\def\cal{\mathcal}
\def\frak{\mathfrak}
\newenvironment{pf*}[1]{\proof[#1]}{\endproof}
\renewcommand*\subjclass[2][1991]{%
  \def\@subjclass{#2}%
  \@ifundefined{subjclassname@#1}{%
    \ClassWarning{\@classname}{Unknown edition (#1) of Mathematics
      Subject Classification; using '1991'.}%
  }{%
    \@xp\let\@xp\subjclassname\csname subjclassname@#1\endcsname
  }%
}
\renewcommand{\subjclassname}{%
  \textup{1991} Mathematics Subject Classification}
\let\csname subjclassname@1991\endcsname \subjclassname
\newtheorem{Theorem}[equation]{Theorem}
\newtheorem{Corollary}[equation]{Corollary}
\newtheorem{Lemma}[equation]{Lemma}
\newtheorem{Proposition}[equation]{Proposition}
\theoremstyle{definition}
\newtheorem{Definition}[equation]{Definition}
\newtheorem{Notation}[equation]{Notation}
\newtheorem{ConjQues}[equation]{Conjecture/Question}
\theoremstyle{remark}
\newtheorem{Remark}[equation]{Remark}
\numberwithin{equation}{section}
\newcommand{\thmref}[1]{Theorem~\ref{#1}}
\newcommand{\secref}[1]{\S\ref{#1}}
\newcommand{\lemref}[1]{Lemma~\ref{#1}}
\newcommand{\propref}[1]{Proposition~\ref{#1}}
\newcommand{\corref}[1]{Corollary~\ref{#1}}
\newcommand{\defref}[1]{Definition~\ref{#1}}
\newcommand{\remref}[1]{Remark~\ref{#1}}
\newcommand{\C}{{\Bbb C}}
\newcommand{\Z}{{\Bbb Z}}
\newcommand{\Q}{{\Bbb Q}}
\newcommand{\R}{{\Bbb R}}
\newcommand{\la}{{\frak \lambda}}
\newcommand{\Hom}{\operatorname{Hom}}
\newcommand{\Ext}{\operatorname{Ext}}
\newcommand{\Tor}{\underline{Tor}}
\newcommand{\D}{{\mathcal D}}
\newcommand{\ra}{\rightarrow}
\newcommand{\ls}{|L|}
\newcommand{\z}{\Theta}
\newcommand{\Pic}{\operatorname{Pic}}
\newcommand{\bbeta}{\boldsymbol\beta}
\newcommand{\res}{\mathop{\text{\rm res}}}
\newcommand{\rk}{\mathop{{\rm rk}}}
\newcommand{\coeff}{\mathop{\text{\rm Coeff}}}
\newcommand{\Coeff}{\mathop{\text{\rm Coeff}}}
\def\oo{{\cal O}}
\def\I{{\cal I}}
\def\P{{\mathbb P}}
\def\<{\langle}
\def\>{\rangle}
\def\F{{\mathcal F}}
\def\G{{\mathcal G}}
\def\E{{\mathcal E}}
\def\cc{{\mathcal C}}
\def\WR{{\widetilde R}}
\def\WS{{\widetilde S}}
\def\WT{{\widetilde \theta}}
\def\th{\theta}
\newcommand{\vechatom}{
    {\Vec{\omega}}
    \,\smash[b]{\hbox{\lower2\ex@\hbox{$\m@th\hat{\null}$}}}
}
\def\R{{\mathbb R}}
\def\RR{{\mathcal R}}
\def\Q{{\mathbb Q}}
\def\P{{\mathbb P}}
\def\Z{{\mathbb Z}}
\def\C{{\mathbb C}}
\def\H{{\mathcal H}}
\def\oo{{\mathcal O}}
\def\L{{\Lambda}}
\begin{document}

\title[Generating functions for K-theoretic  Donaldson invariants]
{Generating functions for K-theoretic  Donaldson invariants and Le Potier's strange duality}
\author{Lothar G\"ottsche,~Yao Yuan}
\address{International Centre for Theoretical Physics, Strada Costiera 11,
34014 Trieste, Italy}
\email{gottsche@ictp.it}
\address{Yau Mathematical Sciences Center, Tsinghua University, 100084, Beijing, P. R. China}
\email{yyuan@math.tsinghua.edu.cn}
\subjclass[2000]{Primary 14D21}

\begin{abstract} For a projective algebraic surface $X$, with an ample line bundle $H$, let $M_H^X(c)$ be the moduli space $H$-semistable sheaves $\E$ of class $c$ in the Grothendieck group $K(X)$.   We write $c=(r,c_1,c_2)$, or $c=(r,c_1,\chi)$ with $r$ the rank, $c_1,c_2$, the Chern classes and $\chi$ the holomorphic Euler characteristic. We also write $M_H^X(2,c_1,c_2)=M_X^X(c_1,d)$, with $d=4c_2-c_1^2$.
The $K$-theoretic Donaldson invariants are the holomorphic Euler characteristics $\chi(M_H^X(c_1,d),\mu(L))$, where $\mu(L)$ is the determinant line bundle associated to a line bundle on $X$. More generally for suitable classes $c^*\in K(X)$ there is a determinant line bundle $\D_{c,c^*}$ on $M^X_H(c)$.
We first compute some generating functions for $K$-theoretic Donaldson invariants on $\P^2$ and rational ruled surfaces, using the
wallcrossing formula of \cite{GNY}.

Then we show that Le Potier's strange duality conjecture relating $H^0(M^X_H(c),\D_{c,c^*})$ and $H^0(M^X_H(c^*),\D_{c^*,c})$ holds for the cases $c=(2,c_1=0,c_2>2)$ and $c^{*}=(0,L,\chi=0)$ with $L=-K_X$ on $\P^2$, and $L=-K_X$ or $-K_X+F$ on $\P^1\times\P^1$ and $\widehat \P^2$ with $F$ the fiber class of the ruling, and also the case $c=(2,H,c_2)$ and $c^*=(0,2H,\chi=-1)$ on $\P^2$.

\end{abstract}

\maketitle
\tableofcontents

\section{Introduction}
In this whole paper let $X$ be a  simply connected nonsingular projective surface over $\C$, with its anti-canonical divisor $-K_X$ ample.  In particular $X$ is a rational surface.
For  $H$ ample on $X$, $c_1\in H^2(X,\Z)$, $c_2\in \Z$, let   $M:=M^X_H(c_1,d)$ with $d=4c_2-c_1^2$ be the moduli space of $H$-semistable sheaves with Chern classes $c_1,c_2$.
For a line bundle $L$ on $X$ there exists a corresponding determinant line bundle $\mu(L)$ on $M$. The Donaldson invariants are given as the top self intersection numbers
$\int_M c_1(\mu(L))^{\dim M}$ on the moduli spaces. In \cite{GNY} the $K$-theoretic Donaldson invariants are introduced as the holomorphic Euler characteristics $\chi(M,\mu(L))$.
Like for the usual Donaldson invariants it is an interesting problem to understand the $K$-theoretic Donaldson invariants and determine their generating functions. In this paper we will do this in a number of cases for $\P^2$ and rational ruled surfaces. This result is then applied to prove some cases of Le Potier's strange duality conjecture, which is a duality between spaces of sections of determinant bundles on different moduli spaces of sheaves on surfaces.

  \subsection{$K$-theoretic Donaldson invariants}\label{backDong}
Let $K(X)$ be the Grothendieck group of coherent sheaves over $X$.
Let $c$ be an element in $K(X)$, which is the class of a coherent rank $2$ sheaf 
with Chern classes $c_1,c_2$.  We write $M^X_H(c)=M^X_H(c_1,d)$ with $d=4c_2-c_1^2$ for the moduli space of $H$-semistable sheaves in class $c$.
Let $L$ be a line bundle on $X$ and assume that $\<c_1(L),c_1\>$ is even with $\<-,-\>$ the intersection form on $H^2(X,\mathbb{Z})$.
Then 
we put
\begin{equation}\label{eq:uL} v(L):=(1-L^{-1})+\<\frac{c_1(L)}{2}, (c_1(L)+K_X-c_1)\>[\oo_x].
\end{equation}
Note that $v(L)$ is independent of $c_2$.
Assume that $H$ is $c$-general. Then we have a well-defined \emph{determinant line bundle} 
$\mu(L):=
\lambda(v(L))\in \Pic(M^X_H(c_1,d))$ associated to $v(L)$.
The {\it $K$-theoretic Donaldson invariant\/} of $X$ with respect to $L,c_1,d,H$ is
$\chi(M^X_H(c_1,d),\mu(L))$.

\subsection{Results for rational ruled surfaces.}
We denote by $\Sigma_e=\P(\oo_{\P_1}\oplus \oo_{\P^1}(e))$ the $e$-th rational ruled  surface. We will restrict to the cases $e=0$, i.e.
$X=\P^1\times \P^1$ and $e=1$, i.e. $X=\widehat \P^2$, the blowup of $\P^2$ in a point.
In the case $X=\P^1\times \P^1$ we denote $G$ the class of the fibre of the second projection to $\P^1$.
In the case $X=\widehat \P^2$ let $H$ be the pullback of the hyperplane class on $\P^2$ and $E$ the exceptional divisor.
We write $F=H-E$ and $G=(H+E)/2$. Note that $G$ only lies in $\frac{1}{2}H^2(\widehat \P^2,\Z).$

For power series $f(\Lambda)=\sum_{d\ge 0} f_n \Lambda^d, \quad g(\Lambda)=\sum_{d\ge 0} g_n \Lambda^d\in \Q[[\Lambda]]$, we write
$f(\Lambda)\equiv g(\Lambda)$ if there exists a $d_0\ge 0$ with $f_d=g_d$ for all $d\ge d_0$.
\begin{Theorem}\label{p11t}
For $X=\P^1\times\P^1$ and $X=\widehat \P^2$ and
for $\frac{a}{b}\ge \frac{n+2}{4}$ the following hold.
\begin{enumerate}
\item For $n\in \Z$ we have
\begin{align*}
1+\sum_{d>0}\chi(M_{aF+bG}^{X}(F,d),\mu(nF))\Lambda^d&=\frac{1}{(1-\Lambda^4)^{n+1}},\\
1+(n+1)\Lambda^4+\sum_{d>4}\chi(M_{aF+bG}^{X}(0,d),\mu(nF))\Lambda^d&=\frac{1}{(1-\Lambda^4)^{n+1}}.
\end{align*}
\item For $n\in \Z$ in case $X=\P^1\times\P^1$ and for $n\in\Z+\frac{1}{2}$ in case $X=\widehat \P^2$ we have
$$1+(2n+2)\Lambda^4+\sum_{d>4}\chi(M_{aF+bG}^{X}(0,d),\mu(nF+G))\Lambda^{d}=
\frac{1}{(1-\Lambda^4)^{2n+2}}.$$
\item For $n\in \Z$ we have
\begin{align*}
\sum_{d>0}\chi(M_{aF+bG}^{X}(F,d),\mu(nF+2G))\Lambda^{d}&=
\frac{1}{2}\frac{(1+\Lambda^4)^n-(1-\Lambda^4)^n}{(1-\Lambda^4)^{3n+3}},\\
1+(3n+3)\Lambda^4+\sum_{d>4}\chi(M_{aF+bG}^{X}(0,d),\mu(nF+2G))\Lambda^{d}&=
\frac{1}{2}\frac{(1+\Lambda^4)^n+(1-\Lambda^4)^n}{(1-\Lambda^4)^{3n+3}}.
\end{align*}
\item The formulas of (1), (2), (3) above hold for all ample classes $aF+bG$ on $X$ with $=$ replaced by $\equiv$.
\end{enumerate}
\end{Theorem}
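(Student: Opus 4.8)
The plan is to treat all three families of invariants uniformly and to deduce part (4) from parts (1)--(3) by a wallcrossing argument. Fix $c_1\in\{0,F\}$ and one of the line bundles $L\in\{nF,\,nF+G,\,nF+2G\}$. For a $c$-general ample class $H=aF+bG$ the moduli space $M^X_H(c_1,d)$ is well defined, and as $H$ moves across the ample cone the invariant $\chi(M^X_H(c_1,d),\mu(L))$ changes only when $H$ meets a wall $W^\xi$, where $\xi=\alpha F+\beta G$ satisfies $\xi\equiv c_1 \pmod 2$ and $-d\le \xi^2=2\alpha\beta<0$; the wall sits at slope $a/b=-\alpha/\beta$. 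My first step is therefore to fix a reference polarization $H_0$ with $a/b$ very large (near the fibre edge), where the geometry is transparent: a rank-two sheaf of class $(F,d)$ restricts to $\oo\oplus\oo(1)$ on the generic fibre, and a stability analysis against $H_0$ shows $M^X_{H_0}(c_1,d)$ is either empty or an explicit variety whose invariant can be read off directly. This provides the constant term (and, in the $c_1=0$ cases, the separately listed $d=4$ correction) of each generating function.

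Next I would run the $K$-theoretic wallcrossing formula of \cite{GNY}. Each crossing of $W^\xi$ changes $\chi(M^X_H(c_1,d),\mu(L))$ by a term $\delta^\xi(L)$ which, by that formula, is an Euler characteristic of a determinant line bundle on a product of Hilbert schemes of points attached to the extension $0\to I_{Z_1}(D_1)\to E\to I_{Z_2}(D_2)\to 0$ with $D_1-D_2=\xi$; summing $d=-\xi^2+4(|Z_1|+|Z_2|)$ shows a single wall contributes to all degrees $d\ge-\xi^2$ in its residue class. Assembling the $\delta^\xi(L)$ over all $\xi$ and all $d$ produces a generating function in $\Lambda$, and the essential computation is to evaluate this sum in closed form. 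Here the explicit product/theta-type generating functions for Euler characteristics of natural line bundles on Hilbert schemes should let one recognize the total as the rational functions in (1), (2), (3). The three cases are distinguished only through the pairing entering $v(L)$: for $L=nF$ one gets $\langle nF,\xi\rangle=n\beta$, and the analogous pairings for $nF+G$ and $nF+2G$ are exactly what produce the different exponents and the combinations $\tfrac12[(1+\Lambda^4)^n\pm(1-\Lambda^4)^n]$ of part (3). The hypothesis $a/b\ge (n+2)/4$ enters precisely by forcing $\delta^\xi(L)=0$ for every wall separating $H$ from $H_0$: the pairing of $c_1(L)+K_X-c_1$ with $\xi$ that controls the sign of the wallcrossing term changes exactly at this $n$-dependent slope, which is why the formulas hold with genuine equality in that chamber.

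For part (4) one must allow an arbitrary ample class $aF+bG$, so on the way to the $a/b\ge(n+2)/4$ chamber one crosses infinitely many walls, some with $\delta^\xi(L)\neq0$. The point to establish is that the difference between the generating function for $H$ and the closed form is a \emph{polynomial} in $\Lambda$, which is exactly the relation $\equiv$. I would argue this by separating the contributing walls into those of small $|\xi^2|$, which lie in a bounded slope range and affect only finitely many low-degree coefficients, and those of large $|\xi^2|$, which are responsible for the high-degree coefficients; for the latter I expect a vanishing statement $\delta^\xi(L)=0$ for $|\xi^2|$ large (again coming from the $v(L)$-pairing), so that the net wallcrossing is confined to bounded degree. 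Combining this with the equalities of (1)--(3) then gives agreement of all sufficiently high coefficients for every ample $aF+bG$.

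The main obstacle is the closed-form evaluation of the full wallcrossing sum in the second step: individually the terms $\delta^\xi(L)$ are Euler characteristics on products of Hilbert schemes, and the rational generating functions emerge only after summing over the entire infinite family of walls, so the difficulty is to organize the sum so that the underlying theta-type identities collapse it. For part (4) the subtlety is of a different kind and is essentially bookkeeping: one must prove the high-degree stabilization in the presence of infinitely many walls, which rests on the vanishing of $\delta^\xi(L)$ for walls of large $|\xi^2|$, so that the difference of generating functions is finitely supported.
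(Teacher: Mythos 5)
There is a genuine gap, in fact two. First, your anchoring step fails: for both classes occurring in the theorem, $c_1=0$ and $c_1=F$, one has $\<c_1,F\>=F^2=0$ even, so a sheaf of class $(F,d)$ restricts to the generic fibre with determinant of degree $0$ (not $\oo\oplus\oo(1)$; that would require $\<c_1,F\>$ odd), and the moduli spaces in the fibre-edge chamber are neither empty nor ``an explicit variety whose invariant can be read off'': their generating functions are exactly the nontrivial right-hand sides of (1)--(3). The emptiness phenomenon you want is \remref{vanfib}, and it applies only when $\<c_1,F\>$ is odd (e.g.\ $c_1=G$ on $\P^1\times\P^1$). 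The paper therefore has to transfer that vanishing to the relevant classes by an indirect route: the $F\leftrightarrow G$ symmetry of $\P^1\times\P^1$, a modularity-plus-residue argument (\thmref{strucdiff}) showing the invariants agree at all boundary points of the positive cone, and blowup formulas (\propref{blowgen}, \thmref{1blow}) to handle $\widehat\P^2$ and $c_1=0$. None of this is supplied or replaced in your outline, and these are precisely the initial conditions your induction would need.

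Second, the step you yourself flag as the ``main obstacle'' --- the closed-form evaluation of the infinite wallcrossing sum --- is where the content of the theorem lies, and the proposal offers only the hope that Hilbert-scheme generating functions will collapse it. That is not how it can feasibly be organized, nor how the paper proceeds: the paper packages all walls into an indefinite theta function, observes that in the chamber $F_+$ the sum collapses to an explicit hyperbolic kernel ($1/\sinh$ or $\coth$ of multiples of $h$) against $\Lambda^2\WT_4(h)^{(L-K_X)^2}u'h^*$ (\propref{Fplus}), and then proves a recursion in $n$, e.g.\ $(1-\Lambda^4)f_n=f_{n-1}$, via theta-constant identities at torsion points built from the blowup polynomials $R_n,S_n$ (\propref{RS0}, \propref{thetanull}, \propref{theth}); the recursion plus the base cases is the engine producing $(1-\Lambda^4)^{-(n+1)}$ and the combinations $\frac{1}{2}\big((1+\Lambda^4)^n\pm(1-\Lambda^4)^n\big)$. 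Nothing in your plan derives these closed forms. On the positive side, your steps for the chamber hypothesis and for part (4) do match the paper in substance: the condition $\frac{a}{b}\ge\frac{n+2}{4}$ enters exactly by forcing $\delta^X_\xi(L)=0$ for every wall between $H$ and the fibre-edge chamber (\propref{ruledconst}), and part (4) follows because the bound $-\xi^2\le|\<\xi,L-K_X\>|+2$ of \thmref{vanwall} leaves only finitely many contributing walls (\propref{asympt}) --- though the mechanism is this inequality, not a sign change of the pairing.
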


\subsection{Results for the projective plane}

Combining \thmref{p11t}
and blowup formulas \lemref{blowsimple}, \lemref{1blow} relating the invariants of a surface and its blowup in a point we get the following formulas for $\P^2$.
\begin{Theorem}\label{P22}
\begin{align*}
\tag{1}1+3\Lambda^4+\sum_{d>4}\chi(M_{H}^{\P^2}(0,d),\mu(H))\Lambda^{d}&=
\frac{1}{(1-\Lambda^4)^{3}},\\
\tag{2}1+6\Lambda^4+\sum_{d>4}\chi(M_{H}^{\P^2}(0,d),\mu(2H))\Lambda^{d}&=
\frac{1}{(1-\Lambda^4)^{6}},\\
\tag{3}\sum_{d}\chi(M_{H}^{\P^2}(H,d),\mu(2H))\Lambda^{d}&=
\frac{\Lambda^3}{(1-\Lambda^4)^{6}},\\
\tag{4}1+10\Lambda^4+\sum_{d>4}\chi(M_{H}^{\P^2}(0,d),\mu(3H))\Lambda^{d}&=
\frac{1+\Lambda^8}{(1-\Lambda^4)^{10}}.
\end{align*}

\end{Theorem}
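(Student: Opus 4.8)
The plan is to transport each generating function on $\P^2$ to the blowup $\widehat{\P}^2$, where \thmref{p11t} applies, via the two blowup formulas. Throughout let $\pi\colon\widehat{\P}^2\to\P^2$ be the blowup at a point, $E$ the exceptional divisor and $H=\pi^*(\text{hyperplane})$, so that in the notation of \thmref{p11t} one has $F=H-E$, $G=(H+E)/2$, and hence the identities
\[
\pi^*H=\tfrac12F+G,\qquad \pi^*(2H)=F+2G,\qquad -K_{\widehat{\P}^2}=3H-E=2F+2G,
\]
together with $-K_{\P^2}=3H$. On moduli, pulling back leaves $c_1=0$ unchanged, while $c_1=H$ on $\P^2$ corresponds under the twisting blowup formula to $c_1=\pi^*H-E=F$ on $\widehat{\P}^2$; the change $c_1^2\colon 1\mapsto 0$ produces a shift $d\mapsto d+1$ in the variable $4c_2-c_1^2$, i.e.\ a factor $\Lambda^{-1}$ on generating functions. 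Since $\P^2$ has Picard rank one there are no walls, so the left-hand sides are unambiguous; on $\widehat{\P}^2$ I would invoke \thmref{p11t}(4) to move to whatever chamber the blowup formula requires, at the cost of replacing $=$ by $\equiv$, and then read off the finitely many low-degree coefficients from the explicit terms already recorded in \thmref{p11t}.

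First I would prove (1) and (2). Here $c_1=0$ and $L=H,2H$ are pure pullbacks with $L\cdot E=0$, so the simple blowup formula \lemref{blowsimple} gives an equality of invariants with the pulled-back determinant. Thus (1) reads off \thmref{p11t}(2) with $c_1=0$ and $n=\tfrac12$ (admissible since $n\in\Z+\tfrac12$ for $\widehat{\P}^2$), giving $(1-\Lambda^4)^{-(2n+2)}=(1-\Lambda^4)^{-3}$ with leading correction $(2n+2)\Lambda^4=3\Lambda^4$; and (2) reads off \thmref{p11t}(3) in the $c_1=0$ case with $n=1$, giving $\tfrac12\bigl[(1+\Lambda^4)+(1-\Lambda^4)\bigr](1-\Lambda^4)^{-6}=(1-\Lambda^4)^{-6}$ with leading correction $(3n+3)\Lambda^4=6\Lambda^4$. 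Both the closed forms and the explicit low-order terms then match the statement.

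Next, for (3) I would apply the twisting blowup formula \lemref{1blow}, which converts $\chi(M^{\P^2}_H(H,d),\mu(2H))$ into $\chi(M^{\widehat{\P}^2}(F,\hat d),\mu(F+2G))$ up to the shift $\Lambda^{-1}$ recorded above. Feeding in \thmref{p11t}(3) in the $c_1=F$ case with $n=1$ gives $\tfrac12\bigl[(1+\Lambda^4)-(1-\Lambda^4)\bigr](1-\Lambda^4)^{-6}=\Lambda^4(1-\Lambda^4)^{-6}$, and multiplying by $\Lambda^{-1}$ yields $\Lambda^3(1-\Lambda^4)^{-6}$, as claimed; the powers $d\equiv3\pmod4$ forced by $c_1^2=1$ are exactly those produced by the shift.

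The main obstacle is (4). Since $\pi^*(3H)=\tfrac32F+3G$ involves the class $3G$, which lies strictly outside the range $mG$, $m\le 2$, covered by \thmref{p11t}, a pure pullback will not suffice. Instead I would exploit that $3H=-K_{\P^2}$ and $2F+2G=-K_{\widehat{\P}^2}$ and use \lemref{1blow} to relate the two anticanonical determinant invariants directly, $c_1=0$ being preserved so that no $\Lambda$-shift occurs. The $\widehat{\P}^2$ side is \thmref{p11t}(3) with $c_1=0$ and $n=2$, namely $\tfrac12\bigl[(1+\Lambda^4)^2+(1-\Lambda^4)^2\bigr](1-\Lambda^4)^{-9}=(1+\Lambda^8)(1-\Lambda^4)^{-9}$. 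The remaining and genuinely delicate point is to evaluate the blowup correction factor supplied by \lemref{1blow}: I expect the sum over the exceptional contributions to telescope to $(1-\Lambda^4)^{-1}$, so that the product is $(1+\Lambda^8)(1-\Lambda^4)^{-10}$, with leading correction $\binom{10}{9}\Lambda^4=10\Lambda^4$ as in the statement. Verifying this factor — that passing from the anticanonical theta bundle on $\widehat{\P}^2$ to that on $\P^2$ multiplies the generating function by exactly $(1-\Lambda^4)^{-1}$ — is where the real work lies; the other three cases are then bookkeeping.
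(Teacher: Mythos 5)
Your route is the paper's route: push everything onto $\widehat\P^2$, quote \thmref{p11t} (with $n=\tfrac12,1,1,2$ in exactly the cases the paper uses), apply \lemref{blowsimple} for (1)--(3), and use $3H-E=2F+2G=-K_{\widehat\P^2}$ together with \thmref{1blow} for (4). But as written the proposal has two gaps. The first affects all four parts: \lemref{blowsimple} lands you at the polarization $H-\epsilon E$ on $\widehat\P^2$, whose slope in $F,G$-coordinates is just above $\tfrac12$, whereas \thmref{p11t}(1)--(3) are stated only for $\tfrac ab\ge\tfrac{n+2}4$ (e.g.\ $\tfrac58$ for $L=H$, $1$ for $L=2F+2G$). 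Your proposed bridge, \thmref{p11t}(4), cannot close this: ``$\equiv$'' only asserts agreement in all sufficiently large degrees, and nothing in \thmref{p11t} identifies the finitely many coefficients where another chamber might differ, nor their values, so there is nothing to ``read off''. What is needed is the vanishing of all intervening wall-crossing terms $\delta^{\widehat\P^2}_\xi(L)$ for the specific bundles $L=H,\,2H,\,3H-E$ (and $c_1=F$, $L=2H$); this is precisely \propref{ruledconst}(2), proved from \thmref{vanwall}, and it is what justifies quoting \thmref{p11t} in the chamber adjacent to $H$.

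The second gap breaks part (4) as you set it up. The right-hand side of \thmref{1blow} is $\chi^{\widehat\P^2,H}_{0}(3H-E)$, the invariant extended (via \defref{chiext}) to the \emph{nef boundary class} $H$ of the ample cone of $\widehat\P^2$; by \remref{average} this is the adjacent-chamber value \emph{minus} $\tfrac12\delta^{\widehat\P^2}_{2E}(3H-E)$, and the paper computes $\delta^{\widehat\P^2}_{2E}(3H-E)=2\Lambda^4-38\Lambda^8\neq0$. If you omit this correction and simply divide the \thmref{p11t} value by $1-\Lambda^4$, the dropped term $(\Lambda^4-19\Lambda^8)/(1-\Lambda^4)=\Lambda^4-18(\Lambda^8+\Lambda^{12}+\cdots)$ corrupts \emph{every} coefficient in degrees $\equiv 0\pmod 4$ from $\Lambda^8$ on, so it is false that only the $\Lambda^4$ term needs adjusting, and your computation would not reproduce $\frac{1+\Lambda^8}{(1-\Lambda^4)^{10}}$ for $d>4$. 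Relatedly, you locate ``the real work'' in verifying the factor $(1-\Lambda^4)^{-1}$: that factor needs no verification, it is the verbatim statement of \thmref{1blow}; the genuinely delicate points are this boundary wall term and the matching of the ad hoc $\Lambda^4$-coefficient in \defref{KdonGen}, neither of which your proposal addresses. (A minor misattribution: in (3) the twisting formula $c_1\mapsto c_1+E$ with the $\Lambda$-shift is the second identity of \lemref{blowsimple}, not \thmref{1blow}.)
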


\subsection{Results on strange duality}\label{insd}
We choose two elements $c,c^*\in K(X)$, such that both moduli spaces $M^X_H(c^*)$  and $M^X_H(c)$ are non-empty and the determinant line bundles $\lambda(c)$ and $\lambda(c^*)$ are well-defined  over $M^X_H(c^*)$  and $M^X_H(c)$, respectively.  Under suitable conditions, (see \secref{strd}),
there is a canonical map
\[ SD_{c,c^*}:H^0(M^X_H(c),\lambda(c^*))^{\vee}\ra H^0(M^X_H(c^*),\lambda(c^*)).\]
The strange duality conjecture says that $SD_{c,c^*}$ should be an isomorphism.

The strange duality conjecture was first formulated for $X$ a smooth curve in the 1990s (see \cite{Bea} and \cite{Donagi}) and in this case been proved around 2007 (see \cite{Bel1}, \cite{MO1} and \cite{Bel2}).
For $X$ a surface, there does not exist until now a general formulation of the strange duality conjecture.   There is a formulation for some special cases due to Le Potier (see \cite{LPst} or \cite{Da2}). 
We will prove the following cases of Le Potier's strange duality conjecture. 

\begin{Theorem}\label{sdmain} Let the polarization $H$ be both $c$-general and $c^*$-general.  Then the strange duality conjecture is true, i.e. the map $SD_{c,c^*}$ is an isomorphism in the following three cases.
\begin{enumerate}
\item $X=\P^2$, $\P^1\times\P^1$ or $\widehat \P^2$.  $c=(2,0,c_2)$ with $c_2>2$ and $c^*=(0,-K_X,\chi=0)$, moreover if $X=\P^1\times\P^1$ or $\widehat \P^2$, we chose the polarization of the form $H=aF+bG$ with $a\geq b$.

\item $X=\P^1\times\P^1$ or $\widehat \P^2$ with $H=aF+bG$ and $\frac ab\geq \frac 54$, $c=(2,0,c_2)$ with $c_2>2$ and $c^*=(0,2G+3F,\chi=0)$.

\item $X=\P^2$ with $H$ the hyperplane class, $c=(2,H,c_2)$ with $c_2>0$ and $c^*=(0,2H,\chi=-1)$.
\end{enumerate}

\end{Theorem}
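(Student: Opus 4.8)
The plan is to treat all three cases by the same two-step scheme: first show that the two spaces of sections $H^0(M^X_H(c),\lambda(c^*))$ and $H^0(M^X_H(c^*),\lambda(c))$ have the same finite dimension, and then show that $SD_{c,c^*}$ is injective; an injective linear map between vector spaces of equal dimension is an isomorphism. It is convenient to realize $SD_{c,c^*}$ through a theta section $\theta\in H^0(M^X_H(c)\times M^X_H(c^*),\lambda(c^*)\boxtimes\lambda(c))$, so that for each $\F\in M^X_H(c^*)$ the restriction $\theta_\F$ is a section of the determinant bundle $\lambda(c^*)=\mu(L)$ on $M^X_H(c)$ cutting out the incidence divisor $D_\F=\{\E: h^0(\E\otimes\F)\neq 0\}$. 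With this description the kernel of $SD_{c,c^*}$ is the annihilator of the span of $\{\theta_\F\}_\F$, so injectivity is exactly the statement that the incidence divisors $D_\F$ span $H^0(M^X_H(c),\mu(L))$.

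For the dimension on the rank-two side I would first check that the higher cohomology of $\mu(L)$ vanishes, so that $h^0(M^X_H(c),\mu(L))=\chi(M^X_H(c),\mu(L))$; on these Fano surfaces the determinant bundles in question are semiample and the moduli spaces have only mild singularities, so a Kawamata--Viehweg/Le Potier type vanishing should apply. The Euler characteristic is then the $K$-theoretic Donaldson invariant computed earlier: in case (1) with $L=-K_X$ one reads off the relevant coefficient from \thmref{P22}(4) for $\P^2$ and from \thmref{p11t}(3) with $n=2$ (so that $nF+2G=-K_X$) for $\P^1\times\P^1$ and $\widehat\P^2$; in case (2) with $L=3F+2G$ one uses \thmref{p11t}(3) with $n=3$, consistent with the hypothesis $a/b\ge 5/4$; and in case (3) with $L=2H$ one uses \thmref{P22}(3), whose generating function $\Lambda^3/(1-\Lambda^4)^6$ gives the dimension directly.

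For the rank-zero side I would analyze $M^X_H(c^*)$, $c^*=(0,L,\chi)$, through its support morphism to the linear system $|L|$. Over a smooth curve $C\in|L|$ the fibre is a compactified Jacobian, the determinant bundle $\lambda(c)$ restricts to a multiple of the canonical theta divisor on each Jacobian, and pushing forward along the support map expresses $H^0(M^X_H(c^*),\lambda(c))$ in terms of a line bundle on $|L|=\P H^0(X,L)$ whose sections are counted by Riemann--Roch. In case (3) the curves are conics of genus zero, the Jacobian fibres collapse, and $M^X_H(c^*)$ is essentially $|2H|=\P^5$ with theta geometry governed by $2H$; this is why case (3) must be handled separately. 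Matching this count with the value produced in the previous step is the numerical heart of the argument.

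The remaining and hardest step is injectivity, i.e. the spanning of $H^0(M^X_H(c),\mu(L))$ by the incidence divisors $D_\F$. I would degenerate $\F$ to a line bundle $N$ on a smooth curve $C\in|L|$ pushed forward to $X$, for which $h^0(\E\otimes \iota_*N)=h^0(C,\E|_C\otimes N)$, so that $D_\F=r_C^*\Theta_N$ is the pullback under the restriction map $r_C:\E\mapsto\E|_C$ of a theta divisor on the moduli space of rank-two bundles on the curve $C$. Fixing $C$ and varying $N$, the \emph{curve} strange duality theorem (Belkale; Marian--Oprea) guarantees that these $\Theta_N$ span the relevant space on the curve side, hence the $D_\F$ span the image of $r_C^*$; letting $C$ sweep out $|L|$ then builds up all of $H^0(M^X_H(c),\mu(L))$. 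The principal obstacles are to control the locus where $r_C$ is undefined or where $\E|_C$ fails to be semistable, and to handle the boundary of $M^X_H(c^*)$ where $C$ degenerates and $\F$ becomes strictly semistable, so that the incidence sections remain well defined across the compactification; the three cases differ precisely in how these transversality and boundary issues are resolved.
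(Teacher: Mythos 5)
Your high-level scheme (equal dimensions plus spanning by incidence sections) is the same as the paper's, and your dimension step is set up correctly: Proposition \ref{vanihh} converts the Euler characteristics of \thmref{p11t}(3) (with $n=2$, resp.\ $n=3$) and \thmref{P22}(3),(4) into $h^0$ on the rank-two side, while the rank-zero side is computed through the support fibration $\pi:M^X_H(c^*)\to|L|$ — exactly the results of \cite{Yuan} that the paper invokes in Proposition \ref{nucon}. The difference is in which side you span: the paper proves \emph{surjectivity} of $SD_{c,c^*}$, i.e.\ that finitely many explicitly constructed rank-two sheaves (extensions $\F_{R,T}$ of ideal sheaves of points, and elementary modifications $\F^i_W$ of a few slope-stable bundles $\cal E_2^i$ of class $(2,0,2)$) give sections $s_{\F}$ spanning $H^0(M^X_H(c^*),\D_{c^*,c})$, reducing everything via $\pi$ to surjectivity of multiplication maps on $|L|$ already established in \cite{Yuan}. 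You instead aim at \emph{injectivity}, i.e.\ spanning of $H^0(M^X_H(c),\mu(L))$ by the incidence divisors $D_{\F}$, $\F\in M^X_H(c^*)$; the linear algebra identifying injectivity with this spanning is correct, but the spanning argument itself has a genuine gap.

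Concretely: fixing a smooth $C\in|L|$ and invoking curve strange duality only yields that the sections $D_{\iota_*N}$ span the image of the pullback $r_C^*$ from the theta space on the curve side. Your key claim that letting $C$ sweep out $|L|$ "builds up all of $H^0(M^X_H(c),\mu(L))$", i.e.\ that the sum over $C$ of $\operatorname{Im}(r_C^*)$ is the whole space, is asserted without any mechanism (no filtration of $H^0(M^X_H(c),\mu(L))$ adapted to the family of restriction maps, no exact sequence over $|L|$), and it is essentially as strong as the theorem you are trying to prove; none of the existing proofs for these surfaces (Danila, Abe, Yuan, or this paper) proceed this way. Moreover the approach collapses structurally in case (3): there $L=2H$, the support curves are rational conics, their Jacobians are trivial and the moduli of semistable rank-two bundles on $\P^1$ is a point, so the curve-level strange duality you invoke is vacuous. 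You acknowledge that case (3) "must be handled separately" but supply no replacement argument, whereas this case requires its own construction — the paper uses $\cal T_{\P^2}(-1)$ and its elementary modifications $\F^H_W$, whose sections $s_{\F^H_W}=t_W$ visibly span $H^0(\oo_{|2H|}(c_2-1))$. Until the sweeping-out claim is actually proved, and case (3) is given an argument, the proposal is not a proof.
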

Actually we will show that essentially the strange duality conjecture holds for any polarization.  But if $H$ is not $c$-general, the formulation needs a slight modification (see Theorem \ref{vsdmain} and Remark \ref{esd}).  

The strange duality for surfaces is a very interesting problem and many other people  worked on it.  For instance in the case  $\P^2$, Danila proves that Le Potier's strange duality holds for $c=(2,0,c_2),c^*=(0,dH,\chi=0)$ with $c_2$ small and $d=1,2,3$ (\cite{Da1} and \cite{Da2});  Abe shows that it holds for all $c=(2,0,c_2),c^*=(0,dH,\chi=0)$ with $d=1,2$ (\cite{Abe});  and the second author shows that it holds for all $c=(1,0,c_2),c^*=(0,dH,0)$ (see Section 4.3 in \cite{Yuan}), and also for all $c=(2,0,c_2=2),c^*=(0,dH,\chi=0)$ (\cite{Yuan2}).  Marian and Oprea, and their collaborators have proven many results on the strange duality for K3 and abelian surfaces (e.g. \cite{MOY}, \cite{MO} and \cite{BMOY}).
However, in general still very few results on this conjecture are known. 


\subsection{Acknowledgements}
The first named author wants to thank Don Zagier for many useful discussions and explanations over the course of several years, without which this project could not have succeeded. 
The second-named author was supported by NSFC grant 11301292.

\section{Background Material}\label{sec:background}

For a class $\alpha\in H^*(X)$, denote $\<\alpha\>:=\int_X\alpha$.
For $\alpha,\beta\in H^2(X)$ we write $\<\alpha,\beta\>:=\int_X\alpha\wedge\beta$ and $\beta^2:=\<\beta,\beta\>$.

Let $H$ be an ample divisor on $X$.  A class $c$ in the Grothendieck group $K(X)$ of coherent sheaves on $X$ is  determined by its rank and the Chern classes $c_1$, $c_2$. Therefore we will also denote $c=(r,c_1,c_2)$ the class of rank $r$ coherent sheaves on $X$ with first and second Chern class $c_1,c_2$.  We also may write $c=(r,c_1,\chi)$ with $\chi$ standing for the holomorphic Euler characteristic.  
Let $M_H^X(c)=M^X_H(r,c_1,c_2)$ be the moduli space of $H$-semistable sheaves in class $c$,  and 
let $M^X_H(c)^s$ be the open
subset consisting of stable sheaves.  If $r=0$ and $c_1=c_1(L)$ with $L$ nontrivial and effective, then $M^X_H(r,c_1,c_2)$ is a moduli space of 1-dimensional semistable sheaves supported on curves in the linear system $|L|$.   
We will write $M^X_H(c_1,d)$ with $d:=4c_2-c_1^2$ instead of $M^X_H(c)$ if $c=(2,c_1,c_2)$.


\subsection{Determinant line bundles}\label{sec:detbun}
We briefly review the determinant line bundles on the moduli space
\cite{DN},\cite{LP1}, for more details we refer to \cite[Chap.~8]{HL}.

For a Noetherian scheme $Y$ we denote by $K(Y)$ and $K^0(Y)$ the Grothendieck groups of coherent sheaves and locally free sheaves on $Y$ respectively.
If $Y$ is nonsingular and quasiprojective, then $K(Y)=K^0(Y)$.
In particular we have $K(X) = K^0(X)$ for the smooth projective
surface $X$. 
If we want to distinguish a sheaf $\F$ and its class in $K(Y)$, we
denote the latter by $[\F]$, but we may also write $\F$ for the class in $K(Y).$
For a proper morphism $f\colon Y_1\to Y_2$ we have the pushforward
homomorphism
\(f_!\colon K(Y_1)\to K(Y_2);  [\F] \mapsto\sum_i (-1)^i [R^if_*\F].\)
For any morphism $f\colon Y_1\to Y_2$ we have the pullback
homomorphism
\(   f^*\colon K^0(Y_2)\to K^0(Y_1);  [\F] \mapsto[f^*\F] \) for a locally free sheaf $\F$ on $Y_2$.
Let $\E$ be a flat family of coherent sheaves of class $c$ on $X$ parametrized by a scheme $S$, then $\E\in K^0(X\times S)$.
Let $p:X\times S\to S$, $q:X\times S\to X$ be the projections.
Define $\lambda_\E:K(X)\to \Pic(S)$ as the composition of the following homomorphisms:
\begin{equation}\label{dlb}
\xymatrix@C=0.3cm{
  K(X)=K^0(X) \ar[rr]^{~~q^{*}} && K^0(X\times S) \ar[rr]^{.[\E]} && K^0(X\times
S) \ar[rr]^{~~~p_{!}} && K^0(S)\ar[rr]^{det^{-1}} &&
\Pic(S),}\end{equation}
where $q^*$ is the pull-back morphism, $[\F].[\G]:=\sum_i (-1)^i[\Tor_i(\F,\G)]$ with $[\F]$ the class of sheaf $\F$ in $K(X)$, 
and $p_{!}([\F])=\sum_i(-1)^i[R^ip_{*}\F].$  Notice that $p_{!}([\F])\in K^0(S)$ for $\F$ $S$-flat by Proposition 2.1.10 in \cite{HL}.

The following elementary facts are important for working with these line bundles:
\begin{enumerate}
\item $\lambda_\E$ is a homomorphism, i.e. $\lambda_\E(v_1+v_2)=\lambda_\E(v_1)\otimes \lambda_{\E}(v_2)$.
\item If $\mu\in \Pic(S)$ is a line bundle, then $\lambda_{\E\otimes p^*\mu}(v)=
\lambda_{\E}(v)\otimes \mu^{\chi(c\otimes v)}$.
\item $\lambda_\E$ is compatible with base change: if
$\phi:S'\to S$ is a morphism, then $\lambda_{\phi^*\E}(v)=\phi^*\lambda_{\E}(v)$.
\end{enumerate}

However, in general there is no universal sheaf $\E$ over $X\times M^X_H(c)$, and even if it exists, there is ambiguity caused by tensoring with the pull-back of a line bundle on $M^X_H(c)$.  Define $K_c:=c^\perp=\big\{v\in K(X)\bigm|
\chi(v\otimes c)=0\big\}$,~
and $K_{c,H}:=c^\perp\cap\{1,h,h^2\}^{\perp\perp}$, where $h=[\oo_H]$.  Then we have a well-defined morphism $\lambda\colon K_c\to \Pic(M_H^X(c)^s)$,  and $\lambda\colon K_{c,H}\to \Pic(M_H^X(c))$ satisfying the following properties:
 \begin{enumerate}
 \item The $\lambda$ commute with the  inclusions $K_{c,H}\subset K_c$ and $\Pic(M_H^X(c))\subset \Pic(M_H^X(c)^s)$.
 \item If $\E$ is a flat family of semistable sheaves  on $X$ of class $c$ parametrized by $S$,  then  we have
$\phi_{\E}^*(\lambda(v))=\lambda_{\E}(v)$ for all $v\in K_{c,H}$ with $\phi_\E:S\ra M^X_H(c)$ the classifying morphism.
 \item If $\E$ is a flat family of stable sheaves, the same statement to (2) holds with  $K_{c,H}$, $M^X_H(c)$ replaced by  $K_{c}$, $M^X_H(c)^s$.
 \end{enumerate}
 

Since $X$ is a simply connected surface, both the moduli space $M^X_H(c)$ and the determinant line bundle $\lambda(c^*)$ only depend on the images of $c$ and $c^*$ in $K(X)_{num}.$  Here $K(X)_{num}$ is the Grothendieck group modulo numerical equivalence.  We say that $u,v\in K(X)$ are numerically equivalent if $u-v$ is in the radical of the quadratic form $(u,v)\mapsto \chi(X,u\otimes v)\equiv \chi(u\otimes v)$


Often $\lambda\colon K_{c,H}\to \Pic(M_H^X(c))$ can be extended.  For instance let $c=(2,c_1,c_2)$, then $\lambda(v(L))$ is well-defined over $M^X_H(c)$ if $\<L,\xi\>=0$ for all $\xi$ a class of type $(c_1,d)$ (see \secref{walls}) with $\<H,\xi\>=0$.  This can be seen easily from the construction of $\lambda(v(L))$ (e.g. see the proof of Theorem 8.1.5 in \cite{HL}), we will also explain more in details in Remark \ref{ppd} in \secref{lpsd}.

\subsection{Walls}\label{walls}
Denote by $\cc$ the ample cone of $X$.
Then $\cc$ has a chamber structure:
For a class $\xi\in H^2(X,\Z)\setminus \{0\}$, let $W^\xi:=\big\{ x\in \cc\bigm| \< x,\xi\>=0\big\}$.
Assume $W^\xi\ne \emptyset $. Let $c_1\in \Pic(X)$, $d\in \Z$ and $d\equiv -c^2_1~(4)$. 
Then we call $\xi$ a {\it class of type} $(c_1,d)$ and call
$W^\xi$ a {\it wall of type} $(c_1,d)$ if the following conditions hold
\begin{enumerate}
\item
$\xi+c_1$ is divisible by $2$ in $H^2(X,\Z)$,
\item $d+\xi^2\ge 0$.
%
\end{enumerate}
We call $\xi$ a {\it class of type} $c_1$, if $\xi+c_1$ is divisible by $2$ in $H^2(X,\Z)$.
We say that $H\in \cc$ lies on the wall $W^\xi$ if $H\in W^\xi$.
The {\it chambers of type} $(c_1,d)$ are the connected components of the complement
of the walls of type $(c_1,d)$ in $\cc$.
Then $M_H^X(c_1,d)$ depends only on the chamber of type $(c_1,d)$ of $H$.

Let $c\in K(X)$ be the class of an sheaf $\F\in M_H^X(c_1,d)$. We call $H$ {\it general} with respect to
$c$ if all the strictly semistable sheaves in $M_H^X(c)$
are strictly semistable with respect to all ample divisors on $X$ in a neighbourhood of $H$.  It is easy to see
that $H$ is general with respect to $c$ if and only if $H$ does not lie on a wall $W_{\xi}$ of
type $(c_1,d)$ such that $(d+\xi^2)/8\in\mathbb{Z}_{\geq0}$.

\subsection{$K$-theoretic Donaldson invariants}\label{backDong}
Let $L$ be a line bundle on $X$ and assume that $\<c_1(L), c_1\>$ is even.
Then for $c=(2,c_1,c_2)$, we put
\begin{equation}\label{eq:uL} v(L):=(1-L^{-1})+\<\frac{c_1(L)}{2},(c_1(L)+K_X-c_1)\>[\oo_x]\in K_c.\end{equation}
Note that $v(L)$ is independent of $c_2$.
Assume that $H$ is general with respect to $c$. Then we denote
$\mu(L):=
\lambda(v(L))\in \Pic(M^X_H(c_1,d))$.
The {\it $K$-theoretic Donaldson invariant\/} of $X$, with respect to $L,c_1,d,H$ is
$\chi(M^X_H(c_1,d),\mu(L))$.

We recall the following blowup relation for the  $K$-theoretic Donaldson invariants from  \cite[Cor.~1.8]{GNY}.
Let $(X,H)$ be a polarized rational surface. Let $\widehat X$ be the
blowup of $X$ in a point and $E$ the exceptional divisor.
In the
following we always denote a class in $H^*(X,\Z)$ and its pullback by
the same letter.
Let $Q$ be an open subset of a suitable quot-scheme
such that $M^X_H(c_1,d)=Q/GL(N)$. Assume that $Q$ is smooth \textup(e.g.\ $\langle -K_X, H\rangle>0$\textup).
We choose $\epsilon>0$ sufficiently small so that  $H-\epsilon E$ is ample on $\widehat X$ and  there is no class $\xi$ of type $(c_1,d)$ or of type $(c_1+E,d+1)$ on $\widehat X$ with $\<\xi, H\><0<\<\xi, (H-\epsilon E)\>.$
In case $c_1=0$ assume $d>4$.

\begin{Lemma}  \label{blowsimple}
We have
\begin{align*}
\chi({M}^{\widehat X}_{H-\epsilon E}(c_1,d),\mu(L))&
=\chi(M_H(c_1,d),\mu(L)), \\
 \chi({M}^{\widehat X}_{H-\epsilon E}(c_1+E,d+1),\mu(L))&
=\chi(M_H(c_1,d),\mu(L))
\end{align*}
for any line bundle  $L$ on $X$ such that $\<L, c_1\>$ is even and
$\<L, \xi\>=0$ for $\xi$ any class of
type $(c_1,d)$ on $\widehat X$ with $\<H, \xi\>=0$.
\end{Lemma}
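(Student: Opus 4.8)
Since this is recalled from \cite[Cor.~1.8]{GNY}, the plan is to present the mechanism behind it: identify the relevant moduli spaces under the blowup morphism $\pi\colon\widehat X\to X$, check that the determinant line bundle $\mu(L)=\lambda(v(L))$ is well-defined and matches on both sides, and observe that the hypotheses on $\epsilon$ and on $L$ are exactly what make the comparison clean. Throughout I would use that $\pi_*\oo_{\widehat X}=\oo_X$ and $R^i\pi_*\oo_{\widehat X}=0$ for $i>0$, so that $\pi^*$ preserves rank, cohomology, and all intersection numbers pulled back from $X$.

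For the first identity I would show that, once $\epsilon$ is small enough that no wall of type $(c_1,d)$ on $\widehat X$ separates $\pi^*H$ from $H-\epsilon E$, the pullback $\F\mapsto\pi^*\F$ is an isomorphism $M^X_H(c_1,d)\xrightarrow{\sim}M^{\widehat X}_{H-\epsilon E}(c_1,d)$. The inverse is $\G\mapsto\pi_*\G$: since $c_1(\G)$ has $E$-degree $0$ and the discriminant equals the value on $X$, there is no room for a subsheaf or quotient of $\G$ supported on $E$, so the adjunction counit $\pi^*\pi_*\G\to\G$ is an isomorphism and $\pi_*\G$ is $H$-semistable. For the second identity I would use instead the standard elementary-modification correspondence along $E$ of the Nakajima--Yoshioka blowup formula: the arithmetic $(c_1+E)^2=c_1^2-1$ gives $d+1=4c_2-(c_1+E)^2$ with $c_2$ unchanged, and a single elementary modification of $\pi^*\F$ supported on $E$ (raising $c_1$ by $E$ while leaving $c_2$ fixed) realizes an isomorphism $M^X_H(c_1,d)\xrightarrow{\sim}M^{\widehat X}_{H-\epsilon E}(c_1+E,d+1)$ under the same no-wall hypothesis.

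Next I would match the determinant line bundles. Using that $v(L)$ is independent of $c_2$ and that $\lambda$ is a homomorphism compatible with base change (the three properties of \S\ref{sec:detbun}), the key point is that the image of $v(L)=(1-L^{-1})+\langle c_1(L)/2,(c_1(L)+K_X-c_1)\rangle[\oo_x]$ in $K(\widehat X)$ is again $v(L)$ for the pulled-back data, because $\pi$ preserves the intersection number $\langle c_1(L)/2,(c_1(L)+K_X-c_1)\rangle$ and sends $[\oo_x]$ to the class of a point. Hence under the pullback isomorphism $\lambda_{\pi^*\E}(v(L))=\lambda_\E(v(L))$ by the projection formula, and the Euler characteristics agree; in the $c_1+E$ case I would track the contribution of the elementary modification to $\lambda(v(L))$ and use $\langle L,E\rangle=0$ to see it is trivial. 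The hypothesis that $\langle L,\xi\rangle=0$ for all $\xi$ of type $(c_1,d)$ on $\widehat X$ with $\langle H,\xi\rangle=0$ is precisely the condition recalled at the end of \S\ref{sec:detbun} guaranteeing that $\mu(L)=\lambda(v(L))$ descends to the full semistable moduli spaces on $\widehat X$, so that both Euler characteristics are defined.

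The main obstacle is this determinant-line-bundle bookkeeping rather than the geometry: one must verify carefully that $v(L)$ transforms to $v(L)$ under $\pi$ and, in the $c_1+E$ case, that the elementary modification contributes no nontrivial twist, and one must handle the strictly semistable locus so that $\lambda(v(L))$ genuinely descends (this is where the condition on $\langle L,\xi\rangle$ enters). By contrast, the isomorphisms of moduli spaces and the resulting equalities of holomorphic Euler characteristics are formal once the smallness of $\epsilon$, the no-wall condition, and the evenness of $\langle L,c_1\rangle$ are in place; equivalently, these two identities are exactly the two surviving terms of the general $K$-theoretic blowup formula of \cite{GNY}, all higher $E$-degree corrections being killed by the hypotheses.
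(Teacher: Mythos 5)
The paper itself offers no proof of this lemma --- it is recalled verbatim from \cite[Cor.~1.8]{GNY} --- so your proposal has to stand on its own, and its core geometric claims are false. The most concrete failure is in your second identity: you claim an isomorphism $M^X_H(c_1,d)\xrightarrow{\sim}M^{\widehat X}_{H-\epsilon E}(c_1+E,d+1)$, but this is dimensionally impossible. On these rational surfaces $M^X_H(c_1,d)$ has pure dimension $d-3$ (as the paper itself uses in the proof of Proposition~\ref{vanihh}), so $M^{\widehat X}_{H-\epsilon E}(c_1+E,d+1)$ has dimension $(d+1)-3=d-2$, one more than $\dim M^X_H(c_1,d)$. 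Indeed your own construction betrays this: an elementary modification $\G=\ker\bigl(\pi^*\F\to\oo_E\bigr)\otimes\oo(E)$ requires choosing a surjection $\pi^*\F|_E\cong\oo_E^{\oplus 2}\twoheadrightarrow\oo_E$, a $\P^1$ of choices, so what one gets is (generically) a $\P^1$-fibration over $M^X_H(c_1,d)$, not a bijection. The equality of Euler characteristics then has to come from a pushforward argument along this fibration together with control of the locus where it degenerates; that is precisely the nontrivial content, and it is absent.

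The first identity fails for a related reason: pullback is not an isomorphism of moduli spaces, only a birational map. It is not even defined at sheaves $\F$ that fail to be locally free at the blown-up point $p$ (there $\pi^*\F$ acquires torsion along $E$), and its image misses, e.g., $\G=\ker(\pi^*\F\to\oo_z)$ with $z\in E$, which for small $\epsilon$ is $(H-\epsilon E)$-stable with $\<c_1(\G),E\>=0$ but is not a pullback: the counit $\pi^*\pi_*\G\to\G$ is not an isomorphism there. Your argument that torsion-freeness leaves ``no room for a subsheaf or quotient supported on $E$'' misses the point --- the obstruction to being a pullback is never a torsion subsheaf, but jumping of $\G|_E$ or non-local-freeness along $E$, and such sheaves genuinely occur (take $\F$ stable with smaller $c_2$ and modify at a point of $E$). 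Since the two spaces are merely birational, equality of $\chi(\cdot,\mu(L))$ does not follow formally; this is exactly why the proof in \cite{GNY} is substantive, resting on the Nakajima--Yoshioka theory of instanton counting and perverse coherent sheaves on the blowup, in which the two moduli spaces are connected by an explicit chain of flips and the corrections are shown to cancel. Your determinant-line-bundle bookkeeping (that $v(L)$ pulls back to $v(L)$ and that $\<L,\xi\>=0$ guarantees descent to the semistable locus) is fine as far as it goes, but it is attached to correspondences that do not exist.
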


\begin{Remark}\label{rem:canonical}
If $H$ is a general polarization, then
$\mu(2K_X)$ is a line bundle on $M^X_H(c)$
which coincides with the dualizing sheaf
on the locus of stable sheaves $M_H^X(c)^s$.
If $\dim (M_H^X(c) \setminus M_H^X(c)^s) \leq \dim M_H^X(c)-2$,
then $\omega_{M_H^X(c)}=\mu(2K_X)$.
\end{Remark}

We introduce the generating function of the $K$-theoretic Donaldson invariants.
\begin{Definition} \label{KdonGen} Let $c_1\in H^2(X,\Z)$. Let $H$ be ample on $X$ not on a wall of type $(c_1)$.
\begin{enumerate}
\item
If $c_1\not \in 2 H^2(X,\Z)$, let
\begin{equation}\label{eq:Kdon}
\begin{split}
\chi_{c_1}^{X,H}(L)&:=\sum_{d>0}
\chi(M^X_H(c_1,d),\mu(L))\Lambda^d.
\end{split}
\end{equation}
\item In case $c_1=0$ let
$\widehat X$ be the blowup of $X$ in a point. Let $E$ be the exceptional divisor. Let $\epsilon>0$ be sufficiently small so that there is no class
$\xi$ of type $(E,d+1)$ on $\widehat X$ with  $\<\xi , H\> <0 <\<\xi , (H-\epsilon E)\>$.
We put
\begin{equation}\label{eq:Kdon0}
\begin{split}
\chi_{0}^{X,H}(L)&:=\sum_{d>4}
\chi(M^X_H(0,d),\mu(L))\Lambda^d+\Big(\chi(M^{\widehat X}_{H-\epsilon E}(E,5) ,\mu(L))+\<L, K_X\>-\frac{K_X^2+L^2}{2}-1\Big)\Lambda^4.
\end{split}
\end{equation}
\end{enumerate}
\end{Definition}

\begin{Remark}
\begin{enumerate}
\item
Note that with this definition we have $\Coeff_{\Lambda^d}\big[\chi^{X,H}_0(L)\big]=\chi(M^X_H(0,d),\mu(L))$ only for $d>4$.
\item  The coefficient of  $\Lambda^4$ of $\chi^{X,H}_{0}(L)$ has been chosen to make the generating functions
$\chi^{X,H}_{c_1}(L)$  more compatible among each other. In particular it will lead to a cleaner blowup formula for $\chi^{X,H}_{0}(L)$.
\end{enumerate}
\end{Remark}

\subsection{Vanishing of higher cohomology}
In this paper we will compute the holomorphic Euler characteristics $\chi(M_H^X(c_1,d),\mu(L))$ on rational surfaces. We then want to apply this to prove cases of Le Potier's strange duality, which is a statement about spaces of sections $H^0(M_H^X(c_1,d),\mu(L))$.
Thus  we need to see  that in the cases considered  $\chi(M_H^X(c_1,d),\mu(L))=\dim H^0(M_H^X(c_1,d),\mu(L))$.
We will show this using arguments closely related to \cite[\S1.4]{GNY}
Let  $L$ be a numerically effective  line bundle on $X$.

\begin{Proposition}\label{vanihh}
Fix $c_1,d$. Let $H$ be an ample line bundle on $X$ which is general with respect to $(c_1,d)$.
If $c_1$ is not divisible by $2$ in $H^2(X,\Z)$ or $d>8$, we have
$H^i(M_H^X(c_1,d),\mu(L))=0$ for all $i>0$, in particular
$$\dim H^0(M_H^X(c_1,d),\mu(L))=\chi(M_H^X(c_1,d),\mu(L)).$$
\end{Proposition}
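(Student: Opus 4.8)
The plan is to obtain the vanishing from Kawamata--Viehweg vanishing, after identifying the dualizing sheaf of $M:=M_H^X(c_1,d)$ with $\mu(2K_X)$ and checking that $M$ is mild enough for the theorem to apply. First I would record the singularities of $M$. Because $-K_X$ is ample we have $\<-K_X,H\>>0$, so the open subscheme $Q$ of the Quot-scheme with $M=Q/\GL(N)$ is smooth. Being a good quotient of a smooth (hence rational) variety by a reductive group, $M$ has rational singularities by Boutot's theorem; in particular $M$ is normal and Cohen--Macaulay. The two hypotheses are precisely what make \remref{rem:canonical} applicable: if $c_1$ is not divisible by $2$ then for general $H$ there are no strictly semistable sheaves, so $M=M_H^X(c)^s$ is smooth, since for a stable sheaf $\E$ one has $\Ext^2(\E,\E)=\Hom(\E,\E\otimes K_X)^{\vee}=0$ as $-K_X$ is ample; and if $c_1$ is even with $d>8$, a dimension count shows that the strictly semistable locus, which contains the whole singular locus, has codimension $\ge 2$. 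In either case $\omega_M=\mu(2K_X)$ is a line bundle, so $M$ is Gorenstein with rational (equivalently canonical) singularities.

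Next I would reduce to a positivity statement. Set $\mathcal A:=\mu(L)\otimes\omega_M^{-1}$, so that $\mu(L)=\omega_M\otimes\mathcal A$ and the claim becomes $H^i(M,\omega_M\otimes\mathcal A)=0$ for $i>0$. Choosing a resolution $\pi\colon\widetilde M\to M$, the rational-singularities property together with Grauert--Riemenschneider gives $R\pi_*\omega_{\widetilde M}=\omega_M$, so by the projection formula and the Leray spectral sequence $H^i(M,\omega_M\otimes\mathcal A)\cong H^i(\widetilde M,\omega_{\widetilde M}\otimes\pi^*\mathcal A)$. If $\mathcal A$ is nef and big then so is $\pi^*\mathcal A$ on the smooth projective variety $\widetilde M$, and Kawamata--Viehweg vanishing concludes. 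Thus the whole proposition reduces to showing that $\mathcal A=\mu(L)\otimes\mu(2K_X)^{-1}$ is nef and big.

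For the positivity I would exploit that $\lambda$ is additive and depends only on the numerical class. Since $v(L)-v(2K_X)$ and $v(L\otimes K_X^{-2})$ have the same rank ($=0$) and the same first Chern class ($c_1(L)-2K_X$), their difference is numerically $m[\oo_x]$ for some $m\in\Z$, whence $\mathcal A\cong\mu(L\otimes K_X^{-2})\otimes\lambda([\oo_x])^{\otimes m}$. Now $L\otimes K_X^{-2}$ is ample, because $L$ is nef and $-2K_X$ is ample, and the determinant line bundle attached to an ample class on $X$ is nef and big: a suitable power is globally generated and defines the morphism from $M$ to the Donaldson--Uhlenbeck compactification, which is generically finite onto its image, so that the pullback of the ample theta bundle is big.

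The main obstacle is the remaining factor $\lambda([\oo_x])^{\otimes m}$, which need not be nef: one must bound $m$ and play it off against the positivity margin coming from the ample part $L\otimes K_X^{-2}$ in order to guarantee that $\mathcal A$ itself stays nef and big, following the determinant-line-bundle estimates of \cite[\S1.4]{GNY}. Once $\mathcal A$ is known to be nef and big, the vanishing $H^i(M,\mu(L))=0$ for all $i>0$ follows from the reduction above, and therefore $\dim H^0(M_H^X(c_1,d),\mu(L))=\chi(M_H^X(c_1,d),\mu(L))$.
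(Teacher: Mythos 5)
Your route is genuinely different from the paper's at the crucial vanishing step, and it could be made to work, but as written it has a real gap: you end by reducing the Proposition to the claim that $\mathcal A=\mu(L)\otimes\omega_M^{-1}$ is nef and big, and you leave that claim unproved, declaring the factor $\lambda([\oo_x])^{\otimes m}$ to be the main obstacle. That obstacle is illusory, and seeing why is exactly the idea you are missing. The point-class coefficient in $v(L)$ is not arbitrary: it is rigged precisely so that $v(L)\in K_c=c^{\perp}$. For two line bundles $L_1,L_2$ with $\<c_1(L_i),c_1\>$ even, the class $v(L_1)+v(L_2)-v(L_1\otimes L_2)$ has rank $0$ and vanishing first Chern class, hence is numerically $m[\oo_x]$ for some $m\in\Z$; since it also lies in $K_c$ and $\chi(c\otimes[\oo_x])=\rk(c)=2\ne 0$, necessarily $m=0$. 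Thus $L\mapsto v(L)$ is additive in $K(X)_{num}$, so $\mu(L_1\otimes L_2)=\mu(L_1)\otimes\mu(L_2)$ and your $\mathcal A$ is literally $\mu(L-2K_X)$, with no correction term and no need for any estimates from \cite{GNY}. This additivity is also what the paper uses silently when it concludes from $H^i(M,\mu(L-2K_X)\otimes\omega_M)=0$ that $H^i(M,\mu(L))=0$.

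Two further points of comparison. First, your bigness argument is stated too loosely: the Donaldson--Uhlenbeck morphism is the one attached to the determinant bundle of the polarization class $H$, not to $\mu(L-2K_X)$ for an arbitrary ample class; it can be repaired by combining bigness of $\mu(H)$ (J.~Li), nefness of $\mu(\text{ample})$ (global generation of a power, \cite[Prop.~8.3.2]{HL}), and the additivity above, e.g. writing $n(L-2K_X)=H+(\text{ample})$ for $n\gg 0$. Second, the paper bypasses resolutions and Kawamata--Viehweg entirely: after the same two inputs you use (Boutot's theorem for rational singularities, and $\omega_M=\mu(2K_X)$ via the same two-case codimension count), it quotes \cite[Prop.~8.3.2]{HL} for global generation of a power of $\mu(L-2K_X)$ on $M$ itself and then applies the vanishing theorem \cite[Cor.~7.70]{SS} directly on the singular moduli space to $\mu(L-2K_X)\otimes\omega_M$. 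So, once patched as above, your argument is a more self-contained variant of the same strategy; the paper's version is shorter because the cited vanishing theorem absorbs the resolution/positivity bookkeeping that you carry out by hand.
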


\begin{proof}
As $-K_X$ is ample on $X$, and $L$ is numerically effective, we get that $L-2K_X$ is ample on $X$.
By \cite[Prop.8.3.2.]{HL} there exists a positive integer $n$, such that  $\mu(L-2K_X)^{\otimes n}$ is globally generated on $M_H^X(c_1,d)$.
Denote by  $\omega_M$ the dualizing sheaf of $M_H^X(c_1,d)$.
As $-K_X$ is ample we have by \cite{Bou} that  $M_H^X(c_1,d)$ is normal and has only rational singularities.
Therefore  \cite[Cor.7.70]{SS} gives that $H^i(M_H^X(c_1,d),\mu(L-2K_X)\otimes \omega_M)=0$ for $i>0$.
If $c_1$ is not divisible by $2$ in $H^2(X,\Z)$, then, by our assumption that $H$ is general, the moduli space $M_H^X(c_1,d)$ consists only of
stable sheaves. If $c_1=0$, again using that $H$ is general, we see that the strictly semistable points of $M_H^X(c_1,d)$ are of the form
$\I_{Z}(c_1/2)\oplus \I_{W}(c_1/2)$ for $0$-dimensional subschemes $Z$, $W$  of $X$ of length $d/8$.
In particular if $d$ is not divisible by $8$, $M_H^X(c_1,d)$ consists only of
stable sheaves, and if $d$ is divisible by $8$, the dimension of the locus $M^{sss}$ of strictly semistable sheaves is $d/2$. On the other hand $M_H^X(c_1,d)$ has pure dimension $d-3$.
Thus if $d>8$ we get $M^{sss}$ has codimension at least $2$ in $M_H^X(c_1,d)$.
In all these cases \remref{rem:canonical} says that $\omega_M=\mu(2K_X)$.
Thus by the above $H^i(M_H^X(c_1,d),\mu(L-2K_X)\otimes \omega_M)=H^i(M_H^X(c_1,d),\mu(L))=0$ for $i>0$.
\end{proof}

\subsection{Strange duality}\label{strd}
We briefly review the strange duality conjecture from \cite{LPst}.
Let $c,c^*\in K(X)_{num}$ with $c\in K_{c^*}$.
Let $H$ be ample line bundle on $X$ which is
both $c$-general and $c^*$-general.
Write $\D_{c,c^*}:=\lambda(c^*)\in \Pic(M^X_H(c))$,
$\D_{c^*,c}:=\lambda(c)\in \Pic(M^X_H(c^*))$.
Assume that  all $H$-semistable sheaves $\F$ on $X$ of class $c$ and
all $H$-semistable sheaves $\G$ on $X$ of class $c^*$ satisfy
\begin{enumerate}
\item $\Tor_i(\F,\G)=0$ for all $i\ge 1$,
\item $H^2(X,\F\otimes \G)=0$.
\end{enumerate}
Both conditions are automatically satisfied if
$c$ is not of dimension $0$ and  $c^*$ is of dimension $1$
(see \cite[p.9]{LPst}).  If $c=(2,c_1,c_2)$ and $c^*=(0,L,\chi=-\<\frac{c_1(L)}2\cdot c_1\>)$, then $\D_{c,c^*}=\mu(L)$.

Put $\D:=\D_{c,c^*}\boxtimes \D_{c^*,c}\in \Pic(M^X_H(c)\times M^X_H(c^*))$.
In \cite[Prop.~9]{LPst} a canonical section $\sigma_{c,c^*}$ of $\D$ is constructed, whose zero set is supported on
$$\mathscr{D}:=\big\{([\F],[\G])\in   M^X_H(c)\times M^X_H(c^*)\bigm| H^0(X,\F\otimes \G)\ne 0\big\}.$$
The  element $\sigma_{c,c^*}$ of $H^0(M^X_H(c),\D_{c,c^*})\otimes H^0(M^X_H(c^*),\D_{c^*,c})$, gives  a linear map
\begin{equation}
\label{SDmap}
SD_{c,c^*}:H^0(M^X_H(c),\D_{c,c^*})^\vee \to H^0(M^X_H(c^*),\D_{c^*,c}),
\end{equation}
 called the {\it strange duality map}.
Le Potier's strange duality is then the following.

\begin{ConjQues}\label{sdcon} Is $SD_{c,c^*}$ an isomorphism?
\end{ConjQues}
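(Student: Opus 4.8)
The plan is to establish, in each of the three cases, that the two sides of $SD_{c,c^*}$ are vector spaces of equal finite dimension and that the map is injective; an injective linear map between spaces of the same finite dimension is an isomorphism, which is all that is required.

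First I would compute $\dim H^0(M^X_H(c),\D_{c,c^*})$. In every case $\D_{c,c^*}=\mu(L)$ with $L=-K_X$, $L=-K_X+F$ (equivalently $2G+3F$ on the ruled surfaces), or $L=2H$ on $\P^2$, and in each case $L$ is nef, so $L-2K_X$ is ample. Since $c=(2,0,c_2)$ with $c_2>2$ (hence $d=4c_2>8$), or $c=(2,H,c_2)$ with $c_1$ not divisible by $2$, \propref{vanihh} applies and gives $\dim H^0(M^X_H(c),\mu(L))=\chi(M^X_H(c),\mu(L))$. This Euler characteristic is exactly a coefficient of the generating series computed in \thmref{P22} for $\P^2$ and \thmref{p11t} for $\P^1\times\P^1$ and $\widehat\P^2$; the blowup formulas \lemref{blowsimple} and \lemref{1blow} are used to transfer invariants between the surfaces and to pin down the $\Lambda^4$--coefficient. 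Reading off the relevant coefficient yields an explicit (quasi-)polynomial in $c_2$.

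Next I would compute $\dim H^0(M^X_H(c^*),\D_{c^*,c})$ on the moduli of $1$-dimensional sheaves. Here $M^X_H(c^*)$ carries the support morphism $\pi\colon M^X_H(c^*)\to |L|$, whose general fibre is a compactified Jacobian of a smooth curve $C\in|L|$. Writing $c=2[\oo_X]-c_2[\oo_x]+\cdots$ in $K(X)$ and using the homomorphism property of $\lambda$, the bundle $\D_{c^*,c}=\lambda(c)$ decomposes into a fibrewise principal theta polarization, contributing a one-dimensional space of sections per smooth fibre, tensored with $\pi^*\oo_{|L|}(c_2)$ up to a fixed twist. A relative-duality and base-change argument along $\pi$ then identifies $H^0(M^X_H(c^*),\lambda(c))$ with the global sections of an explicit line bundle on $|L|$, whose dimension is a Hilbert polynomial in $c_2$. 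Comparing this polynomial with the one from the previous paragraph gives the equality of dimensions; the degrees agree because $\dim|L|$ equals the order of growth of the $K$-theoretic Donaldson invariant, and the exact match is forced by the explicit formulas of \thmref{P22} and \thmref{p11t}.

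The main obstacle is injectivity of $SD_{c,c^*}$. Equivalently one must show that the theta divisors $\Theta_{\F}=\{[\G]\mid H^0(\F\otimes\G)\neq0\}\subset M^X_H(c^*)$, as $[\F]$ ranges over $M^X_H(c)$, span the full linear system $|\D_{c^*,c}|$. I would argue this by restricting to a fixed smooth curve $C\in|L|$: a sheaf $\G$ supported on $C$ with $\chi=0$ is a line bundle of degree $g-1$ on $C$, its theta divisor on the rank-$2$ side is the classical theta divisor, and the surface pairing $\sigma_{c,c^*}$ restricts to the classical strange-duality pairing between the moduli of rank-$2$ bundles on $C$ and $\mathrm{Pic}^{g-1}(C)$. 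Invoking the classical curve strange duality, known to be an isomorphism, yields nondegeneracy over each smooth fibre of $\pi$. The delicate points are to verify the compatibility of the surface and curve theta sections under restriction, to control the contribution of the singular or reducible members of $|L|$ and of the non-locally-free sheaves in the compactified Jacobian, and to propagate the fibrewise nondegeneracy to a global injectivity statement over $|L|$. Once injectivity is established, the equality of dimensions upgrades it to the desired isomorphism, and the minor modifications needed when $H$ is not $c$-general are recorded separately (cf. \thmref{vsdmain} and \remref{esd}).
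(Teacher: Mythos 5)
Your overall skeleton (equal dimensions plus a spanning statement, via \propref{vanihh}, \thmref{p11t}, \thmref{P22} on the Donaldson side) is the same as the paper's, but the two steps where you depart from it both contain genuine gaps, and the second one is fatal as stated. The key step — showing that the theta sections $s_{\F}$, $[\F]\in M^X_H(c)$, span $H^0(M^X_H(c^*),\D_{c^*,c})$ (note: this is \emph{surjectivity} of $SD_{c,c^*}$, not injectivity; harmless once dimensions agree) — cannot be obtained by fibrewise nondegeneracy over $|L|$ alone. Even granting that over every smooth $C\in|L|$ the restrictions of the $s_\F$ span the sections of $\D_{c^*,c}|_{\pi^{-1}(C)}$ (which itself needs the unproved claim that the restrictions $\F|_C$ dominate $SU_C(2)$, so that curve strange duality applies), a proper subspace $W\subsetneq H^0(M^X_H(c^*),\D_{c^*,c})$ can perfectly well surject onto the sections of every fibre: already for $\oo(1,1)$ on $\P^1\times\P^1\to\P^1$ a generic hyperplane of sections restricts surjectively to $H^0(\oo_{\mathrm{fibre}}(1))$ for every fibre. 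So "propagating fibrewise nondegeneracy" is precisely the missing idea, not a delicate point one can defer. The paper's solution is entirely different: it works with the exact sequence $0\to \z(c_2)\to \z^2(c_2)\to \z^2(c_2)|_{D_{\z}}\to 0$ on $M^X_H(c^*)$, proves the needed global spanning via surjectivity of multiplication maps over $|L|$ (\lemref{geno}, \lemref{gent}, quoted from \cite{Yuan}), and then realizes an explicit spanning set of sections as $s_{\F}$ for concretely constructed sheaves: extensions $\F_{R,T}$ of ideal sheaves of points (\lemref{fog}) and elementary modifications $\F^i_W$ of three explicit slope-stable bundles $\E^i_2$ of class $(2,0,2)$ (\propref{etwo}, \lemref{song}), feeding these into the linear-algebra criterion \propref{ned}.

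A second, structural error sits in your dimension count on the $c^*$-side. For $c=(2,0,c_2)$ one has $\D_{c^*,c}=\lambda(c)\cong \z^2\otimes\pi^{*}\oo_{\ls}(c_2)$, and the restriction of $\z^2$ to a smooth fibre $\Pic^{g-1}(C)$ of $\pi$ is $2\Theta$, not a principal polarization: it contributes $2^g$ sections per fibre, not one (your description is accurate only in case (3), where $g=0$ and $M^X_H(c^*)\cong|2H|$). Consequently $\pi_{*}\D_{c^*,c}$ is a rank-$2^g$ bundle and $H^0(M^X_H(c^*),\D_{c^*,c})$ is not the Hilbert polynomial of a line bundle on $|L|$; computing it is a nontrivial result of \cite{Yuan} (Theorems 4.4.1 and 4.5.2 there), which is what \propref{nucon} invokes to match against \thmref{p11t} and \thmref{P22}. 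Finally, for the non-$c$-general polarizations one must also replace $M^X_H(c)$ by the open locus $M^X_H(c)^g$ where $\lambda(c^*)$ descends and check that the strictly semistable and wall loci have codimension at least two; this is carried out in \propref{nucon} and is not covered by your outline.
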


\section{Wallcrossing formula}
\subsection{Theta functions and modular forms} For $\tau\in \H=\big\{\tau\in \C\bigm| \Im(\tau)>0\big\}$ put $q=e^{\pi i\tau/4}$ and for  $h\in \C$ put $y=e^{h/2}$. Note that the notation is not standard.
Recall the $4$ theta functions:
\begin{equation}
\begin{split}\label{theta}
\theta_1(h)&:=\sum_{n\in \Z} i^{2n-1} q^{(2n+1)^2} y^{2n+1},\qquad
\theta_2(h):=\sum_{n\in \Z} q^{(2n+1)^2} y^{2n+1},\\
\theta_3(h)&:=\sum_{n\in \Z} q^{(2n)^2} y^{2n},\qquad
\theta_4(h):=\sum_{n\in \Z} i^{2n}q^{(2n)^2} y^{2n}.
\end{split}
\end{equation}
We usually do not denote the argument $\tau$. If necessary we write
$\theta_i(\tau|h)$. The conventions are essentially the same as in
\cite{WW} and in \cite{Ak}, where the $\theta_i$ for $i\le 3$ are denoted $\vartheta_i$ and $\theta_4$ is however denoted $\vartheta_0$.
Denote
\begin{equation}\label{thetatilde}\theta_i:=\theta_i(0), \quad
\widetilde\theta_i(h):=\frac{\theta_i(h)}{\theta_i}, \quad i=2,3,4;\qquad \widetilde\theta_1(h):=\frac{\theta_1(h)}{\theta_4}
\end{equation}
the corresponding Nullwerte and the normalized theta functions.
The theta functions satisfy the Jacobi identity
\begin{equation}
\label{Jacobi} \theta_3^4=\theta_2^4+\theta_4^4,
\end{equation} as well as the quadratic relations
\begin{equation}\label{quadtheta}\left(\begin{matrix}
0&\th_2^2&-\th_3^2&\th_4^2\\
-\th_2^2&0&-\th_4^2&\th_3^2\\
-\th_3^2&-\th_4^2&0&\th_2^2\\
\th_4^2&\th_3^2&-\th_2^2&0
\end{matrix}\right)\left(\begin{matrix}\th_1(h)^2\\ \th_2(h)^2\\ \th_3(h)^2\\ \th_4(h)^2\end{matrix}\right)=\left(\begin{matrix}0\\ 0\\ 0\\ 0\end{matrix}\right).\end{equation}
We define a modular function
$$u=-\frac{\th_2^2}{\th_3^2}-\frac{\th_3^2}{\th_2^2}
=\frac{1}{4q^2}(-1- 20q^4 + 62q^8 - 216q^{12} + 641q^{16}+\ldots),$$
  and two Jacobi functions
(Jacobi forms of weight and index $0$) by
\begin{align*}
\Lambda&:=\frac{\theta_1(h)}{\theta_4(h)}=-i (y-y^{-1})q-i(y^3-y+y^{-1}-y^{-3})q^5+\ldots,\\
M&:=2\frac{\widetilde \theta_2(h)\widetilde \theta_3(h)}{\widetilde \theta_4(h)^2}=(y+y^{-1})+3(y^3 - y - y^{-1} + y^{-3})q^4 + \ldots.
\end{align*}
The quadratic relations above imply
$$\frac{M^2}{4}=\left(\frac{\th_4^2\th_2(h)^2}{\th_2^2\th_4(h)^2}\right)\left( \frac{\th_4^2\th_3(h)^2}{\th_3^2\th_4(h)^2}\right)=\left(1-\frac{\th_3^2}{\th_2^2}\Lambda^2\right)
\left(1-\frac{\th_2^2}{\th_2^2}\Lambda^2\right)=(1+u\Lambda^2+\Lambda^4),$$
and comparing the coefficients of $q^0$ gives that
 $M=2\sqrt{1+u\Lambda^2+\Lambda^4}$.
 We also have the relation
 \begin{equation}\label{dLdh}\frac{\partial\Lambda}{\partial h}=\frac{\theta_2\theta_3}{4i}M,
\end{equation}
which follows from \cite[\S26]{Ak}, and which 
 is equivalent to the formula
 \begin{equation}
 \label{hint}
 \begin{split}
 h&=\frac{2i}{\theta_2\theta_3}\int_{0}^\Lambda\frac{dx}{\sqrt{1+ux^2+x^4}}\\
 &= i(q^{-1} - 2q^3 + 3q^7+\ldots)\Lambda+
  i(\frac{1}{24}q^{-3} + \frac{3}{4} q - \frac{33}{8}q^5 + \ldots)\Lambda^3+\ldots.
  \end{split}
  \end{equation}
  We have the power series developments
\begin{equation}
\label{hhh}
\begin{split}
\frac{1}{\sqrt{1+u\Lambda^2+\Lambda^4}}&=\sum_{\substack{n\ge 0\\ n\ge k\ge0}}
\binom{-\frac{1}{2}}{n}\binom{n}{k}u^k\Lambda^{4n-2k}, \\
h&=\frac{2i}{\theta_{2}\theta_{3}}.
\sum_{\substack{n\ge 0\\ n\ge k\ge0}}
\binom{-\frac{1}{2}}{n}\binom{n}{k}\frac{u^k\Lambda^{4n-2k+1}}{4n-2k+1}.
\end{split}
\end{equation}

 \begin{Notation}\label{R}
\begin{enumerate}
\item We denote $
 \RR:=\Q[[q^2\Lambda^2,q^4]]$.
 \item Let $\Q[t_1,\ldots,t_k]_n$ be the set of polynomials in $t_1,\ldots,t_k$ of degree $n$ and $\Q[t_1,\ldots,t_k]_{\le n}$ the set of polynomials of degree at most $n$.
 \end{enumerate}
 \end{Notation}

  The formula \eqref{hint} together with the definition of $u$ show that $h\in q^{-1}\Lambda\RR.$
    A function $F(\tau,h)$ can via formula \eqref{hint} also be viewed as a function
 of $\tau$ and $\Lambda$.
  In this case, viewing $\tau$ and $\Lambda$ as the independent variables we define
 $$F' :=\frac{4}{\pi i} \frac{\partial F}{\partial \tau}=q\frac{\partial F}{\partial q},
 \quad F^*:=\Lambda\frac{\partial F}{\partial \Lambda}.$$
 Thus  \eqref{dLdh} and a simple calculation give that 
\begin{equation}
\label{hstar}
h^*=\frac{4i\Lambda}{\theta_2\theta_3 M}, \quad
u'=\frac{2\theta_4^8}{\theta_2^2\theta_3^2}.
\end{equation}
  \begin{Remark}
 The natural set of variables for working with elliptic functions  is $(\tau,h)$. We will see in a moment that the wallcrossing for the $K$-theoretic Donaldson invariants is given by a formula in modular forms and elliptic functions, expressed in terms of
 $\tau$ and $\Lambda$. In order to prove properties of the wallcrossing formula we usually have to work with the natural variables $(\tau,h)$ and then translate the result back into the variables $(\tau,\Lambda)$. Thus the interplay between the two sets of variables $(\tau,h)$ and $(\tau, \Lambda)$ is an important theme in this work.
\end{Remark}

\subsection{Wallcrossing formula}\label{wallcro}
Now we review the wallcrossing formula from \cite{GNY}.
Let $\sigma(X)$ be the signature of $X$.
Fix $c_1\in H^2(X,\Z)$. Let $L\in \Pic(X)$ with $\<c_1(L) ,c_1\>$ even.
Let $\xi\in H^2(X,\Z)$ with $\xi^2\le 0$ and $\xi-c_1\in 2H^2(X,\Z)$.

\begin{Definition}\label{wallcrossterm}
Let $$\Delta_\xi^X(L):=2 i^{\<\xi K_X\>} \Lambda^2 q^{-\xi^2}
y^{\<\xi(L-K_X)\>}\widetilde\theta_4(h)^{(L-K_X)^2}\theta_4^{\sigma(X)}u'h^*.$$
 By the results of the previous section it can be developed as a power series
$$\Delta_\xi^X(L)=\sum_{d\ge 0} f_d(\tau)\Lambda^d\in \Q((q))[[\Lambda]],$$
whose coefficients $f_d(\tau)$ are Laurent series in $q$.
The {\it  wallcrossing term} is
$$\delta_{\xi}^X(L):=\sum_{d\ge 0} \delta_{\xi,d}^X(L)\Lambda^d\in \C[[\Lambda]], $$
with
$$\delta_{\xi,d}^X(L)=\Coeff_{q^0}[f_d(\tau)].$$
\end{Definition}

{\bf Setup:} For the rest of section \ref{wallcro}
let $H_-$, $H_+$ be ample divisors on $X$, which do not lie on a wall
of type $(c_1,d)$. Let $B_+$ be the set of classes $\xi$ of type $(c_1,d)$
with $\<\xi, H_+\>>0 >\<\xi, H_-\>$.

The main result of \cite{GNY} is the following.

\begin{Theorem}\label{wallcr}
\begin{align*}
\chi(M^X_{H_+}(c_1,d),\mu(L))-\chi(M^X_{H_-}(c_1,d),\mu(L))&=\sum_{\xi\in B_+}\delta^X_{\xi,d}(L).
\end{align*}
\end{Theorem}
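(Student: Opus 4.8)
The plan is to localize the difference $\chi(M^X_{H_+}(c_1,d),\mu(L))-\chi(M^X_{H_-}(c_1,d),\mu(L))$ at the individual walls, describe the change of the moduli space across a single wall as an explicit flip, and then evaluate the resulting contribution by equivariant localization, matching it against the theta-function expression defining $\delta^X_{\xi,d}(L)$. First I would reduce to a single wall: choosing a generic path in the ample cone from $H_-$ to $H_+$, one crosses the walls $W^\xi$, $\xi\in B_+$, one at a time, so it suffices to compute the jump when $B_+=\{\xi\}$ is a single class. Across $W^\xi$ the only sheaves whose stability changes are the non-split extensions
\[
0\to \I_{Z_1}(F_1)\to \E\to \I_{Z_2}(F_2)\to 0,
\]
with $c_1(F_1)-c_1(F_2)=\xi$ and $Z_1,Z_2$ zero-dimensional of total length fixed by $d$ and $\xi^2$; these are $H_+$-stable but $H_-$-unstable (or conversely). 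The corresponding locus is a projective bundle $\P\big(\Ext^1(\I_{Z_2}(F_2),\I_{Z_1}(F_1))\big)$ over $X^{[n_1]}\times X^{[n_2]}$, and $M^X_{H_+}(c_1,d)$ and $M^X_{H_-}(c_1,d)$ are related by a flip supported there.

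Second, I would express the jump of the holomorphic Euler characteristic across this flip purely in terms of the Hilbert schemes. Applying a $K$-theoretic analogue of Mochizuki's wallcrossing formula, the difference should be a sum over the splitting type of a holomorphic Euler characteristic on $X^{[n_1]}\times X^{[n_2]}$ of a class assembled from the (virtual) normal bundle of the flip locus together with the restriction of the determinant line bundle $\mu(L)$. This rewrites $\delta^X_{\xi,d}(L)$ as a sum over $(n_1,n_2)$ of computable Euler characteristics over products of punctual Hilbert schemes, with the dependence on $L$ entering only through $\langle\xi,c_1(L)\rangle$ and $(L-K_X)^2$.

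Third, I would evaluate these Euler characteristics by torus localization. Because the wallcrossing term depends on $X$ only through the numerical data $\xi^2$, $\langle\xi K_X\rangle$, $\langle\xi(L-K_X)\rangle$, $(L-K_X)^2$ and the signature $\sigma(X)$, a universality/cobordism argument reduces the computation to toric surfaces, where the fixed loci of the torus action on $X^{[n_1]}\times X^{[n_2]}$ are isolated and indexed by pairs of partitions. Summing the fixed-point contributions over all $(n_1,n_2)$ should assemble the generating series into the ($K$-theoretic) rank-two Nekrasov partition function on $\C^2$, whose building blocks are exactly the normalized theta functions $\widetilde\theta_4(h)$ and the factors $\theta_4^{\sigma(X)}$, $u'h^*$ appearing in $\Delta^X_\xi(L)$; extracting $\Coeff_{q^0}$ then yields $\delta^X_{\xi,d}(L)$. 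The main obstacle is this last identification: reorganizing the doubly-infinite localization sum into a closed modular form and verifying, via the theta-function and elliptic-integral identities collected above, that it coincides term by term with the $\Lambda$-expansion of $\Delta^X_\xi(L)$. Controlling the $q$-expansion and isolating the perturbative ($q^0$) contribution is the delicate point, since it is precisely here that the elliptic functions in the natural variable $h$ must be translated back into the variable $\Lambda$.
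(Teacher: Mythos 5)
The paper does not prove Theorem \ref{wallcr} from scratch: its ``proof'' is a reduction to the literature. It invokes Prop.~2.11, Cor.~4.2, Thm.~4.3 and Section 4.4 of \cite{GNY}, observes that the standing hypothesis that $-K_X$ is ample makes every wall a \emph{good} wall (so those results apply), and then carries out a careful dictionary between conventions: one takes $v=-v(L)$ in \cite[Cor.~4.2]{GNY}, uses $e(X)+\sigma(X)=4$, matches $u\leftrightarrow u\Lambda^2$, $U_1=M$, $d\leftrightarrow d-3$, $\theta_1(h)=-\theta_{11}(\tfrac{h}{2\pi i},\tau)$, sets $\bbeta=1$, and finally simplifies via $u'h^*=\frac{8i\Lambda\theta_4^{8}}{\theta_2^3\theta_3^3M}$ to land exactly on $\Delta^X_\xi(L)$. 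Your proposal instead sets out to reprove the cited result. That is not the paper's route, but it is essentially the route of \cite{GNY} itself: the geometric description of the wallcrossing as contributions of extensions $0\to \I_{Z_1}(F_1)\to\E\to\I_{Z_2}(F_2)\to 0$ parametrized over $X^{[n_1]}\times X^{[n_2]}$, followed by an evaluation of the resulting Hilbert-scheme integrals through the $K$-theoretic Nekrasov partition function. What the paper's approach buys is brevity and reliability at the cost of a purely notational argument; what yours would buy is a self-contained treatment, at the cost of redoing a substantial part of \cite{GNY}.

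Two substantive caveats, the second of which is a genuine gap. First, the flip/projective-bundle description of the exchanged locus requires $\Hom$ and $\Ext^2$ between $\I_{Z_2}(F_2)$ and $\I_{Z_1}(F_1)$ to vanish so that $\Ext^1$ has constant rank; the $\Ext^2$ vanishing is precisely where ampleness of $-K_X$ (equivalently, the wall being good) enters, and you should make this explicit, as the paper does, since without it the projective-bundle claim fails. Second, your third step is not a ``delicate verification'' but the heart of the matter: in \cite{GNY} the identification of the Hilbert-scheme/localization sum with the closed theta-function expression is \emph{not} obtained by reorganizing the fixed-point sum term by term; it is obtained by showing that the relevant generating function satisfies the Nakajima--Yoshioka $K$-theoretic blowup equations, which determine it uniquely and whose solution is what produces $\widetilde\theta_4(h)$, $\theta_4^{\sigma(X)}$, $u'h^*$ and the variable change between $h$ and $\Lambda$. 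Asserting that the sum ``should assemble'' into these building blocks is to assert the theorem, not to prove it. So, as written, your plan retraces the original proof strategy but leaves its main theorem as the acknowledged obstacle --- exactly the step the paper under review avoids entirely by citation.
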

This is a combination of Prop.~2.11, Cor.~4.2 and Thm.~4.3 in \cite{GNY} together with the results of Section 4.4 in \cite{GNY}.
In \cite[Cor.~4.2]{GNY} one has to take $v=-v(L)$, thus
$\<\xi,(K_X+c_1(v)+\frac{\rk(v)}{2}(c_1-K_X)\>=\<\xi,(K_X-L)\>$ and
$(K_X+c_1(v)+\frac{\rk(v)}{2}(c_1-K_X))^2=(L-K_X)^2$. The results of \cite{GNY} apply to what are called there {\it good} walls. However our assumption that
$-K_X$ is ample implies that all walls on $X$ are good.

Note that, as $X$ is a simply connected surface with $p_g=0$,  the Euler number $e(X)$ and the signature $\sigma(X)$ are related by
$e(X)+\sigma(X)=4$, thus in \cite[Cor.~4.2]{GNY} one has
$$\exp(e(X) A+\sigma(X) B)=\frac{4\theta_4^{\sigma(X)}}{\theta_2^2\theta_3^3}.$$
Note that the $u$ of \cite{GNY} corresponds to $u\Lambda^2$ in the current paper, and the function $U_1$ of \cite{GNY} is denoted by $M$ here. Furthermore
   $d$ in  \cite{GNY} corresponds to  $d-3$ here.
Furthermore by definition $\theta_1(h)$ is equal to $-\theta_{11}(\frac{h}{2\pi i},\tau)$ in \cite{GNY}. We put $\bbeta=1$ in the results of \cite{GNY}.
Thus  Thm.~4.3 and the results of Section 4.4. of  \cite{GNY} give that
Theorem \ref{wallcr} is true if we replace $\Delta_\xi^X(L)$ by
$$i^{\<\xi K_X\>} \Lambda^3 q^{-\xi^2}
e^{\<\xi(L-K_X)\>h/2}\Big(\frac{\theta_1(h)}{\Lambda\theta_4}\Big)^{(L-K_X)^2}
\frac{16 i\theta_4^{\sigma(X)+8}}{\theta_2^3\theta_3^3M}.$$
Note that by definition
$e^{h/2}=y$, $\frac{\theta_1(h)}{\Lambda\theta_4}=\widetilde \theta_4(h)$, and finally we have
by \eqref{hstar} that
$u'h^*=\frac{8i\Lambda\theta_4^{8}}{\theta_2^3\theta_3^3M}.$
Thus the result follows.

\begin{Remark}  \label{chi04}\thmref{wallcr} also applies to $\Coeff_{\Lambda^4}\big[\chi^{X,H}_{0}(L)\big]$ and thus to all the generating functions
$\chi^{X,H}_{c_1}(L)$.
\begin{enumerate}
\item
Let $H_1,H_2$ be ample on $X$, assume they do not lie on a wall of type $(0,4)$.
Then
$$\Coeff_{\Lambda^4}\big[\chi^{X,H_1}_{0}(L)-\chi^{X,H_2}_{0}(L)\big]=\sum_{\xi}\delta^X_{\xi,4}(L).$$
where $\xi$ runs through all classes of type $(0,4)$ with $\<\xi, H_1\> >0>\<\xi, H_2\>$.

\item
Let $c_1\in H^2(X,\Z)$. Let $H_1,H_2$ be ample on $X$, assume they do not lie on a wall of type $(c_1)$. Then
$$\chi^{X,H_1}_{c_1}(L)-\chi^{X,H_2}_{c_1}(L)=\sum_{\xi}\delta^X_\xi(L),$$
where $\xi$ runs through all classes in $c_1+2H^2(X,\Z)$ with $\<\xi, H_1\> >0>\<\xi ,H_2\>$.
\end{enumerate}
\end{Remark}
\begin{proof} (2) follows immediately from \thmref{wallcr} and (1).

(1)
Let $\epsilon>0$ sufficiently small such that there is no class $\xi$ of type $(E,5)$ on $\widehat X$ with $\<\xi,H_1\><0<\<\xi,H_1-\epsilon E\>$ or with 
 $\<\xi,H_2\><0<\<\xi,H_2-\epsilon E\>$. Then by \lemref{blowsimple} we have  for $i=1,2$ that
 $\chi(M^{X}_{H_{i}}(0,4) ,\mu(L))=\chi(M^{\widehat X}_{H_{i}-\epsilon E}(E,5) ,\mu(L))$. Thus by \defref{KdonGen} we need to show
$$\chi(M^{\widehat X}_{H_{1}-\epsilon E}(E,5) ,\mu(L))-\chi(M^{\widehat X}_{H_{2}-\epsilon E}(E,5) ,\mu(L))=\sum_{\xi} \delta^{X}_{\xi,4}(L),$$
 where $\xi$ runs through the classes of type
$(0,4)$ on $X$ with $\<\xi, H_1 \>>0>\<\xi, H_2 \>$.
We have
\begin{equation}
\label{blblb}
\chi(M^{\widehat X}_{H_1-\epsilon E}(E,5) ,\mu(L))-\chi(M^{\widehat X}_{H_2-\epsilon E}(E,5) ,\mu(L))=\sum_{\xi'} \delta^{\widehat X}_{\xi',5}(L),\end{equation}
where $\xi'$ runs through the classes of type
$(E,5)$ on $\widehat X$ with $\<\xi' ,H_{1}-\epsilon E \>>0>\<\xi' ,H_{2} -\epsilon E\>$. Note that $H^2(\widehat X,\Z)=H^2( X,\Z)\oplus \Z E$.  Thus we get that these classes are of the form
$\xi'=\xi+(2n-1)E$ with $n\in \Z$, where $\xi$ is a class of type $(0,4)$ on $X$  with $\xi'^2=\xi^2-(2n-1)^2\ge -5$.
Note that by definition, if $\xi'^2<-5$, we get $\delta^{\widehat X}_{\xi',5}(L)=0$, thus we can replace the sum in
\eqref{blblb} by the sum over all $\xi'=\xi+(2n-1)E$ with $n\in \Z$.
Finally we note that
$$\sum_{n\in \Z}\delta^{\widehat X}_{\xi+(2n-1)E,5}(L)=\Coeff_{q^0\Lambda^5}\left[\sum_{n\in \Z}\Delta^{\widehat X}_{\xi+(2n-1)E}(L)\right],$$
and by \defref{wallcrossterm} we have
\begin{align*}
\sum_{n\in \Z}\Delta^{\widehat X}_{\xi+(2n-1)E}(L)&=\sum_{n\in \Z} i^{-(2n+1)}q^{(2n+1)^2}y^{2n+1}\WT_4(h)^{-1}\theta_4\Delta^X_\xi(L)\\
&=\frac{\theta_1(h)}{\theta_4(h)}\Delta^X_\xi(L)=\Lambda\Delta^X_\xi(L).
\end{align*}
Thus
$\sum_{n\in \Z}\delta^{\widehat X}_{\xi+(2n-1)E,5}(L)=\delta^X_{\xi,4}(L).$
\end{proof}

\begin{Remark}\label{delb}
\begin{enumerate}
\item $\delta_{\xi,d}^X(L)=0$ unless $d\equiv -\xi^2 \mod 4$ (equivalently
$d\equiv -c_1^2$ mod $4$).
\item
In the definition of $\delta_\xi^X(L)$ we can replace
$\Delta_{\xi}^X(L)$ by
\begin{equation}
\begin{split}\label{Delbar}
&\overline \Delta_{\xi}^X(L):= \frac{1}{2}(\Delta_{\xi}^X(L)-\Delta_{-\xi}^X(L))\\
&\ =i^{\<\xi, K_X\>} \Lambda^2 q^{-\xi^2}
\big(y^{\<\xi,L-K_X\>}-(-1)^{\xi^2}y^{-\<\xi,L-K_X\>}\big)\widetilde\theta_4(h)^{(L-K_X)^2}\theta_4^{\sigma(X)}u'h^*.
\end{split}
\end{equation}
\end{enumerate}
\end{Remark}
\begin{proof}
(1) As $h\in \Q[[q^{-1}\Lambda,q^4]]$, we also have $h^*, y,\widetilde \theta_4(h)\in \Q[[q^{-1}\Lambda,q^4]]$. Finally $u,u'\in q^{-2}\Q[[q^4]]$.
It follows that $\Delta_{\xi}^X(L)\in q^{-\xi^2}\Q[[q^{-1}\Lambda,q^4]]
$. Writing $\Delta_{\xi}^X(L)=\sum_{d} f_{d,r}(\tau)\Lambda^d$, we see that $\Coeff_{q^0}[f_{d,r}(\tau)]=0$ unless $d\equiv -\xi^2 \mod 4$.

(2) Note that $\widetilde \theta_4$ is even in $\Lambda$ and $h^*$ is odd in
$\Lambda$, thus $$\overline \Delta_{\xi}^X(L)= \sum_{d\equiv -\xi^2 \mod 2 } f_{d,r}(\tau)\Lambda^d,$$ and the claim follows by (1).
\end{proof}

\subsection{Polynomiality and vanishing of the wallcrossing}
By definition the wallcrossing terms
$\delta_\xi^X(L)$ are power series in $\Lambda$. We now show that
they are always polynomials. This has been shown  already in \cite[Rem.~2.9]{GNY} using a geometric
definition of $\delta_\xi^X(L)$. Here we will give a proof which only uses
elementary properties of theta functions. The arguments used here will play an important role in the rest of the paper.

The wallcrossing formula is expressed in terms of an expression in
Jacobi theta functions $\theta_i(h)$ and modular forms: we develop this expression as a Laurent series in $q$ and $\Lambda$ and take the coefficient of $q^0$.
Note however that the natural variables for this expression would be $\tau$ (or $q$) and the elliptic variable $h$. Thus for understanding the wallcrossing formula it is important to understand the interplay between the two sets of variables $(\tau,h)$ and $(q,\Lambda)$.

We have seen above that $h\in q^{-1}\Lambda\Q[[q^{-2}\Lambda^2,q^4]]$, and thus $y=e^{h/2}\in \Q[[q^{-1}\Lambda,q^4]]$.
We write
\begin{equation}\label{sinhh}
\zeta:=y-y^{-1}=2\sinh(h/2).
\end{equation}
We want to see that as a function of $q,\Lambda$ the function $\zeta$ has only a pole of order $1$  in $q$.
It will follow  that many of the functions we will encounter are almost regular in $q$ in the sense that as Laurent series in $\Lambda,q$, they have  only finitely many monomials
with non-strictly positive powers in $q$ whose coefficients do not vanish.

\begin{Lemma}\label{Rregular}
\begin{enumerate}
\item $\zeta=y-y^{-1}\in q^{-1}\Lambda\RR$.
\item $\zeta^{-1}\in q\Lambda^{-1}\RR$.
\item For all integers $n$ we have
\begin{align*}
\sinh((2n+1)h/2)&\in \Q[q^{-1}\Lambda]_{\le |2n+1|}\RR,\\
\cosh(nh)&\in  \Q[q^{-2}\Lambda^2]_{\le |n|} \RR,\\
\sinh(nh)h^*&\in \Q[q^{-2}\Lambda^2]_{\le |n|}\RR\\
\cosh((2n+1)h/2)h^*&\in \Q[q^{-1}\Lambda]_{\le |2n+1|} \RR.
\end{align*}
\item $\widetilde \theta_4(h)\in \RR
$, and we have $\widetilde \theta_4(h)=1+q^2\Lambda^2+O(q^4)$.
\end{enumerate}
\end{Lemma}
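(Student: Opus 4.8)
The plan is to reduce every function in the statement to a polynomial in the two quantities $\zeta=y-y^{-1}$ (from \eqref{sinhh}) and $\eta:=y+y^{-1}$, which satisfy $\eta^2=\zeta^2+4$, and then to track their expansions through the defining relation $\Lambda=\theta_1(h)/\theta_4(h)$. First I would record the elementary expansions obtained by grouping the index $n$ of \eqref{theta} with $-(n+1)$ (odd case) and with $-n$ (even case):
\[ \theta_1(h)=-i\sum_{k\ge0}(-1)^kq^{(2k+1)^2}\big(y^{2k+1}-y^{-(2k+1)}\big),\qquad \theta_4(h)=1+\sum_{n\ge1}(-1)^nq^{4n^2}\big(y^{2n}+y^{-2n}\big), \]
together with the analogous even expansions of $\theta_2(h),\theta_3(h)$. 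The key elementary fact (Chebyshev identities, which I would simply state) is that $y^{2n}+y^{-2n}=:c_n(\zeta)\in\Q[\zeta^2]$ has $\zeta$-degree $2n$, that $y^{2k+1}-y^{-(2k+1)}=:\zeta\,p_k(\zeta^2)$ with $p_k$ of degree $k$, and that $y^{2k+1}+y^{-(2k+1)}=:\eta\,R_k(\zeta^2)$ carries exactly one factor of $\eta$. In particular $\theta_1(h),\theta_3(h),\theta_4(h)$ depend only on $\zeta$, while $\theta_2(h)$ alone carries the $\eta$.

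The heart of the proof is (1), and I expect it to be the main obstacle. Since $\theta_1(h),\theta_4(h)$ involve only $\zeta$, the relation $\Lambda\theta_4(h)=\theta_1(h)$ is a single equation in $\zeta,\Lambda,q$. I would substitute the ansatz $\zeta=iq^{-1}\Lambda\,w$, so that $\zeta^2=-q^{-2}\Lambda^2w^2$, and divide by $\Lambda$. A direct bigraded bookkeeping then shows that each monomial of $q^{4n^2}c_n(\zeta)$ and of $\Lambda^{-1}q^{(2k+1)^2}\zeta\,p_k(\zeta^2)$ has the form $(q^2\Lambda^2)^j(q^4)^{N}$ with $N\ge0$, hence lies in $\RR$; moreover the total $\mathfrak m$-degree (for $\mathfrak m=(q^2\Lambda^2,q^4)$) of the $n$-th, resp. $k$-th, block equals $n^2$, resp. $k^2+k$, so these blocks lie in $\mathfrak m$. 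Thus the equation rearranges to a fixed-point equation $w=1+A(w)+B(w)$ with $A,B\colon\RR\to\mathfrak m$, and since $n^2,k^2+k\to\infty$ the map $w\mapsto 1+A(w)+B(w)$ is a contraction for the $\mathfrak m$-adic topology on the complete local ring $\RR$. Hence there is a unique $w\in\RR$ with $w\equiv1$, and the corresponding $\zeta=iq^{-1}\Lambda\,w$ is the unique power-series solution $y-y^{-1}$; this gives $\zeta\in q^{-1}\Lambda\RR$. Making the bookkeeping and the contraction argument watertight is the delicate part.

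Parts (2) and (4) follow quickly from (1). Since $w=1+O(\mathfrak m)$ is a unit of $\RR$, we get $\zeta^{-1}=-iq\Lambda^{-1}w^{-1}\in q\Lambda^{-1}\RR$, proving (2). The same bookkeeping shows $q^{4n^2}c_n(\zeta)\in\RR$, so $\theta_4(h)=1+\sum_{n\ge1}(-1)^nq^{4n^2}c_n(\zeta)\in\RR$ with leading term $1$; dividing by the unit $\theta_4=\theta_4(0)\in\Q[[q^4]]^{\times}$ gives $\widetilde\theta_4(h)\in\RR$. The stated leading behaviour $1+q^2\Lambda^2+O(q^4)$ comes from $q^4c_1(\zeta)=q^4(\zeta^2+2)=-q^2\Lambda^2w^2+2q^4$ combined with $\theta_4=1-2q^4+\cdots$.

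For (3) I would treat the two pairs separately. The statements for $\cosh(nh)=\tfrac12c_n(\zeta)$ and $\sinh((2k+1)h/2)=\tfrac12\zeta\,p_k(\zeta^2)$ are immediate from the polynomial identities and $\zeta=iq^{-1}\Lambda w$, since $\zeta^2=(q^{-1}\Lambda)^2(-w^2)$ gives $c_n(\zeta)\in\Q[q^{-2}\Lambda^2]_{\le n}\RR$ and $\zeta\,p_k(\zeta^2)\in\Q[q^{-1}\Lambda]_{\le 2k+1}\RR$. The two statements with $h^*$ hinge on the square-root factor $\eta=\sqrt{\zeta^2+4}$, which is \emph{not} in $\RR$ but cancels. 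Combining the formula \eqref{hstar} for $h^*$ with $M=2\widetilde\theta_2(h)\widetilde\theta_3(h)/\widetilde\theta_4(h)^2$ yields
\[ h^*=\frac{2i\Lambda\,\theta_4(h)^2}{\theta_2(h)\theta_3(h)\theta_4^2}, \]
and writing $\theta_2(h)=\eta\,\widehat\theta_2$ with $\widehat\theta_2\in q\RR^{\times}$ (leading term $q$) gives $\eta h^*=2i\Lambda\theta_4(h)^2/(\widehat\theta_2\theta_3(h)\theta_4^2)$. Using $\sinh(nh)=\tfrac12\zeta\eta\,Q_{n-1}(\zeta^2)$ and $\cosh((2k+1)h/2)=\tfrac12\eta\,R_k(\zeta^2)$, the factor $\eta$ cancels against the $\eta$ in $\theta_2(h)$, so $\sinh(nh)h^*=-q^{-2}\Lambda^2\,Q_{n-1}(\zeta^2)\cdot(\text{unit of }\RR)\in\Q[q^{-2}\Lambda^2]_{\le n}\RR$ and $\cosh((2k+1)h/2)h^*=iq^{-1}\Lambda\,R_k(\zeta^2)\cdot(\text{unit of }\RR)\in\Q[q^{-1}\Lambda]_{\le 2k+1}\RR$. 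All cases with negative index reduce to these by the parity of $\sinh$ and $\cosh$, which is what the absolute values $|n|,|2n+1|$ record.
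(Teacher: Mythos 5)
Your proof is correct, and for most of the lemma it runs along the same lines as the paper's: expand $\theta_1(h)$ and $\theta_4(h)$ as series in $\zeta$ with the Chebyshev-type identities, do the bigraded bookkeeping in $q^2\Lambda^2$ and $q^4$, and invert the relation $\Lambda=\theta_1(h)/\theta_4(h)$; your $\mathfrak{m}$-adic fixed-point argument is just a rigorous spelling-out of what the paper compresses into ``thus we can form the inverse power series,'' and your treatments of (2), (4) and of $\cosh(nh)$, $\sinh((2n+1)h/2)$ in (3) coincide with the paper's. The genuine divergence is in the two $h^*$ statements of (3). The paper disposes of them with a differentiation trick: the operator $F\mapsto F^*=\Lambda\partial_\Lambda F$ multiplies each monomial $q^a\Lambda^b$ by $b$, hence preserves the monomially defined spaces $\Q[q^{-2}\Lambda^2]_{\le |n|}\RR$ and $\Q[q^{-1}\Lambda]_{\le|2n+1|}\RR$, and the chain-rule identities $(\cosh(nh))^*=n\sinh(nh)h^*$ and $(\sinh((2n+1)h/2))^*=\tfrac{2n+1}{2}\cosh((2n+1)h/2)h^*$ then make the $h^*$ claims immediate corollaries of the non-$h^*$ claims. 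You instead start from the closed formula \eqref{hstar}, rewrite it as $h^*=2i\Lambda\,\theta_4(h)^2/\bigl(\theta_2(h)\theta_3(h)\theta_4^2\bigr)$, factor $\theta_2(h)=\eta\,\widehat\theta_2$ with $\widehat\theta_2\in q\cdot(\hbox{unit of }\RR)$, and cancel this $\eta=y+y^{-1}$ against the $\eta$ carried by $\sinh(nh)$ and $\cosh((2n+1)h/2)$. Both arguments are valid: the paper's is shorter and never needs to analyze $\theta_2(h)$ or $\theta_3(h)$ at all, while yours is more computational but explains structurally why only the products with $h^*$ are good (the sole obstruction is the square root $\eta=\sqrt{\zeta^2+4}\notin\RR$, and it cancels) and gives slightly finer information, e.g. $\eta h^*\in q^{-1}\Lambda\cdot(\hbox{unit of }\RR)$ and the extra divisibility $\sinh(nh)h^*\in q^{-2}\Lambda^2\,\Q[q^{-2}\Lambda^2]_{\le|n|-1}\RR$.
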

\begin{proof}
(1) By \eqref{theta} we see immediately that
$$\Lambda=\frac{\theta_1(h)}{\theta_4(h)}=\sum_{n\ge 0} q^{4n+1} (g_n(y)-g_n(y^{-1}))=\sum_{n\ge 0} q^{4n+1} f_n(\zeta)$$ for $g_n(y)$, and thus  $f_n(\zeta)$ suitable odd polynomials of degree $2n+1$, in other words
$\Lambda \in q\zeta \Q[[\zeta^2q^4,q^4]].$
Explicitely
$$\Lambda=-i(\zeta q+(\zeta^3+2\zeta)q^5+(\zeta^5+3\zeta^3+\zeta)q^9+(\zeta^7+5\zeta^5+7\zeta^3+2\zeta)q^{13}+\ldots) .$$
Thus we can form the inverse power series
$\zeta\in q^{-1}\Lambda\RR$, i.e.
$$\zeta=i((q^{-1} - 2q^3 + 3q^7+\ldots)\Lambda + (q - 5q^5 + \ldots)
\L^3 +(2 q^3 - 17q^7 +\ldots)\L^5 +\ldots).$$

(2) The coefficient $l_1$ of $\Lambda^1$ of $\zeta$ is a Laurent series in $q$ starting with $iq^{-1}$, and $\frac{\zeta}{f_1}\in 1+q^2\Lambda^2\RR$.
Thus we have $\frac{1}{\zeta}=\frac{1}{f_1}\frac{f_1}\zeta\in q\Lambda^{-1}\RR$.

(3)
Let $n\in \Z$.  From the definition $\zeta=\sinh(h/2)=y-y^{-1}$, $\sinh((2n+1)h/2)=y^{2n+1}-y^{-(2n+1)}$, $\cosh(nh)=y^n+y^{-n}$, we see immediately that $\sinh((2n+1)h/2)$ is an odd polynomial of degree $|2n+1|$ in $\zeta$ and $\cosh(nh)$ is even  of degree  $|2n|$ in $\zeta$. Thus
$\sinh((2n+1)h/2)\in q^{-1}\Lambda\Q[q^{-2}\Lambda^2]_{\le |n|}\RR,$
$\cosh(nh)\in \Q[q^{-2}\Lambda^2]_{\le |n|} \RR.$
Finally note that  $\sinh(n h)h^*=\frac{1}{n}\cosh(nh)$ for $n\ne 0$ and $\sinh(0 h/2)h^*=0$ and $\cosh((2n+1)h/2)h^*=\frac{2}{(2n+1)}\sinh((2n+1)h/2)$. 
.

(4) From the formula
$$\theta_4(h)=1+2\sum_{n\in \Z_{>0}} (-1)^n q^{4n^2} \cosh(nh),$$
it therefore follows that
$\widetilde \theta_4(h)\in  \RR
$, and we also easily see $\WT_4(h)=1+q^2\Lambda^2+O(q^4)$.
\end{proof}

\begin{Theorem}\label{vanwall}
\begin{enumerate}
\item
$\delta_{\xi,d}^X(L)=0$ unless $-\xi^2\le d\le \xi^2+2|\<\xi,L-K_X\>|+4$.
In particular $\delta_{\xi}^X(L)\in \Q[\Lambda]$.
\item $\delta_\xi^X(L)=0$ unless $-\xi^2\le |\<\xi,L-K_X\>|+2$. (Recall that
by definition $\xi^2<0$).
\end{enumerate}
\end{Theorem}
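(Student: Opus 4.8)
The plan is to work with the antisymmetrized wallcrossing kernel $\overline\Delta_\xi^X(L)$ of \remref{delb}(2) in place of $\Delta_\xi^X(L)$, and to track the joint structure of its monomials in $q$ and $\Lambda$. The engine is an elementary observation about $\RR=\Q[[q^2\Lambda^2,q^4]]$: every monomial occurring in an element of $\RR$ has the form $q^A\Lambda^B$ with $A\ge B\ge 0$ and $A,B$ even. Consequently, if $F\in q^a\Lambda^b\RR$ then $\Coeff_{q^0}[F]$ vanishes unless $a\le 0$, and when $a\le 0$ it is a \emph{polynomial} in $\Lambda$ whose nonzero terms $\Lambda^d$ satisfy $b\le d\le b-a$. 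Thus the whole theorem reduces to exhibiting $\overline\Delta_\xi^X(L)$ as an explicit finite sum of pieces $q^a\Lambda^b\RR$ and reading off the pairs $(a,b)$.

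First I would record the parity $\<\xi,L-K_X\>\equiv \xi^2\pmod 2$. Indeed $\xi\equiv c_1\pmod 2$ gives $\<\xi,L\>\equiv\<c_1(L),c_1\>\equiv 0$ by the standing hypothesis, while Wu's formula on $X$ gives $\<\xi,K_X\>\equiv \xi^2\pmod 2$; subtracting yields the claim. Writing $m:=\<\xi,L-K_X\>$, this parity is exactly what lets me pair $h^*$ with the antisymmetric combination of exponentials in $\overline\Delta_\xi^X(L)$: if $\xi^2$ is even then $m=2n$ and $y^m-y^{-m}=2\sinh(nh)$, whereas if $\xi^2$ is odd then $m=2n+1$ and $y^m+y^{-m}=2\cosh((2n+1)h/2)$. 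In either case \lemref{Rregular}(3) applies verbatim and gives
\[ \big(y^{m}-(-1)^{\xi^2}y^{-m}\big)h^*\ \in\ \sum_{t=0}^{|m|}q^{-t}\Lambda^{t}\RR. \]
The remaining factors are harmless: $\widetilde\theta_4(h)^{(L-K_X)^2}\in\RR$ and $\theta_4^{\sigma(X)}\in\Q[[q^4]]\subset\RR$ by \lemref{Rregular}(4) (both invertible, so the signs of the exponents do not matter), while $u'\in q^{-2}\Q[[q^4]]\subset q^{-2}\RR$. Since $\RR$ is a ring, multiplying these together with the prefactor $\Lambda^2q^{-\xi^2}$ gives
\[ \overline\Delta_\xi^X(L)\ \in\ \sum_{t=0}^{|m|} q^{-\xi^2-2-t}\,\Lambda^{2+t}\,\RR. \]

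Now I apply the first-paragraph observation to each summand $q^{a_t}\Lambda^{b_t}\RR$ with $a_t=-\xi^2-2-t$ and $b_t=2+t$. A summand contributes to $\Coeff_{q^0}$ only when $a_t\le 0$, i.e.\ $t\ge -\xi^2-2$, and then only in degrees $b_t\le d\le b_t-a_t=\xi^2+4+2t$. Taking the union over $0\le t\le|m|$ yields the upper bound $d\le \xi^2+2|m|+4$ (attained at $t=|m|$) and, since the contributing $t$ satisfy $t\ge\max(0,-\xi^2-2)$, the lower bound $d\ge 2+\max(0,-\xi^2-2)\ge -\xi^2$; finiteness of this range is the asserted polynomiality of part (1). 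For part (2), the smallest power of $q$ occurring anywhere in $\overline\Delta_\xi^X(L)$ is $-\xi^2-2-|m|$ (the $t=|m|$, $\RR$-constant monomial); if $-\xi^2>|m|+2$ this is strictly positive, so no $q^0$ monomial exists and $\delta_\xi^X(L)=0$.

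The one genuinely delicate point—and the step I expect to be the main obstacle—is the very first move: $h^*$ alone is not controlled by the polynomial bounds of \lemref{Rregular}, and it is only the antisymmetrization of \remref{delb}(2) that converts $y^{\pm m}h^*$ into $\sinh(nh)h^*$ or $\cosh((2n+1)h/2)h^*$, to which the lemma's degree estimates apply; making the parity dichotomy for $\xi^2$ line up with the $\sinh$/$\cosh$ cases of the lemma is where care is needed, everything afterward being routine multiplication in $\RR$. Finally, for the rationality clause $\delta_\xi^X(L)\in\Q[\Lambda]$ I would use the explicit identities $\sinh(nh)h^*=\tfrac1n\cosh(nh)$ and $\cosh((2n+1)h/2)h^*=\tfrac{2}{2n+1}\sinh((2n+1)h/2)$ from the proof of \lemref{Rregular}(3): in the even case the kernel is $i^{\<\xi,K_X\>}$ times a series with rational coefficients (note $\cosh(nh)$ is even in $\zeta=y-y^{-1}$, hence rational) and $\<\xi,K_X\>\equiv\xi^2$ is even, while in the odd case the single surviving factor of $i$ carried by $\zeta$ (visible in \lemref{Rregular}(1)) combines with $i^{\<\xi,K_X\>}$, now with $\<\xi,K_X\>$ odd, to give an overall real, hence rational, series. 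Taking $\Coeff_{q^0}$ then lands in $\Q[\Lambda]$.
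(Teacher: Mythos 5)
Your proof is correct and is essentially the paper's own argument: antisymmetrize via \remref{delb}(2), use the parity of $\<\xi,L-K_X\>$ (forced by $\<L,\xi\>$ even and $K_X$ characteristic) to turn the kernel into $\sinh(nh)h^*$ or $\cosh((2n+1)h/2)h^*$, invoke \lemref{Rregular}(3), and read off the $q^0$-coefficient from the monomial structure of $\RR$. If anything your bookkeeping is tighter than the paper's, which asserts the cruder containment $\sinh(Nh/2)h^*\in q^{-|N|}\Lambda^{|N|}\RR$ and leaves the rationality (the powers of $i$) implicit, whereas you keep the full sum $\sum_{t}q^{-t}\Lambda^{t}\RR$ (from which the lower bound $d\ge-\xi^2$ falls out cleanly) and track the factor of $i$ against $i^{\<\xi,K_X\>}$ explicitly.
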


\begin{proof}
Let $N:=\<\xi,L-K_X\>.$ Note that by the condition that $\<L,\xi\>$ is even, we
have $(-1)^{\xi^2}=(-1)^N$. Assume first that $N$ is even.
Then by (\ref{Delbar}) we have
$$\overline \Delta_\xi^X(L)=q^{-\xi^2}i^{\<\xi,K_X\>}\Lambda^2\sinh(Nh/2)h^*
\widetilde \theta_4^{(L-K_X)^2}\theta_4^{\sigma(X)} u'.$$
If $N=0$, then $\Delta_\xi^X(L)=0$.
Now let $N\ne 0$. Then we note that by \lemref{Rregular} we have
$\sinh(Nh/2)h^*\in q^{-|N|}\Lambda^{|N|}\RR.$
Note that
$\Lambda^2u'\in q^{-2}\Lambda^2\RR.$
Putting this together we get
$$\Delta_\xi^X(L)\in q^{-\xi^2}q^{-|N|-2}\Lambda^{|N|+2}\RR.$$
In case $N$ is odd, we have $$\overline \Delta_\xi^X(L)=q^{-\xi^2}i^{\<\xi,K_X\>}\Lambda^2\cosh(Nh/2)h^*
\widetilde \theta_4^{(L-K_X)^2}\theta_4^{\sigma(X)} u'.$$
Again using
$\cosh(Nh/2)h^*\in q^{-|N|}\Lambda^{|N|}\RR,$
a similar argument shows that $\Delta_\xi^X(L)\in q^{-\xi^2}q^{-|N|-2}\Lambda^{|N|+2}\RR.$
Thus
$\delta_{\xi,d}^X(L)= 0$ unless $-\xi^2-\min(d,2|N|+4-d)\le 0$, i.e.
unless
$-\xi^2\le d\le \xi^2+2|N|+4$.
In particular $\delta_{\xi}^X(L)=0$ unless
$-\xi^2\le\xi^2+2|N|+4$, i.e. unless $-\xi^2\le |N|+2$.
\end{proof}

\begin{Remark}
\thmref{vanwall} implies that for any $\xi$ the generating function $\delta_{\xi}^X(L)$ for the wallcrossing terms $\delta_{\xi,d}^X(L)$ can be obtained by a finite simple computation.
We know that $\delta_{\xi,d}^X(L)=0$ for $d>\xi^2+2|\<\xi,L-K_X\>|+4$. Thus to determine $\delta_{\xi}^X(L)$ we only need to compute the
coefficient of $q^0$ of the coefficients of $\Lambda^d$ with $d\le \xi^2+2|\<\xi,L-K_X\>|+4$ of $\overline \Delta^X_\xi(L)$, this is a finite computation with
the formal Laurent series involved.
\end{Remark}

\thmref{vanwall} implies that
the generating functions
$\sum_{d>0}\chi(M^X_H(c_1,d),\mu(L))$ are essentially independent of $H$:
If $H_1$, $H_2$ are two ample line bundles on $X$, then
$\chi^{X,H_1}_{c_1}(L)-\chi^{X,H_2}_{c_1}(L)$ is a polynomial in $\Lambda$, i.e. $\chi(M_{H_1}^{X}(c_1,d),\mu(L))=\chi(M_{H_2}^{X}(c_1,d),\mu(L))$ for all $d$ which are sufficiently large.

\subsection{The case of rational ruled surfaces}
We investigate the dependence of the polarization of the $K$-theoretic Donaldson invariants in the case of $\P^1\times\P^1$ and $\widehat \P^2$.
We write $F=H-E$ for the class of a ruling on $\widehat \P^2$ and $G=(H+E)/2$.
\thmref{vanwall} implies that
the generating functions
$\sum_{d>0}\chi(M^X_H(c_1,d),\mu(L))$ are essentially independent of $H$:
If $H_1$, $H_2$ are two ample line bundles on $X$, then
$\chi^{X,H_1}_{c_1}(L)-\chi^{X,H_2}_{c_1}(L)$ is a polynomial in $\Lambda$, i.e. $\chi(M_{H_1}^{X}(c_1,d),\mu(L))=\chi(M_{H_2}^{X}(c_1,d),\mu(L))$ for all $d$ which are sufficiently large.

\begin{Proposition}\label{asympt}
Let $X=\P^1\times\P^1$ or $X=\widehat \P^2$,
Let $L$ be a line bundle on $X$, let $c_1\in H^2(X,\Z)$, and let $H_1$, $H_2$ be ample on $X$.
Then there exists a $d_0>0$, such that 
$$\chi^{X,H_1}_{c_1}(L)\equiv \chi^{X,H_2}_{c_1}(L) \mod \Lambda^{d_0}.$$
\end{Proposition}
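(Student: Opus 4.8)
The plan is to reduce the difference of the two generating functions to a \emph{finite} sum of polynomial wallcrossing terms. First I would apply \remref{chi04}(2) to write
\[
\chi^{X,H_1}_{c_1}(L)-\chi^{X,H_2}_{c_1}(L)=\sum_{\xi}\delta^X_\xi(L),
\]
where $\xi$ runs over the classes in $c_1+2H^2(X,\Z)$ with $\<\xi,H_1\>>0>\<\xi,H_2\>$, i.e.\ over the walls crossed in passing from $H_2$ to $H_1$ (in the case $c_1=0$ the $\Lambda^4$-coefficient is already incorporated by \remref{chi04}(1)). By \thmref{vanwall}(1) each $\delta^X_\xi(L)$ is a polynomial in $\Lambda$, so it suffices to show that only finitely many $\xi$ give a nonzero contribution: a finite sum of polynomials is a polynomial $P(\Lambda)$, and then taking $d_0$ strictly larger than $\deg P$ makes the coefficients of $\Lambda^d$ in the two series agree for all $d\ge d_0$, which is the asserted congruence.

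To bound the contributing $\xi$ I would use two constraints at once. By \thmref{vanwall}(2), $\delta^X_\xi(L)=0$ unless $-\xi^2\le|\<\xi,L-K_X\>|+2$. Here I exploit the intersection form of the ruled surface: in both cases $H^2(X,\Z)$ (resp.\ $\tfrac12 H^2$ on $\widehat\P^2$) carries the isotropic basis $F,G$ with $F^2=G^2=0$ and $\<F,G\>=1$. Writing $\xi=\alpha F+\beta G$ and $L-K_X=pF+qG$, we get $\xi^2=2\alpha\beta$ and $\<\xi,L-K_X\>=\alpha q+\beta p$, and a genuine wall requires $\alpha\beta<0$. Writing $H_i=a_iF+b_iG$ with $a_i,b_i>0$, the crossing condition $\alpha b_1+\beta a_1>0>\alpha b_2+\beta a_2$ confines the ratio $|\alpha|/|\beta|$ to the fixed bounded interval determined by the slopes of $H_1$ and $H_2$, hence bounds it away from both $0$ and $\infty$.

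The decisive step is then a growth comparison along this sector: $-\xi^2=-2\alpha\beta$ grows quadratically in $(|\alpha|,|\beta|)$, whereas $|\<\xi,L-K_X\>|=|\alpha q+\beta p|$ grows only linearly. Thus the inequality $-2\alpha\beta\le|\alpha q+\beta p|+2$, together with $|\alpha|/|\beta|$ bounded, forces both $|\alpha|$ and $|\beta|$ to be bounded by an explicit constant depending only on $H_1,H_2,L,K_X$. Consequently only finitely many $\xi$ survive, the displayed sum is a polynomial, and the Proposition follows.

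The main obstacle I anticipate is making this finiteness argument airtight rather than merely heuristic: one must combine the crossing condition (which only constrains the \emph{direction} of $\xi$, not its length) with \thmref{vanwall}(2) (which becomes effective precisely because the quadratic form is negative on the crossed sector). A secondary bookkeeping point is the $\widehat\P^2$ case, where $G\in\tfrac12 H^2(\widehat\P^2,\Z)$, so one must verify that the admissible classes $\xi$ of type $(c_1,d)$ still lie in the lattice spanned over $\tfrac12\Z$ by $F,G$, legitimizing the parametrization $\xi=\alpha F+\beta G$; and, when $c_1=0$, the separate treatment of the $\Lambda^4$-coefficient, which is supplied by \remref{chi04}(1).
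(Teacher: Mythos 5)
Your proposal is correct and follows essentially the same route as the paper: reduce the difference to a sum of wallcrossing terms via \remref{chi04}, parametrize the crossed classes $\xi$ in the isotropic basis $F,G$, and combine the slope constraint from $\<\xi,H_1\>>0>\<\xi,H_2\>$ with the inequality $-\xi^2\le|\<\xi,L-K_X\>|+2$ of \thmref{vanwall} so that the quadratic term $-\xi^2=-2\alpha\beta$ outgrows the linear one, bounding $\alpha,\beta$ and leaving only finitely many contributing polynomial terms. The paper's proof is the same argument with the normalization $H_i=n_iF+G$ and the explicit dichotomy $\alpha\le|a|+4$ or $\beta\le|b|+4$ in place of your sector/growth phrasing.
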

\begin{proof}
We write $L=aF+bG$ with $a,b\in \frac{1}{2}\Z$, and $H_i=n_iF+G$ for $i=1,2$,  and $n_i\in \Q_{>0}$.
Then $L-K_X=(a+2)F+(b+2)G$. We can assume $n_1>n_2$. The classes $\xi$ of type $(c_1)$ with $\<\xi,H_1\><0<\<\xi, H_2\>$ are of the form
$\xi=\alpha F-\beta G$ with $\alpha,\beta\in \frac{1}{2}\Z_{>0}$ satisfying $\beta n_2<\alpha<\beta n_1$. Assume $\delta^X_\xi(L)\ne 0$.
Then we get by \thmref{vanwall}  
\begin{align*}
-\xi^2=2\alpha\beta\le |(a+2)\beta-(b+2)\alpha|+2\le (|a|+2)\beta+(|b|+2)\alpha+2\le  (|a|+4)\beta+(|b|+4)\alpha.
\end{align*}
Therefore $\alpha\le (|a|+4)$ or $\beta\le (|b|+4)$. In the first case $0<\beta\le \frac{|a|+4}{n_2}$, in the second $0<\alpha\le n_1\beta$.
In both cases $\alpha$ and $\beta$ are bounded, thus, as $\alpha,\beta\in \frac{1}{2}\Z$, we see that there are only finitely many $\xi$ of type $(c_1)$ with 
$\<\xi,H_1\><0<\<\xi, H_2\>$ and $\delta^X_\xi(L)\ne 0$. Let $d_0$ be the maximum over these classes of 
$\xi^2+2|\<\xi,L-K_X)|+4$. Then $\chi(M^X_{H_1}(c_1,d),\mu(L))=\chi(M^X_{H_2}(c_1,d),\mu(L))$ for all $d>d_0$.
\end{proof}

For the line bundles we consider above  we can be more specific.

\begin{Proposition}\label{ruledconst}
\begin{enumerate}
\item On $X=\P^1\times \P^1$ and $X=\widehat \P^2$, let $L=nF+lG$, with $0\le l\le 2$ and $n\ge 0$. We write an ample divisor  on $\P^1\times \P^1$ or $\widehat \P^2$ as $aF+bG$.
Then
$\chi^{X,aF+bG}_{0}(L)$ and $\chi^{X,aF+bG}_{F}(L)$ are  independent of the ample class $aF+bG$ as long as $\frac{a}{b}>\frac{n+2}{4}$.
\item If $0\le n\le 3$, $0\le e\le \min(n,2)$ we have  $\chi^{\widehat \P^2,P}_{0}(nH-eE)$ and $\chi^{\widehat \P^2,P}_{F}(nH-eE)$ are independent of the ample class $P$ on $\widehat \P^2$.
\end{enumerate}
\end{Proposition}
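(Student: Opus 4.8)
The plan is to deduce both statements from the vanishing bound in \thmref{vanwall} by showing that, under the stated numerical hypotheses, every wall that could separate two ample classes carries a vanishing wallcrossing term $\delta_\xi^X(L)=0$. By \remref{chi04}(2) and \thmref{wallcr}, the difference $\chi^{X,H_1}_{c_1}(L)-\chi^{X,H_2}_{c_1}(L)$ is a sum of $\delta^X_\xi(L)$ over classes $\xi$ of type $(c_1)$ with $\<\xi,H_1\>>0>\<\xi,H_2\>$, so it suffices to prove that each such $\xi$ has $\delta^X_\xi(L)=0$. By \thmref{vanwall}(2), this holds whenever $-\xi^2>|\<\xi,L-K_X\>|+2$. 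Thus the whole argument reduces to the elementary inequality: for every class $\xi$ separating the two chambers, $-\xi^2>|\<\xi,L-K_X\>|+2$.

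For part (1), I would parametrize as in the proof of \propref{asympt}: write $L=nF+lG$, so $L-K_X=(n+2)F+(l+2)G$, and write a separating class as $\xi=\alpha F-\beta G$ with $\alpha,\beta>0$ (after possibly replacing $\xi$ by $-\xi$). On $\P^1\times\P^1$ the intersection form gives $\<F,G\>=1$, $F^2=G^2=0$, so $\xi^2=-2\alpha\beta$ and $\<\xi,L-K_X\>=(n+2)(-\beta)+(l+2)\alpha$, wait---I should compute $\<\xi,L-K_X\>=\alpha(l+2)-\beta(n+2)$ using $\<F,G\>=1$. The separating condition forces $\alpha/\beta$ to lie strictly between the two slope values $a_i/b_i$ determined by the polarizations; since every ample $aF+bG$ with $a/b>(n+2)/4$ has slope exceeding $(n+2)/4$, and the ``boundary'' slope where walls first appear is governed by when the bound $-\xi^2\le|\<\xi,L-K_X\>|+2$ can be saturated, I would show that $2\alpha\beta>|\alpha(l+2)-\beta(n+2)|+2$ holds for all admissible $(\alpha,\beta)$ in the relevant slope range. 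The hypothesis $a/b>(n+2)/4$ is exactly what excludes the small-$\beta$ walls from appearing between any two such polarizations.

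For part (2), I would pass to the blowup description: on $\widehat\P^2$ with $L=nH-eE$ the relevant classes $\xi$ of type $(0)$ or $(F)$ are integral combinations of $H$ and $E$, and $L-K_X=(n+3)H-(e-1)E$ since $K_{\widehat\P^2}=-3H+E$. The key numerical input is the narrow range $0\le n\le 3$ and $0\le e\le\min(n,2)$, which keeps $\<\xi,L-K_X\>$ small relative to $-\xi^2$ for every separating $\xi$; I would check the inequality $-\xi^2>|\<\xi,L-K_X\>|+2$ by a finite case analysis over the finitely many $\xi$ that \thmref{vanwall}(2) permits to be nonvanishing, using $H^2=1$, $E^2=-1$, $\<H,E\>=0$. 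Because the whole ample cone of $\widehat\P^2$ is being allowed (no slope restriction), the claim here is that for these very special small line bundles no wall of the relevant type is ever crossed with nonzero $\delta_\xi^X(L)$.

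The main obstacle I anticipate is the part (2) case analysis: since all polarizations are allowed, one cannot restrict to a half-plane of slopes as in part (1), so one must genuinely verify $-\xi^2>|\<\xi,L-K_X\>|+2$ for every integral $\xi=pH+qE$ that both defines a wall (i.e.\ $\xi^2<0$, correct type, $W^\xi\cap\cc\neq\emptyset$) and satisfies the weaker necessary condition $-\xi^2\le|\<\xi,L-K_X\>|+2$ of \thmref{vanwall}(1). Organizing this finite set and confirming the strict inequality---equivalently, ruling out the saturating cases---for all four line bundles $nH-eE$ in the allowed range is the delicate bookkeeping step; I expect the constraint $e\le\min(n,2)$ to be precisely what closes the otherwise borderline cases.
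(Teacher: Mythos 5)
Your overall strategy is exactly the paper's: by \remref{chi04} the difference of the generating functions for two polarizations is a sum of wallcrossing terms $\delta^X_\xi(L)$ over separating classes, and each term is killed by the vanishing criterion of \thmref{vanwall}(2). For part (1) your outline reproduces the paper's argument, with one detail you should not gloss over: classes of type $(0)$ or $(F)$ on these surfaces have \emph{even} $G$-coefficient, so $\beta\ge 2$, and this parity is what turns the inequality $2\alpha\le n+2$ (coming from the case $\beta(n+2)>\alpha(l+2)$) into the slope bound $\alpha/\beta\le\frac{n+2}{4}$; if $\beta=1$ were allowed you would only get the threshold $\frac{n+2}{2}$, so the stated hypothesis could not be verified without this lattice fact.

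Part (2) is where there is a genuine gap. First, a computational slip: $K_{\widehat \P^2}=-3H+E$ gives $L-K_X=(n+3)H-(e+1)E$, not $(n+3)H-(e-1)E$. More seriously, your key claim --- that only finitely many wall classes satisfy the necessary condition of \thmref{vanwall}, so a finite case analysis closes the argument --- is false for $L=3H$, which your (and the paper's) hypotheses allow ($n=3$, $e=0$). Indeed, for every $k\ge 2$ the class $\xi_k=2kH-2(k+1)E$ is of type $(0)$, is orthogonal to the ample class $(k+1)H-kE$ (hence defines a wall in the interior of the ample cone), and satisfies $-\xi_k^2=8k+4\le 10k=|\<\xi_k,L-K_X\>|+2$. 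So there are infinitely many walls that \thmref{vanwall} does not kill, the strict inequality you want genuinely fails, and no finite bookkeeping can rescue the argument for this $L$. Your guess that the upper bound $e\le\min(n,2)$ is what closes the borderline cases is also off: what is actually needed is the \emph{lower} bound $e\ge n-2$, i.e.\ that the $G$-coefficient $n-e$ of $L$ is at most $2$. It is worth knowing that the paper's own proof has the same blind spot: it rewrites $L=nH-eE$ as $nF+lG$, assumes $l\le 2$ (equivalently $n-e\le 2$), reuses the part-(1) bound to force the $F$-coefficient of any dangerous wall to be at most $2$, and then finds that the only such class orthogonal to an ample divisor is $H-3E$, which is of type $(F)$ and for which the inequality is checked directly for the five admissible $L$; the case $(n,e)=(3,0)$ is silently excluded. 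So for all cases with $n-e\le 2$ your plan can be completed (the paper's $F,G$-coordinates make the enumeration much shorter than a raw $H,E$ search), but as written your proposal asserts a finiteness that fails precisely at $L=3H$.
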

\begin{proof}
(1) Let $X=\P^1\times\P^1$ or $X=\widehat \P^2$. Let $c_1=0$
or $c_1=F$. Then the classes $\xi$ of type $(c_1)$ on $X$ with $\<F,\xi\><0$
can be written as $\xi=aF-bG$ with $a,b$ positive integers and $b$ even.  $\xi$ is orthogonal to $P=aF+bG$.  If  $\delta^{X}_\xi(L)\ne 0$, then by \thmref{vanwall} we get
$$-(aF-bG)^2\le |(aF-bG)((n+2)F+(l+2)G)|+2,$$
 i.e. $2ab\le a(l+2)-b(n+2)+2$
or $2ab\le -a(l+2)+b(n+2)+2$. By $b\ge 2$ and $n\ge 0$, we have  $-b(n+2)+2<0$; and by $b\ge 2$, $0\le l\le 2$ we have $2ab\ge a(l+2)$.
Thus $2ab>a(l+2)-b(n+2)+2$.
On the other hand again by $a\ge 1,b\ge 2$, the inequality $2ab\le -a(l+2)+b(n+2)+2$ implies $2a\le(n+2)$.
As $b\ge 2$, this implies that  $\frac{a}{b}\le \frac{n+2}{4}$.

(3) Now let $X=\widehat \P^2$, and $\xi=aF-bG$ a class of type $(0)$ or of type $(F)$, with $\<\xi, F\><0$. Then $a\ge 1$ and $b$ is even with $b\ge 2$.
Let $L=nF+lG$ with $l\le 2$, $n\le 3$.
We have seen above that $a\le \frac{n+2}{2}$. Thus, if $n\le 1$, we get $a=1$, but $b\ge 2$ implies that $\xi=(1-b/2)H-(1+b/2)E$ is not orthogonal to an ample class.
If $2\le n\le 3$, we get $a\le 2$, and we find that the only $\xi=aF-bG$ orthogonal to an ample class is $2F-2G=H-3E$.
This is  a class of type $(F)$, not of type $(0)$, so the claim follows if $c_1=0$.
If $c_1=F$, the possible values for $L$ are
$L=0$, $L=H-E$, $L=2H-2E$, $L=2H$, $L=3H-E$. In each of these cases one sees directly that
$$-(H-3E)^2>|(H-3E)(L+3H-E)|+2, $$
so that $\delta^{X}_{H-3E}(L)= 0$.
\end{proof}

\section{Indefinite Theta functions and vanishing   and blowup formulas}

\subsection{Theta functions for indefinite lattices}
We begin by reviewing some notations about modular forms.

\begin{Definition}
Let $T:=\left(\begin{matrix} 1 & 1\\0&1\end{matrix}\right)$,
$S:=\left(\begin{matrix} 0 & -1\\1&0\end{matrix}\right)\in SL(2,\Z)$.
 Let
$\Gamma^0(4):=\Big\{\left(\begin{matrix}a&b\\c&d\end{matrix}\right)\in SL(2,\Z)\Bigm| b \equiv 0 (\hbox{ mod }4)\Big\}.$
We recall that $\Gamma^0(4)$ is generated as a group by
$\pm T^4, \pm TST$.
Recall that the cusps of $\H/\Gamma^0(4)$ are $\infty$ (of width $4$)
and $0$, $2$ (of width $1$).

For $\tau\in \H$ and $A=\left(\begin{matrix} a&b\\c&d\end{matrix}\right)\in \Gamma^0(4)$, we write $A\tau:=\frac{a\tau+b}{c\tau+d}$. A holomorphic function $f:\H\to \C$ is called a weakly holomorphic modular form of weight $k$ on $\Gamma^0(4)$ if $f(A\tau)=(c\tau+d)^kf(\tau)$ for $\tau\in \H,$ $ A\in \Gamma^0(4)$, and $f$ is meromorphic at the cusps.
We denote $M_k^!(\Gamma^0(4))$ the set of all weakly holomorphic modular forms
of weight $k$ on $\Gamma^0(4)$.
\end{Definition}

For us a {\it lattice} is a free $\Z$-module $\Gamma$ together with a quadratic form
$Q:\Gamma\to \frac{1}{2}\Z$, such that the associated bilinear form
$x\cdot y:=Q(x+y)-Q(x)-Q(y)$ is nondegenerate and $\Z$-valued.
We denote the extension of the quadratic and bilinear form to
$\Gamma_\R:=\Gamma\otimes_{\Z} \R$ and $\Gamma_\C:=\Gamma\otimes_{\Z} \C$ by the same letters.
Later the lattice we consider will be $H^2(X,\Z)$ with the negative of the intersection form.
We will then denote $\<F,G\>$ the intersection form and $F^2$ the self intersection, and write
$F\cdot G$ for the negative of the intersection form.

Now let $\Gamma$ be a lattice of rank $r$. Denote by $M_\Gamma$ the set of meromorphic
maps $f:\Gamma_\C\times \H\to \C$.
For $A=\left(\begin{matrix} a&b\\c&d\end{matrix}\right)\in SL(2,\Z)$,
we define  a map
$|_kA:M_\Gamma\to M_\Gamma$ by
$$f|_{k}A(x,\tau):=(c\tau+d)^{-k}\exp\left(-2\pi \sqrt{-1}\frac{cQ(x)}{c\tau+d}\right)
f\left(\frac{x}{c\tau+d},\frac{a\tau+b}{c\tau+d}\right).$$
Then $|_kA$ defines an action of $SL(2,\Z)$ on $M_\Gamma$.

We briefly review the theta functions for indefinite lattices of type $(r-1,1)$
introduced in \cite{GZ}, when $\Gamma$ will be  $H^2(X,\Z)$ for a rational surface $X$ with the negative of the intersection form, and $h$ will be the class of an ample divisor on $X$.

We denote
\begin{align*}S_\Gamma&:=\big\{ f\in \Gamma\bigm| f \hbox{ primitive},  Q(f)=0,\  f\cdot h <0\big\},\
C_\Gamma:=\big\{ m\in \Gamma_\R\bigm| Q(m)<0, \ m\cdot h<0\big\}.
\end{align*}
For $f\in S_\Gamma$ put
$D(f):=\big\{(\tau,x)\in \H\times \Gamma_\C\bigm| 0< \Im(f\cdot x)<\Im(\tau)\big\},$
and for $h\in C_\Gamma$ put $D(h)=\H\times \Gamma_\C$.
For $t\in \R$ denote $$\mu(t):=\begin{cases} 1& t\ge 0, \\0 & t<0.\end{cases}$$

Let $c,b\in \Gamma$. Let  $f,g\in S_\Gamma\cup C_\Gamma$.
Then for $(\tau,x)\in D(f)\cap D(g)$
define
$$\Theta^{f,g}_{\Gamma,c,b}(\tau,x):=\sum_{\xi\in \Gamma+c/2}(\mu(\xi\cdot f)-\mu(\xi\cdot  g)) e^{2\pi i \tau Q(\xi)}e^{2\pi i \xi\cdot(x+b/2)}.$$
In \cite{GZ} it is shown that this sum converges absolutely and locally uniformly on $D(f)\cap D(g)$.
Furthermore the following are shown:
\begin{Remark}
For $f\in S_\Gamma$, $g\in C_\Gamma$ the function $\Theta^{f,g}_{X,c,b}(\tau,x)$ has a meromorphic continuation to
 $|\Im(f\cdot x)/\Im(\tau)|<1$, given by the Fourier expansion.
$$ \sum_{\xi \cdot f \ne 0} (\mu(\xi\cdot f)-\mu(\xi\cdot g))e^{2\pi i \tau Q(\xi)}e^{2\pi i \xi\cdot(x+b/2)}+\frac{1}{1-e^{2\pi i f\cdot (x+b/2)}}\sum_{{\xi\cdot  f=0}\atop {f\cdot g\le \xi\cdot g<0}}
e^{2\pi i \tau Q(\xi)}e^{2\pi i \xi\cdot(x+b/2)},$$
with the sums running through $\xi\in \Gamma+c/2$.
\end{Remark}
\begin{Theorem}\label{thetatrans}
\begin{enumerate}
\item For $f,g\in S_\Gamma$
the function $\Theta^{f,g}_{X,c,b}(\tau,x)$ has a meromorphic continuation to
$\H\times \Gamma_\C$.
\item For
 $|\Im(f\cdot x)/\Im(\tau)|<1$ and $|\Im(g\cdot x)/\Im(\tau)|<1$ it has a Fourier development
\begin{align*}\label{theta}
&\Theta_{X,c,b}^{f,g}(x,\tau):=\frac{1}{1-e^{2\pi i f\cdot (x+b/2)}}
\sum_{\substack{\xi\cdot f=0\\ f\cdot g\le\xi\cdot g<0}}e^{2\pi i \tau Q(\xi)}e^{2\pi i \xi \cdot (x+b/2)}\\ &
-\frac{1}{1-e^{2\pi i g\cdot (x+b/2)}}\sum_{\substack{\xi\cdot g=0\\ f\cdot g \le \xi \cdot f<0 }}e^{2\pi i \tau Q(\xi)}e^{2\pi i \xi \cdot(x+b/2)}+
\sum_{\xi\cdot f>0>\xi\cdot g} e^{2\pi i \tau Q(\xi)}\big(e^{2\pi i \xi \cdot(x+b/2)}-
e^{-2\pi i \xi \cdot(x+b/2)}\big),
\end{align*}
where the sums are always over $v\in \Gamma+c/2$.
\item (parity)
$\Theta_{X,c,b}^{f,g}(-x,\tau)=-(-1)^{c\cdot b}\Theta_{X,c,b}^{f,g}(x,\tau),$
\item (modular properties)
\begin{equation*}
\label{thetajacobi}
\begin{split}
(\Theta_{X,c,b}^{f,g}\theta_{3}^{\sigma(\Gamma)})|_1S&=(-1)^{-b\cdot c/2} \Theta_{X,b,c}^{f,g}\theta_{3}^{\sigma(\Gamma)},\\
(\Theta_{X,c,b}^{f,g}\theta_{3}^{\sigma(\Gamma)})|_1T&=(-1)^{3Q(c)/2-c w/2} \Theta_{X,c,b-c+w}^{f,g}\theta_{4}^{\sigma(\Gamma)},\\
(\Theta_{X,c,b}^{f,g}\theta_{3}^{\sigma(\Gamma)})|_1T^2&=(-1)^{-Q(c)}
\Theta_{X,c,b}^{f,g}\theta_{3}^{\sigma(\Gamma)},\\
(\Theta_{X,c,b}^{f,g}\theta_{3}^{\sigma(\Gamma)})|_1T^{-1}S&=
(-1)^{-Q(c)/2-c\cdot b/2}\Theta_{X,w-c+b,c}^{f,g}\theta_{2}^{\sigma(\Gamma)},
\end{split}\end{equation*}
where $w$ is a characteristic element of $\Gamma$.
\end{enumerate}
\end{Theorem}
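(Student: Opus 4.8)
The four assertions are exactly the content of \cite{GZ} for indefinite lattices of signature $(r-1,1)$, so I would either cite that paper directly or reproduce its arguments, carrying out the proof in the order (3), (1), (2), (4), since the parity feeds the Fourier development and the latter exhibits the poles. I would dispose of the parity (3) first, as it is purely formal. Substituting $\xi\mapsto-\xi$ in the defining series and using $Q(-\xi)=Q(\xi)$, the invariance of the index set $\Gamma+c/2$ under negation (which holds because $c\in\Gamma$), and the relation $\mu(-t)=1-\mu(t)$ valid off the hyperplanes $\xi\cdot f=0$, $\xi\cdot g=0$, gives $\Theta_{X,c,b}^{f,g}(-x,\tau)=-\sum_{\xi}(\mu(\xi\cdot f)-\mu(\xi\cdot g))e^{2\pi i\tau Q(\xi)}e^{2\pi i\xi\cdot(x-b/2)}$. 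Comparing $e^{2\pi i\xi\cdot(x-b/2)}$ with $e^{2\pi i\xi\cdot(x+b/2)}$ and writing $\xi=\gamma+c/2$ with $\gamma\in\Gamma$, the discrepancy is $e^{-2\pi i\xi\cdot b}=(-1)^{c\cdot b}$ because $\gamma\cdot b\in\Z$; this yields $-(-1)^{c\cdot b}\Theta_{X,c,b}^{f,g}(x,\tau)$, as claimed.

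For the meromorphic continuation (1) I would choose an auxiliary $h\in C_\Gamma$ and use the cocycle identity $\mu(\xi\cdot f)-\mu(\xi\cdot g)=(\mu(\xi\cdot f)-\mu(\xi\cdot h))-(\mu(\xi\cdot g)-\mu(\xi\cdot h))$, so that $\Theta^{f,g}_{X,c,b}=\Theta^{f,h}_{X,c,b}-\Theta^{g,h}_{X,c,b}$. Each mixed term continues meromorphically by the Remark preceding the theorem; the key analytic point is that in signature $(r-1,1)$ the quadratic form $Q$ is bounded below and proper on the wedge cut out by the step functions, so the non-boundary part of the Fourier expansion converges on all of $\H\times\Gamma_\C$, the only poles arising from the geometric factor $1/(1-e^{2\pi if\cdot(x+b/2)})$. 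The Fourier development (2) I would then obtain by a direct reorganization of the defining sum valid on $\{|\Im(f\cdot x)/\Im(\tau)|<1\}\cap\{|\Im(g\cdot x)/\Im(\tau)|<1\}$: on the boundary hyperplane $\xi\cdot f=0$ the translation $\xi\mapsto\xi+nf$ preserves both $Q(\xi)$ (as $Q(f)=0$ and $\xi\cdot f=0$) and membership in $\Gamma+c/2$, so summing the geometric progression in $n$ produces $1/(1-e^{2\pi if\cdot(x+b/2)})$, and symmetrically for $g$; the interior terms with $\xi\cdot f>0>\xi\cdot g$ are antisymmetrized into $e^{2\pi i\xi\cdot(x+b/2)}-e^{-2\pi i\xi\cdot(x+b/2)}$ using the parity from (3).

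The modular transformations (4) are the substantial part. The $T$-behavior is elementary: $e^{2\pi i(\tau+1)Q(\xi)}=e^{2\pi iQ(\xi)}e^{2\pi i\tau Q(\xi)}$, and evaluating the phase $e^{2\pi iQ(\xi)}$ for $\xi\in\Gamma+c/2$ through a characteristic element $w$ (so that $\gamma\cdot\gamma\equiv\gamma\cdot w\bmod 2$) produces the shift $b\mapsto b-c+w$, the sign $(-1)^{3Q(c)/2-c\cdot w/2}$, and the replacement $\theta_3\to\theta_4$; the $T^2$ formula is then immediate and $T^{-1}S$ follows by composing the $S$- and $T$-laws. The genuine obstacle is the $S$-transformation $\tau\mapsto-1/\tau$, which requires Poisson summation on the lattice sum. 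Since the indefinite series is not absolutely convergent, I would run Poisson summation on the Fourier development of (2) (or on the Vignéras-type completion), keeping careful track of the Gauss-sum/automorphy factor: this is exactly the role of the auxiliary factor $\theta_3^{\sigma(\Gamma)}$, whose weight-$\tfrac{1}{2}$ transformation is chosen to cancel the metaplectic factor coming from the rank-$r$ indefinite lattice and leave a clean weight-$1$ automorphy, with the Fourier-dual data producing the exchange $c\leftrightarrow b$ and the sign $(-1)^{-b\cdot c/2}$. I expect essentially all the difficulty to lie here, in justifying Poisson summation for the non-absolutely-convergent sum and in the sign- and characteristic-element bookkeeping; the remaining steps are formal manipulations of the defining series.
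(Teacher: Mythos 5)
Your primary move---citing \cite{GZ}---is exactly what the paper does: it states this theorem with no proof at all, introducing it by ``Furthermore the following are shown'' in reference to \cite{GZ}, so your proposal matches the paper's approach. Your supplementary sketches (parity by $\xi\mapsto-\xi$, continuation via the cocycle identity through an auxiliary $h\in C_\Gamma$, geometric-series resummation along the hyperplanes $\xi\cdot f=0$, and Poisson summation with the $\theta_3^{\sigma(\Gamma)}$ factor absorbing the automorphy) are consistent with how these facts are established in \cite{GZ}, but they go beyond anything the paper itself verifies.
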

\begin{Remark}
For $f,\ g,\ h\in C_\Gamma\cap S_\Gamma$ we have the cocycle condition:
$\Theta^{f,g}_{\Gamma,c,b}(\tau,x)+\Theta^{g,h}_{\Gamma,c,b}(\tau,x)=\Theta^{f,h}_{\Gamma,c,b}(\tau,x)$, which holds wherever all three terms are defined.
\end{Remark}

In the following let $X$ be a rational algebraic surface. We assume for simplicity that $-K_X$ is ample on $X$.
We can express the difference of the $K$-theoretic Donaldson invariants  for two different polarizations in terms of these indefinite theta functions. Here we take $\Gamma$ to be $H^2(X,\Z)$ with the negative of the intersection form. In the formulas above we will take the characteristic element to be $K_X$.

\begin{Definition}
Let $F,G\in S_\Gamma\cup C_\Gamma$, let $c_1\in H^2(X,\Z)$.
We put
\begin{align*}
\Psi^{F,G}_{X,c_1}(L;\Lambda,\tau)&:=
\Theta^{F,G}_{X,c_1,K_X}\Big(\frac{(L-K_X)h}{2\pi i},\tau\Big)   \Lambda^2
\widetilde\theta_4(h)^{(L-K_X)^2}\theta_4^{\sigma(X)}u'h^*.
\end{align*}
\end{Definition}

\begin{Corollary}\label{thetawall}
Let $H_1,H_2$ be ample on $X$, and assume that they do not lie on a wall of type $(c_1)$.
Then
\begin{align*}
\chi^{X,H_2}_{c_1}(L)-\chi^{X,H_1}_{c_1}(L)=\Coeff_{q^0} \big[\Psi^{H_1,H_2}_{X,c_1}(L;\Lambda,\tau)\big].
\end{align*}
\end{Corollary}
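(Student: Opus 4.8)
The plan is to recognize $\Coeff_{q^0}\big[\Psi^{H_1,H_2}_{X,c_1}(L;\Lambda,\tau)\big]$ as precisely the sum of wallcrossing terms furnished by \remref{chi04}. By \remref{chi04}(2) we have $\chi^{X,H_1}_{c_1}(L)-\chi^{X,H_2}_{c_1}(L)=\sum_{\xi}\delta_\xi^X(L)$, the sum being over classes $\xi\in c_1+2H^2(X,\Z)$ separating the two chambers, and by \remref{delb}(2) each summand is $\delta_\xi^X(L)=\Coeff_{q^0}\big[\overline\Delta_\xi^X(L)\big]$. Since the whole assertion is an equality of $q^0$-coefficients, it suffices to establish the identity of formal Laurent-in-$q$, power-series-in-$\Lambda$ expressions
\[
\Psi^{H_1,H_2}_{X,c_1}(L;\Lambda,\tau)=\sum_{\xi}\overline\Delta_\xi^X(L),
\]
with $\xi$ ranging over the same chamber-separating classes, and then to apply $\Coeff_{q^0}$.

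For this identity I would expand the indefinite theta function termwise. The common analytic prefactor $\Lambda^2\widetilde\theta_4(h)^{(L-K_X)^2}\theta_4^{\sigma(X)}u'h^*$ is pulled out of $\Theta^{H_1,H_2}_{X,c_1,K_X}$ in the definition of $\Psi$ and appears verbatim in $\overline\Delta_\xi^X(L)$, so the claim reduces to matching the exponential summands. Writing the lattice $\Gamma=H^2(X,\Z)$ with the negative intersection form, so that $Q(\eta)=-\tfrac12\langle\eta,\eta\rangle$ and $\eta\cdot\zeta=-\langle\eta,\zeta\rangle$, I substitute $x=\tfrac{(L-K_X)h}{2\pi i}$ and $b=K_X$ into the defining sum over $\eta\in\Gamma+c_1/2$ and reindex by $\xi=2\eta\in c_1+2H^2(X,\Z)$. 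Using $q=e^{\pi i\tau/4}$ and $y=e^{h/2}$, this turns $e^{2\pi i\tau Q(\eta)}$ into $q^{-\xi^2}$, the factor coming from $x$ into $y^{\mp\langle\xi,L-K_X\rangle}$, and the factor coming from $b/2$ into $i^{\mp\langle\xi,K_X\rangle}$, exactly the data entering $\Delta_\xi^X(L)$. Here the congruence $\langle\xi,K_X\rangle\equiv\xi^2\pmod 2$, valid because $K_X$ is a characteristic element of $\Gamma$, is needed to reconcile the powers of $i$ with the factor $(-1)^{\xi^2}$ in $\overline\Delta_\xi^X(L)$.

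The indicator factor $\mu(\eta\cdot H_1)-\mu(\eta\cdot H_2)$ in $\Theta^{H_1,H_2}_{X,c_1,K_X}$ is the mechanism producing both the correct range of summation and the antisymmetrization. Grouping the pair $\{\eta,-\eta\}$, the relation $\mu(t)+\mu(-t)=1$ for $t\neq0$ shows this indicator is odd in $\eta$, so pairing the two summands converts $\tfrac12\big(\Delta_\xi^X(L)-\Delta_{-\xi}^X(L)\big)$ into $\overline\Delta_\xi^X(L)$, while the nonvanishing of the indicator selects exactly the classes with $\langle\xi,H_1\rangle$ and $\langle\xi,H_2\rangle$ of opposite sign, i.e.\ the chamber-separating classes of \remref{chi04}. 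Finally, convergence of $\Theta^{H_1,H_2}_{X,c_1,K_X}$ from \cite{GZ}, valid since ample classes satisfy $Q(H_i)<0$ and $H_i\cdot h<0$ and hence lie in $C_\Gamma$, together with \thmref{vanwall}, which forces $\delta_{\xi,d}^X(L)=0$ unless $-\xi^2\le d$ and thus leaves only finitely many $\xi$ contributing to each coefficient of $\Lambda^d$ at $q^0$, justifies interchanging $\Coeff_{q^0}$ with the sum and yields the stated equality. I expect the main obstacle to be not any individual step but the sign and normalization bookkeeping: correctly tracking the factor-of-two reindexing $\xi=2\eta$, the signs introduced by the negative intersection form inside each $\mu$, and the characteristic-element congruence, all of which must conspire to produce the precise sign in the statement.
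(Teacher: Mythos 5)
Your proposal takes essentially the same route as the paper's own proof: the paper likewise combines \thmref{wallcr} and \remref{chi04} with \remref{delb} to write $\chi^{X,H_2}_{c_1}(L)-\chi^{X,H_1}_{c_1}(L)=\sum_{\xi}\Coeff_{q^0}\big[\overline\Delta^X_\xi(L)\big]$ over the chamber-separating classes, and then identifies this sum termwise with $\Coeff_{q^0}\big[\Theta^{H_1,H_2}_{X,c_1,K_X}\big(\frac{(L-K_X)h}{2\pi i},\tau\big)\,\Lambda^2\widetilde\theta_4(h)^{(L-K_X)^2}\theta_4^{\sigma(X)}u'h^*\big]$ exactly by the reindexing $\xi=2\eta$, the $\mu$-indicator antisymmetrization, and the characteristic-element congruence that you describe. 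The sign and orientation bookkeeping you flag as the main hazard is indeed the only delicate point, and it is precisely the step the paper's proof also asserts without writing out in detail.
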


\begin{proof}
We write
$J:=\Lambda^2\widetilde\theta_4(h)^{(L-K_X)^2}\theta_4^\sigma(X)u'h^*.$
Now assume that $\xi$ is congruent to $c_1$ modulo $2H^2(X,\Z)$.
Then it follows that $\delta_{\xi,d}^X=0$ unless
$d+\xi^2\ge 0$ and $d$ is congruent to $-c_1^2$ modulo $4$, i.e.
unless $\xi$ is a class of type $(c_1,d)$.
Therefore \thmref{wallcr}, \remref{chi04} and \remref{delb} imply that
\begin{align*}
\chi^{X,H_2}_{c_1}(L)-\chi^{X,H_1}_{c_1}(L)&=\sum_{H_2\xi>0>H_1\xi} \Coeff_{q^0}\big[\overline \Delta^X_\xi(L)\big]\\
&=
\Coeff_{q^0}\Big[\sum_{H_2\xi>0>H_1\xi}
 i^{\<\xi,K_X\>}q^{-\xi^2}\big(e^{\<\frac{\xi}{2},L-K_X\>h}-(-1)^{\xi K_X}e^{\<-\frac{\xi}{2},L-K_X\>h}\big)J\Big]\\
&=\Coeff_{q^0}\Big[\Theta^{H_1,H_2}_{X,c_1,K_X}\Big(\frac{(L-K_X)h}{2\pi \sqrt{-1}},\tau\Big)J\Big]
\end{align*}
The sums are over $\xi\in 2H^2(X,\Z)+c_1$.
\end{proof}

We use \corref{thetawall} to extend the generating function $\chi^{X,H}_{c_1}(L)$ formally  to
$H\in S_L\cup C_L$.

\begin{Definition}\label{chiext}
Let $M$ be ample on $X$ and not on a wall of type $(c_1)$. Let $H\in S_X\cup C_X$. We put
$$\chi^{X,H}_{c_1}(L):=\chi^{X,M}_{c_1}(L)+\Coeff_{q^0} \big[\Psi^{M,H}_{X,c_1}(L;\Lambda,\tau)\big].$$
By the cocycle condition, the definition of $\chi^{X,M}_{c_1}(L)$  is independent of the choice of $H$.
Furthermore by  \corref{thetawall} this coincides with the previous definition in case $M$ is also ample and does not lie on a wall of type $(c_1)$.
\end{Definition}

\begin{Remark}\label{average}
Let $H$ be ample on $X$, possibly lying on a wall of type $(c_1,d)$, let $H_0$ be ample on $X$ in an adjacent chamber of type $(c_1,d)$, i.e. 
$H_0$ does not lie on a wall of type $(c_1,d)$ and there are no classes of type $(c_1,d)$ with $\<\xi, H_0\><0<\<\xi, H\>$.
Then the definition of $\Theta^{f,g}_{\Gamma,c,b}$ implies that 
$$\Coeff_{\Lambda^d}\big[\chi^{X,H}_{c_1}(L)\big]=\Coeff_{\Lambda^d}\big[\chi^{X,H_0}_{c_1}(L)\big]+\frac{1}{2}\sum_{\xi}\delta^X_{\xi,d}(L),$$
with $\xi$ running over all classes of type $(c_1,d)$ with $\<\xi, H_0\><0=\<\xi, H\>$.
\end{Remark}

\subsection{Extension of blowup formulas}
Now we will extend \lemref{blowsimple} to $\chi^{X,H}_{c_1}(L)$ for $H\in C_X\cup S_X$.
We will continue to denote  a class in $H^2(X,\Z)$ and its pullback to the blowup $\widehat X$ of $X$ in a point by the same letter.

\begin{Proposition} \label{blowgen}
Let $X$ be a rational surface. Let $H\in C_X\cup S_X$. Let $c_1\in H^2(X,\Z)$. Let $\widehat X$ be the blowup of $X$ in a general point, and $E$ the exceptional divisor. Assume 
$-K_{\widehat X} is ample.$
Let $L\in \Pic(X)$ with $\<L, c_1\>$ even.
\begin{enumerate}
\item $\chi^{\widehat X,H}_{c_1}(L)=\chi^{X,H}_{c_1}(L)$,
\item $\chi^{\widehat X,H}_{c_1+E}(L)=\Lambda \chi^{X,H}_{c_1}(L)$.
\end{enumerate}
\end{Proposition}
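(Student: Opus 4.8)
The plan is to reduce both generating functions to a common ample reference polarization using \defref{chiext} and the cocycle condition, and then to separate the statement into a base identity, coming from the geometric blowup formula \lemref{blowsimple}, and a compatibility of the theta-function wallcrossing $\Psi$ under blowup. Fix $M$ ample and generic on $X$ and set $\widehat M=M-\epsilon E$ with $\epsilon>0$ small, so that $\widehat M$ is ample on $\widehat X$ and lies in the chamber to which \lemref{blowsimple} applies. The pulled-back class $M$ lies on the nef boundary of $\widehat X$, hence in $C_{\widehat X}$, and routing the wallcrossing through it via the cocycle relation $\Psi^{\widehat M,H}_{\widehat X,c_1}(L)=\Psi^{\widehat M,M}_{\widehat X,c_1}(L)+\Psi^{M,H}_{\widehat X,c_1}(L)$ gives
\begin{align*}
\chi^{\widehat X,H}_{c_1}(L)-\chi^{X,H}_{c_1}(L)
&=\big(\chi^{\widehat X,M}_{c_1}(L)-\chi^{X,M}_{c_1}(L)\big)\\
&\quad+\Coeff_{q^0}\big[\Psi^{M,H}_{\widehat X,c_1}(L)-\Psi^{M,H}_{X,c_1}(L)\big],
\end{align*}
with the analogous expression (a factor $\Lambda$ on the second summand) for $c_1+E$. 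The routing through the pulled-back $M$ is essential because it puts \emph{both} polarizations in the image of $H^2(X,\Z)$.

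The main computation is the behaviour of $\Psi$ in that situation. Since $\Gamma_{\widehat X}=\Gamma_X\perp\Z E$ and the step-weights $\mu(\xi'\cdot M)-\mu(\xi'\cdot H)$ depend only on the $\Gamma_X$-component of $\xi'=\xi+mE$ (because $\<E,M\>=\<E,H\>=0$), the indefinite theta series factors orthogonally as $\Theta^{M,H}_{\widehat X,c_1,K_{\widehat X}}(x',\tau)=\Theta^{M,H}_{X,c_1,K_X}(x,\tau)\cdot\vartheta_E$, where $\vartheta_E=\sum_m q^{m^2}i^m y^{-m}$ is the one-dimensional theta along $E$, summed over even $m$ in the $c_1$ case and odd $m$ in the $c_1+E$ case. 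Using $K_{\widehat X}=K_X+E$, $\sigma(\widehat X)=\sigma(X)-1$ and $(L-K_{\widehat X})^2=(L-K_X)^2-1$, the remaining factors of $\Psi$ differ precisely by $\widetilde\theta_4(h)^{-1}\theta_4^{-1}$; one checks (exactly as in the proof of \remref{chi04}) that $\vartheta_E=\theta_4(h)$ for even $m$ and $\vartheta_E=\theta_1(h)$ for odd $m$, whence
\begin{align*}
\Psi^{M,H}_{\widehat X,c_1}(L)=\Psi^{M,H}_{X,c_1}(L),\qquad
\Psi^{M,H}_{\widehat X,c_1+E}(L)=\tfrac{\theta_1(h)}{\theta_4(h)}\,\Psi^{M,H}_{X,c_1}(L)=\Lambda\,\Psi^{M,H}_{X,c_1}(L).
\end{align*}
This makes the second summand above vanish (part (1)) or collapse to $\Lambda(\cdots)$ (part (2)), reducing everything to the base identity $\chi^{\widehat X,M}_{c_1}(L)=\chi^{X,M}_{c_1}(L)$, respectively $=\Lambda\chi^{X,M}_{c_1}(L)$.

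For the base identity I would first apply \lemref{blowsimple} to the genuinely ample pair $(\widehat M,M)$ — its hypothesis $\<L,\xi\>=0$ for vertical $\xi$ is automatic since $L$ is pulled back — obtaining $\chi^{\widehat X,\widehat M}_{c_1}(L)=\chi^{X,M}_{c_1}(L)$ and $\chi^{\widehat X,\widehat M}_{c_1+E}(L)=\Lambda\chi^{X,M}_{c_1}(L)$, and then pass from the ample $\widehat M$ to the pulled-back $M$. By \remref{average} the difference $\chi^{\widehat X,M}_{c_1}(L)-\chi^{\widehat X,\widehat M}_{c_1}(L)$ is a half wallcrossing over the \emph{vertical} classes $\xi'=mE$ on which $M$ sits, and by \thmref{vanwall} only $|m|\le 2$ can contribute. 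When $c_1$ is not divisible by $2$ no such vertical classes exist and the base identity is immediate; for $c_1=0$ in part (1) the contributions of $\pm 2E$ cancel by the antisymmetry $\overline\Delta^{\widehat X}_{-\xi'}(L)=-\overline\Delta^{\widehat X}_{\xi'}(L)$ of \remref{delb}.

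\textbf{The main obstacle} is the $c_1=0$ case combined with the nonstandard $\Lambda^4$-coefficient of $\chi^{X,H}_0(L)$. Here \lemref{blowsimple} only controls the coefficients of $\Lambda^d$ for $d>4$, so the $\Lambda^4$-term is supplied by \defref{KdonGen} through the correction $\<L,K_X\>-\tfrac{K_X^2+L^2}{2}-1$ added to $\chi(M^{\widehat X}_{H-\epsilon E}(E,5))$. For part (2) with $c_1=0$ the vertical classes $\xi'=\pm E$ do \emph{not} cancel, and the delicate step is to verify that their half wallcrossing contribution at $\Lambda^5$ reproduces exactly this correction constant, so that $\chi^{\widehat X,M}_{E}(L)=\Lambda\chi^{X,M}_0(L)$ holds on the nose. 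I expect this boundary bookkeeping — matching the explicit $\delta^{\widehat X}_{\pm E}(L)$ against the tailored constant built into $\chi^{X,H}_0(L)$ — to be the one genuinely computational point; everything else is either the orthogonal factorization above or a formal consequence of \defref{chiext}. The case $H\in S_X$ is handled uniformly, since the factorization and \defref{chiext} are valid throughout $S_X\cup C_X$.
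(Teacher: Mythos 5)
Your reduction to a generic ample reference polarization $M$ via \defref{chiext} and the cocycle relation, together with the orthogonal factorization $\Theta^{M,H}_{\widehat X,c_1,K_{\widehat X}}=\Theta^{M,H}_{X,c_1,K_X}\,\theta_4(h)$ (resp.\ $\theta_1(h)$ for $c_1+E$), giving $\Psi^{M,H}_{\widehat X,c_1}=\Psi^{M,H}_{X,c_1}$ and $\Psi^{M,H}_{\widehat X,c_1+E}=\Lambda\Psi^{M,H}_{X,c_1}$, is exactly the second half of the paper's proof and is correct. The genuine gap is in your base identity at $M$ when $c_1\in 2H^2(X,\Z)$. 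For part (1) with $c_1=0$ you claim the vertical contributions of $\pm 2E$ cancel by antisymmetry; but in \remref{average} the sum runs only over classes $\xi$ with $\<\xi,H_0\><0=\<\xi,H\>$, i.e.\ one representative of each wall, never both $\pm\xi$. So passing from $\widehat M=M-\epsilon E$ to the pulled-back $M$ contributes $\frac{1}{2}\delta^{\widehat X}_{-2E}(L)=-\frac{1}{2}\delta^{\widehat X}_{2E}(L)$, which is nonzero in general: the paper computes $\delta^{\widehat \P^2}_{2E}(H)=\Lambda^4$ in the proof of \thmref{P22}. Moreover your other input, that \lemref{blowsimple} gives $\chi^{\widehat X,\widehat M}_{0}(L)=\chi^{X,M}_{0}(L)$ as generating functions, is also false at the coefficient of $\Lambda^4$: for $c_1=0$ the lemma controls only $d>4$, and the constants built into \defref{KdonGen} are computed on different surfaces ($K_{\widehat X}^2=K_X^2-1$). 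Concretely, $\chi^{\widehat\P^2,H-\epsilon E}_{0}(H)=\frac{1}{(1-\Lambda^4)^3}-1-\frac{17}{2}\Lambda^4$ while $\chi^{\P^2,H}_{0}(H)=\frac{1}{(1-\Lambda^4)^3}-1-9\Lambda^4$; the discrepancy is exactly $\frac{1}{2}\delta^{\widehat\P^2}_{2E}(H)$. Your two errors therefore compensate, but proving that they compensate requires precisely the explicit evaluation of $\delta^{\widehat X}_{2E}(L)$ that your cancellation argument was meant to avoid; as written, part (1) for $c_1=0$ is not established.

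A related, though acknowledged, incompleteness sits at the point you flag in part (2): your bookkeeping is correctly structured, but the identity you only ``expect'' — that the half wallcrossing over $\pm E$ reproduces the constant $\<L,K_X\>-\frac{K_X^2+L^2}{2}-1$ of \defref{KdonGen} — amounts to the explicit formula $\delta^{\widehat X}_{E}(L)=\big(-2\<K_X,L\>+K_X^2+L^2+2\big)\Lambda^5$, and this Fourier-expansion computation is the actual content of the paper's argument there. Note also how the paper sidesteps your part-(1) difficulty entirely: having proved (2) for $c_1=0$ and (1) for classes not divisible by $2$, it blows up a second general point $\widetilde X\to\widehat X$ and chains $\chi^{X,H_0}_{0}(L)=\frac{1}{\Lambda}\chi^{\widehat X,H_0}_{E}(L)=\frac{1}{\Lambda}\chi^{\widetilde X,H_0}_{E}(L)$, then applies (2) once more to contract $E$; no term $\delta_{2E}$ is ever evaluated. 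Adopting this bootstrap, or else carrying out the $\delta^{\widehat X}_{2E}(L)$ computation and checking that the two discrepancies above cancel, is what is needed to close the gap.
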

\begin{proof}
We can choose $H_0\in C_X$, which does not lie on any wall of type $(c_1)$ on $X$. In case $X=\P^2$ we take $H_0$ the hyperplane class. If $b_2(X)>1$ we can take $H_0$ general in $C_X$.
For each $d>0$ and all $c_1\in H^2(X,\Z)$ we choose  $\epsilon_d>0$, such that there is no class $\xi$ of type $(c_1,d)$ or of type $(c_1+E,d+1)$  on $\widehat X$ with
 $\<\xi, H_0\><0<\<\xi,H_0-\epsilon_d E\>$.
For simplicity we will write $M^{\widehat X}_{H_{0+}}(c_1,d):=M^{\widehat X}_{H_0-\epsilon_d}(c_1,d)$, $M^{\widehat X}_{H_{0+}}(c_1+E,d+1):=M^{\widehat X}_{H_0-\epsilon_d}(c_1+E,d+1)$.
Then by \lemref{blowsimple} we have for $c_1\not \in 2H^2(X,\Z)$ or $d>4$ that
\begin{equation}\label{blowsimform}
\begin{split}
\chi(M^{\widehat X}_{H_{0+}}(c_1,d),\mu(L))&=\chi(M^{X}_{H_0}(c_1,d),\mu(L)), \\ \chi(M^{\widehat X}_{H_{0+}}(c_1+E,d+1),\mu(L))&=\chi(M^{X}_{H_0}(c_1,d),\mu(L)).
\end{split}
\end{equation}
In case $c_1\not\in 2H^2(X,\Z)$, we see that $H_0$ does not lie on a wall of type $(c_1,d)$ or $(c_1+E,d)$, on $\widehat X$, and we get
$$\chi(M^{\widehat X}_{H_{0+}}(c_1,d),\mu(L))=\Coeff_{\Lambda^d}\big[\chi^{\widehat X,H_0}_{c_1}(L)\big],\quad
\chi(M^{\widehat X}_{H_{0+}}(c_1,d),\mu(L))=\Coeff_{\Lambda^{d+1}}\big[\chi^{\widehat X,H_0}_{c_1+E}(L)\big].$$
Thus (1) and (2)   follow for $H_0$ in case $c_1\not\in 2H^2(X,\Z)$.

Now we deal with the case $c_1=0$.
By \eqref{eq:Kdon0} and \eqref{blowsimform} and \remref{average} we get
\begin{equation}\label{Ewall}\begin{split}
\chi^{X,H_0}_{0}(L)&=\sum_{d>0}\chi(M^{\widehat X}_{H_{0+}}(E,d+1),\mu(L))\Lambda^d+(\<L,K_X\>-\frac{K_X^2+L^2}{2}-1)\Lambda^4\\
&=\frac{1}{\Lambda}\left(\chi^{\widehat X,H_0}_{E}(L)+\frac{1}{2}\sum_{\xi}\delta_{\xi}^{\widehat X}(L)\right)+(\<L,K_X\>-\frac{K_X^2+L^2}{2}-1)\Lambda^4,
\end{split}
\end{equation}
 where $\xi$ runs through all classes $\xi$ of type $(E,d+1)$ on $\widehat X$ with $\<\xi,H_0\>=0$ and $\<\xi,E\> <0$; in other words $\xi=(2n-1)E$, with $n>0$.
By \thmref{vanwall} we get that $\delta_{nE}^{\widehat X}(L)=0$ unless $n^2\le n+2$.
Thus   in \eqref{Ewall}  we only have to consider $\xi=E$, and \thmref{vanwall}
gives that $\delta_{E}^{\widehat X,d}(L)=0$ for $d> 5$. Explicit computations with the Fourier developments of the functions occurring in \eqref{Delbar} give
$$\delta_{E}^{\widehat X}(L)=(-\<2K_{\widehat X},L\> + (K_{\widehat X+L^2}^2 + 3))\Lambda^5=(-\<2K_{X},L\> + (K_{X}^2 +L^2+ 2))\Lambda^5.$$
This shows (2) for $H_0$ and $c_1=0$.

Let $\widetilde X$ be the blow up of $\widehat X$ in a general point. Then we get  by (1) in case $c_1\not\in 2H^2(X,\Z)$ and by (2) that
$$\chi^{X,H_0}_{0}(L)=\frac{1}{\Lambda}\chi^{\widehat X,H_0}_{E}(L)=\frac{1}{\Lambda}\chi^{\widetilde X,H_0}_{E}(L)=\chi^{\widetilde X,H_0}_{0}(L).$$
Now for general $H$ in $C_X\cup S_X$  \defref{chiext} gives
$$\chi^{X,H}_{c_1}(L)=\chi^{X,H_0}_{c_1}(L)+\Coeff_{q^0}\big[\Psi^{H_0,H}_{X,c_1}(L,\Lambda,\tau)\big], \quad \chi^{\widehat X,H}_{c_1}(L)=\chi^{\widehat X,H_0}_{c_1}(L)+\Coeff_{q^0}\big[\Psi^{H_0,H}_{\widehat X,c_1}(L,\Lambda,\tau)\big].$$
By $H^2(\widehat X,\Z)=H^2(X,\Z)+\Z E$, we find that
$$\Theta^{H_0,H}_{\widehat X,c_1,K_{\widehat X}}\left(\frac{1}{2\pi i} (L-K_{\widehat X})h,\tau\right)=\Theta^{H_0,H}_{X,c_1,K_X}\left(\frac{1}{2\pi i} (L-K_X)h,\tau\right)\theta_4(h).$$
As $(L-K_{\widehat X})^2=(L-K_{X})^2-1$, and $\sigma(\widehat X)=\sigma(X)-1$, we get by definition
$$\Psi^{H_0,H}_{\widehat X,c_1}(L,\Lambda,\tau)=\Psi^{H_0,H}_{X,c_1}(L,\Lambda,\tau)\frac{\theta_4(h)}{\widetilde\theta_4(h)\theta_4}=\Psi^{H_0,H}_{X,c_1}(L,\Lambda,\tau).$$
Similarly we get the following
$$ \chi^{\widehat X,H}_{c_1+E}(L)=\chi^{\widehat X,H_0}_{c_1+E}(L)+\Coeff_{q^0}\big[\Psi^{H_0,H}_{\widehat X,c_1+E}(L,\Lambda,\tau)\big].$$
By $H^2(\widehat X,\Z)=H^2(X,\Z)+\Z E$, we find that
$$\Theta^{H_0,H}_{\widehat X,c_1+E,K_{\widehat X}}\left(\frac{1}{2\pi i} (L-K_{\widehat X})h,\tau\right)=\Theta^{H_0,H}_{X,c_1,K_X}\left(\frac{1}{2\pi i} (L-K_X)h,\tau\right)\theta_1(h).$$
Thus we get  by definition
$$\Psi^{H_0,H}_{\widehat X,c_1+E}(L,\Lambda,\tau)=\Psi^{H_0,H}_{X,c_1+E}(L,\Lambda,\tau)\frac{\theta_1(h)}{\widetilde\theta_4(h)\theta_4}=\Psi^{H_0,H}_{X,c_1}(L,\Lambda,\tau)\frac{\theta_1(h)}{\theta_4(h)},$$
and use $\Lambda=\frac{\theta_1(h)}{\theta_4(h)}$. This shows the result.
\end{proof}

\subsection{Modularity properties}
We want to show that under suitable assumptions the difference of the $K$-theoretic Donaldson invariants between two points $F,G\in S_X$ vanishes.
For this we first show that $\Psi^{F,G}_{X,c_1}(L;\Lambda,\tau)$ has a power series development in $\Lambda$, whose coefficients are modular forms of
weight $2$ on $\Gamma^0(4)$. The result is then proven by replacing the
$q$-development at the cusp $\infty$ by that at the other two cusps of
$\H/\Gamma^0(4)$.

{\bf Convention:} In this whole section $A$ will always stand for a matrix $A=
\left(\begin{matrix} a&b\\ c&d\end{matrix}\right)\in SL(2,\Z)$.


\begin{Lemma}\label{modular}
Let $F,G\in S_X$.
Then
$$\Psi^{F,G}_{X,c_1}(L;\Lambda,\tau)\in M_2^!(\Gamma^0(4))[[\Lambda]].$$
\end{Lemma}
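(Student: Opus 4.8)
The plan is to show that $\Psi^{F,G}_{X,c_1}(L;\Lambda,\tau)$ is a Jacobi form of weight $2$ and index $0$ for $\Gamma^0(4)$ in the variables $(\tau,h)$. Since by the previous section $\Lambda=\theta_1(h)/\theta_4(h)$ is itself a Jacobi form of weight $0$ and index $0$, the coefficients $g_d(\tau)$ in the expansion $\Psi=\sum_d g_d(\tau)\Lambda^d$ will then automatically be weight-$2$ weakly holomorphic modular forms on $\Gamma^0(4)$. First I would isolate the indefinite theta factor by writing $\Psi=\bigl(\Theta^{F,G}_{X,c_1,K_X}\theta_3^{\sigma(\Gamma)}\bigr)\cdot R$, where $R:=\Lambda^2\widetilde\theta_4(h)^{(L-K_X)^2}(\theta_3\theta_4)^{\sigma(X)}u'h^*$ and $\sigma(\Gamma)=-\sigma(X)$, and then substitute the explicit expressions $u'=2\theta_4^8/(\theta_2^2\theta_3^2)$ and $h^*=4i\Lambda/(\theta_2\theta_3 M)$ from \eqref{hstar}, so that $R$ becomes an explicit product of the Nullwerte $\theta_2,\theta_3,\theta_4$ with the weight-$0$ functions $\Lambda$ and $M$. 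The factor $\Theta^{F,G}_{X,c_1,K_X}\theta_3^{\sigma(\Gamma)}$ is exactly the object whose modular behaviour is recorded in \thmref{thetatrans}(4). (Note the statement has content only when $b_2(X)\ge 2$, since otherwise $S_X=\emptyset$.)

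Next I would verify the index and the weight separately. For the index: the elliptic variable enters $\Theta^{F,G}_{X,c_1,K_X}$ only through $x=(L-K_X)h/(2\pi i)$, so along this line its index equals $Q(L-K_X)=-\tfrac12(L-K_X)^2$, which exactly cancels the index $+\tfrac12(L-K_X)^2$ carried by $\widetilde\theta_4(h)^{(L-K_X)^2}$; all other factors ($\Lambda$, $M$, the Nullwerte, $u'$) are independent of $h$ or have index $0$. This index-$0$ statement is another way of saying, as is already implicit in \defref{wallcrossterm}, that $\Psi\in\C((q))[[\Lambda]]$, i.e.\ that its $\Lambda$-coefficients depend on $\tau$ alone. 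For the weight: by \thmref{thetatrans}(4) the factor $\Theta^{F,G}_{X,c_1,K_X}\theta_3^{\sigma(\Gamma)}$ has a definite modular weight, and adding the weights of the Nullwerte in $R$ (each $\theta_i$ of weight $\tfrac12$, so $u'$ of weight $2$ and the $\theta_2\theta_3$ inside $h^*$ of weight $1$), while using that $\Lambda$ and $M$ have weight $0$, one finds that the total weight of $\Psi$ is $2$, the $\sigma(X)$-dependence recombining between $\theta_3^{\sigma(\Gamma)}$ and the powers of $\theta_3,\theta_4$ in $R$.

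The heart of the argument is the $\Gamma^0(4)$-covariance, which I would check on the generators $\pm T^4$ and $\pm TST$. The transformation under $T^4$ is elementary: from the $q$-series one sees that $\theta_3,\theta_4$ are invariant while $\theta_1,\theta_2$ pick up signs, and the net sign on $\Psi$ is controlled by the parity of the $\Lambda$-exponents, i.e.\ by $c_1^2\bmod 2$ (\remref{delb}). The transformation under $TST$ is the main obstacle. Here I would compose the $|_1S$ and $|_1T$ formulas of \thmref{thetatrans}(4): under this composite the auxiliary data $(c,b)=(c_1,K_X)$ of the indefinite theta are shifted by multiples of the characteristic element $w=K_X$ and return to $(c_1,K_X)$ modulo $2H^2(X,\Z)$ (up to a sign governed by the parity factors $(-1)^{-bc/2}$ and $(-1)^{3Q(c)/2-cw/2}$), while $\theta_3^{\sigma(\Gamma)}$ is permuted into $\theta_2^{\sigma(\Gamma)}$ or $\theta_4^{\sigma(\Gamma)}$. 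Simultaneously the Nullwerte in $R$ undergo the same permutation under the classical $S$- and $T$-transformations, and the delicate point is to check that these two permutations, together with all the accumulated roots of unity and the characteristic-element signs, cancel exactly so that $\Psi|_2(TST)=\Psi$.

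Finally I would confirm that the $g_d$ are weakly holomorphic rather than holomorphic by bounding the principal parts of the $q$-expansions at the three cusps. At $\infty$ this follows from \lemref{Rregular} together with $u'\in q^{-2}\Q[[q^4]]$, which bound the pole order in $q$; at the cusps $0$ and $2$ the same transformation laws used for $TST$ express $\Psi$ through $\theta_2$- and $\theta_3$-type expansions with likewise finite principal parts. Combining the covariance, the weight count, the index-$0$ expansion, and the cusp estimates gives $g_d\in M_2^!(\Gamma^0(4))$ for every $d$, which is the assertion. I expect essentially all of the difficulty to be concentrated in the bookkeeping of signs and theta-Nullwert permutations in the $TST$ step.
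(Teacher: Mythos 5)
Your outline follows the same route as the paper's own proof: the covariance of the indefinite theta factor under $\Gamma^0(4)$ extracted from \thmref{thetatrans}(4) (this is exactly the paper's \lemref{jacobi}), the classical identities for $\widetilde\theta_4(h)$, $u$ and $\theta_2\theta_3$ from Akhiezer, a check on the generators $T^4$ and $TST$ of $\Gamma^0(4)$, the index cancellation between $\Theta$ evaluated along $x=(L-K_X)h/(2\pi i)$ and $\widetilde\theta_4(h)^{(L-K_X)^2}$, and finally holomorphy on $\H$ together with meromorphy at the cusps for the $\Lambda$-coefficients. All of these ingredients appear, in equivalent form, in the paper.

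There is, however, a genuine gap in your pivotal ``transfer'' step, and it sits precisely where the paper does its real work. Neither $\Psi^{F,G}_{X,c_1}$ (viewed as a function of $(h,\tau)$) nor $\Lambda$ is a Jacobi form with \emph{trivial} multiplier on $\Gamma^0(4)$: under $T^4$ one has $q\mapsto -q$, hence $\theta_1(h)\mapsto -\theta_1(h)$ while $\theta_4(h)\mapsto\theta_4(h)$, so $\Lambda\mapsto -\Lambda$ and $\Psi\mapsto(-1)^{c_1^2}\Psi$; for general $A\in\Gamma^0(4)$ the multipliers are $(-1)^{b/4}$ and $(-1)^{c_1^2b/4}$ respectively. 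Consequently the coefficients $g_d$ a priori transform with the character $(-1)^{(c_1^2+d)b/4}$, and they are honest weight-$2$ forms only because the expansion is supported in degrees $d\equiv c_1^2\pmod 2$. You gesture at this (``the net sign is controlled by the parity of the $\Lambda$-exponents''), but it contradicts your opening claim that $\Lambda$ is a weight-$0$, index-$0$ Jacobi form and that the transfer is ``automatic'': as written, your step one (a trivial-character Jacobi form) is false whenever $c_1^2$ is odd, and step two fails without the parity argument. The paper sidesteps this by never passing through trivial-character Jacobi forms: it takes $(\Lambda,\tau)$ as the independent variables, proves $h(\Lambda,A\tau)=(-1)^{b/4}h(\Lambda,\tau)/(c\tau+d)$ from the transformation of $\theta_2\theta_3$ and \eqref{hhh}, and absorbs the sign via the oddness $F(-z,\tau)=-(-1)^{c_1^2}F(z,\tau)$ of the theta factor, so that the composite $F\bigl(h(\Lambda,\tau),\tau\bigr)\cdot\Lambda^2u'h^*$ is directly of weight $2$ with trivial character. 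Your argument is repairable by the same parity mechanism, but it must be built into the transfer rather than mentioned as an aside. A secondary caveat: with your normalization $\sigma(\Gamma)=-\sigma(X)$ and \thmref{thetatrans}(4) giving weight $1$ to $\Theta\theta_3^{\sigma(\Gamma)}$, your weight count yields $2+\sigma(X)$, not $2$; the paper instead works throughout with $\Theta\theta_4^{\sigma(X)}$ (\lemref{jacobi}), whose weight-$1$ transformation combined with the weight $1$ of $\Lambda^2u'h^*$ (from \eqref{hstar}) gives total weight $2$ for every rational $X$, not only those with $\sigma(X)=0$.
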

In order to prove \lemref{modular}, we first study the transformation
behaviour of $\Theta^{F,G}_{X,c_1,K_X}$ under $\Gamma^0(4)$.

\begin{Lemma}\label{jacobi}
\begin{enumerate}
\item $(\Theta^{F,G}_{X,c_1,K_X}\theta_{4}^{\sigma(X)})|_1{A}=
(-1)^{(c_1^2) b/4}\Theta^{F,G}_{X,c_1,K_X}\theta_{4}^{\sigma(X)}$
for $A\in \Gamma^0(4)$.
\item $(\Theta^{F,G}_{X,c_1,K_X}\theta_{4}^{\sigma(X)})|_1{S}=
i^{\<c_1,K_X\> }\Theta^{F,G}_{X,K_X,c_1}\theta_{2}^{\sigma(X)}.$
\end{enumerate}
\end{Lemma}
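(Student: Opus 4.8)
The plan is to deduce both transformation laws from the general laws of Theorem \ref{thetatrans}(4), which are stated for the combination $\Theta^{F,G}_{X,c,b}\theta_3^{\sigma(\Gamma)}$, by converting from the $\theta_3$-normalization used there to the $\theta_4$-normalization appearing here. Two mechanisms drive everything. First, under the generators of $SL(2,\Z)$ the Nullwerte are permuted: $T$ interchanges $\theta_3\leftrightarrow\theta_4$ and fixes $\theta_2$ up to a root of unity, while $S$ interchanges $\theta_2\leftrightarrow\theta_4$ and fixes $\theta_3$, all up to the common automorphy factor $\sqrt{-i\tau}$. Second, straight from the series one reads off the $b$-periodicity $\Theta^{F,G}_{X,c,b+2\gamma}=(-1)^{c\cdot\gamma}\Theta^{F,G}_{X,c,b}$ for $\gamma\in\Gamma$. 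Throughout I use that $w=K_X$ is a characteristic element, i.e. $K_X\cdot x\equiv x\cdot x\pmod 2$ for all $x\in\Gamma$, where $\cdot$ is the negative intersection form.

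I would do part (2) first, as it is the cleaner. Writing $\theta_4^{\sigma}=(\theta_4/\theta_3)^{\sigma}\theta_3^{\sigma}$ and noting that $\theta_4/\theta_3$ is a weight-$0$ modular function independent of $x$, the weight-$1$ slash distributes as
\[(\Theta^{F,G}_{X,c_1,K_X}\theta_4^{\sigma})|_1S=\big[(\Theta^{F,G}_{X,c_1,K_X}\theta_3^{\sigma})|_1S\big]\cdot\big[(\theta_4/\theta_3)^{\sigma}|_0S\big].\]
The first bracket is the $S$-line of Theorem \ref{thetatrans}(4), equal to $(-1)^{-K_X\cdot c_1/2}\Theta^{F,G}_{X,K_X,c_1}\theta_3^{\sigma}$; the second equals $(\theta_2/\theta_3)^{\sigma}$ since $S$ sends $\theta_4\mapsto\theta_2$, $\theta_3\mapsto\theta_3$. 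The $\theta_3^{\sigma}$ factors cancel, leaving $(-1)^{-K_X\cdot c_1/2}\Theta^{F,G}_{X,K_X,c_1}\theta_2^{\sigma}$. Finally $K_X\cdot c_1=-\<c_1,K_X\>$, and with the conventions $(-1)^x=e^{\pi ix}$, $i^x=e^{\pi ix/2}$ the prefactor becomes exactly $i^{\<c_1,K_X\>}$, which is the claim.

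For part (1) I would use that $\Gamma^0(4)$ is generated by $\pm T^4$ and $\pm TST$ and that the slash is a right action, so it suffices to verify the stated factor on these generators. For $T^4$ I compute directly: under $\tau\mapsto\tau+4$ the $\xi$-term of $\Theta^{F,G}_{X,c_1,K_X}$ acquires $e^{8\pi iQ(\xi)}=(-1)^{c_1\cdot c_1}=(-1)^{c_1^2}$ (the lattice part of $Q(\xi)$ contributing an integer), while $\theta_4$ is invariant under $\tau\mapsto\tau+4$; this gives the predicted $(-1)^{c_1^2}=(-1)^{c_1^2\cdot 4/4}$. For $TST=\left(\begin{smallmatrix}1&0\\1&1\end{smallmatrix}\right)$, whose upper-right entry is $0$, I must prove invariance. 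I would derive, by the same factoring applied to the $T$-line of Theorem \ref{thetatrans}(4), the intermediate identity
\[(\Theta^{F,G}_{X,c_1,K_X}\theta_4^{\sigma})|_1T=(-1)^{3Q(c_1)/2-c_1\cdot K_X/2}\,\Theta^{F,G}_{X,c_1,2K_X-c_1}\theta_3^{\sigma},\]
and then use $b$-periodicity to rewrite $\Theta^{F,G}_{X,c_1,2K_X-c_1}=(-1)^{c_1\cdot(K_X-c_1)}\Theta^{F,G}_{X,c_1,c_1}$, where the sign is $1$ because $K_X\cdot c_1\equiv c_1\cdot c_1\pmod 2$. Writing $TST=S^{-1}T^{-1}S$ and chaining this $T$-law with the $S$-law of part (2) (together with the $T^2$- and $T^{-1}S$-lines of Theorem \ref{thetatrans}(4)), the accumulated roots of unity collapse, yielding invariance; the sign contributions of $-I$ are governed by the parity relation Theorem \ref{thetatrans}(3).

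The main obstacle is, as always with these theta identities, the exact bookkeeping of roots of unity, and I expect three points to require genuine care. (i) The signature: Theorem \ref{thetatrans} is phrased with $\sigma(\Gamma)$, the signature of $\Gamma=H^2(X,\Z)$ with the negative intersection form, whereas the statement uses $\sigma(X)$; since $\sigma(\Gamma)=-\sigma(X)$ one must track the power of $\theta_4$ carefully (this is invisible for the ruled surfaces, where $\sigma=0$, but matters for $\P^2$). (ii) The role of $-I\in\Gamma^0(4)$: for a weight-$1$ object this enters through Theorem \ref{thetatrans}(3) and the congruence $c_1\cdot K_X\equiv c_1^2\pmod 2$, so that $(-1)^{c_1^2b/4}$ should be read as a multiplier system once $-I$ is included. (iii) The consistent interpretation of half-integer exponents via $(-1)^x=e^{\pi ix}$ and $i^x=e^{\pi ix/2}$, which is precisely what makes the prefactor in part (2) come out as $i^{\<c_1,K_X\>}$ rather than its inverse. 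Everything else reduces to routine manipulation of the theta series and of the laws already recorded in Theorem \ref{thetatrans}.
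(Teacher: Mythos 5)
Your proposal is correct, and it rests on the same pillars as the paper's own proof: systematic use of the transformation laws of \thmref{thetatrans}(4), the generator presentation of $\Gamma^0(4)$ by $\pm T^4$, $\pm TST$, and the fact that $K_X$ is a characteristic element. The execution, however, is genuinely different. The paper converts between the $\theta_3$- and $\theta_4$-normalizations by inverting the $T$-line of \thmref{thetatrans}(4), i.e.\ writing $\Theta^{F,G}_{X,c_1,K_X}\theta_4^{\sigma(X)}$ as a sign times $(\Theta^{F,G}_{X,c_1,c_1}\theta_3^{\sigma(X)})|_1T$; after that, every computation ($T^2$ and hence $T^4$, then $TST$ by reducing $T^2ST\to ST\to T$, then $S$ via the $T^{-1}S$-line applied with $(c,b)=(c_1,c_1)$) is a word in the four listed identities and nothing else. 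You instead factor off the weight-zero ratio $(\theta_4/\theta_3)^{\sigma(X)}$, use the permutation of the Nullwerte under $S$ and $T$, and absorb the index shift $b\mapsto b-c_1+K_X$ by the periodicity $\Theta_{X,c,b+2\gamma}=(-1)^{c\cdot\gamma}\Theta_{X,c,b}$ together with $c_1\cdot K_X\equiv c_1\cdot c_1 \pmod 2$; you also verify $T^4$ directly on the $q$-series rather than through $T^2$, and you apply the $S$-line with $(c,b)=(c_1,K_X)$, which makes part (2) a one-line computation (the paper needs the extra detour through $(\Theta^{F,G}_{X,c_1,c_1}\theta_3^{\sigma(X)})|_1T^{-1}S$ and a final application of the characteristic property). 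Your route buys exactly this shortcut and a clean isolation of the normalization issue in a single weight-zero factor; the paper's route buys that it never needs the auxiliary periodicity statement or the Nullwerte transformations beyond what \thmref{thetatrans}(4) already encodes. Two remarks. First, the $TST$ step is the only place where you gesture at the computation instead of doing it; it does close up, and in fact more simply than via $S^{-1}T^{-1}S$: chaining your intermediate $T$-identity with the $S$-line at $(c_1,c_1)$ and then the $T$-line at $(c_1,c_1)$ gives total sign $(-1)^{c_1\cdot c_1-c_1\cdot K_X}=1$, which is precisely where the characteristic property of $K_X$ enters, so that short computation should be written out. Second, your caveat about $-I$ is correct and is a point the paper passes over in silence: by the parity statement \thmref{thetatrans}(3) the multiplier of $-I$ is $(-1)^{c_1^2}$ rather than $(-1)^{c_1^2\cdot 0/4}=1$, so for $c_1^2$ odd statement (1) holds literally only on the index-two subgroup generated by $T^4$ and $TST$; this is harmless in the sequel, since the series $\Psi^{F,G}_{X,c_1}$ used in \lemref{modular} has weight two, but reading $(-1)^{c_1^2 b/4}$ as a multiplier-system convention, as you propose, is the more careful formulation.
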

\begin{proof}
By using the identities \eqref{thetajacobi} systematically we get

\begin{align*}
(\Theta_{X,c_1,K_X}^{F,G}\theta_{4}^{\sigma(X)})|_1T^2&=(-1)^{\<c_1^2\>/4-\<c_1,K_X\>/2}(\Theta_{X,c_1,c_1}^{F,G}\theta_{3}^{\sigma(X)})|_1T\\&=
(-1)^{-\<c_1^2\>/2}\Theta_{X,c_1,K_X}^{F,G}\theta_{4}^{\sigma(X)},
\end{align*}
In particular $(\Theta_{X,c_1,K_X}^{F,G}\theta_{4}^{\sigma(X)})|_1T^4=
(-1)^{c_1^2}\Theta_{X,c_1,K_X}^{F,G}\theta_{4}^{\sigma(X)}$.
 Similarly we get
\begin{align*}
(\Theta_{X,c_1,K_X}^{F,G}\theta_{4}^{\sigma(X)})|_1TST&=
(-1)^{c_1^2/4-\<c_1,K_X\>/2}(\Theta_{X,c_1,c_1}^{F,G}\theta_{3}^{\sigma(X)})|_1ST\\&=
(-1)^{3c_1^2/4-\<c_1,K_X\>/2} (\Theta_{X,c_1,c_1}^{F,G}\theta_{3}^{\sigma(X)})|_1T\\&=\Theta_{X,c_1,K_X}^{F,G}\theta_{4}^{\sigma(X)}.
\end{align*}
The last  two formulas imply  (1). Finally
\begin{align*}
(\Theta_{X,c_1,K_X}^{F,G}\theta_{4}^{\sigma(X)})|_1S&=
(-1)^{c_1^2/4-\<c_1,K_X\>/2}(\Theta^{F,G}_{X,c_1,c_1}\theta_{3}^{\sigma(X)})|_1 T^{-1}S\\&=
(-1)^{c_1^2-\<c_1,K_X\>/2}\Theta^{F,G}_{X,K_X,c_1}\theta_{2}^{\sigma(X)}=(-1)^{\<c_1,K_X\>/2}\Theta^{F,G}_{X,K_X,c_1}\theta_{2}^{\sigma(X)},
\end{align*}
where the last equality is because $K_X$ is characteristic, i.e. $(-1)^{c_1^2}=(-1)^{\<c_1,K_X\>}$.
\end{proof}

\begin{pf*}{Proof of \lemref{modular}}
By \cite[\S 22]{Ak} we have
\begin{equation}\label{th44}
\begin{split}
\widetilde \theta_4(z)|_{0}S&=\frac{\theta_2(z)}{\theta_2},\quad \frac{\theta_3(z)}{\theta_3}|_{0}S=\frac{\theta_3(z)}{\theta_3},\quad
\widetilde \theta_4(z)|_{0}T=\frac{\theta_3(z)}{\theta_3},\\
\widetilde \theta_4(z)|_{0}T^2&=\widetilde \theta_4(z),\quad
\widetilde \theta_4(z)|_{0}TST=\frac{\theta_3(z)}{\theta_3}|_0ST=\frac{\theta_3(z)}{\theta_3}|_0T=\widetilde \theta_4(z).
\end{split}
\end{equation}
In particular  $\widetilde\theta_4(h)|_0 A=\widetilde \theta_4(h)$ for $A\in \Gamma^0(4)$.
Writing
$$F(z,\tau):=\Theta^{G,F}_{X,c_1,K_X}\left(\frac{(L-K_X)z}{2\pi i},\tau\right)\theta_{4}^{\sigma(X)}\widetilde\theta_4(z)^{(L-K_X)^2},$$
we get by \lemref{jacobi} that
\begin{equation}\label{Ftrans}
F\Big(\frac{z}{c\tau+d},A\tau\Big)=
(-1)^{\<c_1^2\>b/4}(c\tau+d)F(z,\tau), \quad A\in \Gamma^0(4).
\end{equation}
Again by \cite[\S 22]{Ak} we see that $u=-\frac{\theta_2^2}{\theta_3^2}-\frac{\theta_3^2}{\theta_2^2}$ is a modular function on $\Gamma^0(4)$, and by definition it is clear that it is holomorphic on $\H$. We also see from \cite[\S 22]{Ak} that
$\theta_{2}\theta_{3}$ transforms under $A\in \Gamma^0(4)$ according to
$\theta_{2}(A\tau)\theta_{3}(A\tau)
=(-1)^{b/4}(c\tau+d)\theta_{2}(\tau)\theta_{3}(\tau)$.
Thus by \eqref{hhh}
we see that
$h$ transforms under $A\in \Gamma^0(4)$ by
\begin{equation}\label{hG}
h(\Lambda,A\tau)=(-1)^{b/4}\frac{h(\Lambda,\tau)}{c\tau+d}.
\end{equation}
Thus  by \eqref{Ftrans} we get for $A\in \Gamma^0(4)$ that
\begin{equation}\label{Gtrans}
\begin{split}
F\big(h(\Lambda,A\tau),A\tau\big)&=F\Big((-1)^{b/4} \frac{h(\Lambda,\tau)}{c\tau+d},A\tau\Big)=
(-1)^{c_1^2b/4}(c\tau+d)F\big((-1)^{b/4}h(\Lambda,\tau),\tau\big)\\
&=(-1)^{b/4}(c\tau+d)F\big(h(\Lambda,\tau),\tau\big).
\end{split}
\end{equation}
In  the last line we use $F(-z,\tau)=-(-1)^{\<c_1^2\>}F(z,\tau)$, which follows from \thmref{thetatrans}(3)
and the fact that $\widetilde \theta_{4}(z)$ is even in $z$.
We use \eqref{hstar} to write $$G(\Lambda,\tau):=\Lambda^2u'h^*=\frac{4i\Lambda^3\theta_{4}^8}{\theta_{2}^3\theta_{3}^3}\frac{1}{\sqrt{1+u\Lambda^2+\Lambda^4}}.$$ Using
\eqref{hhh} and the fact that $u$ is a modular function on $\Gamma^0(4)$, we have
$G(\Lambda,A\tau)=
(-1)^{b/4}(c\tau+d)G(\Lambda,\tau)$ for $A\in  \Gamma^0(4)$.
Putting all this together, we obtain for $A\in \Gamma^0(4)$ that
\begin{equation}\label{modu2}
\begin{split}
\Psi^{F,G}_{X,c_1}(L;\Lambda,A\tau)&=F\big(h(\Lambda,A\tau),A\tau\big)
G(\Lambda,A\tau)=(c\tau+d)^2\Psi^{F,G}_{X,c_1}(L;\Lambda,\tau).
\end{split}
\end{equation}
The Fourier development \eqref{theta} of $\Theta^{G,F}_{X,c_1,K_X}$
and the standard Fourier development of $\theta_{4}(z)$
imply that we can write $F$ as a formal power series  $F=\sum_{n\ge 0} f_n(\tau)z^n$, where each $f_n$ is meromorphic at the cusps and holomorphic on $\H$.
It is easy to see that $u$ is holomorphic on $\H$ and meromorphic  at the cusps.
We use that $\theta_{2}$,  $\theta_{3}$, $\theta_{4}$ are holomorphic on $\H$ and at the  cusps and without zero on $\H$.
By \eqref{hhh} we can write
$h(\Lambda,\tau)=\sum_{n\ge 1} h_{n}(\tau)\Lambda^{n}$, $G(\Lambda,\tau)=\sum_{n\ge 0} w_n(\tau)\Lambda^n,$
where each $h_n,\ w_n$ is holomorphic on $\H$ and meromorphic at the cusps.
Thus we obtain that
$$\Psi^{F,G}_{X,c_1}(L;\Lambda,\tau)=F\big(h(\Lambda,\tau),\tau\big)G(\Lambda,\tau)=\sum_{n\ge 0} p_n(\tau) \Lambda^n,$$
where each $p_n$ is holomorphic on $\H$ and meromorphic at the cusps. Thus    by \eqref{modu2} each $p_n$ is a weakly holomorphic modular form of weight $2$ on $\Gamma^0(4)$.
\end{pf*}

\subsection{Vanishing of the difference between boundary points}
Let again $X$ be a projective surface with $-K_X$ ample. Let  $F,G\in S_X$.
In this section we want to show that for any $c_1,d$ and any line bundle $L$ on $X$ with $\<c_1,L\>$ even, we have
$\chi(M^X_F(c_1,d),\mu(L))=\chi(M^X_G(c_1,d),\mu(L))$.

\begin{Theorem}\label{strucdiff}
Let $-K_X$ be ample. Let  $F,G\in S_X$.  Then
$$\chi^{X,G}_{c_1}(L;\Lambda)-\chi^{X,F}_{c_1}(L;\Lambda)=0.$$
\end{Theorem}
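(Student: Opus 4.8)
The plan is to reduce the statement to the vanishing of the $q^0$‑coefficient of the single function $\Psi^{F,G}_{X,c_1}(L;\Lambda,\tau)$ and then exploit its modularity. Fix an ample $M$ not on a wall of type $(c_1)$. By \defref{chiext},
\[
\chi^{X,G}_{c_1}(L)-\chi^{X,F}_{c_1}(L)=\Coeff_{q^0}\big[\Psi^{M,G}_{X,c_1}(L;\Lambda,\tau)\big]-\Coeff_{q^0}\big[\Psi^{M,F}_{X,c_1}(L;\Lambda,\tau)\big].
\]
Since $\Psi^{f,g}_{X,c_1}(L)$ depends on $f,g$ only through the factor $\Theta^{f,g}_{X,c_1,K_X}$, and this factor telescopes ($\Theta^{M,G}-\Theta^{M,F}=\Theta^{F,G}$, directly from the defining sum), the right‑hand side equals $\Coeff_{q^0}[\Psi^{F,G}_{X,c_1}(L;\Lambda,\tau)]$. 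By \lemref{modular} I may write $\Psi^{F,G}_{X,c_1}(L;\Lambda,\tau)=\sum_{n\ge 0}p_n(\tau)\Lambda^n$ with each $p_n\in M_2^!(\Gamma^0(4))$, so it suffices to prove $\Coeff_{q^0}[p_n]=0$ for every $n$.

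Next I interpret $\Coeff_{q^0}[p_n]$ as a residue. As $p_n$ is a weakly holomorphic modular form of weight $2$ on $\Gamma^0(4)$, the differential $p_n(\tau)\,d\tau$ descends to a meromorphic $1$‑form on the compact modular curve $\overline{\H/\Gamma^0(4)}$, which has genus $0$; as $p_n$ is holomorphic on $\H$, its only poles are at the cusps $\infty,0,2$. At the width‑$4$ cusp $\infty$ the local parameter is $q^2=e^{\pi i\tau/2}$, so $p_n$ is a Laurent series in $q^2$ and the residue there is a nonzero constant multiple of $\Coeff_{q^0}[p_n]$. The residue theorem gives $\res_\infty+\res_0+\res_2=0$, and the residue at each width‑$1$ cusp is a constant multiple of the constant term there. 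Hence it suffices to show that $\Psi^{F,G}_{X,c_1}(L;\Lambda,\tau)$ has vanishing constant term at each of the cusps $0$ and $2$; in fact I will show it \emph{vanishes} there.

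The heart of the argument is the expansion of $\Psi^{F,G}$ at $0=S\cdot\infty$ and $2=T^2ST\cdot\infty$. Writing $\Psi^{F,G}=\Theta^{F,G}_{X,c_1,K_X}(\tfrac{(L-K_X)h}{2\pi i},\tau)\,\Lambda^2\widetilde\theta_4(h)^{(L-K_X)^2}\theta_4^{\sigma(X)}u'h^*$, I transform each factor using \lemref{jacobi}, \thmref{thetatrans}(4), the relations \eqref{th44} for $\widetilde\theta_4$, and the formulas \eqref{hstar} for $u'$ and $h^*$. Under $S$ the combination $\Theta^{F,G}_{X,c_1,K_X}\theta_4^{\sigma(X)}$ becomes $i^{\<c_1,K_X\>}\Theta^{F,G}_{X,K_X,c_1}\theta_2^{\sigma(X)}$, while $u'=2\theta_4^8/(\theta_2^2\theta_3^2)$ becomes $-2\theta_2^8/(\theta_3^2\theta_4^2)$; since $\theta_2\sim 2\tilde q$ and $\theta_3,\theta_4\to 1$ in the new parameter $\tilde q$, the factor $u'$ acquires a zero of order $8$, whereas $\theta_2^{\sigma(X)}$ has order $\sigma(X)$ and the factors $\Lambda^2,\widetilde\theta_4(h)\mapsto\theta_2(h)/\theta_2\to\cosh(h/2),h^*$ (with $u,M,h,h^*$ finite) contribute order $0$. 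The same bookkeeping applies at the cusp $2$. The decisive input is the order of the indefinite theta factor: because $F,G\in S_X$ are isotropic, the Fourier development of \thmref{thetatrans}(2) expresses $\Theta^{F,G}_{X,K_X,c_1}$ through sums supported on the null hyperplanes $\xi\cdot F=0$ and $\xi\cdot G=0$, on which $Q=-(\,\cdot\,)^2$ is non‑negative; hence this factor has order $\ge 0$ in $\tilde q$. Adding up, the total order at each of the cusps $0,2$ is at least $8+\sigma(X)=K_X^2\ge 1$, using $e(X)+\sigma(X)=4$ and Noether's formula $e(X)=12-K_X^2$ to get $\sigma(X)=K_X^2-8$. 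Here the hypothesis that $-K_X$ is ample is essential: it forces $K_X^2\ge 1$, i.e. $\sigma(X)\ge -7>-8$, so the order‑$8$ zero of $u'$ strictly dominates the pole of $\theta_2^{\sigma(X)}$. Thus $\Psi^{F,G}$ vanishes at $0$ and $2$, its constant terms there vanish, and the residue theorem gives $\Coeff_{q^0}[p_n]=0$.

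The main obstacle is exactly this third step: establishing, through the modular transformations at the two finite cusps, that the order‑$8$ vanishing of $u'$ together with the non‑negative contribution of the isotropic theta factor beats the pole $\theta_2^{\sigma(X)}$. Controlling the leading order of $\Theta^{F,G}_{X,K_X,c_1}$ via the isotropy of $F,G$, and pinning down the exact constants in the transformation formulas so that the inequality $8+\sigma(X)=K_X^2\ge 1$ is precisely what is needed, is where the $-K_X$ ample hypothesis does its work and where essentially all of the effort lies.
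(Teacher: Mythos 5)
Your proposal is correct and follows essentially the same route as the paper's proof: reduce the statement to $\Coeff_{q^0}\big[\Psi^{F,G}_{X,c_1}(L;\Lambda,\tau)\big]=0$, invoke \lemref{modular} and the residue theorem on $\overline{\H/\Gamma^0(4)}$, and kill the contributions at the cusps $0$ and $2$ by exhibiting the overall factor $\theta_2^{8+\sigma(X)}$ (order-$8$ zero from $u'h^*$ against the pole of $\theta_2^{\sigma(X)}$, with the indefinite theta factor regular at the cusp), where $8+\sigma(X)=K_X^2>0$ is exactly what ampleness of $-K_X$ provides. The only cosmetic difference is that the paper handles the cusp $2$ not by redoing the bookkeeping but by reducing it to the cusp $0$ via $u(T^{-2}\tau)=-u(\tau)$, which gives $\Psi^{F,G}_{X,c_1}(L;\Lambda,ST^{-2}\tau)$ as a constant multiple of $\tau^{-2}\Psi^{F,G}_{X,c_1}(L,i\Lambda,S\tau)$, i.e. the cusp-$0$ expansion with $\Lambda$ replaced by $i\Lambda$.
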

\begin{proof}
By \corref{thetawall} we have to show that
$\Coeff_{q^0}\big[\Psi^{F,G}_{X,c_1}(L;\Lambda,\tau)\big]=0.$

By \lemref{modular} we have   $\Psi^{F,G}_{X,c_1}(L;\Lambda,\tau)\in M_2^!(\Gamma^0(4))[[\Lambda]]$, i.e. $[\Psi^{F,G}_{X,c_1}(L;\Lambda,\tau)2\pi  i d\tau]_{\Lambda^d}$ is  for all $d$ a holomorphic differential form on $\H/\Gamma^0(4)$,
with a meromorphic extension to the cusps $\infty$, $0$ and $2$.
Note that $2\pi i q d\tau=8dq$, thus, taking into account that the width of the cusp $\infty$ is $4$, we get by the  residue theorem that
\begin{align*}\Coeff_{q^0}\big[\Psi^{F,G}_{X,c_1}&(L;\Lambda,\tau)\big]=
\res_{\tau=\infty}\Big[\Psi^{F,G}_{X,c_1}(L;\Lambda,\tau)\frac{\pi  i }{2}d\tau\Big]\\
&=
-\res_{\tau=0}\Big[\Psi^{F,G}_{X,c_1}(L;\Lambda,\tau) \frac{\pi  i }{2} d\tau
\Big]
-\res_{\tau=2} \Big[\Psi^{F,G}_{X,c_1}(L;\Lambda,\tau) \frac{\pi  i }{2} d\tau
\Big]\\&=
-\frac{1}{4}\Coeff_{q^0}\Big[\tau^{-2}\Psi^{F,G}_{X,c_1}(L;\Lambda,S\tau)\Big]
-\frac{1}{4}\Coeff_{q^0}\Big[(\tau-2)^{-2}\Psi^{F,G}_{X,c_1}(L;\Lambda,ST^{-2}\tau)\Big].
\end{align*}
By  \cite[\S 22]{Ak} we have
\begin{equation}\label{th234S}
\theta_2(-1/\tau)=\sqrt{-i\tau}\theta_4(\tau), \quad \theta_3(-1/\tau)=\sqrt{-i\tau}\theta_3(\tau), \quad \theta_3(-1/\tau)=\sqrt{-i\tau}\theta_2(\tau),
\end{equation}
Again we write
\begin{align*}F(z,\tau)&:=\Theta^{G,F}_{X,c_1,K_X}\left(\frac{(L-K_X)z}{2\pi i},\tau\right)\theta_{4}^{\sigma(X)}\widetilde\theta_4(z)^{(L-K_X)^2},\\
G(\Lambda,\tau)&:=\Lambda^2u'h^*=\frac{4i\Lambda^3\theta_4^8}{\theta_2^3\theta_3^3}\frac{1}{1+u\Lambda^2+\Lambda^4}.
\end{align*}
We put
$\widetilde u:=u|_0S$.
Then by \eqref{th44} we see that $\widetilde u=-\frac{\theta_{3}^4+\theta_{4}^4}{\theta_{3}^2\theta_{4}^2}=-2+O(q^4).$
Let
$\widetilde h(\Lambda,\tau):=\tau h(\Lambda,S\tau)$. By \eqref{th234S} we get $(\theta_2\theta_3)|_1S=-i \theta_3\theta_4.$ Thus by \eqref{hhh} we see that
$$
\widetilde h=-\frac{2}{\theta_{2}\theta_{3}}.
\sum_{\substack{n\ge 0\\ n\ge k\ge0}}
\binom{-\frac{1}{2}}{n}\binom{n}{k}\frac{\widetilde u^k\Lambda^{4n-2k+1}}{4n-2k+1}.
$$
In particular both $\widetilde u$ and $\widetilde h$ are regular at $q=0$.
We get by
 \lemref{jacobi}, \eqref{th234S} and \eqref{th44}  that
\begin{align*}F\Big(\frac{z}{\tau},S\tau\Big)&=
(-1)^{\<c_1,K_X\>/2}
 \Theta_{X,K_X,c_1}^{G,F}\Big(\frac{(L-K_X)z}{2\pi  i }, \tau\Big)
(-1)^{-\sigma(X)/4}\theta_{2}^{\sigma(X)} \left(\frac{\theta_{2}(z)}{\theta_{2}}\right)^{(L-K_X)^2}.\\
 G(\Lambda,\tau)|_1S &=\frac{4\Lambda^3\theta_2^8}{\theta_3^3\theta_4^3}\frac{1}{1+\widetilde u\Lambda^2+\Lambda^4}.
 \end{align*}
and
$$ F\big(h(\Lambda,S\tau),S\tau)=
(-1)^{\<c_1,K_X\>/2}
 \Theta_{X,K_X,c_1}^{G,F}\left(\frac{(L-K_X)\widetilde h}{2\pi  i }, \tau\right)(-1)^{-\sigma(X)/4}\theta_{2}^{\sigma(X)} \left(\frac{\theta_{2}(z)}{\theta_{2}}\right)^{(L-K_X)^2}.$$
 Putting this together we see that
 \begin{align*}
-\frac{1}{4}\tau^{-2}\Psi^{F,G}_{X,c_1}(L;\Lambda,S\tau)&= F\big(h(\Lambda,S\tau),S\tau)\cdot G(\Lambda,\tau)|_1S
\end{align*}
can be written as $\theta_2^{8+\sigma(X)}H(\Lambda,\tau)$, where $H(\Lambda,\tau)$ is regular at $q=0$.
Recall that $-K_X$ is ample, thus $K_X^2>0$. As $K_X^2-\sigma(X)=8$, this implies $\sigma(X)>-8$.  As $\theta_2$ has a zero of order $1$ in $q$, we find that
$$\Coeff_{q=0}\big[-\frac{1}{4}\tau^{-2}\Psi^{F,G}_{X,c_1}(L;\Lambda,S\tau)\big]=0.$$

Now we compute $\Coeff_{q^0}\big[(\tau-2)^{-2}\Psi^{F,G}_{X,c_1}(L;\Lambda,ST^{-2}\tau)\big]$.
It is easy to see that
$u(T^{-2}\tau)=-u(\tau)$, thus $u(T^{-2}\tau)\Lambda^2=(i\Lambda)^2 u   $ and
$\frac{1}{\theta_{2}\theta_{3}}| T^{-2}=
 \frac{i}{\theta_{2}\theta_{3}}$
Thus we get by \eqref{hhh} that
$
h(\Lambda,T^{-2}\tau)=h( i \Lambda,\tau)$ and
$G(\Lambda,T^{-2}\tau):=i^3G(i\Lambda,\tau)$.
We also see $\widetilde \theta_4(z)|_{T^{-2}}=\widetilde \theta_4(z)$ and
$$
\big(\Theta^{G,F}_{X,c_1,K_X}((L-K)z,\tau)\theta_{4}^{\sigma(X)}\big)|_{\tau\to T^{-2}\tau}=(-1)^{c_1^2/2}\Theta^{G,F}_{X,c_1,K_X}((L-K)z,\tau)\theta_{4}^{\sigma(X)}
$$
Combining these facts, we get
$\Psi^{F,G}_{X,c_1}(L;\Lambda,T^{-2}\tau)= i^{c_1^2+3}
\Psi^{F,G}_{X,c_1}(L, i \Lambda,\tau).$
Therefore  we get
$$(\tau-2)^{-2}\Psi^{F,G}_{X,c_1}(L;\Lambda,ST^{-2}\tau)
= i^{c_1^2+3}\tau^{-2}
\Psi^{F,G}_{X,c_1}(L, i \Lambda,S\tau),$$
and thus
$\Coeff_{q^0}\big[\Psi^{F,G}_{X,c_1}(L;\Lambda,ST^{-2}\tau]= 0$.
\end{proof}

\subsection{Vanishing at boundary of the positive cone}
The following standard fact allows us to compute the $K$-theoretic Donaldson
for rational surfaces.
\begin{Remark}\label{vanfib}
Let $X$ be a simply connected algebraic surface, and let
$\pi:X\to \P^1$ be a morphism whose general fibre is isomorphic to
$\P^1$. Let $M$ be ample on $X$. Let $F\in H^2(X,\Z)$ be the class of a fibre, and assume that
$\<c_1,F\>$ is odd. 
Then for all $d>0$ there exists an $\epsilon_d>0$, such that  $M_{F+\epsilon}^X(c_1,d)=\emptyset$ for all $d$ and all $0<\epsilon\le<\epsilon_d$.
As $\<F,\xi\>\ne 0$ for all $\xi$ of type $c_1$, we get that for all $d>0$ 
$$\Coeff_{\Lambda^d}[\chi^{X,F}_{c_1}(L)]=\chi(M_{F+\epsilon_d}^X(c_1,d),\mu(L))=0,$$ 
and therefore
$\chi^{X,F}_{c_1}(L)=0$.
\end{Remark}

This result together with \thmref{strucdiff} implies that for many rational surfaces $\chi^{X,F}_{c_1}(L)=0$ for all $F\in S_X$.

\begin{Theorem}\label{vanbound}
Let $X$ be $\P^1\times \P^1$ or the blowup of $\P^2$ in at most $7$ general points. Let $c_1\in H^2(X,\Z)$ and let $L$ be a line bundle on $X$ with $\<c_1,L\>$ even.
Let $F\in S_X$. Then
  $\chi^{X,F}_{c_1}(L)=0$.
\end{Theorem}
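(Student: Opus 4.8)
The plan is to combine three ingredients already established: \thmref{strucdiff}, which (since $-K_X$ is ample — all the surfaces in the list are del Pezzo) shows that $\chi^{X,F}_{c_1}(L)$ takes the \emph{same} value for every $F\in S_X$; \remref{vanfib}, which gives the vanishing at a boundary point that is the class of a fibre of a $\P^1$-fibration, provided $\langle c_1,F\rangle$ is odd; and the blowup formulas of \propref{blowgen}, which transport the generating function between $X$ and its iterated blowups. The strategy is to produce, on $X$ itself or on a small del Pezzo blowup of it, a single boundary point which is a fibre class pairing oddly with the relevant first Chern class, kill the generating function there by \remref{vanfib}, spread the vanishing to all boundary points by \thmref{strucdiff}, and finally read the result back to $X$ through \propref{blowgen}.

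Consider first $X=\widehat{\P^2}$, or more generally the blowup of $\P^2$ in $k\le 7$ general points, and write $c_1=aH-\sum_i b_iE_i$. Blow up one further general point to obtain $Y=\text{Bl}_{k+1}\P^2$, which is still del Pezzo because $k+1\le 8$; let $E_{k+1}$ be the new exceptional divisor and $f=H-E_{k+1}\in S_Y$ the class of the pencil of lines through the new point, a genuine fibre class. Then $\langle c_1,f\rangle=a$ while $\langle c_1+E_{k+1},f\rangle=a+1$, so exactly one of these pairings is odd. Choosing accordingly, I would apply \propref{blowgen}(1) (keeping $c_1$, when $a$ is odd) or \propref{blowgen}(2) (replacing $c_1$ by $c_1+E_{k+1}$, when $a$ is even); in either case \remref{vanfib} makes the corresponding generating function on $Y$ vanish at the boundary point $f$, \thmref{strucdiff} on $Y$ propagates the vanishing to every element of $S_Y$ (in particular to the pullback of the given $F\in S_X$), and \propref{blowgen} then yields $\Lambda^{m}\chi^{X,F}_{c_1}(L)=0$ with $m\in\{0,1\}$, whence $\chi^{X,F}_{c_1}(L)=0$.

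For $X=\P^1\times\P^1$ the same idea works, but with one subtlety. Writing $c_1=\alpha F+\beta G$, we have $\langle c_1,F\rangle=\beta$ and $\langle c_1,G\rangle=\alpha$; if $\alpha$ or $\beta$ is odd then one of the two rulings is already a fibre class pairing oddly with $c_1$, and \remref{vanfib} together with \thmref{strucdiff} finishes with no blowup. The remaining and hardest case is $c_1\in 2H^2(X,\Z)$: every fibre class of $X$ pairs evenly with $c_1$, and — because in the identification $\text{Bl}_1(\P^1\times\P^1)=\text{Bl}_2\P^2$ the exceptional divisor of a single point blowup is $H-E_1-E_2$, which pairs evenly with both available fibre classes $H-E_1,H-E_2$ — a single blowup cannot fix the parity. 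I would therefore blow up twice, landing in $Y=\text{Bl}_3\P^2$ (a degree $6$ del Pezzo surface): apply \propref{blowgen}(1) at the first blowup and \propref{blowgen}(2) at the second, so that the relevant class becomes $c_1+E_3$, pairing with the fresh fibre class $H-E_3$ in $\langle c_1,H\rangle+1$, an odd number. Then \remref{vanfib}, \thmref{strucdiff} on $Y$, and the two instances of \propref{blowgen} give $\Lambda\,\chi^{X,F}_{c_1}(L)=0$, hence the claim.

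The hard part is exactly this parity bookkeeping under the del Pezzo constraint: I must ensure that the auxiliary blowups needed to realize an odd pairing with a fibre class never push me past eight blown-up points, where $-K$ would cease to be ample and both \thmref{strucdiff} and \propref{blowgen} would break down. The hypothesis of at most seven points for $X$ (and the degree-$8$ surface $\P^1\times\P^1$) is precisely what leaves room for the one, respectively two, extra blowups. The only remaining geometric points to verify are routine: that the pullback of a class in $S_X$ lies in $S_Y$, and that \propref{blowgen} may be iterated with $-K$ ample at every intermediate stage — both clear for general blowups of $\P^2$ within the del Pezzo range.
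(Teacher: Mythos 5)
Your proof is correct and follows the same overall strategy as the paper's: kill the generating function at a boundary point given by a fibre class pairing oddly with the first Chern class (\remref{vanfib}), spread the vanishing over all of $S$ by \thmref{strucdiff}, and transport it back to $X$ through the blowup formulas of \propref{blowgen}, adjusting parity by adding the exceptional divisor when needed. For $X$ a blowup of $\P^2$ in $k\le 7$ points your argument coincides with the paper's. Where you genuinely differ --- and in fact improve on the paper --- is the case $X=\P^1\times\P^1$ with $c_1\in 2H^2(X,\Z)$. The paper treats all cases with a single blowup $\widehat X$ of $X$, asserting that if $\<c_1,G\>$ is even then $\<c_1+E,G\>$ is odd, where $E$ is the exceptional divisor of $\widehat X\to X$ and $G=H-E$ a fibre class on $\widehat X$. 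This is fine when $X$ is a blowup of $\P^2$, since there $\<E,G\>=1$; but for $X=\P^1\times\P^1$, identifying $\widehat X$ with the blowup of $\P^2$ in two points, the exceptional divisor of $\widehat X\to X$ is $E=H-E_1-E_2$, the only fibre classes on $\widehat X$ are $H-E_1$ and $H-E_2$, and $\<E,H-E_i\>=0$, so adding $E$ changes no parity and $H-E$ is not even a fibre class --- exactly the obstruction you identified. Your remedy, a second blowup landing in the degree-$6$ del Pezzo surface, where the fresh exceptional divisor $E_3$ pairs oddly with the fresh fibre class $H-E_3$, closes this gap; the two applications of \propref{blowgen} then give $\Lambda\,\chi^{X,F}_{c_1}(L)=\chi^{Y,F}_{c_1+E_3}(L)=0$ as you state. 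The details you defer (pullbacks of classes in $S_X$ lie in $S_Y$, and $\<c_1+E_3,L\>$ stays even for pulled-back $L$) are indeed routine. In short: same method, but your version is the one that actually covers $\P^1\times\P^1$ with $c_1\in 2H^2(X,\Z)$ --- e.g.\ $c_1=0$, a case the paper itself needs for \thmref{p11t}.
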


\begin{proof}
We note that $-K_X$ is ample on $X$.
If there is a morphism $\pi:X\to \P^1$ with general fibre isomorphic to $\P^1$, if $G$ is the class of a fibre of $\pi$,  then $G\in S_X$. Furthermore, if  $\<c_1, G\>$ odd,
 we get by \remref{vanfib} that
$\chi^{X,G}_{c_1}(L)=0$. Then \thmref{strucdiff} implies that $\chi^{X,F}_{c_1}(L)=0$ for all $F\in S_X$.

Let $\widehat X$ be the blowup of $X$ in a general point. We denote by $E$ the exceptional divisor.
Then  $\widehat X$ is the blowup of $\P^2$ in at most 8 general points and $-K_{\widehat X}$ is ample on $\widehat X$ .
Denote by $H$ the pullback of the hyperplane class of $\P^2$. Then $G:=H-E$ is the class of the fibre of a morphism $\pi:\widehat X\to \P^1$, whose general fibre is $\P^1$.

If $\<c_1,G\>$ is odd, then $\chi^{\widehat X,G}_{c_1}(L)=0$, and thus, applying  \thmref{strucdiff}  again, we get $\chi^{\widehat X,F'}_{c_1}(L)=0$ for all $F'\in S_{\widehat X}$.
If $F\in S_X$, then also $F\in S_{\widehat X}$ and by \propref{blowgen} we get  $\chi^{X,F}_{c_1}(L)=\chi^{\widehat X,F}_{c_1}(L)=0$.

If $\<c_1,G\>$ is even, then $\<c_1+E,G\>$ is odd, and thus $\chi^{\widehat X,G}_{c_1+E}(L)=0$. Again \thmref{strucdiff}  gives that $\chi^{\widehat X,F}_{c_1+E}(L)=0$ for all $F\in S_X$.
Thus we get by \propref{blowgen} that $\chi^{X,F}_{c_1}(L)=\frac{1}{\Lambda}\chi^{\widehat X,F}_{c_1+E}(L)=0$.
\end{proof}

\subsection{Blowup polynomials and  a higher blowup formula}

In this section we introduce and study the "blowup polynomials"
$R_n(\la,x)$, $S_n(\la,x)$, which are
related to addition formulas for the standard theta functions $\theta_1(z)$ and $\theta_4(z)$.
These are related to "higher blowup formulas": if $\widehat X$ is the blowup of $X$ in a point and $E$ the exceptional divisor, they relate
$\chi^{X,M}_{c_1}(L)$ to $\chi^{\widehat X,M}_{c_1}(L-(n-1)E)$ and $\chi^{X,M}_{c_1+E}(L-(n-1)E)$.

\begin{Definition}\label{blowpol}
Define for all $n\in \Z$  rational functions  $R_n$, $S_n\in \Q(\la,x)$ by
$R_0=R_1=1,$
$S_1=\la,\ S_2=\la x,$
the  recursion relations
\begin{align}
\label{recurR} R_{n+1}&=\frac{R_{n}^2-\la^2 S_n^2}{R_{n-1}},\qquad  n\ge 1,\\
\label{recurS} S_{n+1}&=\frac{S_{n}^2-\la^2 R_{n}^2}{S_{n-1}},\qquad  n\ge 2.
\end{align}
and $R_{-n}=R_{n}$, $S_{-n}=S_{n}$.
The definition gives
\begin{align*}
R_1&=1,\ R_2=(1-\la^4), R_3=-\la^4 x^2 + (1-\la^4)^2,\ R_4=-\la^4x^4+(1-\la^4)^4,\\
S_1&=\la,\ S_2=\la x,\ S_3=\la(x^2-(1-\la^4)^2),\ S_4=\la x\big((1-\la^8)x^2-2(1-\la^4)^3\big).
\end{align*}
One can show that the $R_i$, $S_i$ are polynomials, but we will not need it here.
We will want to show that these polynomials are related to the following  expressions in theta functions.
We put
\begin{equation}\label{RSmod}
\widetilde R_n:=\frac{\WT_4(nh)}{\WT_4(h)^{n^2}},\quad \widetilde S_n:= \frac{\WT_1(nh)}{\WT_4(h)^{n^2}}.
\end{equation}
\end{Definition}

\begin{Proposition}\label{rnprop}
$R_n$, $S_n$  satisfy
\begin{equation}\label{ThRn}
\widetilde R_n=R_n(\Lambda,M),\qquad
\widetilde S_n=S_n(\Lambda,M).
\end{equation}
\end{Proposition}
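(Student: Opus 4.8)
The plan is to prove both identities of \eqref{ThRn} simultaneously by induction on $n$, by showing that the theta quotients $\widetilde R_n,\widetilde S_n$ of \eqref{RSmod} satisfy \emph{exactly} the recursions \eqref{recurR}, \eqref{recurS} and the same initial data $R_0,R_1,S_1,S_2$ as the rational functions of \defref{blowpol}. Since those recursions determine $R_n,S_n$ uniquely from that initial data, this forces $\widetilde R_n=R_n(\Lambda,M)$ and $\widetilde S_n=S_n(\Lambda,M)$ for all $n\ge 0$; the values for negative $n$ are then fixed by the symmetry conventions $R_{-n}=R_n$, $S_{-n}=S_n$.

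The entire analytic content is carried by two classical two-variable addition theorems, which in the normalization \eqref{thetatilde} I would write as
\[
\WT_4(u+v)\WT_4(u-v)=\WT_4(u)^2\WT_4(v)^2-\WT_1(u)^2\WT_1(v)^2,
\]
\[
\WT_1(u+v)\WT_1(u-v)=\WT_1(u)^2\WT_4(v)^2-\WT_4(u)^2\WT_1(v)^2.
\]
These follow from the standard addition formulas of \cite{WW}, \cite{Ak} after clearing the Nullwerte $\theta_4$; alternatively each follows by the classical argument that, for fixed $v$, the two sides are theta functions of the same type in $u$ with the same quasi-periods and divisor, hence proportional, with the constant found by letting $v\to 0$.

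Now I specialize both identities to $(u,v)=(nh,h)$ and divide by $\WT_4(h)^{2n^2+2}$. Using $(n+1)^2+(n-1)^2=2n^2+2$ together with $\Lambda=\WT_1(h)/\WT_4(h)$ (so that $\Lambda^2\WT_4(h)^2=\WT_1(h)^2$), the first identity becomes
\[
\widetilde R_{n+1}\widetilde R_{n-1}=\widetilde R_n^2-\Lambda^2\widetilde S_n^2,
\]
and the second becomes
\[
\widetilde S_{n+1}\widetilde S_{n-1}=\widetilde S_n^2-\Lambda^2\widetilde R_n^2,
\]
which are precisely \eqref{recurR} and \eqref{recurS} with $\la=\Lambda$, $x=M$ after clearing denominators; the division is legitimate as an identity of meromorphic functions since $\WT_4((n-1)h)$ and (for $n\ge2$) $\WT_1((n-1)h)$ are not identically zero. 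For the initial data, $\WT_4(0)=1$ and $\WT_1(0)=0$ give $\widetilde R_0=\widetilde R_1=1$ and $\widetilde S_1=\WT_1(h)/\WT_4(h)=\Lambda$, while the duplication formula, which in the normalization \eqref{thetatilde} reads $\WT_1(2h)=2\WT_1(h)\WT_2(h)\WT_3(h)\WT_4(h)$, yields
\[
\widetilde S_2=\frac{\WT_1(2h)}{\WT_4(h)^4}=\frac{\WT_1(h)}{\WT_4(h)}\cdot\frac{2\WT_2(h)\WT_3(h)}{\WT_4(h)^2}=\Lambda M .
\]
With the two recursions and these four base values in hand, the induction closes.

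I expect the main obstacle to be purely a matter of conventions rather than of ideas: because the theta functions here carry the nonstandard normalization $q=e^{\pi i\tau/4}$, I must transcribe the two addition theorems and the duplication formula correctly into the $\WT_i$ and keep the exponent bookkeeping $2n^2+2$ exact. Once the two displayed theta identities are secured in this normalization, the reduction to \eqref{recurR}, \eqref{recurS} and the verification of the initial data are routine formal manipulations.
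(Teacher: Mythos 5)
Your proof is correct and follows essentially the same route as the paper: both establish the initial values $\widetilde R_0=\widetilde R_1=1$, $\widetilde S_1=\Lambda$, $\widetilde S_2=\Lambda M$ (the last via the duplication formula for $\theta_1$) and then derive the recursions \eqref{recurR}, \eqref{recurS} for the theta quotients from the two Whittaker--Watson addition formulas for $\theta_4$ and $\theta_1$, specialized at $(y,z)=(nh,h)$ and divided by $\WT_4(h)^{2n^2+2}$. The only differences are presentational — you normalize the addition theorems by the Nullwerte before specializing, and you explicitly justify the division by noting $\WT_4((n-1)h)$ and $\WT_1((n-1)h)$ are not identically zero, a point the paper leaves implicit.
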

\begin{proof}



As $\theta_1(h)$ is odd in $h$ and $\theta_4(h)$ is even in $h$, it follows that
$\widetilde R_{-n}=\widetilde R_n$, $\widetilde S_{-n}=-\widetilde S_n$, and by definition we get
$\widetilde R_0=1$, $\widetilde R_1=1$, $\widetilde S_1=\Lambda$.
The duplication formula for $\theta_1(g)$ (see \cite[\S 2.1 Ex.~5]{WW})
$$\theta_1(2g)\theta_2\theta_3\theta_4=2\theta_1(h)\theta_2(h)\theta_3(h)\theta_4(h)$$ gives
\begin{equation}
\widetilde S_2=\frac{\theta_1(2h)}{\WT_4(h)^4}=
\frac{2\WT_1(h)\theta_2(h)\theta_3(h)}{\th_2\th_3\WT_4(h)^3}=\Lambda M.
\end{equation}
The addition formula for  $\theta_4(z)$ (see \cite[\S 2.1 Ex.~1]{WW})
$$\theta_4(y+z)\theta_4(y-z)\theta_4^2=\theta_4(y)^2\theta_4(z)^2-\theta_1(y)^2\theta_1(z)^2$$
applied to
$y=nh$,  gives
\begin{equation}
\begin{split}
\WR_{n+1} \WR_{n-1}&=\frac{\WT_4((n+1)h)}{\WT_4(h)^{(n+1)^2}}\cdot \frac{\WT_4((n-1)h)}{\WT_4(h)^{(n-1)^2}}\\
&=
\frac{\WT_4(nh)^2}{\WT_4(h)^{2n^2}}-
\frac{\theta_{1}(nh)^2\theta_1(h)^2}{\WT_4(h)^{2n^2}\theta_4(h)^2}=
\WR_n^2-\Lambda^2\WS_n^2,
\end{split}
\end{equation}
where in the last step we have used the definition $\Lambda=\frac{\theta_1(h)}{\theta_4(h)}.$
Similarly the addition formula for  $\theta_1(z)$ (see \cite[\S 2.1 Ex.~1]{WW})
$$\theta_1(y+z)\theta_1(y-z)\theta_4^2=\theta_1(y)^2\theta_4(z)^2-\theta_4(y)^2\theta_1(z)^2$$
applied to
$y=nh$  gives
\begin{equation}
\begin{split}
\WS_{n+1} \WS_{n-1}&=\frac{\WT_1((n+1)h)}{\WT_4(h)^{(n+1)^2}}\cdot \frac{\WT_1((n-1)h)}{\WT_4(h)^{(n-1)^2}}\\
&=
\frac{\WT_1(nh)^2}{\WT_4(h)^{2n^2}}-
\frac{\WT_{4}(nh)^2\theta_1(h)^2}{\theta_4(h)^2\WT_4(h)^{2n^2}}=
\WS_n^2-\Lambda^2\WR_n^2.
\end{split}
\end{equation}
This shows the result.
\end{proof}

We use this result to prove a higher blowup formula. We will use it here for $n=2$. In a forthcoming paper the first named author
 will systematically use the higher blowup formulas
to study the $K$-theoretic Donaldson invariants of $\P^2$.

\begin{Proposition}\label{highblow}
Let $X$ be $\P^2$, $\P^1\times\P^1$ or the blowup of $\P^2$ in at most $7$ points.
Let $c_1\in H^2(X,\Z)$ and let $L$ be a line bundle on $X$ with $\<c_1, L\>$ even.
Let $F,M\in C_X\cup S_X$.
\begin{enumerate}
\item $\Psi^{F,M}_{\widehat X,c_1}(L-(n-1)E,\Lambda,\tau)=\Psi^{F,M}_{X,c_1}(L,\Lambda,\tau)R_{n}(\Lambda, M)$.
\item $\Psi^{F,M}_{\widehat X,c_1+E}(L-(n-1)E,\Lambda,\tau)=\Psi^{F,M}_{X,c_1}(L,\Lambda,\tau)S_{n}(\Lambda, M)$.
\end{enumerate}
\end{Proposition}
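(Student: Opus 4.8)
The plan is to reduce both identities to the formal relations $\widetilde R_n=R_n(\Lambda,M)$, $\widetilde S_n=S_n(\Lambda,M)$ of \propref{rnprop} by factoring the indefinite theta function over the orthogonal splitting $H^2(\widehat X,\Z)=H^2(X,\Z)\oplus\Z E$ (with $E^2=-1$, hence $E\cdot E=1$ and $Q(E)=\tfrac12$ in the negative-intersection lattice). This is exactly the mechanism used for $n=1$ in the proof of \propref{blowgen}; the new feature is only that passing from $L$ to $L-(n-1)E$ scales the $E$-component of the theta argument by $n$. Throughout, both sides are defined because $F,M\in C_X\cup S_X$ are pulled back from $X$ and hence orthogonal to $E$, so they also lie in $C_{\widehat X}\cup S_{\widehat X}$; the hypotheses on $X$ serve only to guarantee that $-K_{\widehat X}$ is ample, so that the framework applies on $\widehat X$.

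First I would write a lattice vector on $\widehat X$ as $\xi=\eta+mE$ and observe that, since $F\cdot E=M\cdot E=0$, the step functions $\mu(\xi\cdot F)$ and $\mu(\xi\cdot M)$ depend only on $\eta$, and that the splitting is orthogonal so $Q(\xi)=Q(\eta)+\tfrac{m^2}{2}$ with no cross terms. Because $K_{\widehat X}=K_X+E$, the argument of the $\widehat X$-theta function is $\tfrac{(L-(n-1)E-K_{\widehat X})h}{2\pi i}=\tfrac{((L-K_X)-nE)h}{2\pi i}$, whose $E$-component carries the factor $n$. Each summand of $\Theta^{F,M}_{\widehat X,c_1,K_{\widehat X}}$ (resp.\ $\Theta^{F,M}_{\widehat X,c_1+E,K_{\widehat X}}$) thus factors as the corresponding summand of $\Theta^{F,M}_{X,c_1,K_X}(\tfrac{(L-K_X)h}{2\pi i},\tau)$ times a one-dimensional factor in $m$. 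Collecting these, and recording the phase $e^{\pi i m}$ coming from the $E$-part $\tfrac12E$ of $K_{\widehat X}/2$, the $E$-sum becomes $\sum_m e^{\pi i\tau m^2}e^{-nmh}e^{\pi i m}$, where $m$ runs over $\Z$ for $c_1$ and over $\Z+\tfrac12$ for $c_1+E$.

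The heart of the argument is then to match this one-dimensional sum with a theta value. Reading off the defining $q$-series of $\theta_1$ and $\theta_4$, I would verify that the integer sum equals $\theta_4(nh)$ and that, after the substitution $m=k+\tfrac12$ (which turns $e^{\pi i\tau m^2}$ into $q^{(2k+1)^2}$, $e^{-nmh}$ into $y^{-n(2k+1)}$, and $e^{\pi i m}$ into $i(-1)^k$), the half-integer sum equals $\theta_1(nh)$. Hence
$$\Theta^{F,M}_{\widehat X,c_1,K_{\widehat X}}\Big(\tfrac{((L-K_X)-nE)h}{2\pi i},\tau\Big)=\Theta^{F,M}_{X,c_1,K_X}\Big(\tfrac{(L-K_X)h}{2\pi i},\tau\Big)\,\theta_4(nh),$$
and the same identity with $c_1+E$ on the left, $c_1$ on the right, and $\theta_1(nh)$ replacing $\theta_4(nh)$.

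Finally I would assemble the remaining factors of $\Psi$. Since $(L-(n-1)E-K_{\widehat X})^2=(L-K_X)^2-n^2$ and $\sigma(\widehat X)=\sigma(X)-1$, while $\Lambda^2u'h^*$ is unchanged, the quotient $\Psi^{F,M}_{\widehat X,c_1}(L-(n-1)E)/\Psi^{F,M}_{X,c_1}(L)$ equals $\theta_4(nh)\,\widetilde\theta_4(h)^{-n^2}\theta_4^{-1}$; using $\theta_4(nh)=\theta_4\,\widetilde\theta_4(nh)$ this is $\widetilde\theta_4(nh)/\widetilde\theta_4(h)^{n^2}=\widetilde R_n$, and the $c_1+E$ quotient is likewise $\theta_1(nh)\,\widetilde\theta_4(h)^{-n^2}\theta_4^{-1}=\widetilde\theta_1(nh)/\widetilde\theta_4(h)^{n^2}=\widetilde S_n$. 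Invoking $\widetilde R_n=R_n(\Lambda,M)$ and $\widetilde S_n=S_n(\Lambda,M)$ from \propref{rnprop} then gives (1) and (2). The one genuinely delicate point is the bookkeeping in the middle step: tracking the half-integer shift of $m$, the phase $e^{\pi i m}$ produced by the characteristic element $K_{\widehat X}$, and the factor $i$ from the substitution, so that the $E$-sum lands on $\theta_1(nh)$ with exactly the right constant; everything else is the orthogonal factorization (already in place for $n=1$) combined with the formal identities of \propref{rnprop}.
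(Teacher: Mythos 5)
Your proposal is correct and follows essentially the same route as the paper's proof: the orthogonal splitting $H^2(\widehat X,\Z)=H^2(X,\Z)\oplus\Z E$, the observation that $L-(n-1)E-K_{\widehat X}=L-K_X-nE$, the factorization of the indefinite theta function into $\Theta^{F,M}_{X,c_1,K_X}$ times the one-dimensional sum giving $\theta_4(nh)$ (integer $E$-component) resp.\ $\theta_1(nh)$ (half-integer $E$-component), the bookkeeping $(L-K_X-nE)^2=(L-K_X)^2-n^2$, $\sigma(\widehat X)=\sigma(X)-1$, and the final appeal to Proposition~\ref{rnprop}. Your write-up is in fact more careful than the paper's (whose displayed intermediate sums contain index typos), but the argument is the same.
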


\begin{proof}
(1) Note that $H^2(\widehat X,\Z)=H^2(X,\Z)+\Z E$ and $L-(n-1)E-K_{\widehat X}=L-K_X-nE$. Also we have $(-1)^{\<K_{\widehat X},(nE)\>}=(-1)^n$,
$\<(nE), (kE)\>=-nk$.
By definition we get therefore
\begin{align*}
\Theta^{F,M}_{\widehat X,c_1,K_{\widehat X}}((L-(n-1)E-K_{\widehat X})z)&=\Theta^{F,M}_{\widehat X,c_1,K_{\widehat X}}(\frac{1}{2\pi i}(L-K_X)z)\sum_{k\in \Z} (-1)^n e^{\pi i \tau n^2}e^{nkz}
\\
&= \Theta^{F,M}_{\widehat X,c_1,K_{\widehat X}}(\frac{1}{2\pi i}(L-K_X)z)\theta_4(nz).
\end{align*}
We also see that $\sigma(\widehat X)=\sigma(X)-1$ and $(L-K_X-nE)^2=(L-K_X)^2-n^2$.
Putting this into the definitions of $\Psi^{F,M}_{\widehat X,c_1}$ and $\Psi^{F,M}_{X,c_1}$, we see that
$$\Psi^{F,M}_{\widehat X,c_1}(L-(n-1)E,\Lambda,\tau)=\Psi^{F,M}_{X,c_1}(L,\Lambda,\tau)\frac{\theta_4(nh)}{\theta_4\widetilde\theta_4(h)^{n^2}}=\Psi^{F,M}_{X,c_1}(L,\Lambda,\tau)R_{n}(\Lambda, M).$$

(2) The proof is similar. The same argument as above shows that
\begin{align*}
\Theta^{F,M}_{\widehat X,c_1+E,K_{\widehat X}}((L-(n-1)E-K_{\widehat X})z)&=\Theta^{F,M}_{\widehat X,c_1,K_{\widehat X}}(\frac{1}{2\pi i}(L-K_X)z)\sum_{k\in \Z} (-1)^n e^{\pi i \tau (n+1/2)^2}e^{(n+1/2)kz}
\\
&= \Theta^{F,M}_{\widehat X,c_1,K_{\widehat X}}(\frac{1}{2\pi i}(L-K_X)z)\theta_1(nz).
\end{align*}
Thus the definitions of $\Psi^{F,M}_{\widehat X,c_1+E}$ and $\Psi^{F,M}_{X,c_1}$ give that
$$\Psi^{F,M}_{\widehat X,c_1+E}(L-(n-1)E,\Lambda,\tau)=\Psi^{F,M}_{X,c_1}(L,\Lambda,\tau)\frac{\theta_1(nh)}{\theta_4\widetilde\theta_4(h)^{n^2}}=\Psi^{F,M}_{X,c_1}(L,\Lambda,\tau)S_{n}(\Lambda, M).$$
\end{proof}

\begin{Theorem}\label{1blow}
Let $X$ be $\P^2$, $\P^1\times\P^1$ or the blowup of $\P^2$ in at most $7$ points.
Let $c_1\in H^2(X,\Z)$ and let $L$ be a line bundle on $X$ with $\<c_1, L\>$ even.
Then we have for all $M\in C_X$
$$\chi^{X,M}_{c_1}(L)=\frac{\chi^{\widehat X,M}_{c_1}(L-E)}{1-\Lambda^4}.$$
\end{Theorem}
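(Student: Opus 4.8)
The plan is to reduce the statement to its \emph{difference form} over arbitrary pairs of (possibly boundary) polarizations, fix the additive ambiguity by anchoring at a class where both sides vanish, and treat $\P^2$ — which has Picard rank one and hence no room for the wallcrossing machinery — by a separate two-blowup argument.

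First I would specialize \propref{highblow}(1) to $n=2$. By \defref{blowpol} one has $R_2(\Lambda,M)=1-\Lambda^4$, which is independent of $M$, $\tau$ and $q$, so the proposition reads
\[
\Psi^{F,M}_{\widehat X,c_1}(L-E,\Lambda,\tau)=(1-\Lambda^4)\,\Psi^{F,M}_{X,c_1}(L,\Lambda,\tau)
\]
for all $F,M\in C_X\cup S_X$. Taking $\Coeff_{q^0}$, using \defref{chiext} together with the cocycle condition for $\Theta^{f,g}$ (so that $\Psi^{N,M}-\Psi^{N,F}=\Psi^{F,M}$ for an ample reference $N$ on $\widehat X$), and exploiting that $1-\Lambda^4$ carries no powers of $q$, I obtain the difference form
\[
\chi^{\widehat X,M}_{c_1}(L-E)-\chi^{\widehat X,F}_{c_1}(L-E)=(1-\Lambda^4)\bigl(\chi^{X,M}_{c_1}(L)-\chi^{X,F}_{c_1}(L)\bigr),
\]
valid for all $F,M\in C_X\cup S_X$, where on the right I have again invoked \defref{chiext} and the cocycle to identify $\Coeff_{q^0}[\Psi^{F,M}_{X,c_1}(L)]$.

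For $X=\P^1\times\P^1$ or a blowup of $\P^2$ in $k\le 7$ points we have $S_X\neq\emptyset$, and I would anchor by choosing $F\in S_X$. By \thmref{vanbound}, $\chi^{X,F}_{c_1}(L)=0$; pulling $F$ back to $\widehat X$ (it remains primitive, isotropic and negative against the ample class, hence lies in $S_{\widehat X}$) and noting $\<c_1,L-E\>=\<c_1,L\>$ is even, the vanishing established in the proof of \thmref{vanbound} for the once-more-blown-up surface gives $\chi^{\widehat X,F}_{c_1}(L-E)=0$ as well, covering also the borderline $\widehat X=\mathrm{Bl}_8\P^2$ (still with $-K$ ample). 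Substituting these zeros into the difference form yields $\chi^{\widehat X,M}_{c_1}(L-E)=(1-\Lambda^4)\chi^{X,M}_{c_1}(L)$ for all $M\in C_X$, which is the assertion. The genuinely harder case is $X=\P^2$, where $S_{\P^2}=\emptyset$, the positive cone is a single ray, and the difference form is therefore vacuous with no boundary anchor available. Here I would pass through two blowups: write $Y_1=\widehat{\P^2}=\mathrm{Bl}_{p_1}\P^2$ (the relevant $\widehat X$, exceptional $E_1$), introduce $Y_2=\mathrm{Bl}_{p_2}\P^2$, and let $Z=\mathrm{Bl}_{p_1,p_2}\P^2$ be their common blowup. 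With $M=H$ the pulled-back hyperplane class and $c_1,L$ pulled back from $\P^2$, I would chain three already-available facts: the simple blowup formula $\chi^{\P^2,H}_{c_1}(L)=\chi^{Y_2,H}_{c_1}(L)$ from \propref{blowgen}(1); the already-proven case of the present theorem for $Y_2=\widehat{\P^2}$ (blown up to $Z$ with exceptional $E_1$), namely $(1-\Lambda^4)\chi^{Y_2,H}_{c_1}(L)=\chi^{Z,H}_{c_1}(L-E_1)$; and \propref{blowgen}(1) once more for $Z=\mathrm{Bl}_{p_2}Y_1$, giving $\chi^{Z,H}_{c_1}(L-E_1)=\chi^{Y_1,H}_{c_1}(L-E_1)$ since $L-E_1$ is pulled back from $Y_1$. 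Concatenating, $\chi^{Y_1,H}_{c_1}(L-E_1)=(1-\Lambda^4)\chi^{\P^2,H}_{c_1}(L)$, exactly the claim for $\P^2$.

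The main obstacle is this $\P^2$ case: its rank-one Picard group admits no isotropic polarization to anchor at and collapses the theta-function/wallcrossing input, so one cannot avoid importing the rank-two surfaces $Y_1,Y_2,Z$ and using the symmetry between the two blowup points. A secondary bookkeeping point is to verify the parity hypothesis $\<c_1,\cdot\>$ even and the ampleness of $-K$ on every auxiliary surface (all del Pezzo of degree $\ge 1$), and to confirm that the vanishing of \thmref{vanbound} genuinely applies on the eight-point blowup needed when $X=\mathrm{Bl}_7\P^2$.
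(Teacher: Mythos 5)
Your proposal is correct, and for the non-$\P^2$ surfaces it is exactly the paper's argument: specialize \propref{highblow} to $n=2$ so that $R_2(\Lambda,M)=1-\Lambda^4$, pass to $\Coeff_{q^0}$ via \defref{chiext} and the cocycle property, and anchor at $F\in S_X$, where \thmref{vanbound} kills both $\chi^{X,F}_{c_1}(L)$ and $\chi^{\widehat X,F}_{c_1}(L-E)$. For $\P^2$ the two proofs rest on the identical geometric trick --- blowing up two points and playing the two rulings $H-E_1$, $H-E_2$ against each other --- but assemble the ingredients differently: you formally concatenate \propref{blowgen}(1), the already-proven case of the theorem applied to $\widehat\P^2$ and its blowup $Z$, and \propref{blowgen}(1) once more, whereas the paper unrolls the whole chain at the level of $\Coeff_{q^0}[\Psi]$, anchoring at $F_2$, applying \propref{highblow}, switching the anchor from $F_2$ to $F_1$ by an explicit appeal to \thmref{strucdiff} on the two-point blowup, and only then descending to the one-point blowup. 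Your assembly is more modular and shorter (the theorem is applied to itself, and \thmref{strucdiff} enters only implicitly through the results you quote, at the price of invoking \thmref{vanbound} on the two-point blowup inside your step (b)); it is neither more nor less general, since both routes consume the same lemmas. One shared piece of fine print: both arguments need the vanishing $\chi^{\widehat X,F}_{c_1}(L-E)=0$, and when $X$ is the seven-point blowup of $\P^2$ the surface $\widehat X$ is the eight-point blowup, which lies outside the literal statement of \thmref{vanbound}; the proof of that theorem extends to it only when some ruling class pairs oddly with $c_1$ (for $c_1=0$ it would require a ninth blowup, where $-K$ is no longer ample). You at least flag this point explicitly, while the paper passes over it silently, so your proposal matches the paper's level of rigor there.
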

\begin{proof}
First consider the case that $X$ is not $\P^2$.
Then by \thmref{vanbound} there is an $F\in S_X$ with $\chi^{X,F}_{c_1}(L)=0=\chi^{\widehat X,F}_{c_1}(L-E)$.
Thus by \corref{thetawall} and \propref{highblow} we get
\begin{align*}
\chi^{\widehat X,M}_{c_1}(L-E)&=\Coeff_{q^0}\big[\Psi^{F,M}_{\widehat X,c_1}(L-E;\Lambda,\tau)\big]=\Coeff_{q^0}\big[\Psi^{F,M}_{X,c_1}(L;\Lambda,\tau)R_2(M,\Lambda)\big]\\&=
\Coeff_{q^0}\big[\Psi^{F,M}_{X,c_1}(L;\Lambda,\tau)\big](1-\Lambda^4)=\chi^{X,F}_{c_1}(L-E)(1-\Lambda^4).
\end{align*}
Now assume that $X=\P^2$ and $c_1=H$ is the hyperplane class. Let $p_1,p_2$ be two different points of $\P^2$.
For $i=1,2$ let $X_i$ be the blowup of $\P^2$ in $p_i$ with exceptional divisor $E_i$, and let $X$ be the blowup of $\P^2$ in $p_1$ and $p_2$.
Let $F_1:=H-E_1$, $F_2:=H-E_2$. Then
$$\chi^{\P^2,H}_H(L)=\chi^{X_2,H}_H(L)=\Coeff_{q^0}\big[\Psi^{F_2,H}_{X_2,H}(L;\Lambda,\tau)\big]=\frac{1}{1-\Lambda^4}\Coeff_{q^0}\big[\Psi^{F_2,H}_{X,H}(L-E_1;\Lambda,\tau)\big].$$
On the other hand by \thmref{strucdiff}  $\Coeff_{q^0}\big[\Psi^{F_2,F_1}_{X,H}(L-E_1;\Lambda,\tau)\big]=0$, and thus we get by \propref{blowgen}
\begin{align*} \Coeff_{q^0}\big[\Psi^{F_2,H}_{X,H}(L-E_1;\Lambda,\tau)\big]&=\Coeff_{q^0}\big[\Psi^{F_1,H}_{X,H}(L-E_1;\Lambda,\tau)\big]=
\Coeff_{q^0}\big[\Psi^{F_1,H}_{X_1,H}(L-E_1;\Lambda,\tau)\big]\\&=\chi^{X_1,H}_H(L-E_1).
\end{align*}
Finally let $X=\P^2$ and $c_1=0$. We use again \propref{blowgen} and the same argument to get
\begin{align*}
\chi^{\P^2,H}_0(L)&=\chi^{X_2,H}_0(L)=\Coeff_{q^0}\big[\Psi^{F_2,H}_{X_2,0}(nH;\Lambda,\tau)\big]=\frac{1}{1-\Lambda^4}\big(\Coeff_{q^0}\big[\Psi^{F_2,H}_{X,0}(nH-E_1;\Lambda,\tau)\big]\big).
\end{align*}
Again by \thmref{strucdiff} we get  $\Coeff_{q^0}\big[\Psi^{F_2,F_1}_{X,0}(nH-E_1;\Lambda,\tau)\big]=0$, and thus by \thmref{vanbound}
\begin{align*} \Coeff_{q^0}\big[\Psi^{F_2,H}_{X,0}(nH-E_1;\Lambda,\tau)\big]&=\Coeff_{q^0}\big[\Psi^{F_1,H}_{X,0}(nH-E_1;\Lambda,\tau)\big]=
\Coeff_{q^0}\big[\Psi^{F_1,H}_{X_1,0}(nH-E_1;\Lambda,\tau)\big]\\&=\chi^{X_1,H}_0(nH-E_1).
\end{align*}
The claim follows.
\end{proof}

\section{$K$-theoretic Donaldson invariants of rational ruled surfaces}
In this section we will compute  generating functions for $K$-theoretic Donaldson invariants of rational ruled surfaces, proving \thmref{p11t}.
We will do this by proving some recursion formulas for them, which determine them, once suitable initial conditions are satisfied.

\subsection{The limit of the invariant at the boundary point}
Let $X=\P^1\times \P^1$ or $\widehat \P^2$ the blowup of $\P^2$ in a point.
We denote the line bundles on $\P^1\times\P^1$ and $\widehat \P^2$ in a uniform way.
\begin{Notation}
Let $X=\P^1\times \P^1$ or $X=\widehat \P^2$. In the case $X=\P^1\times \P^1$ we denote $F$ the class of the fibre of the projection to the first factor, and by $G$ the class of the fibre of the projection to the second factor. In the case $X=\widehat \P^2$, let $H$ be the pullback of the hyperplane class on $\P^2$ and $E$ the class of the exceptional divisor. Then  $F:=H-E$ is the fibre of the ruling of $X$. We put $G:=\frac{1}{2}(H+E)$. Note that $G$ is not an integral cohomology class. In fact, while $H^2(\P^1\times\P^1,\Z)=\Z F\oplus \Z G$, we have
$$H^2(\P^1\times\P^1,\Z)=\Z H\oplus \Z E=\big\{aF+bG\bigm| a\in \Z,b\in 2\Z \hbox{ or } a\in \Z+\frac{1}{2}, b\in 2\Z+1\big\}.$$
On the other hand we note that both on $X=\P^1\times\P^1$ and $\widehat \P^2$ we have
$F^2=G^2=0$, $\<F,G\>=1$, and $-K_X=2F+2G$.
\end{Notation}

We want to define and study the limit of the $K$-theoretic Donaldson invariant $\chi\big(M_{P}^X(c_1,d),\mu(L)),$ as the ample class $P$ tends to $F$.
For $c_1=F$ or $c_1=0$ this will be different from our previous definition of $\chi\big(M_{F}^X(c_1,d),\mu(L)).$

\begin{Definition}
Fix $d\in \Z$ with $d\equiv -c_1^2 (4)$. For $n_d>0$ sufficiently large, $n_dF+ G$ is ample on $X$, and the condition
$\<c_1,F\>$ odd implies that there is no wall $\xi$ of type $(c_1,d)$ with
$\<\xi ,(n_dF+ G)\>>0> \<\xi, F\>$.  Let $L\in \Pic(X)$ with $\<c_1,L\>$ even.
We define
\begin{align*}M_{F_+}^X(c_1,d)&:=M_{n_dF+G}^X(c_1,d), \\
\chi(M_{F_+}^X(c_1,d),\mu(L))&:=\chi(M_{n_d F+G}^X(c_1,d),\mu(L)),\\
\chi^{X,F_+}_{c_1}(L)&:=\sum_{d\ge 0} \chi(M_{F_+}^X(c_1,d),\mu(L))\Lambda^d.\\
\end{align*}
\end{Definition}

Now we give a formula for $\chi^{X,F_+}_{0}(nF+mG)$ and $\chi^{X,F_+}_{F}(nF+mG)$. The rest of this section will be mostly devoted to giving an explicit
evaluation of this formula for $m\le 2$. In work in progress  the  method will be generalised for higher values of $m$ and also to $c_1$ different from $0$ and $F$.

\begin{Proposition}\label{Fplus}
Let $X=\P^1\times\P^1$ or $X=\widehat \P^2$.
\begin{enumerate}
\item
Let $nF+mG$ be a line bundle on $X$ with $m$ even. Then
$$\chi^{X,F_+}_{F}(nF+mG)=\Coeff_{q^0}\left[\frac{1}{2\sinh((m/2+1)h)}\Lambda^2\WT_4(h)^{2(n+2)(m+2)}u'h^*\right].$$

\item
Let $nF+mG$ be a line bundle on $X$. Then
$$\chi^{X,F_+}_{0}(nF+mG)=-\Coeff_{q^0}\left[\frac{1}{2}\left(\coth((m/2+1)h)\right)\Lambda^2\WT_4(h)^{2(n+2)(m+2)}u'h^*\right].$$
\end{enumerate}

\end{Proposition}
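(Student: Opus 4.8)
The plan is to reduce everything to the indefinite theta function of \corref{thetawall} and then to a single geometric series in the fibre direction. Since the fibre class $F$ lies in $S_X$, \thmref{vanbound} gives $\chi^{X,F}_{c_1}(L)=0$ for $c_1=0$ and $c_1=F$. Hence, working degree by degree in $\Lambda$, I would write
$$\chi^{X,F_+}_{c_1}(L)=\chi^{X,F_+}_{c_1}(L)-\chi^{X,F}_{c_1}(L),$$
and evaluate the right-hand side via \corref{thetawall} and \defref{chiext}, representing $F_+$ by an ample class $n_dF+G$ with $n_d$ large, as in the definition of $M^X_{F_+}$. This expresses the coefficient of $\Lambda^d$ as $\Coeff_{q^0}\big[\Psi^{F,n_dF+G}_{X,c_1}(L;\Lambda,\tau)\big]$, i.e. as $\Coeff_{q^0}$ of $\Theta^{F,n_dF+G}_{X,c_1,K_X}\big(\tfrac{(L-K_X)h}{2\pi i},\tau\big)$ times the universal factor $\Lambda^2\widetilde\theta_4(h)^{(L-K_X)^2}\theta_4^{\sigma(X)}u'h^*$.

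Next I would analyse which classes survive. Writing $\xi=\alpha F-\beta G$ and using $F^2=0$, $\langle F,G\rangle=1$, the weight $\mu(\xi\cdot F)-\mu(\xi\cdot(n_dF+G))$ is supported, after $n_d\to\infty$, on the isotropic ray $\beta=0$; moreover by \thmref{vanwall} every class with $\beta\neq0$ crossed between $F$ and $n_dF+G$ has $-\xi^2=2\alpha\beta>d$ once $n_d>d/2$, so its contribution to $\Lambda^d$ vanishes. Thus in each degree only the fibre classes $\xi=\alpha F$ with $\alpha>0$ contribute, with $\alpha$ odd for $c_1=F$ and $\alpha$ even for $c_1=0$. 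For such $\xi$ one has $Q(\xi)=0$, $\langle\xi,L-K_X\rangle=\alpha(m+2)$ and $\langle\xi,K_X\rangle=-2\alpha$, so the surviving part of the theta function becomes the geometric series $\sum_{\alpha>0}(-1)^{\alpha}e^{\alpha(m/2+1)h}$ over the relevant parity. The rigorous incarnation of this summation is the meromorphic continuation of $\Theta^{F,g}$ for $F\in S_X$, $g\in C_X$ recorded after the definition of $\Theta^{f,g}_{\Gamma,c,b}$, whose boundary term is exactly this series collapsed into a factor $1/(1-e^{\ldots})$.

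Summing gives $\tfrac{1}{2\sinh((m/2+1)h)}$ when $c_1=F$ (odd $\alpha$) and $-\tfrac12\coth((m/2+1)h)-\tfrac12$ when $c_1=0$ (even $\alpha$). Inserting this, together with $\sigma(X)=0$ and $(L-K_X)^2=\big((n+2)F+(m+2)G\big)^2=2(n+2)(m+2)$, into the universal factor yields (1) directly. For (2) the extra additive constant $-\tfrac12$ contributes $\Coeff_{q^0}\big[-\tfrac12\Lambda^2\widetilde\theta_4(h)^{2(n+2)(m+2)}u'h^*\big]$, which vanishes: by \lemref{Rregular} this expression lies in $q^{-3}\Lambda^3\RR$ with $\RR=\Q[[q^2\Lambda^2,q^4]]$, whose monomials $q^{-3}\cdot q^{2a+4b}\Lambda^{\ast}$ can never have total $q$-exponent $0$; hence only the $\coth$ term survives, giving (2). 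The main obstacle I anticipate is the analysis in the second paragraph: justifying rigorously that, degree by degree, only the isotropic fibre classes contribute, and that the formal geometric series is correctly produced by the meromorphic continuation of the indefinite theta function near the boundary ray $F$ — including the careful bookkeeping of the sign factors $i^{\langle\xi,K_X\rangle}$ coming from the dictionary in the proof of \corref{thetawall}.
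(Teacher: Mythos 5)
Your proposal is correct and follows essentially the same route as the paper's own proof: vanishing at the boundary class $F$ via \thmref{vanbound}, degree-by-degree reduction to $\Coeff_{q^0}\big[\Psi^{F,n_dF+G}_{X,c_1}(L;\Lambda,\tau)\big]$ with $n_d$ large enough that no wall of type $(c_1,d)$ is crossed, extraction of the fibre-ray contribution from the boundary term of the Fourier expansion of the indefinite theta function, and a parity argument disposing of the extra constant $-\tfrac12$ in the $c_1=0$ case.

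One local correction to your last step: $\Lambda^2\widetilde\theta_4(h)^{2(n+2)(m+2)}u'h^*$ does \emph{not} lie in $q^{-3}\Lambda^3\RR$; already its $\Lambda^5$-coefficient contains $q^{-5}$, since $h^*=\frac{4i\Lambda}{\theta_2\theta_3M}$ lies in $q^{-1}\Lambda\,\Q[[q^{-2}\Lambda^2,q^4]]$ but not in $q^{-1}\Lambda\RR$. The conclusion $\Coeff_{q^0}[\,\cdot\,]=0$ nevertheless holds: the expression is odd in $\Lambda$ and lies in $\Q[[q^{-1}\Lambda,q^4]]$ (as in the proof of \remref{delb}), so every monomial $q^s\Lambda^d$ appearing has $d$ odd and $s\equiv -d\pmod 4$, hence $s\neq 0$. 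This is exactly the parity argument the paper invokes by citing \remref{delb} at the end of its proof.
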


\begin{proof}
We denote $\Gamma_X=H^2(X,\Z)$ with inner product the negative of the intersection form.
Let $c_1=0$ or $c_1=F$,  fix $d$, and let $s\in \Z_{\ge 0}$ be sufficiently large so that there is no class $\xi$ of $(c_1,d)$ with $\<\xi, F\><0<\<\xi,  G+sF\>$.
Write $L:=nF+mG$.
By definition
\begin{align*}
\chi(M^{sF+G}_X&(F,d),\mu(L))=\Coeff_{\Lambda^d}\Coeff_{q^0}\left[\Psi^{F,G+sF}_{X,F}(L;\Lambda,\tau)\right]\\
&=\Coeff_{\Lambda^d}\Coeff_{q^0}\left[\Theta^{F,G+sF}_{\Gamma_X,F,K_X}(\frac{1}{2\pi i}(L-K_X)h,\tau)\Lambda^2\WT_4(h)^{(L-K_X)^2}u'h^*\right]\\
&=
\Coeff_{\Lambda^d}\Coeff_{q^0}\left[\frac{e^{-\<\frac{F}{2},L-K_X\>}}{1-e^{-\<F,L-K_X\>h}}\Lambda^2\WT_4(h)^{(L-K_X)^2}u'h^*\right]+\sum_{\<F,\xi\>>0>\<G+sF,\xi\>}\delta^X_\xi(L).
\end{align*}
Here the second sum is over the classes of type $(F,d)$. By our assumption on $n$ the second sum is empty, so we get
$$\chi^{X,F_+}_F(L)=\Coeff_{q^0}\left[\frac{e^{-\<\frac{F}{2},L-K_X\>}}{1-e^{-\<F,L-K_X\>h}}\right]=
\Coeff_{q^0}\left[\frac{\Lambda^2\WT_4(h)^{(L-K_X)^2}u'h^*}{2\sinh(\<\frac{F}{2},L-K_X\>h)}\right].$$
In the case $c_1=0$ the argument is very similar. By definition and \thmref{vanbound} we have
\begin{align*}
\chi&(M^{nF+G}_X(0,d),\mu(L))=\Coeff_{\Lambda^d}\Coeff_{q^0}\left[\Psi^{F,sF+G}_{X,0}(L;\Lambda,\tau)\right]\\
&=\Coeff_{\Lambda^d}\Coeff_{q^0}\left[\Theta^{F,sF+G}_{\Gamma_X,0,K_X}(\frac{1}{2\pi i}(L-K_X)h,\tau)\Lambda^2\WT_4(h)^{(L-K_X)^2}u'h^*\right]\\
&=
\Coeff_{\Lambda^d}\Coeff_{q^0}\left[\frac{\Lambda^2\WT_4(h)^{(L-K_X)^2}u'h^*}{1-e^{-\<F,L-K_X\>h}}\right]+\sum_{\<F,\xi\>>0>\<M,\xi\>}\delta^X_\xi(L).
\end{align*}
The second sum is again over the walls of type $(0,d)$, and thus it is $0$.
Thus we get
\begin{align*}
\chi^{X,F_+}_{0}(L)&=\Coeff_{q^0}\left[\frac{\Lambda^2\WT_4(h)^{(L-K_X)^2}u'h^*}{1-e^{-\<F,L-K_X\>h}}\right]\\
&=
-\Coeff_{q^0}\left[\frac{1}{2}\left(\coth(\<F,(L-K_X)/2\>h)+1\right)\Lambda^2\WT_4(h)^{(L-K_X)^2}u'h^*\right].
\end{align*}
Note that by \remref{delb}, we get $$\Coeff_{q^0}[\Lambda^2\WT_4(h)^{(L-K_X)^2}u'h^*]=\Coeff_{q^0}[(1-1)\Lambda^2\WT_4(h)^{(L-K_X)^2}u'h^*]=0.$$
\end{proof}

\begin{Remark}\label{Gplus}
In the case $\P^1\times\P^1$, we can in the same way define $M^{\P^1\times\P^1}_{G_+}(c_1,d):=M^{\P^1\times\P^1}_{n_dG+F}(c_1,d)$ for $n_d$ sufficiently large with respect to $d$, and
$$\chi^{\P^1\times\P^1,G_+}_{c_1}(nF+mG):=\sum_{d>0} \chi(M^{\P^1\times\P^1}_{G_+}(c_1,d),\mu(nF+mG)).$$
Then we see immediately that
$\chi^{\P^1\times\P^1,G_+}_{F}(nF+mG)=0$, and we get by symmetry from \propref{Fplus} that
$$\chi^{\P^1\times\P^1,G_+}_{0}(nF+mG)=-\Coeff_{q^0}\left[\frac{1}{2}\left(\coth((n/2+1)h)\right)\Lambda^2\WT_4(h)^{2(n+2)(m+2)}u'h^*\right].$$
\end{Remark}

\subsection{Theta constant identities}

We use the blowup polynomials $R_n(x,\lambda)$, $S_n(x,\lambda)$ and the blowup functions $\widetilde R_n=R_n(M,\Lambda)$, $\widetilde S_n=S_n(M,\Lambda)$
of \defref{blowpol}
to find
identities between expressions in theta functions, evaluated at some division point. These will then below be used to give recursion formulas for $K$-theoretic
Donaldson invariants of rational ruled surfaces.

\begin{Definition}
Fix $\tau\in \H$.
Let $r\in \Z_{\ge 0}$ and $l\in \Z$ with  $l\equiv r (2)$.
\begin{align*}
RR_{\frac{r-l}{2},\frac{r+l}{2}}(h)&:=
\widetilde R_{\frac{r-l}{2}}(h)-\WT_4(z)^{rl}
 \widetilde R_{\frac{r+l}{2}}(h),\\
 SS_{\frac{r-l}{2},\frac{r+l}{2}}(h)&:=
\widetilde S_{\frac{r-l}{2}}(h)+ e^{\pi i r\Re(h)}\WT_4(h)^{rl}
 \widetilde S_{\frac{r+l}{2}}(h).\\
  \end{align*}
 \end{Definition}

\begin{Proposition} \label{RS0}Fix $r\in \Z_{\ge 0}$, and fix $a\in \Z$.
Let $l\in \Z$ with $l\equiv r (2)$. Then
\begin{enumerate}
\item $RR_{\frac{r-l}{2},\frac{r+l}{2}}(2\pi i \frac{a}{r})=0$,
\item $SS_{\frac{r-l}{2},\frac{r+l}{2}}(2\pi i \frac{a}{r})=0$.
\end{enumerate}
\end{Proposition}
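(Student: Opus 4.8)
The plan is to reduce both identities to elementary quasi-periodicity and parity statements for the standard theta functions $\theta_1$ and $\theta_4$, evaluated at the torsion point $h_0:=2\pi i\,a/r$. Throughout write $m:=\tfrac{r-l}{2}$ and $n:=\tfrac{r+l}{2}$, so that $m+n=r$, $\,n-m=l$, and in particular $rl=n^2-m^2$; this last identity is what makes the weight factors collapse.

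First I would simplify the two expressions. Inserting the definitions $\widetilde R_k=\widetilde\theta_4(kh)/\widetilde\theta_4(h)^{k^2}$ and using $rl=n^2-m^2$, the factor $\widetilde\theta_4(h)^{rl}$ turns $\widetilde R_n$ into $\widetilde\theta_4(nh)/\widetilde\theta_4(h)^{m^2}$, so that
$$RR_{m,n}(h)=\frac{\widetilde\theta_4(mh)-\widetilde\theta_4(nh)}{\widetilde\theta_4(h)^{m^2}}=\frac{\theta_4(mh)-\theta_4(nh)}{\theta_4\,\widetilde\theta_4(h)^{m^2}}.$$
The same manipulation, now with $\widetilde S_k=\widetilde\theta_1(kh)/\widetilde\theta_4(h)^{k^2}$ and $\widetilde\theta_1=\theta_1/\theta_4$, gives
$$SS_{m,n}(h)=\frac{\theta_1(mh)+e^{\pi i r\Re(h)}\,\theta_1(nh)}{\theta_4\,\widetilde\theta_4(h)^{m^2}}.$$
Since $\widetilde\theta_4(h_0)^{m^2}\neq 0$, both claims reduce to the vanishing of the respective numerators at $h=h_0$.

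The heart of the argument is the relation $r h_0=2\pi i\,a$, equivalently $n h_0=2\pi i\,a-m h_0$. For (1) I would use that $\theta_4$ has period $2\pi i$ and is even, whence $\theta_4(n h_0)=\theta_4(2\pi i\,a-m h_0)=\theta_4(-m h_0)=\theta_4(m h_0)$, so the numerator of $RR_{m,n}$ vanishes. For (2) I would instead use that $\theta_1$ is antiperiodic, $\theta_1(h+2\pi i)=-\theta_1(h)$, and odd, giving $\theta_1(n h_0)=(-1)^a\theta_1(-m h_0)=-(-1)^a\theta_1(m h_0)$. The prefactor $e^{\pi i r\Re(h)}$ specializes to $(-1)^a$ at $h_0$ (equivalently $y^r=e^{r h_0/2}=e^{\pi i a}$), so the numerator of $SS_{m,n}$ becomes $\theta_1(m h_0)\bigl(1-(-1)^{2a}\bigr)=0$, as desired.

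The algebraic collapse and the two standard relations for $\theta_1,\theta_4$ (periodicity/antiperiodicity and parity) are routine, and I would quote them from the $q$-series conventions of the previous section and from \cite{WW}. The one place I would be most careful is the sign bookkeeping in part (2): the antiperiodicity of $\theta_1$ produces a factor $(-1)^a$ that must be cancelled \emph{exactly} by the prefactor $e^{\pi i r\Re(h)}$ at $h_0$, whereas in part (1) $\theta_4$ is genuinely periodic so no such prefactor is present or needed. Verifying that this prefactor really evaluates to $(-1)^a$ at the torsion point — and hence that the design of $SS_{m,n}$ is precisely what forces the cancellation — is the main (and essentially the only) subtlety; no convergence or well-definedness issues arise, as everything is an identity of meromorphic functions of $\tau$.
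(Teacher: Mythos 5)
Your proposal is correct and takes essentially the same approach as the paper: both rest on the identity $rl=\frac{(r+l)^2}{4}-\frac{(r-l)^2}{4}$ to collapse the $\widetilde\theta_4(h)$-weights, and then on the parity and $2\pi i$-(anti)periodicity of $\theta_4$ resp.\ $\theta_1$ at the torsion point $h_0=2\pi i a/r$. Your explicit check that the prefactor $e^{\pi i r\Re(h)}$ must be read as $y^r=e^{rh/2}$, hence equals $(-1)^a$ at $h_0$, is exactly the interpretation the paper uses implicitly both in its own proof of (2) and in the later application in \propref{thetanull}(3).
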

 \begin{proof}
(1)  Put $z_0:=2\pi i \frac{a}{r}$.
As $\theta_4(h)$ is even, we get  $$\theta_4(\frac{r-l}{2}z_0)=\theta_4(-\frac{r-l}{2}z_0)=\theta_4(-2\pi i \frac{r-l}{2r}a)=\theta_4(2\pi i \frac{r+l}{2r}a)=\theta_4(\frac{r+l}{2}z_0),$$ where  we used
 $\theta_4(z+a)=\theta_4(z)$.
 By the definition of $\widetilde R_n$ we get
 \begin{align*}
 \widetilde R_{\frac{r-l}{2}}(z_0)
 &=\frac{\WT_4(\frac{r-l}{2}z_0)}{\WT_4(z_0)^{\frac{(r-l)^2}{4}}}
 = \frac{\WT_4(\frac{r+l}{2}z_0)}{\WT_4(z_0)^{\frac{(r-l)^2}{4}}}=\WT_4(z_0)^{rl} \frac{\WT_4(\frac{r+l}{2}z_0)}{\WT_4(z_0)^{\frac{(r+l)^2}{4}}}
 = \WT_4(z_0)^{rl} \widetilde R_{\frac{r+l}{2}}(z_0).
 \end{align*}
(2) As $\theta_1(z)$ is odd, and  $\theta_1(z+2\pi i a)=(-1)^a\theta_1(z)$, we get
$$\theta_1(\frac{r-l}{2}z_0)=-\theta_1(-\frac{r-l}{2}z_0)=-\theta_1(-2\pi i \frac{r-l}{2r}a)=(-1)^{a+1}\theta_1(2\pi i \frac{r+l}{2r}a)=(-1)^{a+1}\theta_1(\frac{r+l}{2}z_0).$$
By the definition of $\widetilde S_n$ we get
 \begin{align*}
 \widetilde S_{\frac{r-l}{2}}(z_0)
 &=\frac{\WT_1(\frac{r-l}{2}z_0)}{\WT_4(z_0)^{\frac{(r-l)^2}{4}}}
 = (-1)^{a+1}\WT_4(z_0)^{rl} \frac{\WT_1(\frac{r+l}{2}z_0)}{\WT_4(z_0)^{\frac{(r+l)^2}{4}}}
 = (-1)^{a+1}\WT_4(z_0)^{rl} \widetilde S_{\frac{r+l}{2}}(z_0).
 \end{align*}
\end{proof}

 \begin{Proposition}\label{thetanull}
 Let $a\in \Z$.
 \begin{enumerate}
\item Put $Q_2(z):=\widetilde \theta_4(z)^4(1-\Lambda^4)$.
Then
$\big(Q_2(z)-1\big)|_{z=\pi i a}=0.$
\item
Put $Q_3(z):=\widetilde \theta_4(z)^3(1-\Lambda^4)$.
Then
$\big(Q_3(z)-1\big)|_{z=2\pi i \frac{a}{3}}=0.$
\item Put $Q_4(z):=\widetilde \theta_4(z)^8(1-\Lambda^4)^3$. Then
$\big(Q_4(z)-(1-(-1)^a\Lambda^4)\big)|_{z=\pi i \frac{a}{2}}=0.$
\end{enumerate}
\end{Proposition}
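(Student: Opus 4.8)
The plan is to reduce everything to the vanishing statements of \propref{RS0}, after recognizing each $Q_i(z)$ as an expression in the normalized blowup functions $\widetilde R_n$, $\widetilde S_n$ of \defref{blowpol}. By \propref{rnprop} and the recursion \eqref{recurR} one has $\widetilde R_2=R_2(\Lambda,M)=1-\Lambda^4$, so that $1-\Lambda^4=\widetilde R_2=\widetilde\theta_4(2z)/\widetilde\theta_4(z)^4$ as a function of the elliptic variable $z$. Hence
$$Q_2(z)=\widetilde\theta_4(z)^4\,\widetilde R_2(z),\qquad Q_3(z)=\widetilde\theta_4(z)^3\,\widetilde R_2(z),\qquad Q_4(z)=\widetilde\theta_4(z)^8\,\widetilde R_2(z)^3,$$
and, recalling $\widetilde R_0=\widetilde R_1=1$ and that the symbol $RR_{(r-l)/2,(r+l)/2}$ defined before \propref{RS0} carries the exponent $rl$, we may read off $1-Q_2=RR_{0,2}$ (take $r=l=2$, so $rl=4$) and $1-Q_3=RR_{1,2}$ (take $r=3$, $l=1$, so $rl=3$).

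With these identifications parts (1) and (2) are immediate. For (1) the relevant division point is $2\pi i\,a/r=\pi i\,a$ with $r=2$, and \propref{RS0}(1) gives $RR_{0,2}(\pi i a)=0$, i.e. $Q_2(\pi i a)=1$. For (2) the division point is $2\pi i\,a/3$ with $r=3$, and \propref{RS0}(1) gives $RR_{1,2}(2\pi i a/3)=0$, i.e. $Q_3(2\pi i a/3)=1$. Equivalently, one may note $Q_2(z)=\widetilde\theta_4(2z)$ and $Q_3(z)=\widetilde\theta_4(2z)/\widetilde\theta_4(z)$ and use the $2\pi i$-periodicity and evenness of $\widetilde\theta_4$.

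The substantive case is (3), where the division point is $z_0=\pi i\,a/2=2\pi i\,a/4$, so $r=4$. First I would apply \propref{RS0}(1) with $l=2$: since $\widetilde R_1=1$ this yields $\widetilde\theta_4(z_0)^8\,\widetilde R_3(z_0)=1$, whence $Q_4(z_0)=\widetilde R_2(z_0)^3/\widetilde R_3(z_0)$. Next I would apply \propref{RS0}(2) with the same $r,l$; at $z_0$ the phase $e^{\pi i r\Re(h)}$ equals $(-1)^a$, so using $\widetilde S_1=\Lambda$ together with the previous relation one gets $\widetilde S_3(z_0)=(-1)^{a+1}\Lambda(z_0)\,\widetilde R_3(z_0)$. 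Finally I would substitute the closed forms $\widetilde R_3=\widetilde R_2^2-\Lambda^4M^2$ and $\widetilde S_3=\Lambda(M^2-\widetilde R_2^2)$, obtained from \propref{rnprop} and the recursions \eqref{recurR}, \eqref{recurS}, together with $\widetilde R_2=1-\Lambda^4$. When $\Lambda(z_0)\ne0$ one cancels $\Lambda$ to find $M^2(1+\Lambda^4)=2\widetilde R_2^2$ if $a$ is odd and $M^2(1-\Lambda^4)=0$ if $a$ is even; feeding this back into $\widetilde R_3$ and simplifying gives $\widetilde R_2^3/\widetilde R_3=1+\Lambda^4$ for $a$ odd and $\widetilde R_2^3/\widetilde R_3=1-\Lambda^4$ for $a$ even, i.e. $Q_4(z_0)=1-(-1)^a\Lambda(z_0)^4$ in both cases.

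I expect the bookkeeping in (3) to be the main obstacle, on two fronts. The first is the correct evaluation of the phase factor $e^{\pi i r\Re(h)}=(-1)^a$ at $z_0$, where $\Re$ must be read in the normalized sense used in the proof of \propref{RS0}(2), the point $z_0$ lying in the real-period direction. The second is the treatment of the locus where the cancellation of $\Lambda$ is not allowed: this occurs precisely when $\Lambda(z_0)=0$, i.e. for $a\equiv0\pmod4$, in which case both sides of (3) equal $1$ by direct substitution, since there $\widetilde R_2(z_0)=\widetilde R_3(z_0)=1$. Because $\widetilde R_2(z_0)$ and $\widetilde R_3(z_0)$ have constant term $1$ as series in $q$ (as $\Lambda(z_0)=O(q)$ forces $\Lambda(z_0)^4=O(q^4)$), the divisions above are legitimate and, for $a$ even, $M(z_0)^2=0$ follows; since the asserted identity is an identity of $q$-series, these finitely many checks complete the argument.
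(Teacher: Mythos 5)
Your proposal is correct and is essentially the paper's own argument: parts (1) and (2) are exactly its applications of \propref{RS0}(1) with $(r,l)=(2,2)$ and $(3,1)$, and part (3) rests on the same two identities from \propref{RS0} with $(r,l)=(4,2)$ together with the same closed forms $\widetilde R_3=(1-\Lambda^4)^2-\Lambda^4M^2$ and $\widetilde S_3=\Lambda\bigl(M^2-(1-\Lambda^4)^2\bigr)$. The only divergence is the endgame of (3): the paper eliminates $M^2$ by forming one linear combination of the two identities (subtracting $(-1)^a\Lambda^4$ times the $SS$-relation, normalized by $\Lambda$, from the $RR$-relation), which yields $1-(-1)^a\Lambda^4=\widetilde\theta_4(z_0)^8\bigl(1-\Lambda^4\bigr)^3$ uniformly in $a$, whereas you solve for $\widetilde S_3$ and substitute, which forces the division by $\Lambda(z_0)$ and the case analysis modulo $4$ that you correctly carry out (including the degenerate locus $a\equiv 0\ (4)$, where $\Lambda(z_0)=0$).
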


\begin{proof}
(1) Put $z_0=\pi i a$. Applying (1) of \propref{RS0}, with $r=2$, $l=2$, we get
$\widetilde R_{0}(z_0)=\WT_4(z_0)^4
 \widetilde R_{2}(z_0)$, and the claim follows because
 $\widetilde R_{0}=1$, $\widetilde R_{2}=(1-\Lambda^4)$.

 (2) Put $z_0=2\pi i \frac{a}{3}$. Applying (1) of \propref{RS0}, with $r=3$, $l=1$, we get
$\widetilde R_{1}(z_0)= \WT_4(z_0)^3
 \widetilde R_{2}(z_0)$, and the claim follows because
 $\widetilde R_{1}=1$, $\widetilde R_{2}=(1-\Lambda^4)$.

 (3) Put $z_0=\pi i \frac{a}{2}$. Apply (1) and (2) of \propref{RS0}, with $r=4$, $l=2$. This gives
 \begin{align}\label{RR13}
 1&=\widetilde R_{1}(z_0)= \WT_4(z)^8\widetilde R_3(z_0),\\
\label{SS13} 1&=\widetilde S_{1}(z_0)=-(-1)^a\WT_4(z_0)^8\widetilde S_3(z_0).
 \end{align}
 We have $\widetilde R_3=-\Lambda^4M^2+(1-\Lambda^4)^2$,
 $\widetilde S_3=M^2-(1-\Lambda^4)^2$. Thus subtracting $(-1)^a\Lambda^4(z_0)$ times \eqref{SS13} from   \eqref{RR13} we get
$$ (1-(-1)^a\Lambda(z_0)^4)=\WT_4(z)^8(1-\Lambda(z_0)^4)^3.$$
  \end{proof}

\subsection{Recursion formulas from theta constant identities}
We now   use the theta constant identities of \propref{RS0} to show recursion formulas in $n$ for the $K$-theoretical Donaldson invariants $\chi^{X,F_+}_{0}(nF+mG)$, $\chi^{X,F_+}_{F}(nF+mG)$ for $0\le m\le 2$ for polarizations near the fibre class.
We consider expressions relating the left hand sides of the formulas of \propref{Fplus} for $\chi^{X,F_+}_{0}(nF+mG)$, $\chi^{X,F_+}_{F}(nF+mG)$ for successive values of $n$.
We show that   the theta constant identities of \propref{RS0} imply that these  expressions
are almost holomorphic in $q$, i.e. that they have only finitely many monomials $\Lambda^d q^s$ with nonzero coefficients and $s\le 0$.
This will then give recursion formulas for  $\chi^{X,F_+}_{0}(nF+mG)$, $\chi^{X,F_+}_{F}(nF+mG)$.


\begin{Proposition}\label{theth}
\begin{align*}
\tag{1}&\widetilde\theta_4(h)= 1+q^2\Lambda^2+O(q^3),\\
\tag{2}&-\frac{1}{2}\coth(h/2)u'h^*\Lambda^2=-\frac{1}{2}q^{-2}\Lambda^2-\Lambda^4+O(q),\\
\tag{3}&\frac{1}{2\sinh(h)}\left(\widetilde\theta_4(h)^4(1-\Lambda^4)-1\right)u'h^*\Lambda^2= \Lambda^4+O(q),\\
\tag{4}&-\frac{1}{2}\coth(h)\left(\widetilde\theta_4(h)^4(1-\Lambda^4)-1\right)u'h^*\Lambda^2 =-\Lambda^4+\frac{1}{2}q^{-2}\Lambda^6+3\Lambda^8+O(q),\\
\tag{5}&-\frac{1}{2}\coth(3h/2)\left(\widetilde\theta_4(h)^3(1-\Lambda^4)-1\right)u'h^*\Lambda^2= -\frac{1}{2}\Lambda^4+\frac{1}{2}q^{-2}\Lambda^6+\frac{5}{2}\Lambda^8+O(q),\\
\tag{6}&-\frac{1}{2}\tanh(h)\left(\widetilde\theta_4(h)^8(1-\Lambda^4)^3-(1+\Lambda^4)\right)u'h^*\Lambda^2\\&\qquad\qquad= 2q^{-2}\Lambda^6+13\Lambda^8-\frac{3}{2}q^{-2}\Lambda^{10}-14\Lambda^{12}+\frac{1}{2}q^{-2}\Lambda^{14}+5\Lambda^{16}+ O(q).
\end{align*}
\end{Proposition}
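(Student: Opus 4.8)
The plan is to treat identity (1) as a restatement of \lemref{Rregular}(4) (with a weaker error term), and to prove each of (2)--(6) in two stages: first show that the left-hand side is \emph{almost holomorphic in $q$}, i.e.\ that as a Laurent series in $q$ with coefficients in $\Q[[\Lambda]]$ only finitely many monomials $q^s\Lambda^d$ with $s\le 0$ occur, and then read off those finitely many coefficients by expanding $\theta_2,\theta_3,\theta_4,u,u',h,h^*,\widetilde\theta_4(h)$ to the required order in $q$ and $\Lambda$. Once almost holomorphy is known together with an explicit bound on the $\Lambda$-degree of the part with $s\le 0$, the second stage is a finite, routine computation producing exactly the bounded-degree polynomials in $\Lambda$ and $q^{-2}$ displayed in the statement.

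For (2) no theta identity is needed. Writing $\coth(h/2)h^*=\cosh(h/2)h^*/\sinh(h/2)$ and using $\cosh(h/2)h^*\in\Q[q^{-1}\Lambda]_{\le 1}\RR$ together with $1/\sinh(h/2)=2\zeta^{-1}\in q\Lambda^{-1}\RR$ from \lemref{Rregular}, one gets $\coth(h/2)h^*\in(a_0q\Lambda^{-1}+a_1)\RR$; since $\coth(h/2)h^*$ is even in $\Lambda$ while $a_0q\Lambda^{-1}\RR$ is odd, the term $a_0q\Lambda^{-1}$ must vanish, so $\coth(h/2)u'h^*\Lambda^2\in q^{-2}\Lambda^2\RR$, whose $q^{\le0}$-part is supported on $\{q^{-2}\Lambda^2,\ \Lambda^4\}$. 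Evaluating the two coefficients gives the stated expansion.

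The heart of the matter is the almost holomorphy of (3)--(6). In each case the $(q,\Lambda)$-expansion acquires $q$-poles of order growing with the $\Lambda$-degree from two sources: the hyperbolic factor $1/\sinh(h)$, $\coth(h)$, $\coth(3h/2)$, or $\tanh(h)$, whose poles in the elliptic variable sit at the $2$-, $3$- or $4$-division points, and the factor $M^{-1}$ inside $u'h^*=8i\Lambda\theta_4^8/(\theta_2^3\theta_3^3M)$, whose pole sits where $\cosh(h/2)=0$, i.e.\ at the odd multiples of $\pi i$. The subtracted constants are engineered, via the theta constant identities of \propref{thetanull} (which come from \propref{RS0} through \propref{rnprop}), to vanish exactly at the poles of the hyperbolic factor: $\widetilde\theta_4(h)^4(1-\Lambda^4)-1$ and $\widetilde\theta_4(h)^3(1-\Lambda^4)-1$ vanish at $h\in\pi i\Z$ and $h\in\frac{2\pi i}{3}\Z$, while $\widetilde\theta_4(h)^8(1-\Lambda^4)^3-(1+\Lambda^4)$ vanishes at the odd multiples of $\frac{\pi i}{2}$. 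Because $\theta_4$ is even these combinations in fact vanish to order $2$, which is what is needed where a hyperbolic pole and the $M^{-1}$ pole coincide (the odd multiples of $\pi i$ in (3) and (4)); and the remaining $M^{-1}$ poles are killed by the zeros of the hyperbolic factor itself, since $\cosh(3h/2)$, hence $\coth(3h/2)$, vanishes at the odd multiples of $\pi i$ in (5), and $\sinh(h)$, hence $\tanh(h)$, vanishes there in (6). After all these cancellations each expression contains only non-negative powers of $M$ and lies in $q^{-N}\Q[q^{-1}\Lambda]_{\le D}\RR$ for explicit $N,D$, which is almost holomorphic.

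I expect the main obstacle to be making this pole cancellation rigorous at the level of formal $(q,\Lambda)$-expansions rather than of meromorphic functions of $h$: one must check that the orders of vanishing of the theta combinations match the orders of the poles at every division point (the even-ness of $\theta_4$, giving order-$2$ vanishing, is the crucial input), and then convert this elliptic-variable statement into an honest factorization in the ring $\RR[q^{-1}\Lambda,\zeta^{-1},M]$ that removes every negative power of $M$. With the degree bounds $N,D$ thus made explicit, the coefficients in (3)--(6) follow from a direct expansion of the theta functions to the needed order.
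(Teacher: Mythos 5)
Your proposal is correct in substance and shares the paper's overall architecture: use the theta-constant identities of \propref{thetanull} (coming from \propref{RS0}) to force vanishing of the bracketed combinations at the division points where the hyperbolic factors blow up, deduce that only finitely many monomials $q^s\Lambda^d$ with $s\le 0$ survive, and finish with a finite explicit expansion. The genuine difference is the treatment of $u'h^*$. You keep the closed formula $u'h^*=8i\Lambda\theta_4^8/(\theta_2^3\theta_3^3M)$ and therefore must cancel the pole of $M^{-1}$ at odd multiples of $\pi i$, which is why you need order-two vanishing of $\widetilde\theta_4(h)^4(1-\Lambda^4)-1$ in (3),(4) and the zeros of $\coth(3h/2)$, $\tanh(h)$ in (5),(6). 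The paper never sees $M^{-1}$: it expands each combination as $\sum_n f_n(y)q^{4n}$, uses \propref{thetanull} to conclude that each coefficient $f_n(y)$ is divisible, as a Laurent polynomial in $y$, by the zero locus of the hyperbolic denominator, so that after division each $g_n(y)$ is a linear combination of $\sinh(kh)$ (resp.\ $\cosh((2k+1)h/2)$) of length growing linearly in $n$; then \lemref{Rregular}(3) — whose proof rests on the identity $\sinh(kh)h^*=\tfrac1k\bigl(\cosh(kh)\bigr)^*$ — gives $g_n(y)h^*u'\Lambda^2\in\Q[q^{-2}\Lambda^2]_{\le n+c}\RR$ term by term, with no pole analysis at all. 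Your route does close: the order-two vanishing you invoke is automatic (a Laurent polynomial symmetric under $y\mapsto y^{-1}$ which vanishes at $y^2=\pm1$ vanishes there doubly), and $M^{-1}$ does factor formally as $(y+y^{-1})^{-1}$ times a $q$-series with Laurent-polynomial coefficients, because the zero of $\theta_2(h)$ sits at $h=\pi i$ identically in $q$; but these verifications — exactly the obstacle you flag — are what the paper's $\sinh(kh)$-decomposition plus derivative trick render unnecessary, at the modest price of the divisibility-and-degree bookkeeping for the $f_n(y)$.
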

\begin{proof} (1) was already shown in \lemref{Rregular}(4).

(2) In \lemref{Rregular} it was shown that $\frac{1}{\sinh(h/2)}\in q\Lambda^{-1}\RR$, and that  $\cosh(h/2)h^*\in q^{-1}\Lambda\RR$.
It is also easy to see that the coefficient of $q\Lambda^{-1}$ of $\frac{1}{\sinh(h/2)}$ is $-2i$.
As $u\in q^{-2}\Q[[q]]$, we find therefore that
$$\frac{\cosh(h/2)}{2\sinh(h/2)}u'h^*\Lambda^2\in q^{-2}\Lambda^2\RR,$$ and explicit computation with lower order coefficients of  the power series involved determines the coefficients of degree at most $0$ in $q$. This shows (2).

By definition it is easy to see that $\WT_1(h)\in (y-y^{-1})q\Q[[y^{\pm2}q^4,q^4]],$ and we get
 \begin{equation}\label{WTser}\quad \WT_4(h)\in  \Q[[y^{\pm2}q^4,q^4]], \quad \Lambda^4\in \Q[y^2,y^{-2}]_{\le 1}\Q[[y^{\pm2}q^4,q^4]]
 .\end{equation}

 (3)  Let $$f(y,q)=\sum_{n\ge 0} f_n(y)q^{4n}=\WT_4(h)^4(1-\Lambda^4)-1.$$
 By the above $f(y,q)\in  \Q[y^2,y^{-2}]_{\le 1}\Q[[y^{\pm2}q^4,q^4]]$.
 Furthermore $f(y,q)$ is symmetric under
 $y \leftrightarrow y^{-1}$.
By \propref{thetanull}(1) the function $f(y,q)$ vanishes (identically in $q$) for $2h\in 2\pi i \Z$, i.e. if $y^4=1$, i.e. for $y^2=y^{-2}$.
Thus every coefficient $f_n(y)$  is as a symmetric Laurent polynomial in $y^2$ divisible by $y^2-y^{-2}$.
Let $$g(y,q)=\sum_{n\ge 1} g_n(y)q^{4n}:=\frac{1}{2\sinh(h)}(\WT_4(h)^4(1-\Lambda^4)-1).$$
Then $g_1=-(y^2-y^{-2})$ and $g_n$ an antisymmetric Laurent polynomial in $y^2$ of degree at most $n$.
Thus for all $n$  we get that $g_n(y)$ is a $\Q$-linear combination of $\sinh(kh)$ with $k=1,\ldots,n$. Therefore we get by \lemref{Rregular} that
\begin{equation}\label{gser}
g_n(y)h^*u'\Lambda^2 \in  \Q[q^{-2}\Lambda^2]_{\le n+1}\RR.
\end{equation}
Thus the only possible monomial $\Lambda^mq^n$ with non vanishing coefficient of
\begin{align*}g(y,q)u'h^*\Lambda^2=
\sum_{n} g_n(y)h^*u'\Lambda^2 q^{4n},\end{align*}
  with nonpositive power of $q$ is $\Lambda^4 q^0$, and direct simple computation gives that its coefficient is $1$.
This shows (3)

(4) Note that $$-\frac{1}{2}\coth(h)(\WT_4(h)^4(1-\Lambda^4)-1)=-\cosh(h)g(y,q).$$
By \lemref{Rregular} we have $\cosh(h)\in 1+(q^{-2}\Lambda^2)_{\le 1}\RR$.
By \eqref{gser} this implies that the only monomials in $\Lambda$, $q$
with non vanishing coefficients and nonpositive degree in $q$ in $-\cosh(h)g(y,q)$ are $\Lambda^4q^0$, $\Lambda^6q^2$, $\Lambda^8q^0$.
A simple direct computation with the leading terms of the Laurent series involved determines the coefficients of these monomials.

(5) Let
$$f^1(y,q)=\sum_{n>0} f^1_n(y)q^{4n}=\WT_4(h)^3(1-\Lambda^4)-1.$$
By \eqref{WTser} we get that
$f^1(y,q)\in \Q[y^2,y^{-2}]_{\le 1}\Q[[y^{\pm2}q^4,q^4]].$ Again we see that every $f^1_n(y)$ is a symmetric Laurent polynomial in $y^2$.
By \propref{thetanull} we have that $f^1(y,q)$ vanishes (identically in $q$) for $3h\in 2\pi i \Z$,  i.e. for $y^3=y^{-3}$.
Thus every coefficient $f^1_n(y)$  is a symmetric Laurent polynomial in $y$ divisible by $y^3-y^{-3}$.
Let
\begin{align}\label{g1ser}
g^1(y,q)&=\sum_n g^1_n(y)q^{4n}:=\frac{1}{2}\coth(3h/2)f^1(y,q)
\end{align}
Then all $g^1_n(y)=\frac{y^3+y^{-3}}{2(y^3-y^{-3})}f^1_n(y)$ are antisymmetric Laurent polynomials in $y^2$ of degree at most $n+1$.
Thus all $g^1_n(y)$ are linear combinations of $\sinh(kh)$ for $k=1,\ldots,n+1$.
Therefore we get by \lemref{Rregular} that $$g^1_n(y)h^*u'\Lambda^2\in \Q[q^{-2}\Lambda^2]_{\le n+2}\Q[[\Lambda^2q^2,q^4]].$$
Thus we get by \eqref{g1ser} that the only monomials
$\Lambda^mq^n$ with non vanishing coefficients in
$g^1(y,q)u'h^*\Lambda^2$ are $\Lambda^4$, $q^{-2}\Lambda^6$, $\Lambda^8$, and their coefficients are again determined by a simple direct computation.

(6) Let
$$f^2(y,q)=\sum_{n>0} f^2_n(y)q^{4n}=\WT_4(h)^8(1-\Lambda^4)^3-(1+\Lambda^4).$$
By \eqref{WTser} we get that $f^2(y,q)=\Q[y^{2},y^{-2}]_{\le 3}\Q[[y^{\pm2}q^4,q^4]]$.
Thus again every coefficient $f^2_n(y)$ is a symmetric Laurent polynomial in $y^2$.
By \propref{thetanull} $f^2(y,q)$ vanishes (identically in $q$) for $4h\in 2\pi i \Z$, but $2h\not \in  2\pi i \Z$.
Thus every $f^2_n(y)$  is a symmetric Laurent polynomial in $y^2$ divisible   by $\frac{y^4-y^{-4}}{y^2-y^{-2}}=y^2+y^{-2}$.
Let
$$g^2(y,q):=\sum_n g^2_n(y)q^{4n}:=\frac{1}{2}\tanh(h)f^2(y,q).$$
Then all $g^2_n(y)$ are antisymmetric Laurent polynomials in $y^2$ of degree  at most   $n+3$.
Thus
$g^2_n(y)h^*u'\Lambda^2\in \Q[q^{-2}\Lambda^2]_{\le n+4}\RR$.
By the definition of $g^2(y,q)$ this gives that the only monomials
$\Lambda^mq^n$ with non vanishing coefficients in
$g^2(y,q)u'h^*\Lambda^2$ are $\Lambda^4$, $q^{-2}\Lambda^6$, $q^{-4}\Lambda^8$, $\Lambda^8$, $q^{-6}\Lambda^{10}$, $q^{-2}\Lambda^{10}$, $q^{-4}\Lambda^{12}$, $\Lambda^{12}$, $q^{-2}\Lambda^{14}$, $\Lambda^{16}$, and their coefficients are again determined by a simple direct computation.
\end{proof}

\begin{Proposition}\label{p110}  For $X=\P^1\times\P^1$ or $X=\widehat \P^2$ and all $n\in \Z$ we have
\begin{enumerate}
\item
$\displaystyle{1+\chi^{X,F_+}_{F}(nF)=\frac{1}{(1-\Lambda^4)^{n+1}}}$,
\item
$\displaystyle{1+(2n+5)\Lambda^4+\chi^{X,F_+}_{0}(nF)=\frac{1}{(1-\Lambda^4)^{n+1}}}$.
\end{enumerate}
\end{Proposition}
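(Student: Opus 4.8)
The plan is to reduce both parts to the formulas of \propref{Fplus} specialized to $m=0$, where the exponent $(L-K_X)^2$ becomes $2(n+2)(0+2)=4(n+2)$, and then to exploit the almost-holomorphicity estimates of \lemref{Rregular} together with the theta-constant identity \propref{thetanull}(1) (equivalently \propref{RS0}) in the guise of \propref{theth}(2),(3).

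For (1), I would write $A_k:=\Coeff_{q^0}\big[\frac{1}{2\sinh(h)}\Lambda^2\WT_4(h)^{4k}u'h^*\big]$, so that \propref{Fplus}(1) gives $\chi^{X,F_+}_F(nF)=A_{n+2}$. The main step is a recursion in $k$: since $\WT_4(h)^{4(k+1)}(1-\Lambda^4)-\WT_4(h)^{4k}=\WT_4(h)^{4k}\big(\WT_4(h)^4(1-\Lambda^4)-1\big)$, I would compute
$$(1-\Lambda^4)A_{k+1}-A_k=\Coeff_{q^0}\Big[\tfrac{1}{2\sinh(h)}\Lambda^2\WT_4(h)^{4k}\big(\WT_4(h)^4(1-\Lambda^4)-1\big)u'h^*\Big].$$
By \propref{theth}(3) the factor $\frac{1}{2\sinh(h)}(\WT_4(h)^4(1-\Lambda^4)-1)u'h^*\Lambda^2$ has $\Lambda^4q^0$ as its only monomial of nonpositive $q$-degree, while $\WT_4(h)^{4k}=1+O(q^2)$ lies in $\RR$; hence multiplying by $\WT_4(h)^{4k}$ cannot create new $q^0$-terms and the right-hand side equals $\Lambda^4$ exactly. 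Thus $C_k:=A_k+1$ satisfies $C_{k+1}=C_k/(1-\Lambda^4)$, so $C_k=C_0\,(1-\Lambda^4)^{-k}$ and the whole family is determined by a single base value.

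The base value is the crux, and I expect it to be the main obstacle. Taking $k=0$ (i.e. $n=-2$, where $(L-K_X)^2=(2G)^2=0$ and the $\WT_4$-power disappears) one must show $A_0=\Coeff_{q^0}\big[\frac{1}{2\sinh(h)}\Lambda^2u'h^*\big]=-\Lambda^4$, equivalently $A_1=\chi^{X,F_+}_F(-F)=0$. I would establish this by a direct computation with the explicit $q,\Lambda$-expansions of \lemref{Rregular}: using $h^*=\zeta^{*}/\cosh(h/2)$ and $2\sinh(h)=\zeta\cdot 2\cosh(h/2)$ the integrand becomes $\frac{u'\Lambda^2\zeta^{*}}{2\zeta\cosh^2(h/2)}$, and the mod-$4$ grading (each $\Lambda^{2m}$-coefficient of $\zeta^2$, $\zeta^{*}/\zeta$ and their products carries $q$-powers $\equiv 2m\bmod 4$, while $u'$ carries $q$-powers $\equiv 2\bmod 4$) forces the $q^0$-part to involve only powers $\Lambda^4,\Lambda^8,\dots$; the leading coefficient is computed to be $-1$, and the higher terms must be checked to vanish. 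This is precisely the step where the theta-constant identities at the division points $h=\pi i a$—the common poles of $\frac{1}{2\sinh h}$ and the zeros of $\WT_4(h)^4(1-\Lambda^4)-1$—do the real work, and it is the part I would spend the most care on. With $A_0=-\Lambda^4$ one gets $C_0=1-\Lambda^4$ and hence $A_{n+2}=(1-\Lambda^4)^{-(n+1)}-1$, which is (1).

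Finally, (2) follows cleanly from (1). Using $\frac{1}{2\sinh(h)}+\frac12\coth(h)=\frac12\coth(h/2)$, the two formulas of \propref{Fplus} give
$$\chi^{X,F_+}_F(nF)-\chi^{X,F_+}_0(nF)=\Coeff_{q^0}\Big[\tfrac12\coth(h/2)\,\Lambda^2\WT_4(h)^{4(n+2)}u'h^*\Big].$$
By \propref{theth}(2) the factor $\frac12\coth(h/2)\Lambda^2u'h^*$ lies in $q^{-2}\Lambda^2\RR$ with $q$-nonpositive part $\frac12q^{-2}\Lambda^2+\Lambda^4$, and $\WT_4(h)^{4(n+2)}=1+4(n+2)q^2\Lambda^2+O(q^4)$; pairing $q^{-2}\Lambda^2$ with the $q^2\Lambda^2$-term and $\Lambda^4$ with the constant term yields exactly $2(n+2)\Lambda^4+\Lambda^4=(2n+5)\Lambda^4$, with the ring structure ruling out any higher contribution. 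Therefore $1+(2n+5)\Lambda^4+\chi^{X,F_+}_0(nF)=1+\chi^{X,F_+}_F(nF)=(1-\Lambda^4)^{-(n+1)}$, proving (2).
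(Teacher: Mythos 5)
Your recursion step and your reduction of (2) to (1) are fine, but the base case of (1) --- which you yourself flag as the crux --- is a genuine gap, and the route you propose for it would fail. The base-case integrand $\frac{1}{2\sinh(h)}\Lambda^2\WT_4(h)^{4k}u'h^*$ (for $k=0$ or $k=1$) is \emph{not} almost regular in $q$: writing $2\sinh(h)=2\zeta\cosh(h/2)$ and $\cosh^2(h/2)=1+\zeta^2/4$ with $\zeta\in q^{-1}\Lambda\RR$, the factor $1/\cosh^2(h/2)=\sum_{k\ge 0}(-1)^k(\zeta/2)^{2k}$ contributes monomials of arbitrarily negative $q$-degree, so the $q^0$-part of your $A_0$ is an honest infinite series in $\Lambda$, and no finite computation can verify that all coefficients beyond $\Lambda^4$ vanish. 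Moreover, the theta-constant identities you invoke cannot supply the missing finiteness: \propref{thetanull} and \propref{RS0} assert that $\WT_4(h)^4(1-\Lambda^4)-1$ vanishes at $h\in\pi i\Z$, i.e.\ that the \emph{numerator occurring in the recursion step} vanishes at the poles of $1/\sinh(h)$, and this cancellation is exactly what makes the recursion-step expression almost regular (\propref{theth}(3)); in the base-case integrand the numerator is $1$ (or $\WT_4(h)^4$), which does not vanish there, so no analogous mechanism exists. What is needed is geometric input, and that is how the paper anchors the induction: on $\P^1\times\P^1$, since $\<F,G\>=1$ is odd, the moduli spaces for polarizations near the other ruling $G$ are empty (\remref{vanfib}, \remref{Gplus}), so $\chi^{\P^1\times\P^1,G_+}_F(-F)=0$; then every wall-crossing term $\delta^{\P^1\times\P^1}_\xi(-F)$ between $G_+$ and $F_+$ is shown to vanish by the numerical criterion of \thmref{vanwall}(2), forcing $\chi^{\P^1\times\P^1,F_+}_F(-F)=0$, i.e.\ $A_1=0$ (equivalently your $A_0=-\Lambda^4$). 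Since the formula of \propref{Fplus} is the same expression in $(q,\Lambda)$ for both surfaces, this single computation on $\P^1\times\P^1$ anchors the induction for $\widehat\P^2$ as well; nothing of this sort appears in your proposal.

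The rest of your argument is correct. The recursion step is exactly the paper's, including the observation that multiplying by $\WT_4(h)^{4k}\in\RR$ (a unit of $\RR$, so negative $k$ is covered) cannot create new non-positive $q$-degree terms. Your deduction of (2) from (1) via $\frac{1}{2\sinh(h)}+\frac12\coth(h)=\frac12\coth(h/2)$ is correct --- the $q^0$-bookkeeping $\Lambda^4+\frac12\cdot 4(n+2)\Lambda^4=(2n+5)\Lambda^4$ checks out against the ring structure --- and it is genuinely different from, and more economical than, the paper's treatment: the paper proves (2) by a second, independent induction, with its own wall-crossing base case $\chi^{\P^1\times\P^1,F_+}_0(-F)=-3\Lambda^4$ and its own recursion step via \propref{theth}(4), whereas you obtain $\chi^{X,F_+}_F(nF)-\chi^{X,F_+}_0(nF)=(2n+5)\Lambda^4$ in one stroke from \propref{theth}(2) and (1). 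So: replace your base-case computation by the emptiness-plus-wall-crossing argument (or an equivalent geometric or modularity input), and your proof is complete.
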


\begin{proof}
 (1)  By \propref{Fplus} we need to show
\begin{equation}\label{sifo}
1+\coeff_{q^0}\Big[\frac{1}{2\sinh(h)} \Big(\frac{\theta_4(h)}{\theta_4}\Big)^{4(n+2)}u'h^*\Lambda^2\Big]=
\frac{1}{(1-\Lambda^4)^{n+1}}.
\end{equation}

The proof is by both descending and ascending induction on $n\in \Z$.
We first study the case $n=-1$.

By \propref{Fplus} we know that
$$\chi^{\P^1\times\P^1,F_+}_{F}(-F)=\Coeff_{q^0}\left[\frac{1}{2\sinh(h)}\Lambda^2\WT_4^4u'h^*\right].$$
On the other hand
$\chi^{\P^1\times\P^1,G}_{F}(-F)=0$ and thus
$$\chi^{\P^1\times\P^1,F_+}_{F}(-F)=\sum_{\<\xi, F\>>0>\<\xi, G\>} \delta_\xi^{\P^1\times\P^1}(-F),$$ where the sum is over classes of type $(F)$, i.e. over all
$\xi=-(2n-1)F+2mG$ with $n,m\in \Z_{\ge 0}$.
By \thmref{vanwall} we get that  $\delta_{-(2n-1)F+2mG}^{\P^1\times\P^1}(-F)=0$ unless
$8nm-4m\le |4n-2-2m|+2$. This means in case $4n-2-2m\ge 0$ that  $8nm-4m\le 4n-2m$, which is impossible, and in case
$4n-2-2m\le 0$ that $8nm-4m\le 2m-4n+4$, which is also impossible. Thus $\chi^{\P^1\times\P^1,F_+}_{F}(-F)=0$ and therefore also
$\Coeff_{q^0}\left[\frac{1}{2\sinh(h)}\Lambda^2\WT_4^4u'h^*\right]=0$. This shows the case $n=-1$.

By \lemref{Rregular} we have $\WT_4(h)=1 +O(q)$. Thus we get by \propref{theth}(3), for all $n\in \Z$ that
\begin{align*}\frac{1}{2\sinh(h)}&\big(\WT_4(h)^{4(n+2)}(1-\Lambda^4)-\WT_4(h)^{4(n+1)}\big)u'h^*\Lambda^2\\&=
\WT_4(h)^{4(n+1)}\frac{1}{2\sinh(h)}\big(\WT_4(h)^{4}(1-\Lambda^4)-1\big)u'h^*\Lambda^2 = \Lambda^4+O(q).
\end{align*}
Thus, writing $$f_n:=1+\coeff_{q^0}\Big[\frac{1}{2\sinh(h)}\WT_4(h)^{4(n+2)}u'h^*\Lambda^2\Big],$$ we have
$(1-\Lambda^4)f_n-f_{n-1}=\Lambda^4+(1-\Lambda^4)-1=0$, i.e. $f_{n}(1-\Lambda^4)=f_{n-1}$ for all $n\in \Z$.  We have shown above that $f_{-1}=1$.
Thus the claim follows.

(2) By \propref{Fplus} we know that
$$\chi^{\P^1\times\P^1,F_+}_{0}(-F)=\Coeff_{q^0}\left[-\frac{1}{2}\coth(h)\Lambda^2\WT_4^4u'h^*\right].$$
By  \propref{theth}(1) we have $\WT_4(h)= 1+q^2\Lambda^2+O(q^3)$. Thus \propref{theth}(2) gives
$\Coeff_{q^0}\left[-\frac{1}{2}\coth(h/2)\Lambda^2\WT_4^4u'h^*\right]=-3\Lambda^4.$
Thus we get by \remref{Gplus} that
$$\chi^{\P^1\times\P^1,G_+}_{0}(-F)=\Coeff_{q^0}\left[-\frac{1}{2}\coth(h/2)\Lambda^2\WT_4^4u'h^*\right]=-3\Lambda^4.$$
This gives
$$\chi^{\P^1\times\P^1,F_+}_{0}(-F)=\chi^{\P^1\times\P^1,F_+}_{0}(-F)-\chi^{\P^1\times\P^1,G_+}_{0}(-F)-3\Lambda^4=\sum_{\<\xi, F\>>0>\<\xi, G\>} \delta_\xi^{\P^1\times\P^1}(-F)-3\Lambda^4,$$
where the sum is over classes of type $(0)$, i.e. over all
$\xi=-2nF+2mG$ with $n,m\in \Z_{\ge 0}$.
By \thmref{vanwall} we get that  $\delta_{-2nF+2mG}^{\P^1\times\P^1}(-F)=0$ unless
$8nm\le |4n-2m|+2$, which is impossible. Thus $\chi^{F_+}_{\P^1\times\P^1,F}(-F)=-3\Lambda^4$ and therefore also
$\Coeff_{q^0}\left[-\frac{1}{2}\coth(h)\Lambda^2\WT_4^4u'h^*\right]=-3\Lambda^4$.

$\WT_4(h)= 1+q^2\Lambda^2+O(q^3).$  Thus we get by \propref{theth}(2), for all $n\in \Z$ that
\begin{align*}-\frac{1}{2}\coth(h)&\big(\WT_4(h)^{4(n+2)}(1-\Lambda^4)-\WT_4(h)^{4(n+1)}\big)u'h^*\Lambda^2\\&=
-\WT_4(h)^{4(n+1)}\frac{1}{2}\coth(h)\big(\WT_4(h)^{4}(1-\Lambda^4)-1\big)u'h^*\Lambda^2 = \Lambda^4+(2n+5)\Lambda^8+O(q).
\end{align*}
We put $g_n:=1+(2n+5)\Lambda^4+\chi^{X,F_+}_{0}(nF)$. Then we have by \propref{Fplus} that
$$g_n=1+(2n+5)\Lambda^4+\Coeff_{q^0}\left[-\frac{1}{2}\coth(h)\WT_4(h)^{4(n+2)}u'h^*\Lambda^2\right].$$
We have by the above $g_{-1}= 1$.
We get for all $n\in \Z$ that
\begin{align*}
(1-\Lambda^4)g_n-g_{n-1}&=(1+(2n+5)\Lambda^4)(1-\Lambda^4)-(1+(2n+3)\Lambda^4)-\Lambda^4+(2n+5)\Lambda^8
=0,\end{align*} i.e. $(1-\Lambda^4)g_n=g_{n-1}$. By  induction over $n\in \Z$ this gives $g_n=\frac{1}{(1-\Lambda^4)^{n+1}}.$
\end{proof}

\begin{Proposition}\label{p11G}  For $X=\P^1\times\P^1$ and $n\in \Z$, and for $X=\widehat \P^2$ and $n\in \Z+\frac{1}{2}$ we have
\begin{align*}
1+(3n+7)\Lambda^4+\chi^{X,F+}_{0}(nF+G)&=\frac{1}{(1-\Lambda^4)^{2n+2}}.\\
\end{align*}
\end{Proposition}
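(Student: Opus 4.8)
The plan is to mimic the proof of \propref{p110}: derive a recursion in $n$ for the quantity of \propref{Fplus}(2), pin down one initial value, and solve. The new input is the theta-constant identity \propref{theth}(5), which plays here the role that \propref{theth}(3)--(4) played for \propref{p110}.

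First I would record the formula to iterate. Specializing \propref{Fplus}(2) to $L=nF+G$ (so $m=1$, $\<\tfrac F2,L-K_X\>=\tfrac32$ and $(L-K_X)^2=6(n+2)$) gives
$$\phi(n):=\chi^{X,F_+}_0(nF+G)=-\Coeff_{q^0}\Big[\tfrac12\coth(3h/2)\,\WT_4(h)^{6(n+2)}u'h^*\Lambda^2\Big],$$
an expression that is literally the same function of $n\in\tfrac12\Z$ for both $X=\P^1\times\P^1$ and $X=\widehat\P^2$; only the admissible values of $n$ (integral, resp.\ half-integral) differ. Set $g_n:=1+(3n+7)\Lambda^4+\phi(n)$. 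Multiplying the identity of \propref{theth}(5) by $\WT_4(h)^{6(n+2)-3}=\WT_4(h)^{6n+9}=1+(6n+9)q^2\Lambda^2+O(q^4)$ (using \lemref{Rregular}(4)) and extracting $\Coeff_{q^0}$, only the $q^0$ and $q^2$ parts of $\WT_4(h)^{6n+9}$ meet the fully known nonpositive-$q$ part $-\tfrac12\Lambda^4+\tfrac12q^{-2}\Lambda^6+\tfrac52\Lambda^8$ of \propref{theth}(5), yielding
$$(1-\Lambda^4)\phi(n)-\phi(n-\tfrac12)=-\tfrac12\Lambda^4+\Big(\tfrac52+\tfrac{6n+9}2\Big)\Lambda^8=-\tfrac12\Lambda^4+(3n+7)\Lambda^8.$$
Substituting $\phi(n)=g_n-1-(3n+7)\Lambda^4$ and simplifying, all the correction terms telescope and I obtain the half-step recursion $(1-\Lambda^4)g_n=g_{n-1/2}$, valid for every $n\in\tfrac12\Z$. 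Note that this recursion is a \emph{half}-step: it ties the integral ($\P^1\times\P^1$) values to the half-integral ($\widehat\P^2$) ones, so a single base value will fix all $g_n$ at once.

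For the base case I would compute $g_{-1}=1$ on $X=\P^1\times\P^1$, i.e.\ show $\chi^{\P^1\times\P^1,F_+}_0(-F+G)=-4\Lambda^4$. Here I use the $F\leftrightarrow G$ symmetric formula \remref{Gplus}: with $n=-1,m=1$ it gives $\chi^{\P^1\times\P^1,G_+}_0(-F+G)=-\Coeff_{q^0}[\tfrac12\coth(h/2)\WT_4(h)^6u'h^*\Lambda^2]$, and by \propref{theth}(2) together with $\WT_4(h)^6=1+6q^2\Lambda^2+O(q^4)$ this equals $-4\Lambda^4$ exactly. It then remains to see that the wall-crossing difference $\chi^{F_+}_0(-F+G)-\chi^{G_+}_0(-F+G)=\sum_\xi\delta^{\P^1\times\P^1}_\xi(-F+G)$ vanishes, the sum running over classes $\xi=2aF+2bG$ of type $(0)$ with $\<\xi,F\>=2b$ and $\<\xi,G\>=2a$ of opposite sign. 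By \thmref{vanwall} a nonzero term forces $-\xi^2=-8ab\le|6a+2b|+2$, which has no solution with $ab<0$; hence the sum is zero and $\chi^{F_+}_0(-F+G)=-4\Lambda^4$, i.e.\ $g_{-1}=1$.

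Finally, iterating $(1-\Lambda^4)g_n=g_{n-1/2}$ from $g_{-1}=1$ gives $g_n=(1-\Lambda^4)^{-(2n+2)}$ for all $n\in\tfrac12\Z$; reading off $n\in\Z$ proves the $\P^1\times\P^1$ case and $n\in\Z+\tfrac12$ the $\widehat\P^2$ case. I expect the main obstacle to be the coefficient bookkeeping in the recursion step: one must check that pairing the $q^2$-term $(6n+9)q^2\Lambda^2$ of $\WT_4(h)^{6n+9}$ with the $q^{-2}\Lambda^6$ term of \propref{theth}(5) produces \emph{exactly} the $\Lambda^8$-coefficient $3n+7$, since it is precisely this value that makes the correction $(3n+7)\Lambda^4$ built into $g_n$ cancel and turn the relation into the clean recursion. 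The secondary subtlety is the base case, where the passage through the $G_+$-invariant and the \thmref{vanwall} vanishing of the intervening walls is what lets us avoid computing $\coth(3h/2)\WT_4(h)^6$ directly.
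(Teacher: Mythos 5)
Your proof is correct and follows essentially the same route as the paper: the identical half-step recursion $(1-\Lambda^4)g_n=g_{n-1/2}$ over $n\in\frac12\Z$ derived by multiplying \propref{theth}(5) by $\WT_4(h)^{6n+9}$ (your value $-\tfrac12\Lambda^4+(3n+7)\Lambda^8$ for the difference is the consistent one; the paper's intermediate display carries a sign slip, but its final cancellation agrees with your computation). The only difference is the anchor point: you establish $g_{-1}=1$ directly from \remref{Gplus} and \propref{theth}(2) together with the \thmref{vanwall} vanishing of the intervening wall-crossing terms, whereas the paper anchors at $g_0$ by citing \propref{p110} for $\chi^{\P^1\times\P^1,G_+}_{0}(G)$ and the same wall-crossing vanishing — an immaterial variation, since both rest on the same $F\leftrightarrow G$ symmetry of $\P^1\times\P^1$.
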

\begin{proof}
We will treat the cases of $\P^1\times\P^1$ and $\widehat \P^2 $ together and prove the result by induction over $n\in \frac{1}{2}\Z$.
By \propref{Fplus} we have for $n\in \frac{1}{2}\Z$ that
$$\chi^{X,F_+}_{0}(nF+G)=\coeff_{q^0}\left[
-\frac{1}{2}\coth(3h/2)\WT_4(h)^{6(n+2)}u'h^*\Lambda^2\right].$$
Here $X=\P^1\times\P^1$ if $n\in \Z$ and $\widehat \P^2$ otherwise.  
For $n\in \frac{1}{2}\Z$ let $$h_n:=1+(3n+7)\Lambda^4+\chi^{X,F_+}_{0}(nF+G)=\coeff_{q^0}\left[
-\frac{1}{2}\coth(3h/2)\WT_4(h)^{6(n+2)}u'h^*\Lambda^2\right]+1+(3n+7)\Lambda^4.$$
We want to show by induction on $n\in \frac{1}{2}\Z$ that
$h_n=\frac{1}{(1-\Lambda^4)^{2n+2}}$.

{\bf Case $n=0$.} By \propref{p110} and symmetry, we have
$1+7\Lambda^4+\chi^{\P^1\times\P^1,G_+}_{0}(G)=
\frac{1}{(1-\Lambda^4)^2}.$
Thus $$1+7\Lambda^4+\chi^{\P^1\times\P^1,F_+}_{0}(G)=
\frac{1}{(1-\Lambda^4)^2}+\sum_{\xi} \delta_{\xi}^{\P^1\times\P^1}(G),$$
where $\xi$ runs over all $-2nF+2mG$ with $n,m\in \Z_{>0}$.
By \thmref{vanwall} we have that $\delta_{-2nF+2mG}^{\P^1\times\P^1}(G)=0$, unless
$8nm\le |4m-6n|+2$, which is impossible. Thus $$g_0=1+7\Lambda^4+\chi^{\P^1\times\P^1,G_+}_{0}(G)=
\frac{1}{(1-\Lambda^4)^2}.$$

{\bf Induction step.} By \propref{theth} we have
$$-\frac{1}{2}\coth(3h/2)\left(\widetilde\theta_4(h)^3(1-\Lambda^4)-1\right)u'h^*\Lambda^2= -\frac{1}{2}\Lambda^4+\frac{1}{2}q^{-2}\Lambda^6+\frac{5}{2}\Lambda^8+ O(q).$$
Using also $\WT_4(h)=1+q^2\Lambda^2+O(q^3)$ we get
\begin{align*}
-\coeff_{q^0}\big[\frac{1}{2}\coth(3h/2)&\Big(\WT_4(h)^{6(n+2)}(1-\Lambda^4)-\WT_4(h)^{6(n+3/2)}\big)u'h^*\Lambda^2\big]&=1/2\Lambda^4+(3n+7)\Lambda^8.
\end{align*}
Thus we get
$$h_n(1-\Lambda^4)-h_{n-1/2}=\frac{1}{2}\Lambda^4+(3n+7)\Lambda^8+(1+(3n+7)\Lambda^4)(1-\Lambda^4) -(1+(3n+\frac{11}{2})\Lambda^4)=0.$$
Thus by induction $h_n=\frac{1}{(1-\Lambda^4)^{2n+2}}$.
\end{proof}


\begin{Proposition}\label{p112G}
Let $X=\P^1\times\P^1$ or $X=\widehat \P^2$.
\begin{enumerate}
\item
 For all $n\in \Z$
$$\sum_{d}\chi(M_{F_+}^{X}(F,d),nF+2G)\Lambda^{d}=
\frac{1}{2}\frac{(1+\Lambda^4)^n-(1-\Lambda^4)^n}{(1-\Lambda^4)^{3n+3}}.$$
\item  For all $n\in \Z$:
$$1+(4n+9)\Lambda^4+\sum_{d}\chi(M_{F+}^{X}(0,d),nF+2G)\Lambda^{d}=
\frac{1}{2}\frac{(1+\Lambda^4)^n+(1-\Lambda^4)^n}{(1-\Lambda^4)^{3n+3}}.$$
\end{enumerate}
\end{Proposition}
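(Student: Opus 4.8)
The plan is to specialize \propref{Fplus} to $m=2$ and then separate the resulting $q^0$-coefficients into a "$\coth$-piece" that is already handled by \propref{p110} and a "$\tanh$-piece" governed by \propref{theth}(6). Writing $u_n:=\chi^{X,F_+}_F(nF+2G)$ and $v_n:=1+(4n+9)\Lambda^4+\chi^{X,F_+}_0(nF+2G)$ for the two left-hand sides, \propref{Fplus} gives
$$u_n=\Coeff_{q^0}\Big[\tfrac{1}{2\sinh(2h)}\WT_4(h)^{8(n+2)}u'h^*\Lambda^2\Big],\qquad \chi^{X,F_+}_0(nF+2G)=-\Coeff_{q^0}\Big[\tfrac12\coth(2h)\WT_4(h)^{8(n+2)}u'h^*\Lambda^2\Big].$$
Using the elementary identities $\coth(2h)=\tfrac12(\coth h+\tanh h)$ and $\tfrac{1}{\sinh(2h)}=\tfrac12(\coth h-\tanh h)$, I would rewrite everything through
$$U_n:=\Coeff_{q^0}\big[\coth(h)\WT_4(h)^{8(n+2)}u'h^*\Lambda^2\big],\qquad V_n:=\Coeff_{q^0}\big[\tanh(h)\WT_4(h)^{8(n+2)}u'h^*\Lambda^2\big],$$
so that $u_n=\tfrac14(U_n-V_n)$ and $\chi^{X,F_+}_0(nF+2G)=-\tfrac14(U_n+V_n)$, whence $v_n-u_n=1+(4n+9)\Lambda^4-\tfrac12U_n$ and $v_n+u_n=1+(4n+9)\Lambda^4-\tfrac12V_n$. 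The target formulas (1) and (2) are then equivalent to $v_n-u_n=\tfrac{1}{(1-\Lambda^4)^{2n+3}}$ and $v_n+u_n=\tfrac{(1+\Lambda^4)^n}{(1-\Lambda^4)^{3n+3}}$.

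For the difference I would note $\WT_4(h)^{8(n+2)}=\WT_4(h)^{4((2n+2)+2)}$, so that $U_n$ equals $-2$ times the quantity $\Coeff_{q^0}[-\tfrac12\coth(h)\WT_4(h)^{4(m+2)}u'h^*\Lambda^2]$ at $m=2n+2$, which \propref{p110}(2) evaluates to $\tfrac{1}{(1-\Lambda^4)^{m+1}}-1-(2m+5)\Lambda^4$. Substituting $m=2n+2$ gives $U_n=-\tfrac{2}{(1-\Lambda^4)^{2n+3}}+2+2(4n+9)\Lambda^4$, and hence $v_n-u_n=\tfrac{1}{(1-\Lambda^4)^{2n+3}}$ for all $n\in\Z$ with no further computation; this is exactly the difference of (2) and (1).

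The remaining content is the sum $b_n:=v_n+u_n=1+(4n+9)\Lambda^4-\tfrac12V_n$, for which I claim the recursion $b_n(1-\Lambda^4)^3=(1+\Lambda^4)b_{n-1}$. Factoring $\WT_4(h)^{8(n+1)}$ out of $(1-\Lambda^4)^3V_n-(1+\Lambda^4)V_{n-1}$ reduces the $V$-part to $\Coeff_{q^0}[\tanh(h)\WT_4(h)^{8(n+1)}(\WT_4(h)^8(1-\Lambda^4)^3-(1+\Lambda^4))u'h^*\Lambda^2]$, which is governed by \propref{theth}(6); expanding $\WT_4(h)^{8(n+1)}=1+8(n+1)q^2\Lambda^2+O(q^3)$ and pairing its $q^2\Lambda^2$-term against the $q^{-2}$-monomials of \propref{theth}(6) produces the explicit polynomial $(-32n-58)\Lambda^8+(24n+52)\Lambda^{12}-(8n+18)\Lambda^{16}$, and a direct check shows $-\tfrac12$ of this cancels against $(1-\Lambda^4)^3(1+(4n+9)\Lambda^4)-(1+\Lambda^4)(1+(4n+5)\Lambda^4)$, establishing the recursion. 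Since $1\pm\Lambda^4$ are units in $\Q[[\Lambda]]$ the recursion runs in both directions, so one base value suffices.

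For the base value I would prove the clean vanishing $u_0=\chi^{X,F_+}_F(2G)=0$. By \thmref{vanbound}, $\chi^{X,G}_F(2G)=0$ since $G\in S_X$ and $\langle F,G\rangle=1$ is odd; by \corref{thetawall} the passage from the boundary point $G$ to the polarization $n_dF+G$ defining $F_+$ contributes $\sum_\xi\delta^X_\xi(2G)$ over classes $\xi$ of type $(F)$ with $\langle\xi,F\rangle>0>\langle\xi,G\rangle$, and writing $\xi=-\alpha F+\beta G$ with $\alpha$ odd positive and $\beta$ even positive, the bound of \thmref{vanwall} would require $2\alpha\beta\le|4\alpha-2\beta|+2$, which has no solution; hence $u_0=0$. (As \propref{Fplus} makes these invariants independent of whether $X=\P^1\times\P^1$ or $\widehat\P^2$, it suffices to run this on $\P^1\times\P^1$.) Combined with the difference formula at $n=0$ this gives $b_0=v_0+u_0=v_0=\tfrac{1}{(1-\Lambda^4)^3}$, so the recursion yields $b_n=\tfrac{(1+\Lambda^4)^n}{(1-\Lambda^4)^{3n+3}}$ for all $n$, and finally $u_n=\tfrac12(b_n-(v_n-u_n))$ and $v_n=\tfrac12(b_n+(v_n-u_n))$ recover (1) and (2). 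I expect the main obstacle to be the bookkeeping in the recursion step—tracking how the $q^{-2}$-terms of \propref{theth}(6) interact with the $q^2\Lambda^2$-term of $\WT_4(h)^{8(n+1)}$ to generate the $n$-linear coefficients—whereas the reduction to \propref{p110} and the base-case vanishing are comparatively routine.
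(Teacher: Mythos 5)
Your proposal is correct and follows essentially the same route as the paper: the same splitting of $\tfrac{1}{2\sinh(2h)}$ and $-\tfrac12\coth(2h)$ into $\coth(h)$- and $\tanh(h)$-parts, evaluation of the $\coth$-part via \propref{p110}(2) applied to $(2n+2)F$, and a two-sided induction for the $\tanh$-part driven by \propref{theth}(6), anchored at the vanishing $\chi^{X,F_+}_F(2G)=0$ obtained by wallcrossing from the $G$-side. The only cosmetic differences are your sum/difference bookkeeping ($U_n,V_n,b_n$ versus the paper's $l_n$) and using \thmref{vanbound} at the boundary point $G$ instead of \remref{Gplus} at $G_+$, which amount to the same computation.
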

\begin{proof}
(1) By \propref{Fplus}, we have
\begin{align*}\chi^{X,F_+}_{F}(nF+2G)=
\coeff_{q^0}\Big[\frac{1}{2\sinh(2h)}\WT_4(h)^{8(n+2)}u'h^*\Lambda^2\Big].
\end{align*}
Note that
$\frac{1}{2\sinh(2h)}=\frac{1}{4\sinh(h)\cosh(h)}=
\frac{1}{4}\big(\coth(h)-\tanh(h)\big).$
By  \propref{Fplus} and \propref{p110}, we have
\begin{equation}\label{coth}\begin{split}
\coeff_{q^0}\Big[\frac{1}{2}\coth(h)\WT_4(h)^{8(n+2)}u'h^*\Lambda^2\Big] &=-\chi^{X,F_+}_{0}((2n+2)F)\\&=-\frac{1}{(1-\Lambda^4)^{2n+3}}+(1+(4n+9)\Lambda^4).
\end{split}
\end{equation}

We will show by induction on $n\in \Z$ that
\begin{equation}\label{tanh}
l_n:=1+(4n+9)\Lambda^4-\coeff_{q^0}\big[\frac{1}{2}\tanh(h)\WT_4(h)^{8(n+2)}u'h^*\Lambda^2\big]=\frac{(1+\Lambda^4)^n}{(1-\Lambda^4)^{3n+3}}.
\end{equation}
(1) follows directly from  \eqref{coth} and\eqref{tanh}:
\begin{align*}
&\chi^{X,F_+}_F(nF+2G)=\coeff_{q^0}\frac{1}{4}\big(\coth(h)-\tanh(h)\big)\WT_4(h)^{8(n+2)}u'h^*\Lambda^2\Big]\\&=
\frac{1}{2}\left(-\frac{1}{(1-\Lambda^4)^{2n+3}}+1+(4n+9)\Lambda^4+\frac{(1+\Lambda^4)^n}{(1-\Lambda^4)^{3n+3}}-1-(4n+9)\Lambda^4\right)=\frac{1}{2}\frac{(1+\Lambda^4)^n-(1-\Lambda^4)^n}{(1-\Lambda^4)^{3n+3}}.
\end{align*}

{\bf Case $n=0$.} We have
$\chi^{\P^1\times\P^1,G_+}_{F}(2G)=0.$
Thus $$\chi^{\P^1\times\P^1,F_+}_{F}(2G)=
\sum_{\xi} \delta_{\xi}^{\P^1\times\P^1}(2G),$$
where $\xi$ runs over all $-(2n-1)F+2mG$ with $n,m\in \Z_{>0}$.
By \thmref{vanwall} we have that $\delta_{-(2n-1)F+2mG}^{\P^1\times\P^1}(G)=0$, unless
$8nm-4m\le |4m-8n+4|+2$, which is impossible. Thus by the above
$$0=\chi^{\P^1\times\P^1,F_+}_{F}(2G)=\coeff_{q^0}\Big[\frac{1}{4}(\coth(h)-\tanh(h))\WT_4(h)^{16}u'h^*\Lambda^2\Big].$$
This gives
$$-\coeff_{q^0}\Big[\frac{1}{2}\tanh(h)\WT_4(h)^{16}u'h^*\Lambda^2\Big]=-\coeff_{q^0}\Big[\frac{1}{2}\coth(h)\WT_4(h)^{16}u'h^*\Lambda^2\Big]
=\frac{1}{(1-\Lambda)^{3}}-1-9\Lambda^4.$$

{\bf Induction step.}
By  \propref{theth} we have
\begin{align*}-\frac{1}{2}\tanh(h)\big(\WT_4(h)^8&(1-\Lambda^4)^3-(1+\Lambda^4)\big)h^*u'\Lambda^2\\&= 2q^{-2}\Lambda^6+13\Lambda^8-\frac{3}{2}q^{-2}\Lambda^{10}-14\Lambda^{12}+\frac{1}{2}q^{-2}\Lambda^{14}+5\Lambda^{16}+O(q).\end{align*}
Using again that $\WT_4(h) = 1+q^2\Lambda^2+O(q^3)$, this gives
\begin{align*}
-\coeff_{q^0}\Big[\frac{1}{2}\tanh(h)&\big(\WT_4(h)^{8(n+2)}-\WT_4(h)^{8(n+1)}\big)u'h^*\Lambda^2\Big]\\&=(16n + 29)\Lambda^8-(12n+26)\Lambda^{12}+(4n + 9)\Lambda^{16}.
\end{align*}
Again one checks that this gives
$(1-\Lambda^4)^3l_n-(1+\Lambda^4)l_{n-1}=0$.
This shows (1).

(2) By \propref{Fplus}, we have
\begin{align*}\chi^{X,F_+}_0(nF+2G)=
-\coeff_{q^0}\Big[\frac{1}{2}\coth(2h)\WT_4(h)^{8(n+2)}u'h^*\Lambda^2\Big],
\end{align*}
Using
$-\frac{1}{2}\coth(2h)=
\frac{1}{4}\big(-\coth(h)-\tanh(h)\big)$ we get from \eqref{coth} and \eqref{tanh} that
$$\chi^{X,F_+}_0(nF+2G)=\frac{1}{2}\left(\frac{1}{(1-\Lambda^4)^{2n+3}}+\frac{(1+\Lambda^4)^{n}}{(1-\Lambda^4)^{3n+3}}\right)-1-(4n+9)\Lambda^4,$$ and the claim follows.
\end{proof}

\begin{proof}[Proof of \thmref{p11t}]
This follows directly from \propref{p110}, \propref{p11G}, \propref{p112G} and \propref{ruledconst} and \defref{KdonGen}.
\end{proof}

\subsection{Blowdown and the $K$-theoretic Donaldson invariants of $\P^2$}

Now we will prove \thmref{P22}, by combining the formulas \thmref{p11t} for $\widehat \P^2$ with the
blowup formulas \lemref{blowsimple} and \lemref{1blow}.

\begin{proof}[Proof of \thmref{P22}]
(1) By \thmref{p11t} we have
\begin{align*}\chi^{\widehat\P^2,H_+}_{0}(H)=\chi^{\widehat\P^2,H_+}_{0}(\frac{1}{2}F+G)=\frac{1}{(1-\Lambda^4)^3}-1-\frac{17}{2}\Lambda^4.
\end{align*}
We have $\chi^{\widehat \P^2,H}_{0}(H)=\chi^{\widehat \P^2,H_+}_{0}(H)-\frac{1}{2}\sum_{\xi} \delta^{\widehat \P^2}_{\xi}(H)$, where $\xi$ runs over all classes of class $0$ with
$0=\<\xi,H\>>\<\xi, E\>$. These are all the $\xi=2nE$ with $n\in \Z_{\ge 0}$.
By \thmref{vanwall} we have that $\delta^{\widehat \P^2}_{2nE}(H)=0$ unless $4n^2\le |2n|+2$. This is only possible for $n=1$ and direct computation gives
$\delta^{\widehat \P^2}_{2E}(H)=\Lambda^4$.
Thus $$\chi^{\widehat \P^2,H}_{0}(H)=\frac{1}{(1-\Lambda^4)^3}-1-9\Lambda^4.$$

On the other hand we have by  \lemref{blowsimple}
$\chi^{\P^2,H}_{0}(H)=\chi^{\widehat \P^2,H_+}_{0}(H)$.

Alternatively observe that by  \thmref{p11t}
\begin{align*}\chi^{\widehat\P^2,H_+}_{0}(H-E)=\chi^{\widehat\P^2,H_+}_{0}(F)=\frac{1}{(1-\Lambda^4)^2}-1-7\Lambda^4,
\end{align*}
As above
$$\chi^{\widehat \P^2,H}_{0}(H-E)=\chi^{\widehat \P^2,H_+}_{0}(H-E)-\frac{1}{2}\sum_{n>0} \delta^{\widehat \P^2}_{2nE}(H-E).$$
By \thmref{vanwall} we have that $\delta^{\widehat \P^2}_{2nE}(H-E)=0$ unless $4n^2\le |4n|+2$. This is only possible for $n=1$ and direct computation gives
$\delta^{\widehat \P^2}_{2E}(H-E)=2\Lambda^4-18\Lambda^8$.
Thus we get
by \lemref{1blow}
\begin{equation}\label{P0H}
\chi^{\P^2,H}_{0}(H)=\frac{\chi^{\widehat P^2,H}_{0}(H-E)-\Lambda^4+9\Lambda^8}{(1-\Lambda^4)}=\frac{1}{(1-\Lambda^4)^3}-1-9\Lambda^4.
\end{equation}
The result follows because $\Coeff_{\Lambda^d}\big[\chi^{\P^2,H}_{0}(H)\big]=\chi(M_{H}^{\P^2}(0,d),\mu(H)))$ for $d>4$.

(2)
By \thmref{p11t} we have
\begin{align*}\chi^{\widehat\P^2,H_+}_{0}(2H)=\chi^{\widehat\P^2,H_+}_{0}(F+2G)=\frac{1}{(1-\Lambda^4)^6}-1-13\Lambda^4,
\end{align*}
As above, using again \thmref{vanwall}
$$\chi^{\widehat \P^2,H}_{0}(2H)=\chi^{\widehat \P^2,H_+}_{0}(2H)-\frac{1}{2}\sum_{n>0} \delta^{\widehat \P^2}_{2nE}(2H)=\chi^{\widehat \P^2,H_+}_{0}(2H)-\frac{1}{2}\delta^{\widehat \P^2}_{2E}(2H),$$ and
$\delta^{\widehat \P^2}_{2E}(2H)=\Lambda^4$.
Thus   \lemref{blowsimple} gives
\begin{equation}\label{P02H}
\chi^{\P^2,H}_{0}(2H)=\frac{1}{(1-\Lambda^4)^6}-1-\frac{27}{2}\Lambda^4.
\end{equation}
An alternative proof is by observing that
\begin{align*}\chi^{\widehat\P^2,H_+}_{0}(2H-E)=\chi^{\widehat\P^2,H_+}_{0}(\frac{3}{2}F+G)=\frac{1}{(1-\Lambda^4)^5}-1-\frac{23}{2}\Lambda^4,
\end{align*}
Using \thmref{vanwall} this gives
$$\chi^{\widehat \P^2,H}_{0}(2H-E)=\chi^{\widehat \P^2,H_+}_{0}(2H-E)-\frac{1}{2}\sum_{n>0} \delta^{\widehat \P^2}_{2nE}(2H-E)=\chi^{\widehat \P^2,H_+}_{0}(2H-E)-\frac{1}{2}\delta^{\widehat \P^2}_{2E}(2H-E),$$ and
$\delta^{\widehat \P^2}_{2E}(2H-E)=2\Lambda^4-27\Lambda^8$.
Thus  \lemref{1blow} gives again
$\chi^{\P^2,H}_{0}(2H)=\frac{1}{(1-\Lambda^4)^6}-1-\frac{27}{2}\Lambda^4.$

(3) By \thmref{p11t} and  \lemref{blowsimple} we have
\begin{equation}\label{PH2H}
\Lambda \chi^{\P^2,H}_{H}(2H)=\chi^{\widehat\P^2,H_+}_{F}(2H)=\chi^{\widehat\P^2,H_+}_{F}(F+2G)=\frac{\Lambda^4}{(1-\Lambda^4)^6},
\end{equation}
The result follows because $\Coeff_{\Lambda^d}\big[\chi^{\P^2,H}_{H}(2H)\big]=\chi(M_{H}^{\P^2}(0,d),\mu(H)))$ for $d>0$.

(4) By \thmref{p11t} we have
\begin{align*}\chi^{\widehat\P^2,H_+}_{0}(3H-E)=\chi^{\widehat\P^2,H_+}_{0}(2F+2G)=\frac{1+\Lambda^8}{(1-\Lambda^4)^9}-1-17\Lambda^4.
\end{align*}
Using \thmref{vanwall} this gives
$$\chi^{\widehat \P^2,H}_{0}(3H-E)=\chi^{\widehat \P^2,H_+}_{0}(3H-E)-\frac{1}{2}\sum_{n>0} \delta^{\widehat \P^2}_{2nE}(3H-E)=\chi^{\widehat \P^2,H_+}_{0}(3H-E)-\frac{1}{2}\delta^{\widehat \P^2}_{2E}(3H-E),$$ and
$\delta^{\widehat \P^2}_{2E}(3H-E)=2\Lambda^4-38\Lambda^8$, this gives
by  \lemref{1blow}
\begin{equation}\label{P03H}\chi^{\P^2,H}_{0}(3H)=\frac{1+\Lambda^8}{(1-\Lambda^4)^{10}}-1-19\Lambda^4.
\end{equation}
\end{proof}

\subsection{Some further invariants of the blowup of the plane}

In this subsection we apply the blowup formula \lemref{blowsimple}
to the results of the previous section to obtain $K$-theoretic invariants with respect to first Chern class $H$ or $E$.

\begin{Corollary}
\begin{enumerate}
\item
For $P=aH+bF$ with $\frac{b}{a}<2$ we have
\begin{align*}
 \Lambda+\sum_{d>4}\chi(M^{\widehat \P^2}_P(E,d),\mu(H))\Lambda^d&=\frac{\Lambda}{(1-\Lambda^4)^3},\\ \Lambda+\sum_{d>4}\chi(M^{\widehat \P^2}_P(E,d),\mu(2H))\Lambda^d&=\frac{\Lambda}{(1-\Lambda^4)^6},\\
\Lambda+\sum_{d>4}\chi(M^{\widehat \P^2}_P(E,d),\mu(3H))\Lambda^d&=\frac{\Lambda+\Lambda^9}{(1-\Lambda^4)^{10}}.\\
\end{align*}
\item For $P=aH+bF$ with $\frac{b}{a}<1$ we have
\begin{align*}
\sum_{d>0}\chi(M^{\widehat \P^2}_P(H,d),\mu(2H))\Lambda^d&=\frac{\Lambda^3}{(1-\Lambda^4)^6}.\\
\end{align*}
\end{enumerate}
\end{Corollary}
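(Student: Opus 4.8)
The plan is to derive both families of invariants on $\widehat{\P}^2$ by transporting the invariants of $\P^2$ from \thmref{P22} across the blowup formula \lemref{blowsimple}; the only genuine work lies in the low-degree boundary term and in verifying that the stated polarization ranges are ranges of chamber independence.

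For part (2) I would apply the first identity of \lemref{blowsimple} with $X=\P^2$, $c_1=H$ and $L=2H$. Since $c_1=H$ is not divisible by $2$, the lemma holds for every $d$, and its auxiliary hypothesis is vacuous: a class $\xi$ of type $(H,d)$ on $\widehat{\P}^2$ with $\langle H,\xi\rangle=0$ would need its $H$-coefficient to be simultaneously odd and zero. Hence $\chi(M^{\widehat{\P}^2}_{H-\epsilon E}(H,d),\mu(2H))=\chi(M^{\P^2}_H(H,d),\mu(2H))$ for all $d$, and summing against $\Lambda^d$ and invoking \thmref{P22}(3) gives $\frac{\Lambda^3}{(1-\Lambda^4)^6}$ at once.

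For part (1) I would use the second identity of \lemref{blowsimple} with $X=\P^2$ and $c_1=0$, giving $\chi(M^{\widehat{\P}^2}_{H-\epsilon E}(E,d+1),\mu(L))=\chi(M^{\P^2}_H(0,d),\mu(L))$ for $d>4$. As $E^2=-1$, the class $(E,d)$ has $d\equiv 1\pmod 4$, so the range $d>4$ consists of $d=5,9,13,\dots$; the terms $d\ge 9$ come from $\chi(M^{\P^2}_H(0,d-1),\mu(L))$ with $d-1>4$ and are read off from $\chi^{\P^2,H}_0(L)$, while the single term $d=5$ is exactly the quantity appearing in the correction term of \defref{KdonGen}. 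Solving that definition yields
\[
\chi(M^{\widehat{\P}^2}(E,5),\mu(L))=\Coeff_{\Lambda^4}\big[\chi^{\P^2,H}_0(L)\big]-\langle L,K_{\P^2}\rangle+\tfrac{K_{\P^2}^2+L^2}{2}+1 .
\]
Re-indexing by one then collapses everything into
\[
\Lambda+\sum_{d>4}\chi(M^{\widehat{\P}^2}_P(E,d),\mu(L))\Lambda^d=\Lambda\Big[1+\chi^{\P^2,H}_0(L)+\big(-\langle L,K_{\P^2}\rangle+\tfrac{K_{\P^2}^2+L^2}{2}+1\big)\Lambda^4\Big],
\]
in which the correction term precisely cancels the $\Lambda^4$-defect of $\chi^{\P^2,H}_0(L)$. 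Inserting the three values of $\chi^{\P^2,H}_0$ from \thmref{P22} and noting that $-\langle L,K_{\P^2}\rangle+\tfrac{K_{\P^2}^2+L^2}{2}+1$ equals $9,\tfrac{27}{2},19$ for $L=H,2H,3H$ produces the three claimed right-hand sides.

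The last and most delicate point is to replace $H-\epsilon E$ by a general $P=aH+bF$ in the stated ranges. Here I would argue exactly as in \propref{ruledconst}: by \thmref{wallcr} the difference of the two invariants is a sum of wallcrossing terms $\delta^{\widehat{\P}^2}_\xi(L)$ over classes $\xi$ separating $P$ from $H-\epsilon E$, and by \thmref{vanwall}(2) each such term vanishes unless $-\xi^2\le |\langle\xi,L-K_{\widehat{\P}^2}\rangle|+2$. Writing $\xi=xH+yE$ and combining this inequality with the wall condition $\langle\xi,P\rangle=0$, I expect the bound $\frac{b}{a}<2$ in (1) (resp. $\frac{b}{a}<1$ in (2)) to exclude every separating class with nonzero wallcrossing, so that the invariant is constant throughout the stated range. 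I expect this numerical bookkeeping — pinning down the exact thresholds $2$ and $1$ — to be the main obstacle, the blowup identities and the $\Lambda^4$-cancellation being essentially formal once \thmref{P22} is available.
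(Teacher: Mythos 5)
Your strategy is the same as the paper's: part (2) via the first identity of \lemref{blowsimple} together with \thmref{P22}(3), part (1) via the second identity plus the $\Lambda^4$-correction in \defref{KdonGen}, and a wallcrossing argument to justify the stated ranges of $P$. However, your treatment of the $d=5$ coefficient in part (1) is circular. By \defref{KdonGen}(2), the $\Lambda^4$-coefficient of $\chi^{\P^2,H}_0(L)$ is \emph{defined} as $\chi(M^{\widehat\P^2}_{H-\epsilon E}(E,5),\mu(L))+\<L,K_{\P^2}\>-\tfrac{K_{\P^2}^2+L^2}{2}-1$, i.e.\ it contains exactly the unknown you are solving for; ``solving that definition'' is therefore a tautology, and your assertion that the correction term ``precisely cancels the $\Lambda^4$-defect of $\chi^{\P^2,H}_0(L)$'' is logically equivalent to the $d=5$ case of the corollary itself. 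Nor can the \emph{statement} of \thmref{P22} supply the missing coefficient: it pins down $\chi(M^{\P^2}_H(0,d),\mu(L))$ only for $d>4$, and its displayed terms $3\Lambda^4$, $6\Lambda^4$, $10\Lambda^4$ are packaging, not the $\Lambda^4$-coefficients of $\chi^{\P^2,H}_0(L)$ (for $L=H$ the true coefficient is $-6$; a literal reading of the theorem would give a wrong $d=5$ value). What closes the gap are the unmodified generating functions proved \emph{inside} the proof of \thmref{P22}, namely \eqref{P0H}, \eqref{P02H}, \eqref{P03H}: $\chi^{\P^2,H}_0(H)=\frac{1}{(1-\Lambda^4)^{3}}-1-9\Lambda^4$, and the analogues with defects $-\tfrac{27}{2}\Lambda^4$ and $-19\Lambda^4$. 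Only against these do your constants $9,\tfrac{27}{2},19$ cancel; the paper's own proof cites precisely these equations (through \eqref{Ewall}), and they ultimately rest on \propref{p110}, \propref{p11G}, \propref{p112G}, not on the statement of \thmref{P22}.

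The second gap is the one you flag yourself: the thresholds $\tfrac{b}{a}<2$ and $\tfrac{b}{a}<1$ are the actual content of the hypothesis, and ``I expect the bound to exclude every separating class'' is not a proof. The computation is short and is exactly what the paper does: for part (1), \thmref{vanwall} forces any class of type $(E)$ with $\<H,\xi\>>0>\<F,\xi\>$ and $\delta^{\widehat\P^2}_{\xi}(kH)\ne 0$ for some $k\in\{1,2,3\}$ to be of the form $\xi=2nF-E$ with $n\ge 1$, and then $\<\xi,aH+bF\>=2na-b>0$ whenever $\tfrac{b}{a}<2$, so no such wall separates $P$ from $H$; for part (2) the same analysis with type $(H)$ and $L=2H$ yields $\xi=(2n-1)F-E$, and $\<\xi,aH+bF\>=(2n-1)a-b>0$ whenever $\tfrac{b}{a}<1$. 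With these two points supplied, your argument coincides with the paper's proof.
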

\begin{proof}
(1)
Let $\xi\in E+2H^2(X,\Z)$ with $\< H,\xi\>>0> \<F,\xi\>$ and with $\delta^{\widehat \P^2}_\xi(kH)\ne 0$ for some $k$ with $1\le k\le 3$.
Then \thmref{vanwall} gives that $\xi=2nF-(2m-1)E$ with $n,m\in \Z_{>0}$ and
$8nm-4n+(2m-1)^2\le |(4+2k)n-2m+1|+2$. Thus either $2m+1-2kn\ge 8mn+(2m-1)^2$,which is impossible, or
$2kn-2m+3\ge 8nm-8n+(2m-1)^2$, which implies $m=1$. Thus if $P=aH+bF$ with $\frac{b}{a}<2$ then there is no class $\xi$ of type $(E)$ with
 $\< H,\xi\>>0> \<P, \xi\>$ with $\delta^{\widehat \P^2}_\xi(kH)\ne 0$.
Therefore  $$\chi^{\widehat\P^2,H_+}_{E}(kH)=\sum_{d>4}\chi(M^{\widehat \P^2}_P(E,d),\mu(kH))\Lambda^d.$$

(a) \eqref{P0H} and \lemref{blowsimple} give
$$\Lambda\chi^{\P^2,H}_{0}(H)=\frac{\Lambda}{(1-\Lambda^4)^3}-\Lambda-9\Lambda^5$$
By \eqref{Ewall} we have thus $$\chi^{\widehat\P^2,H_+}_{E}(H)=\Lambda\chi^{\P^2,H}_{0}(H)+(\<-H, K_{\P^2}\>+\frac{K_{\P^2}^2+H^2}{2}+1)\Lambda^5=\frac{\Lambda}{(1-\Lambda^4)^3}-\Lambda.$$

(b) \eqref{P02H} and \lemref{blowsimple} give
$$\Lambda\chi^{\P^2,H}_{0}(2H)=\frac{\Lambda}{(1-\Lambda^4)^6}-\Lambda-\frac{27}{2}\Lambda^5.$$
By \eqref{Ewall} we have $$\chi^{\widehat\P^2,H_+}_{E}(2H)=\Lambda\chi^{\P^2,H}_{0}(H)+\frac{27}{2}\Lambda^5=\frac{\Lambda}{(1-\Lambda^4)^6}-\Lambda.$$

(c) \eqref{P02H}, \lemref{blowsimple} and \eqref{Ewall} give that
$$\chi^{\widehat\P^2,H_+}_{E}(3H)=\Lambda\left(\frac{1+\Lambda^8}{1-\Lambda^{10}}-1-19\Lambda^4\right)+19\Lambda^5.$$

(2) Let $\xi\in H+2H^2(X,\Z)$ with $\< H,\xi\>>0> \<F,\xi\>$ and with $\delta^{\widehat \P^2}_\xi(2H)\ne 0$ for some $d$.
Then \thmref{vanwall} gives that $\xi=(2n-1)F-(2m-1)E$ with $n,m\in \Z_{>0}$ and
$(4n-2)(2m-1)+(2m-1)^2\le |(8n-4-2m+1|+2$. Thus either $(4n-3)(2m-1)+(2m-1)^2\le -8n+6$, which is impossible, or
$(4n-1)(2m-1)+(2m-1)^2\le 8n-2$, which implies $m=1$.

Thus if $P=aH+bF$ with $\frac{b}{a}<1$ then there is no class $\xi$ of type $(E)$ with
 $\< H,\xi\>>0> \<P, \xi\>$ with $\delta^{\widehat \P^2}_\xi(2H)\ne 0$.
Therefore  $$\chi^{\widehat\P^2,H_+}_{H}(2H)=\sum_{d>0}\chi(M^{\widehat \P^2}_P(H,d),\mu(H))\Lambda^d.$$

By \eqref{PH2H} and \lemref{blowsimple} we have
$\chi^{\widehat\P^2,H}_{H}(2H)=\chi^{\P^2,H}_{H}(2H)=\frac{\Lambda^3}{(1-\Lambda^4)^6}$. As $H$ does not lie on a wall of type $(H)$, we get
$\chi^{\widehat\P^2,H_+}_{H}(2H)=\chi^{\widehat\P^2,H}_{H}(2H)$.
\end{proof}

\section{Le Potier's strange duality.}\label{lpsd}
We have briefly recalled the setting of strange duality conjecture in \secref{strd}.  We use the same notations as in \secref{strd}, Theorem \ref{p11t} and Theorem \ref{P22}.  We will prove Theorem \ref{sdmain} in this section.



\begin{Remark}\label{ppd}(1) If $c^*=(0,L,\chi=0)$, then $M_H^X(c^*)$ does not depend on the polarization $H$, hence any ample class $H$ is $c^*$-general.  Actually if $c^*$ is of rank 0, then the morphism $\lambda:~K_{c^*,H}\rightarrow \Pic(M_H^X(c^*))$ introduced in \secref{sec:detbun} can be extended to $K_{c^*}$ for any ample $H$.

(2) $H$ may not be $c$-general with $c=(2,0,c_2)$, then 
the determinant line bundle $\mathcal{D}_{c,c^*}=\lambda(c^*)$ is not well-defined over the whole space $M^X_H(c)$, unless $\<L,\xi\>=0$ for every $\xi$ of type $c$ with $\<H,\xi\>=0$ and $
\xi^2+4c_2\in 8\mathbb{Z}_{\geq0}$.  One can see this from the construction of $\lambda(c^*)$: we first have a good $GL(V)$-quotient $\Omega(c)\ra M^X_H(c)$ with $\Omega(c)$ an open subset of some Quot-scheme.  There is a universal family $\mathscr{F}$ over $\Omega(c)$, and we get $\lambda(c^*)$ by descending the determinant line bundle $\lambda_{\mathscr{F}}(c^*)$ over $\Omega(c)$ to $M^X_H(c)$.  $\lambda_{\mathscr{F}}(c^*)$ is $GL(V)$-linearized and  $\lambda(c^*)$ is well-defined if and only if $\lambda_{\mathscr{F}}(c^*)$ satisfies the descent condition (see Theorem 4.2.15 in \cite{HL}).  Hence we know that 
$\lambda(c^*)$ is certainly well-defined over the stable locus $M^X_H(c)^s$ since $c^*\in K_c$.  Denote this line bundle by $\mathcal{D}_{c,c^*}^s$.  There are also  strictly semistable points in $M^X_H(c)$, which correspond to S-equivalence classes of sheaves $\I_Z(\xi)\oplus \I_W(-\xi)$, with $\<H,\xi\>=0$ and $
\xi^2+4c_2\in 8\mathbb{Z}_{\geq0}$ and $len(Z)=len(W)=\xi^2/8+c_2/2$.  Let $c_{+\xi}$ (resp. $c_{-\xi}$) be the class of $\I_Z(\xi)$ (resp. $\I_W(-\xi)$) in $K(X)$.  Then by the descent condition, $\lambda(c^*)$ is well-defined over the strictly semistable point $[\I_Z(\xi)\oplus \I_W(-\xi)]$ if and only if $\chi(c^*\otimes c_{\pm\xi})=0$ (see the proof of Theorem 8.1.5 in \cite{HL}) which is equivalent to say that $\<L,\xi\>=0$ since $c^*=(0,L,\chi=0)$.
\end{Remark}

Denote by $M^X_H(c)^g$ the biggest open subset of $M^X_H(c)$ where $\lambda(c^*)$ is well-defined.  We denote this line bundle over $M^X_H(c)^g$ by $D_{c,c^*}^g$.
Notice that $M^X_H(c)^s\subset M^X_H(c)^g$ and by Remark \ref{ppd} sheaves of the  form $\I_Z(c_1/2)\oplus \I_W(c_1/2)$ are in $M^X_H(c)^g$, where $Z,W$ are $0$-dimensional subschemes of length $d/8$.  This is because in this case $\xi=0$. 
We have $M^X_H(c)^g=M^X_H(c)$ if the polarization $H$ is $c$-general or proportional to $L$.    Replacing $M^X_H(c)$ by $M^X_H(c)^g$ and $\mathcal{D}_{c,c^*}$ by $\mathcal{D}_{c,c^*}^g$, we can define a morphism \begin{equation}\label{vsdmap}SD_{c,c^*}^g:H^0(M^X_H(c)^g,\D_{c,c^*}^g)^\vee \to H^0(M^X_H(c^*),\D_{c^*,c}).\end{equation} analogously to \eqref{SDmap}. We will prove the  following theorem which implies Theorem \ref{sdmain}.
\begin{Theorem}\label{vsdmain}$SD_{c,c^*}^g$ is an isomorphism for the three cases of \thmref{sdmain}.
\end{Theorem}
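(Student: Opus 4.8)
The plan is to reduce \thmref{vsdmain} to two separate assertions by way of the elementary principle that a linear map between finite-dimensional vector spaces of equal dimension is an isomorphism if and only if it is injective. Accordingly I would establish (A) the equality $\dim H^0(M^X_H(c)^g,\D^g_{c,c^*})=\dim H^0(M^X_H(c^*),\D_{c^*,c})$, and (B) the injectivity of $SD^g_{c,c^*}$; together these give the theorem in all three cases, and hence \thmref{sdmain}.

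For (A) I would read off the source dimension from the computations of the first part of the paper. In each case $\D_{c,c^*}=\mu(L)$ with $L$ numerically effective: $L=-K_X$ in case (1), $L=3F+2G$ in case (2), $L=2H$ in case (3). Moreover $c_2>2$ gives $d=4c_2>8$ in cases (1),(2), while $c_1=H$ is not divisible by $2$ in case (3), so in every case \propref{vanihh} yields $H^{i}(M^X_H(c),\mu(L))=0$ for $i>0$ and therefore $\dim H^0=\chi(M^X_H(c),\mu(L))$. This Euler characteristic is precisely the coefficient furnished by \thmref{P22} and \thmref{p11t}: case (1) is \thmref{P22}(4) on $\P^2$ and \thmref{p11t}(3) with $n=2$ on the ruled surfaces, case (2) is \thmref{p11t}(3) with $n=3$ (whence the hypothesis $\frac ab\ge\frac54$), and case (3) is \thmref{P22}(3). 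When $H$ is not $c$-general one works on $M^X_H(c)^g$ (\remref{ppd}); in the sub-cases with $H\propto L$ or $c_1$ odd one has $M^X_H(c)^g=M^X_H(c)$ outright, and in the remaining sub-cases the discarded locus has codimension $\ge2$ once $c_2>2$, so $H^0$ is unchanged, while the relevant $\chi$ is independent of the chamber of $H$ by \propref{ruledconst} and hence equals the computed coefficient. The target dimension $\dim H^0(M^X_H(c^*),\D_{c^*,c})$ I would compute through the support fibration $M^X_H(c^*)\to|L|$: over a general smooth $C\in|L|$ the fibre is a $\Pic(C)$-torsor on which $\D_{c^*,c}$ restricts to a fixed power of a theta bundle determined by $c$, so a base-change computation over $|L|$ evaluates this dimension as a Verlinde-type number to be matched with the above, the $\P^2$ instances being alternatively covered by the known cases of \cite{Da2} and \cite{Yuan2}.

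The injectivity (B) is the heart of the matter and the step I expect to be the main obstacle. Writing Le Potier's canonical section as $\sigma_{c,c^*}\in H^0(M^X_H(c)^g,\D^g_{c,c^*})\otimes H^0(M^X_H(c^*),\D_{c^*,c})$, injectivity of $SD^g_{c,c^*}$ is equivalent to the spanning statement that the theta sections $\sigma_\G$, obtained by specializing the second factor at $[\G]\in M^X_H(c^*)$ and having divisor $\{[\F]\mid H^0(X,\F\otimes\G)\neq0\}$, generate $H^0(M^X_H(c)^g,\D^g_{c,c^*})$. To prove this I would restrict to a general smooth curve $C\in|L|$: a general $[\G]$ is a line bundle on such a $C$, a general semistable $\F$ of class $c$ restricts to a semistable rank-$2$ bundle on $C$, and the surface pairing $H^0(X,\F\otimes\G)$ is governed by the theta pairing on $C$, which is perfect by strange duality on curves (\cite{Bel1}, \cite{MO1}, \cite{Bel2}). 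Transporting this non-degeneracy back to $X$ shows that no nonzero functional kills every $\sigma_\G$, so $SD^g_{c,c^*}$ is injective; the delicate point, and where the bulk of the work lies, is to control the boundary of $M^X_H(c^*)$ and the non-reduced or reducible members of $|L|$ so that sufficiently many triples $(\F,C,\G)$ remain in the stable range for the curve-level input to apply. Combined with (A) this proves that $SD^g_{c,c^*}$ is an isomorphism, which is \thmref{vsdmain}. Alternatively, one may bypass the target-dimension computation in (A) by proving the symmetric spanning statement, that the sections $\sigma_\F$ for $[\F]\in M^X_H(c)^g$ span $H^0(M^X_H(c^*),\D_{c^*,c})$, which yields surjectivity directly.
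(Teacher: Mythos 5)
Your reduction to (A) dimension equality plus (B) injectivity is a legitimate scheme, and your part (A) is essentially the paper's \propref{nucon}: the source dimension is read off from \propref{vanihh} together with \thmref{P22}(3),(4) and \thmref{p11t}(3) (with $n=2$ for $L=-K_X$ and $n=3$ for $L=3F+2G$, which is where $\frac ab\ge\frac54$ comes from), exactly as you say; for the target dimension the paper does not run a fibration/Verlinde computation but quotes the explicit results of \cite{Yuan} (Theorems 4.4.1 and 4.5.2), and it handles non-$c$-general $H$ by a codimension estimate on the exceptional locus of the birational map $M^X_{H_+}(c)^s\dashrightarrow M^X_H(c)^s$ across a wall. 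The genuine gap is in (B), which you correctly identify as the heart. The paper never proves injectivity by restriction to curves: it proves \emph{surjectivity}, via the linear-algebra criterion (\propref{ned}) that it suffices to exhibit finitely many semistable sheaves $\F_i$ of class $c$ whose divisor sections $s_{\F_i}$ span $H^0(M^X_H(c^*),\D_{c^*,c})$, and it constructs these explicitly (extensions $\F_{R,T}$ of ideal sheaves of points, slope-stable bundles $\cal E_2^i$ of class $(2,0,2)$, elementary modifications of them, and of $\cal T_{\P^2}(-1)$ in case (3)), verifying spanning with Yuan's structure results on the one-dimensional side ($\pi_{*}\z\cong\oo_{|L|}$ and surjectivity of multiplication maps).

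Your curve-theoretic route to injectivity fails on two concrete counts. First, the smooth members of $|L|$ have genus $1$ in case (1) ($L=-K_X$ on any of the three surfaces) and genus $0$ in case (3) ($L=2H$ on $\P^2$: conics); the results \cite{Bel1}, \cite{MO1}, \cite{Bel2} concern theta pairings on moduli of bundles over curves of higher genus, and in case (3) there is literally nothing to transport: $M^X_H(c^*)\to|2H|$ is an isomorphism, the fibre over a smooth conic $C$ is the single sheaf $\oo_C\otimes\oo_{\P^2}(-1)$, and the moduli space of semistable rank-$2$ bundles on $\P^1$ is a point, so the "theta pairing on $C$" is vacuous. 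Only case (2) (genus $2$) has non-degenerate curve-level content. Second, even where curve strange duality applies, it gives perfection of the pairing on the curve moduli spaces; this shows at most that the sections $\sigma_\G$ with $\G$ supported on a fixed smooth $C$ span the image of the pullback $H^0(M_C,\Theta_C)\to H^0(M^X_H(c)^g,\D^g_{c,c^*})$ along $\F\mapsto\F|_C$. To conclude that the union of these images over all $C$ is the whole space of sections, you would need to know that pullbacks of curve theta functions generate the surface-level $H^0$ — a statement that is nowhere established and is essentially as strong as the duality you are trying to prove. (Your worry about stable restriction is also real, since Mehta--Ramanathan-type theorems require curves in $|kH|$ with $k\gg0$ rather than the fixed low-degree class $L$, but it is secondary.) So the sentence "transporting this non-degeneracy back to $X$" is precisely the missing idea, and in cases (1) and (3) no such transport exists; the paper's surjectivity argument on the $M(c^*)$ side is what replaces it.
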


\subsection{Numerical condition.}
We first show that the two vector spaces $H^0(M^X_H(c),\mathcal{D}_{c,c^*})$ (or $H^0(M^X_H(c)^g,\mathcal{D}_{c,c^*}^g)$ in general) and $H^0(M^X_H(c^*),\mathcal{D}_{c^*,c})$ have the same dimension.
\begin{Proposition}\label{nucon} For the three cases of \thmref{sdmain}, we have
\begin{equation}\label{fir:nucon}dim~H^0(M^X_H(c)^g,\mathcal{D}_{c,c^*}^g)=dim~H^0(M^X_H(c)^s,\mathcal{D}_{c,c^*}^s)=dim~H^0(M^X_H(c^*),\mathcal{D}_{c^*,c}).\end{equation}
In particular if $H$ is $c$-general or proportional to $L$, then $M_H^X(c)^g=M_H^X(c)$ and 
\begin{equation}\label{sec:nucon}dim~H^0(M^X_H(c),\mathcal{D}_{c,c^*})=dim~H^0(M^X_H(c)^s,\mathcal{D}_{c,c^*}^s)=dim~H^0(M^X_H(c^*),\mathcal{D}_{c^*,c}).\end{equation}
\end{Proposition}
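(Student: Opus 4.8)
The plan is to prove the chain \eqref{fir:nucon} in two steps, the second being the substantial one, and then to deduce \eqref{sec:nucon} immediately from the fact that $M^X_H(c)^g=M^X_H(c)$ precisely when $H$ is $c$-general or proportional to $L$. So the first displayed equality in \eqref{fir:nucon} is a soft extension statement, the second is a genuine matching of two independently computed integers, and the ``in particular'' clause is then automatic.

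First I would prove $\dim H^0(M^X_H(c)^g,\mathcal{D}_{c,c^*}^g)=\dim H^0(M^X_H(c)^s,\mathcal{D}_{c,c^*}^s)$ by a normality-plus-codimension argument. By \remref{ppd} and the discussion after it, $M^X_H(c)^g$ contains $M^X_H(c)^s$ together with the split points $\I_Z(c_1/2)\oplus\I_W(c_1/2)$, and $\mathcal{D}_{c,c^*}^g$ restricts to $\mathcal{D}_{c,c^*}^s$ on the stable locus. For $c=(2,H,c_2)$ the first Chern class is not divisible by $2$, so $M^X_H(c)$ is everywhere stable and there is nothing to prove; for $c=(2,0,c_2)$ with $c_2>2$ the computation in the proof of \propref{vanihh} shows that the strictly semistable locus is either empty or of codimension $\geq 2$ in $M^X_H(c)$, since $d=4c_2>8$. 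Hence $M^X_H(c)^g\setminus M^X_H(c)^s$ has codimension $\geq 2$ in $M^X_H(c)^g$, which is normal as an open subset of the normal variety $M^X_H(c)$ (normality by \cite{Bou}). As a line bundle is reflexive and sections of a reflexive sheaf on a normal variety extend across a closed set of codimension $\geq 2$, restriction gives an isomorphism $H^0(M^X_H(c)^g,\mathcal{D}_{c,c^*}^g)\to H^0(M^X_H(c)^s,\mathcal{D}_{c,c^*}^s)$.

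For the remaining equality $\dim H^0(M^X_H(c)^s,\mathcal{D}_{c,c^*}^s)=\dim H^0(M^X_H(c^*),\mathcal{D}_{c^*,c})$ I would compute both sides and compare. On the left, $\mathcal{D}_{c,c^*}=\mu(L)$ with $L$ the support class of $c^*$ (namely $L=-K_X$, $L=2G+3F$, or $L=2H$ in the three cases); by \propref{vanihh} the higher cohomology of $\mu(L)$ vanishes, so in the situations where $\mu(L)$ is defined on the whole moduli space the left-hand dimension equals the $K$-theoretic Donaldson invariant $\chi(M^X_H(c_1,d),\mu(L))$, and the $g$-locus version reduces to this via the first step. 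Crucially, the constraints $a\geq b$ and $\frac ab\geq\frac54$ are exactly the conditions $\frac ab\geq\frac{n+2}{4}$ under which \thmref{p11t} holds with genuine equality for $L=nF+mG$ ($n=2$ and $n=3$ respectively), and \thmref{P22} supplies the $\P^2$ values; together with \thmref{vanwall} and \propref{asympt} this pins down the left-hand side in closed form in each case. On the right, $M^X_H(c^*)$ is a moduli space of pure one-dimensional sheaves on curves in $|L|$, and I would exploit the support morphism $\pi\colon M^X_H(c^*)\to|L|$: over a smooth $C\in|L|$ the fibre is a moduli of rank-one (or appropriate-multiplicity) sheaves on $C$, on which $\mathcal{D}_{c^*,c}$ restricts to a power of the theta bundle, so $\dim H^0$ can be evaluated fibrewise by a Riemann--Roch/Verlinde count with control of the non-smooth members of $|L|$, or equivalently by invoking the already-proven strange duality on the curves $C$.

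The hard part will be this computation of $\dim H^0(M^X_H(c^*),\mathcal{D}_{c^*,c})$ together with the verification that the resulting integer coincides, case by case, with the closed-form expression extracted on the left from \thmref{p11t} and \thmref{P22}. The left-hand numbers emerge from the modular and wallcrossing machinery as coefficients of explicit generating functions, whereas the right-hand side is a genuinely different, more classical object counted through the geometry of the support map and the behaviour of the theta bundle over the singular locus of $|L|$; reconciling these two independent counts is the crux of the proposition, and is where the specific choices of $c$, $c^*$ and the numerical constraints on the polarization enter decisively.
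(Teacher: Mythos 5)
Your skeleton (extend sections across a codimension-two locus, then match two independently computed dimensions) is indeed the paper's skeleton, but two essential pieces are missing. The first is the bridge for polarizations lying on a wall. The proposition is stated so as to cover $H$ that is \emph{not} $c$-general (that is the whole point of the superscript $g$), and for such $H$ your matching argument collapses: \thmref{p11t}, \thmref{P22} and \propref{vanihh} compute $\chi=h^0$ only for $c$-general $H$, and even your appeal to the proof of \propref{vanihh} for the codimension estimate silently assumes $H$ general --- on a wall the strictly semistable locus contains sheaves $\I_Z(\xi/2)\oplus\I_W(-\xi/2)$ with $\xi\neq 0$, $\langle H,\xi\rangle=0$, which that proof never considers (the paper bounds this locus directly, using that the lengths of $Z,W$ are at most $c_2/2$). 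More seriously, to identify $\dim H^0(M^X_H(c)^s,\D^s_{c,c^*})$ at a wall with the number computed in an adjacent chamber, the paper constructs a birational map $\rho:M^X_{H_+}(c)^s\dashrightarrow M^X_H(c)^s$, checks it identifies the determinant bundles, and proves the exceptional locus $E_{H_+}$ has codimension $\geq 2$ by an $\Ext^1$ dimension count (using $-K_X$ ample and that $\Pic(X)$ has rank $2$, so any destabilizing class $\widetilde\xi$ is a positive multiple of $\xi$ and satisfies $\langle\widetilde\xi,K_X\rangle\le 0$). Without this step the wallcrossing machinery, which lives entirely in chambers, says nothing about the stable locus at the wall, and \eqref{fir:nucon} is unproved there.

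The second gap is the right-hand side. You propose to compute $\dim H^0(M^X_H(c^*),\D_{c^*,c})$ fibrewise over $|L|$ by a Verlinde/Riemann--Roch count ``with control of the non-smooth members of $|L|$'', but that control is precisely the hard content and you do not supply it: $|L|$ contains reducible and non-reduced curves, the fibres of the support map over these are not Jacobians, and strange duality on such curves is not available as a black box. The paper does not perform this computation at all; for cases (1) and (2) it quotes the second author's prior results (Theorems 4.4.1 and 4.5.2 of \cite{Yuan}, which also give the surjectivity of the relevant multiplication maps used later), and for case (3) it invokes Le Potier's theorem that $M^{\P^2}_H(c^*)\cong|2H|\cong\P^5$ together with \lemref{point} to get $\D_{c^*,c}\cong\oo_{\P^5}(c_2-1)$, whence the dimension is $\binom{c_2+4}{c_2-1}$, matching the coefficient extracted from \thmref{P22}(3). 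So the step you yourself flag as ``the crux'' is exactly where your proposal stops being a proof: it needs either these citations or a genuinely new argument.
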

\begin{proof} Since $c^*=(0,L,\chi=-\<\frac{c_1(L)}2, c_1\>)$ for the three cases of \thmref{sdmain}, $\D_{c,c^*}=\mu(L)$.  

In the cases (1) and (2) of \thmref{sdmain}, every strictly semistable sheaf is S-equivalent to a sheaf of the form $\mathcal{I}_Z(\xi/2)\oplus\mathcal{I}_W(-\xi/2)$ with $\xi$ a class of type $c$ such that $\<H,\xi\>=0$ and  $0$-dimensional subschemes $Z$, $W$ of length $(4c_2+\xi^2)/8\leq c_2/2$.  Hence the locus $M^X_H(c)^{sss}$ of strictly semistable sheaves is of codimension $\geq 2$ in $M^X_H(c)$ because $dim~M^X_H(c)^s=4c_2-3$ and $c_2>2$.

Furthermore $-K_X$ is ample and hence $M^X_H(c)$ has only rational singularities.  
Therefore we have
\begin{equation}\label{hsh}dim~H^0(M^X_H(c)^g,\mathcal{D}^g_{c,c^*})=dim~H^0(M^X_H(c)^s,\mathcal{D}_{c,c^*}^s).
\end{equation}
The dimension of $H^0(M^X_H(c^*),\D_{c^*,c})$ has been computed in \cite{Yuan} (see Theorem 4.4.1 and Theorem 4.5.2 in \cite{Yuan}) for cases (1) and (2).  Theorem \ref{p11t}, Theorem \ref{P22} and Proposition \ref{vanihh} provide the dimension of $H^0(M^X_H(c),\D_{c,c^*})$ for $H$ $c$-general.  By comparing those two results, we get (\ref{sec:nucon}) for cases (1) and (2) with $c$-general polarization.

Now we assume that the polarization $H$ lies on a wall $W^{\xi}$ with $\xi$ a class of type $c$.  We want to show (\ref{fir:nucon}) for cases (1) and (2).  With no loss of generality, we assume $\<\xi, K_X\>\leq0$.  Let $H_{+}$ be a $c$-general polarization lying in the chamber next to $W^{\xi}$ such that $\<\xi, H_{+}\>>0$.  We have a surjective birational map
\[\rho:M^X_{H_{+}}(c)^s\dashrightarrow M^X_H(c)^s,\]
by sending every sheaf to itself, which is an isomorphism outside the locus $E_{H_{+}}$ consisting of $H_+$-stable sheaves lying in the following exact sequence
\begin{equation}\label{niso}0\rightarrow \mathcal{I}_Z(-\widetilde{\xi}/2)\rightarrow \F \rightarrow \mathcal{I}_{W}(\widetilde{\xi}/2)\rightarrow 0,\end{equation}
with $\<\widetilde{\xi},H_{+}\>>0$ and $\<\widetilde{\xi},H\>=0$, and $Z,W$ $0$-dimensional subschemes satisfying $len(Z)+len(W)=c_2+\widetilde{\xi}^2/4$. 
Because now we have $\Pic(X)\cong H^2(X,\mathbb{Z})$ is free of rank 2, we must have $\widetilde{\xi}=a\xi$ for some $a>0$ and hence $\<\widetilde{\xi},K_X\>\leq 0$. 
  It is obvious that $\rho$ identifies the determinant line bundles $\D_{c,c^*}$ on both sides.  Hence to show (\ref{fir:nucon}) for $H$, it is enough to show $E_{H_{+}}$ is of codimension $\geq 2$ in $M^X_{H_{+}}(c)^s$, which has pure dimension $d-3=4c_2-3$.  

$\Hom(\I_W(\widetilde{\xi}/2),\I_Z(-\widetilde{\xi}/2))=0$ because $\<H_{+},\widetilde{\xi}\>>0$.   Since $-K_X$ is ample and $\<H,\widetilde{\xi}\>=0$, we have $\Ext^2(\I_W(\widetilde{\xi}/2),\I_Z(-\widetilde{\xi}/2))\cong\Hom(\I_Z(-\widetilde{\xi}),\I_W(\widetilde{\xi}+K_X))^{\vee}=0$.  Hence 
\[dim~\Ext^1(I_W(\widetilde{\xi}/2),\I_Z(-\widetilde{\xi}/2))=-\chi(I_W(\widetilde{\xi}/2),\I_Z(-\widetilde{\xi}/2))=\<\widetilde{\xi}\cdot K_X\>/2-\widetilde{\xi}^2/4+c_2.\]
Hence \[dim ~E_{H_{+}}\leq \displaystyle{\max_{\tiny{\begin{array}{c}\widetilde{\xi}/2\in H^2(X,\mathbb{Z}),2l_{\widetilde{\xi}}\in\mathbb{Z}_{\geq0}\\
\<\widetilde{\xi},H_{+}\>>0,\<\widetilde{\xi},H\>=0\end{array}}}}\{3c_2+\widetilde{\xi}^2/4+\<\widetilde{\xi}, K_X\>/2-1\}.\]  
Since $\<\xi, K_X\>\leq 0,$  $dim~E_{H_{+}}\leq \displaystyle{\max_{\tiny{\begin{array}{c}\widetilde{\xi}/2\in H^2(X,\mathbb{Z}),2l_{\widetilde{\xi}}\in\mathbb{Z}_{\geq0}\\
\<\widetilde{\xi},H_{+}\>>0,\<\widetilde{\xi},H\>=0\end{array}}}}\{3c_2+\widetilde{\xi}^2/4-1\}$.  Then
\begin{equation}N_c:=dim~M_{H_+}(c)^s-dim~E_{H_+}\geq \displaystyle{\min_{\tiny{\begin{array}{c}\widetilde{\xi}/2\in H^2(X,\mathbb{Z}),2l_{\widetilde{\xi}}\in\mathbb{Z}_{\geq0}\\
\<\widetilde{\xi},H_{+}\>>0,\<\widetilde{\xi},H\>=0\end{array}}}}\{c_2-2-\widetilde{\xi}^2/4\}.
\end{equation}
Since $\widetilde{\xi}/2\in H^2(X,\mathbb{Z})$ and $\widetilde{\xi}^2<0$, $-\widetilde{\xi}^2/4\geq 1$.  Moreover since $c_2>2$, we have $c_2-2-\widetilde{\xi}^2/4\geq 2$ and hence $N_c\geq 2$.  This proves the claim  for cases (1) and (2).  

For case (3), $M^X_H(c)=M^X_H(c)^s=M^X_H(c)^g$ and there is no wall.  By Theorem 3.5 in \cite{LP2} $M^X_H(c^*)\cong |2H|\cong \P^5$ and moreover a sheaf $\G \in M^X_H(c^*)$ with support $C_{\G}$ is isomorphic to $\mathcal{O}_{C_{\G}}\otimes\mathcal{O}_{\P^2}(-1)$.  We have $c=(2,H,c_2)$.  If $c_2=1$, then $M^X_H(c)$ consists of only one point $[\cal T_{\P^2}(-1)]$ with $\cal T_{\P^2}$ the tangent bundle over $\P^2$.  $H^0(\cal T_{\P^2}(-2)\otimes\mathcal{O}_{C})=0$ for any $C\in |2H|$.  Hence $\D_{c^*,c}\cong \oo_{|2H|}$ for $c_2=1$ and for $c_2>1$ we have $\D_{c^*,c}\cong \oo_{|2H|}(c_2-1)$ by the following lemma due to Le Potier.  
\begin{Lemma}[Proposition 2.8 in \cite{LP3}]\label{point}If $x$ is not a base point of $|L|$, then the determinant line bundle $\lambda_x:=\lambda([\oo_x])$ associated to the skyscraper sheaf $[\oo_x]$ over $M(0,L,\chi)$ satisfies  $\lambda_x\cong \pi^{*}\oo_{|L|}(-1)$, where $\pi:M(0,L,\chi)\rightarrow |L|$ is the projection sending every sheaf to its support.
\end{Lemma}
By Theorem \ref{P22} and Proposition \ref{vanihh} we have for case (3)
\[dim~H^0(M(c),D_{c,c^*})=dim~H^0(\P^5,\oo_{\P^5}(c_2-1))=dim~H^0(M(c^*),\D_{c^*,c})={c_2+4 \choose c_2-1}.\]
This finishes the proof of the  proposition.
\end{proof}

\subsection{Strange duality.}
Recall that we have introduced in Section \ref{strd} the locus 
$$\mathscr{D}:=\big\{([\F],[\G])\in   M^X_H(c)\times M^X_H(c^*)\bigm| H^0(X,\F\otimes \G)\ne 0\big\},$$
 which gives a canonical section $\sigma_{c,c^*}$ of the line bundle $\D:=\D_{c,c^*}\boxtimes\D_{c^*,c}\in\Pic(M^X_H(c)\times M^X_H(c^*))$ and induces the strange duality map
\begin{equation}\label{sdmap}SD_{c,c^*}:H^0(M^X_H(c),\D_{c,c^*})^\vee \to H^0(M^X_H(c^*),\D_{c^*,c}).\end{equation}
If the polarization is not $c$-general, we have the map
\begin{equation}\label{vsdmap}SD_{c,c^*}^g:H^0(M^X_H(c)^g,\D_{c,c^*}^g)^\vee \to H^0(M^X_H(c^*),\D_{c^*,c}).\end{equation}

To show Theorem \ref{sdmain} (resp. Theorem \ref{vsdmain}), it is by Proposition \ref{nucon} enough to show that $SD_{c,c^*}$ (resp. $SD_{c,c^*}^g$) is surjective. 

Let $\F$ be a semistable sheaf of class $c$.  Denote by $s_{\F}$ the section of $\D_{c^*,c}$ over $M^X_H(c^*)$ up to scalars defined by the canonical divisor $D_{\F}:=\big\{\G\in M^X_H(c^*)|H^0(\F\otimes \G)\ne 0\big\}.$ 
\begin{Lemma}$s_{\F}$ only depends on the $S$-equivalence class of $\F$.  More precisely, take a Jordan-H\"older filtration of $\F$
\[0=JH_0\subset JH_1\subset\ldots\subset JH_{\ell}=\F\]
and let $J_i:=JH_i/JH_{i-1}$, then $s_{\F}=\Pi_{i\geq 1}s_{J_i}$.
\end{Lemma}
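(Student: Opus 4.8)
The plan is to analyze how the section $s_{\F}$ transforms as we degenerate $\F$ within its $S$-equivalence class, using the fact that the determinant line bundle construction factors through $K$-theory and is additive on short exact sequences. Recall that $s_{\F}$ is the section of $\D_{c^*,c}=\lambda(c)$ over $M^X_H(c^*)$ whose zero divisor is $D_{\F}=\{\G\mid H^0(\F\otimes\G)\neq 0\}$. The first observation I would record is that the class $[\F]\in K(X)_{num}$ is unchanged under $S$-equivalence: the associated graded $\bigoplus_i J_i$ has the same class as $\F$, since in $K$-theory a short exact sequence gives $[\F]=[JH_{i-1}]+[J_i]$ and hence $[\F]=\sum_{i\geq 1}[J_i]$. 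Consequently $\D_{c^*,c}=\lambda([\F])=\bigotimes_i\lambda([J_i])$ as line bundles over $M^X_H(c^*)$, which is exactly the target in which the claimed factorization $s_{\F}=\prod_{i\geq 1}s_{J_i}$ is to hold.

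The substantive content is then to identify the \emph{section} $s_{\F}$ with the product of the sections $s_{J_i}$, not merely the line bundle. My approach would be to use Le Potier's construction of the canonical section $\sigma_{c,c^*}$ (from \cite[Prop.~9]{LPst}, recalled in \secref{strd}): over $M^X_H(c^*)$ the section $s_{\F}$ is realized as the determinant of a complex computing $R\Hom(\F,\G)$ (equivalently $p_!$ of $\F\otimes\G$ in the derived sense) as $\G$ varies, and its vanishing locus is precisely where $H^0(\F\otimes\G)\neq 0$ under the vanishing hypotheses (1),(2) of \secref{strd} that force $\Hom$ and $H^0$ to agree and higher $\Tor$/$H^2$ to vanish. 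Given a short exact sequence $0\to JH_{i-1}\to JH_i\to J_i\to 0$, tensoring with a flat family $\G$ over $M^X_H(c^*)$ and applying $p_!$ yields a distinguished triangle, so the determinant of the middle complex is canonically the tensor product of the determinants of the outer two, and this isomorphism is compatible with the canonical sections. I would carry this out by induction on the filtration length $\ell$: the sequence $0\to JH_{\ell-1}\to\F\to J_\ell\to 0$ gives $s_{\F}=s_{JH_{\ell-1}}\cdot s_{J_\ell}$ up to a nonzero scalar, and the inductive hypothesis applied to $JH_{\ell-1}$ (whose Jordan--Hölder factors are $J_1,\dots,J_{\ell-1}$) finishes the argument.

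The independence of $s_{\F}$ on the choice of Jordan--Hölder filtration then follows formally, since the factors $J_i$ are determined up to isomorphism and reordering by the $S$-equivalence class, and the product $\prod_i s_{J_i}$ is manifestly symmetric in the factors. To make the statement ``$s_{\F}$ only depends on the $S$-equivalence class'' precise and complete, I would finally note that two $S$-equivalent semistable sheaves have the same associated graded, hence the same collection $\{J_i\}$, hence the same product section, recovering the same divisor $D_{\F}$ in $|\D_{c^*,c}|$ up to scalar.

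The main obstacle I anticipate is the bookkeeping needed to upgrade the additivity of \emph{determinant line bundles} on exact sequences (which is formal, coming from property (1) of $\lambda_\E$ in \secref{sec:detbun}, namely $\lambda_\E(v_1+v_2)=\lambda_\E(v_1)\otimes\lambda_\E(v_2)$) to an additivity of the \emph{canonical sections}. One must check that Le Potier's section is genuinely the determinant section of the complex $R p_*(\F\otimes\G)$ and that the triangle coming from the short exact sequence induces exactly the tensor-product decomposition of these determinant sections, rather than merely of the line bundles; equivalently, one must verify that the divisors add, $D_{\F}=D_{JH_{\ell-1}}+D_{J_\ell}$, which amounts to the additivity of $h^0(\F\otimes\G)$-vanishing along the exact sequence and is where one uses that the relevant higher cohomology vanishes generically so that the determinant section does not acquire or lose unexpected zeros. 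Once this compatibility is established, the induction and the symmetry are routine.
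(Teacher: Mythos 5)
Your proposal is correct in substance, but it takes a genuinely different route from the paper. The paper argues at the level of zero loci: it reduces to the stable locus $M^X_H(c^*)^s$ using that the strictly semistable locus has codimension $\geq 2$, then tensors $0\to J_1\to\F\to\F/J_1\to 0$ with a stable $\G$ and reads off the long exact sequence of cohomology; the pivot is the numerical fact $\chi(\G\otimes J_1)=0$, which forces $h^0(\G\otimes J_1)=h^1(\G\otimes J_1)$, so that $H^0(\G\otimes J_1)=0$ implies $H^0(\G\otimes\F)\cong H^0(\G\otimes\F/J_1)$, giving $D_{\F}=D_{J_1}\cup D_{\F/J_1}$ and then concluding by induction on the quotient. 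You instead work directly with the sections via the determinant-of-cohomology formalism: the triangle $Rp_*(J_1\otimes\G)\to Rp_*(\F\otimes\G)\to Rp_*((\F/J_1)\otimes\G)$ and multiplicativity of determinants (block-triangular two-term representatives) give $s_{\F}=s_{J_1}\cdot s_{\F/J_1}$ up to scalar. Each approach buys something: the paper's is elementary (no determinant functor compatibilities to verify) but, strictly speaking, only establishes equality of supports, leaving multiplicities and the scalar implicit; yours, once the compatibility of Le Potier's canonical section with the multiplicativity isomorphism is actually verified, yields the identity of sections with multiplicities for free and does not need the codimension-two reduction. Two caveats on your version: first, it still secretly relies on exactly the numerical input the paper makes explicit — $\chi(J_i\otimes\G)=0$ — since without it the two-term complexes computing $Rp_*(J_i\otimes\G)$ are not square and the factor sections $s_{J_i}$ have no determinant-theoretic meaning (this is the condition, visible in Remark \ref{ppd}, under which $\lambda(c^*)$ even descends to the corresponding strictly semistable points); second, $M^X_H(c^*)$ need not carry a universal family, so your triangle argument must be run on a Quot-scheme cover and descended, which is routine but should be said.
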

\begin{proof}It is enough to show that $D_{\F}=\cup_{i\geq 1}D_{J_i}$.  Since the strictly semistable locus in $M^X_H(c^*)$ forms a subset of codimension $\geq 2$ (See Proposition 3.4 in \cite{LP2} and Lemma 4.2.6 in \cite{Yuan}), It is enough to show that $D_{\F}$ and $\cup_{i\geq1}D_{J_i}$ coincide on the stable locus $M^X_H(c^*)^s$.  We write down an exact sequence
\begin{equation}\label{see}0\ra J_1\ra \F\ra \F/J_1\ra 0.
\end{equation}
Take any $\G$ stable sheaf of class $c^*$.  We tensor (\ref{see}) by $\G$ and get
\begin{equation}\label{saw}0\ra \G\otimes J_1\ra \G\otimes \F\ra \G\otimes \F/J_1\ra 0,
\end{equation}
and
\begin{equation}\label{go}0\ra H^0(\G\otimes J_1)\ra H^0(\G\otimes \F)\ra H^0(\G\otimes \F/J_1)\ra H^1(\G\otimes J_1).\end{equation}
Notice that $\chi(\G\otimes J_1)=h^0(\G\otimes J_1)-h^1(\G\otimes J_1)=0.$  By (\ref{go}) we see that $H^0(\G\otimes \F)\ne 0\Leftrightarrow H^0(\G\otimes J_1)\ne 0$ or $H^0(\G\otimes \F/J_1)\ne 0$.  Hence we get $D_{\F}=D_{J_1}\cup D_{\F/J_1}$, and the lemma follows by applying the induction assumption to $\F/J_1$.
\end{proof}

We then have the following criterion for the surjectivity of maps $SD_{c,c^*}$ and $SD^s_{c,c^*}$.
\begin{Proposition}\label{ned}If there are finitely many semistable sheaves $\F_i$ $i\in I$ of class $c$ such that $\{s_{\F_i}\}_{i\in I}$ spans $H^0(M^X_H(c^*),\D_{c^*,c})$, then the map $SD_{c,c^*}$ in (\ref{sdmap}) is surjective.  Moreover, if $\F_i$ can be chosen to be in $M^X_H(c)^g$, then $SD_{c,c^*}^g$ in (\ref{vsdmap}) is surjective.
\end{Proposition}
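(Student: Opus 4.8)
The plan is to reduce the surjectivity of $SD_{c,c^*}$ to the hypothesis by identifying, for each semistable sheaf $\F$ of class $c$, the image $SD_{c,c^*}(\mathrm{ev}_{[\F]})$ of the evaluation functional at $[\F]$ with the section $s_{\F}$. Recall that $\sigma_{c,c^*}$ is an element of $V\otimes W$, where $V:=H^0(M^X_H(c),\D_{c,c^*})$ and $W:=H^0(M^X_H(c^*),\D_{c^*,c})$, and that by construction $SD_{c,c^*}$ is precisely the linear map $V^\vee\to W$ associated to the tensor $\sigma_{c,c^*}$, i.e. $\phi\mapsto(\phi\otimes\mathrm{id})(\sigma_{c,c^*})$.

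First I would fix a point $[\F]\in M^X_H(c)$ together with a nonzero vector in the one-dimensional fibre $\D_{c,c^*}|_{[\F]}$; this trivialization turns evaluation at $[\F]$ into a genuine functional $\mathrm{ev}_{[\F]}\in V^\vee$. Writing $\sigma_{c,c^*}=\sum_j a_j\otimes b_j$, one computes
$$SD_{c,c^*}(\mathrm{ev}_{[\F]})=\sum_j a_j([\F])\,b_j,$$
which is exactly the restriction of $\sigma_{c,c^*}$ to the slice $\{[\F]\}\times M^X_H(c^*)$, read as a section of $\D_{c^*,c}$ through the chosen trivialization. Since the zero locus of $\sigma_{c,c^*}$ is supported on $\mathscr{D}$, the zero divisor of this restricted section is supported on $D_{\F}=\{\G\in M^X_H(c^*)\mid H^0(\F\otimes\G)\ne 0\}$. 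Hence $SD_{c,c^*}(\mathrm{ev}_{[\F]})$ coincides with $s_{\F}$ up to a scalar, which is nonzero whenever $s_{\F}\neq 0$.

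With this identification in hand the conclusion is immediate: for each of the finitely many $\F_i$ in the hypothesis, $[\F_i]$ is a point of $M^X_H(c)$, so $s_{\F_i}=SD_{c,c^*}(\mathrm{ev}_{[\F_i]})$ lies in the image of $SD_{c,c^*}$. As the $\{s_{\F_i}\}_{i\in I}$ span $H^0(M^X_H(c^*),\D_{c^*,c})$ by assumption, the image of $SD_{c,c^*}$ is the whole target, so $SD_{c,c^*}$ is surjective. For the second assertion I would run the identical argument with $M^X_H(c)^g$, $\D_{c,c^*}^g$ and the corresponding section in place of $M^X_H(c)$, $\D_{c,c^*}$ and $\sigma_{c,c^*}$; the only extra input needed is that each chosen $\F_i$ gives a point $[\F_i]\in M^X_H(c)^g$, which is exactly the added assumption, so that $\mathrm{ev}_{[\F_i]}$ makes sense as an element of $H^0(M^X_H(c)^g,\D_{c,c^*}^g)^\vee$.

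The main obstacle is the bookkeeping in the identification $SD_{c,c^*}(\mathrm{ev}_{[\F]})=s_{\F}$: one must verify that restricting $\sigma_{c,c^*}$ to the slice $\{[\F]\}\times M^X_H(c^*)$ really yields a nonzero section of $\D_{c^*,c}$ (equivalently $D_{\F}\ne M^X_H(c^*)$, which holds once $s_{\F}\ne 0$, as is the case for the spanning $\F_i$), and that this slicewise restriction is compatible with the tensor description of $SD_{c,c^*}$. Everything else is a formal consequence of the fact that, for a map $V^\vee\to W$ induced by a tensor in $V\otimes W$, the evaluation functionals at points of $M^X_H(c)$ produce exactly the sections $s_{\F}$, so that the image contains their span.
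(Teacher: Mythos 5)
Your proof is correct and is essentially the paper's own argument: both rest on the identification of $s_{\F_i}$ with the image under $SD_{c,c^*}$ of the evaluation functional at $[\F_i]$ (equivalently, the restriction of $\sigma_{c,c^*}$ to the slice $\{[\F_i]\}\times M^X_H(c^*)$), after which the spanning hypothesis immediately forces surjectivity. The only difference is presentational — you phrase it coordinate-free via the tensor $\sigma_{c,c^*}\in V\otimes W$, while the paper expands $\sigma_{c,c^*}$ in bases and phrases the same conclusion as a matrix rank statement $A\cdot B=\mathrm{Id}_m$.
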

\begin{proof}This is very standard in linear algebra.  We have the section $\sigma_{c,c^*}$ associated to the divisor $\mathscr{D}$ over $M^X_H(c)\times M^X_H(c^*)$.  Let $\{e_k\}_{1\leq k\leq m}$ and $\{f_j\}_{1\leq j\leq l}$ be the basis of $H^0(M^X_H(c^*),\D_{c^*,c})$ and $H^0(M^X_H(c),\D_{c,c^*})$ respectively.  Then we have $\sigma_{c,c^*}=\sum a_{kj}e_k\otimes f_j$ with $a_{kj}$ constant.  Thus we have a $m\times l$ matrix $A=(a_{kj})$, and the map $SD_{c,c^*}$ is surjective if and only if $A$ is of rank $m$. 

On the other hand $s_{\F_i}=\sum a_{kj}\cdot f_j([\F_i])\cdot e_k$, where $f_j([\F_i])$ is the value of $f_j$ at point $[\F_i]$.  Because $\{s_{\F_i}\}$ spans $H^0(M^X_H(c^*),\D_{c^*,c})$, we can recover $e_k$ as a linear combination of the $s_{\F_i}$, hence we can find an $l\times m$ matrix $B$ such that $A\cdot B=Id_m$, hence $A$ is of rank $m$.  The statement for $SD^g_{c,c^*}$ also follows if $[\F_i]\in M(c)^g$ for all $i$.
 \end{proof}

We will use Proposition \ref{ned} to show the surjectivity of $SD_{c,c^*}^g$ for the three cases of \thmref{sdmain}.

~~~
\begin{flushleft}{\textbf{$\dagger$ cases (1) and (2).}}\end{flushleft} 

In these two cases we have $c^*=(0,L,\chi=0)$ with $L=-K_X$ or $L=-K_X+F$ and $c=(2,0,c_2)$.  

If $c^*=(0,L,\chi=0)$ with $L$ some effective line bundle, we denote by $\pi$ the projection $M^X_H(c^*)\ra |L|$ sending every sheaf to its support.  Denote by $\z$ the determinant line bundle associated to $[\oo_X]$ over $M^X_H(c^*)$.  Therefore by Lemma \ref{point} we have $\D_{c^*,c}\cong \z^2(c_2):=\z^2\otimes\pi^{*}\oo_{|L|}(c_2)$ with $c=(2,0,c_2)$.    Notice that these notations are compatible with those in \cite{Yuan}.
\begin{Lemma}\label{geno}For $c^*=(0,L,\chi=0)$ with $L=-K_X$ or $L=-K_X+F$, the multiplication map 
\[m_1:~H^0(M^X_H(c^*),\z)\otimes H^0(M^X_H(c^*),\pi^{*}\oo_{\ls}(c_2))\ra H^0(M^X_H(c^*),\z(c_2)),\]
is surjective. 
\end{Lemma}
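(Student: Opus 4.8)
The plan is to push everything down to $\ls\cong\P^N$ along the support morphism $\pi\colon M^X_H(c^*)\ra \ls$ and to turn the assertion into a Castelnuovo--Mumford regularity statement there. First I would record two facts about $\pi$. Since the generic fibre of $\pi$ is a compactified Jacobian (a torsor over $\Pic^0$ of an integral curve $C\in\ls$) and $M^X_H(c^*)$ is normal with connected fibres, we have $\pi_*\oo_{M^X_H(c^*)}=\oo_{\ls}$, so that $H^0(M^X_H(c^*),\pi^*\oo_{\ls}(c_2))=H^0(\ls,\oo_{\ls}(c_2))$. Next, the restriction of $\z=\lambda([\oo_X])$ to a fibre over an integral $C$ is, by base-change functoriality of $\lambda$, the classical theta line bundle on $\Pic^{g-1}(C)$, which has one-dimensional space of sections and no higher cohomology. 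Hence $R^{>0}\pi_*\z=0$ and $\E:=\pi_*\z$ is a rank-one sheaf on $\ls$; by the projection formula $\pi_*\big(\z\otimes\pi^*\oo_{\ls}(t)\big)=\E(t)$ and $H^0(M^X_H(c^*),\z(c_2))=H^0(\ls,\E(c_2))$.

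With this reduction $m_1$ becomes the multiplication map $H^0(\ls,\E)\otimes H^0(\ls,\oo_{\ls}(c_2))\ra H^0(\ls,\E(c_2))$ on $\P^N$. The second step is then purely a regularity statement: if $\E$ is $0$-regular in the sense of Castelnuovo--Mumford, then the standard fact that for an $m$-regular sheaf $\E$ the map $H^0(\E(m))\otimes H^0(\oo(k))\ra H^0(\E(m+k))$ is surjective for all $k\ge 0$ gives at once the surjectivity of $H^0(\E)\otimes H^0(\oo_{\ls}(c_2))\ra H^0(\E(c_2))$ for every $c_2\ge 0$, which is exactly what we want.

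It therefore remains to verify $H^i(\ls,\E(-i))=0$ for all $i\ge 1$. Using $R^{>0}\pi_*\z=0$ and the Leray spectral sequence this equals $H^i(M^X_H(c^*),\z\otimes\pi^*\oo_{\ls}(-i))$, and by Le Potier's identification $\pi^*\oo_{\ls}(-1)=\lambda([\oo_x])$ (\lemref{point}) together with the homomorphism property of $\lambda$ this is $H^i\big(M^X_H(c^*),\lambda([\oo_X]+i[\oo_x])\big)$. I would establish the vanishing of these groups in the spirit of \propref{vanihh}: $M^X_H(c^*)$ is normal with only rational singularities because $-K_X$ is ample, its dualizing sheaf is again a determinant line bundle, and I would compare the numerical class of $\lambda([\oo_X]+i[\oo_x])$ with $\z$ (which is $\pi$-ample) and $\pi^*\oo_{\ls}(1)$, so that Kawamata--Viehweg vanishing applies on the fibration over $\P^N$.

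The step I expect to be the main obstacle is precisely this last cohomology vanishing, equivalently the determination of $\E=\pi_*\z$: the difficulty is that over the locus of non-integral curves in $\ls$ the fibrewise $h^0(\z)$ may jump, so $\E$ need not be locally free and one must control the singular fibres of $\pi$. An alternative, more computational route that sidesteps this is to identify $\E$ directly. Since $h^0(\z|_{\text{fibre}})=1$ on the open locus of integral curves, $\E$ is a rank-one torsion-free sheaf whose reflexive hull is some $\oo_{\ls}(a)$, and the Hilbert polynomial $\chi(\E(t))=\chi(M^X_H(c^*),\z\otimes\pi^*\oo_{\ls}(t))$ --- computable from the dimension formulas of \cite{Yuan} already invoked in \propref{nucon} --- pins down $a$ and shows $\E=\oo_{\ls}(a)$, for which $m_1$ is the standard surjective multiplication on $\P^N$.
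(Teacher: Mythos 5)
Your reduction to the linear system is the same first move as the paper's: both use $\pi_*\oo_{M^X_H(c^*)}\cong\oo_{\ls}$ (the paper cites Lemma 4.3.3 of \cite{Yuan}) and the projection formula to turn $m_1$ into a multiplication map of sheaves on $\ls$. But after that point your argument has a genuine gap, and it sits exactly where you say you expect trouble. The entire content of the lemma is the identification of $\pi_*\z$: the paper's proof consists of quoting Theorems 4.4.1 and 4.5.2 of \cite{Yuan}, which assert that $\pi_*\z\cong\oo_{\ls}$ (and the surjectivity of the pushed-down multiplication map), after which surjectivity of $m_1$ is immediate since $H^0(\oo_{\ls})\otimes H^0(\oo_{\ls}(c_2))\ra H^0(\oo_{\ls}(c_2))$ is trivially onto. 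You do not prove this identification; you replace it by a Castelnuovo--Mumford regularity statement for $\E=\pi_*\z$ whose verification you leave open, so the proposal proves nothing beyond the reduction.

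Moreover, neither of your two suggested ways of closing the hole works as stated. First, $R^{>0}\pi_*\z=0$ and ``$\E$ has rank one'' cannot be deduced from the behaviour of $\z$ on fibres over \emph{integral} curves: cohomology and base change requires control of $H^i(\z|_{\pi^{-1}(C)})$ for \emph{all} $C\in\ls$, and over the locus of non-reduced or reducible curves the fibres of $\pi$ are not compactified Jacobians of integral curves, their dimension and cohomology can jump, and $\pi$ is not obviously flat there; this is precisely the locus where the work in \cite{Yuan} is done. Your Kawamata--Viehweg setup is also not in place: the twist $\pi^*\oo_{\ls}(-i)$ is anti-nef, $M^X_H(c^*)$ is singular, and you never exhibit the required nef-and-big difference with the dualizing sheaf. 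Second, the Hilbert-polynomial route needs $\chi\big(\z\otimes\pi^*\oo_{\ls}(t)\big)$, but the dimension formulas of \cite{Yuan} invoked in \propref{nucon} compute sections of $\D_{c^*,c}\cong\z^2(c_2)$, i.e.\ of powers of $\z^2$, not of $\z(t)$; so the integer $a$ with $\E^{\vee\vee}\cong\oo_{\ls}(a)$ is not pinned down by the inputs you allow yourself. (If instead you permit yourself the theorems of \cite{Yuan} that do treat $\z$ itself, then you have assumed $\pi_*\z\cong\oo_{\ls}$ and the regularity machinery is superfluous --- that is exactly the paper's two-line proof.)
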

\begin{proof}By Lemma 4.3.3 in \cite{Yuan}, $\pi_{*}\oo_{M^X_H(c^*)}\cong \oo_{\ls}$.  Thus $\pi_{*}(\pi^{*}\oo_{\ls}(n))\cong \oo_{\ls}(n)$.  By Theorem 4.4.1 and Theorem 4.5.2 in \cite{Yuan}, $\pi_{*}\z\cong\oo_{\ls}$ and the multiplication map
\[\widetilde{m}_1:~H^0(\ls,\pi_{*}\z)\otimes H^0(\ls,\pi_{*}\pi^{*}(\oo_{\ls}(n)))\ra H^0(\ls,\pi_{*}\z(n))\]
is surjective.  Hence so is $m_1$.
\end{proof}

We choose a finite collection of distinct points $\{x_j\}_{j\in J}$ on $X$, and associate to each point $x_j$ a divisor in $\ls$ consisting of curves passing through $x_j$, which gives a section $t_{x_j}$ of $\oo_{\ls}(1)$.  Let $d_L=dim~\ls$, then it is possible to choose $d_L+1$ distinct points $x_j$ such that $\{t_{x_j}\}_{j=1}^{d_L+1}$ spans $H^0(\oo_{\ls}(1))$.  Hence we can choose $n(d_L+1)$ distinct points $x^k_j$ with $1\leq j\leq d_L+1$, $1\leq k\leq n$ such that $\{t_{j_1,\cdots,j_n}\}$ spans $H^0(\oo_{\ls}(n))$, where $t_{j_1\cdots,j_n}$ is defined as follows.

\[t_{j_1,\cdots,j_n}:=\prod_{k=1}^n t_{x_{j_k}^k},~with~t_{x_{j_k}^k} the~section~associated~to~x^k_{j_k}.\]

Let $R$ be a subset of $\{x_{j}^k\}$ and denote by $\I_R$ the ideal sheaf of points appearing in $R$.  Then $s_{\I_R}=s_{\oo_X}\times \pi^{*}t_R$ where $t_R:=\prod_{x\in R} t_x$.  For any two disjoint subsets $R$ and $T$ of $\{x_j^k\}$ such that $\#R=[\frac n2]$ is the round-down of $\frac{n}{2}$ and $\#T=n-[\frac n2]$, we define a rank 2 torsion free sheaf $\F_{R,T}$ to be an extension of $\I_R$ by $\I_T$, i.e.
\begin{equation}\label{first}0\ra \I_T\ra \F_{R,T}\ra \I_R\ra 0.
\end{equation}
Moreover if $n\geq 2$ and $n$ is odd, we ask (\ref{first}) not to split.
It is easy to see that for all $n\geq 2$ or $n=0$, $\F_{R,T}$ is semistable of class $c=(2,0,c_2=n)$ and $s_{\F_{R,T}}=s_{\I_T}\cdot s_{\I_R}$.  Hence we have the following lemma as an easy corollary of Lemma \ref{geno}
\begin{Lemma}\label{fog}For $n\geq 2$ or $n=0$, the set $\{s_{\F_{R,T}}\}_{R,T}$ spans the image of $H^0(M^X_H(c^*),\z(c_2))$ in $H^0(M^X_H(c^*),\z^2(c_2))$ via the natural embedding induced by the following sequence
\begin{equation}\label{the}0\ra \z(c_2)\ra \z^2(c_2)\ra\z^2(c_2)|_{D_{\z}}\ra 0,\end{equation}
where $D_{\z}:=\big\{\G\in M^X_H(c^*)\bigm |H^0(\G)\ne 0\big\}$ is the canonical divisor associated to $\z$.
\end{Lemma}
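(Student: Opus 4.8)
The plan is to read the statement off from \lemref{geno} once one notices that $H^0(M^X_H(c^*),\z)$ is one-dimensional. Indeed, the proof of \lemref{geno} uses $\pi_{*}\z\cong\oo_{\ls}$, so that $H^0(M^X_H(c^*),\z)=H^0(\ls,\pi_{*}\z)=H^0(\ls,\oo_{\ls})=\C$, a line spanned by the section $s_{\oo_X}$ cutting out the divisor $D_{\z}$. With this in hand, the surjectivity of $m_1$ from \lemref{geno} becomes the precise statement that $H^0(M^X_H(c^*),\z(c_2))=s_{\oo_X}\cdot\pi^{*}H^0(\ls,\oo_{\ls}(n))$, where I write $n=c_2$.

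First I would identify the embedding in \eqref{the}: the first map $\z(c_2)\ra\z^2(c_2)$ is multiplication by $s_{\oo_X}\in H^0(\z)$, whose vanishing locus is $D_{\z}$. Hence the image of $H^0(\z(c_2))$ inside $H^0(\z^2(c_2))$ equals $s_{\oo_X}\cdot H^0(\z(c_2))$, which by the reformulation above is $s_{\oo_X}^2\cdot\pi^{*}H^0(\ls,\oo_{\ls}(n))$.

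Next I would compare this image with the sections $s_{\F_{R,T}}$. From $s_{\F_{R,T}}=s_{\I_T}\cdot s_{\I_R}$ and $s_{\I_R}=s_{\oo_X}\cdot\pi^{*}t_R$ one gets $s_{\F_{R,T}}=s_{\oo_X}^2\cdot\pi^{*}(t_R\,t_T)=s_{\oo_X}^2\cdot\pi^{*}t_{R\cup T}$. As $R,T$ range over the disjoint pairs with $R\cup T=\{x^1_{j_1},\dots,x^n_{j_n}\}$ taking one point per level, the products $t_{R\cup T}=t_{j_1,\dots,j_n}$ run through the chosen spanning set of $H^0(\ls,\oo_{\ls}(n))$. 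Therefore $\{s_{\F_{R,T}}\}_{R,T}$ spans $s_{\oo_X}^2\cdot\pi^{*}H^0(\ls,\oo_{\ls}(n))$, which is exactly the image found above, proving the lemma.

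The content is almost entirely formal, so I expect no real obstacle; the one point worth checking is the combinatorial bookkeeping that the prescribed split into $R$ of size $[n/2]$ and $T$ of size $n-[n/2]$ loses none of the monomials $t_{j_1,\dots,j_n}$. Since each such monomial is a product of $n$ factors, one from each level, it admits a partition of the required sizes, so every spanning monomial is realized by some $\F_{R,T}$. The one genuinely useful idea, the one-dimensionality of $H^0(\z)$, is what makes \lemref{geno} align perfectly with the embedding in \eqref{the}.
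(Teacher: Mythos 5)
Your proof is correct and follows essentially the same route as the paper, which states the lemma as an immediate corollary of Lemma~\ref{geno} together with the identities $s_{\I_R}=s_{\oo_X}\cdot\pi^{*}t_R$ and $s_{\F_{R,T}}=s_{\I_T}\cdot s_{\I_R}$ established in the setup. Your explicit observations — that $\pi_{*}\z\cong\oo_{\ls}$ forces $H^0(\z)=\C\, s_{\oo_X}$, that the embedding in \eqref{the} is multiplication by $s_{\oo_X}$, and that every monomial $t_{j_1,\dots,j_n}$ splits into disjoint $R$, $T$ of sizes $[n/2]$ and $n-[n/2]$ — are exactly the details the paper leaves implicit.
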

\begin{Remark}\label{fin}Notice that $[\F_{R,T}]\in M^X_H(c)^g$ under any polarization.
\end{Remark}
By Theorem 4.4.1 and Theorem 4.5.2 in \cite{Yuan}, $\z(n)$ has no higher cohomology for $n\geq0$.  Then by (\ref{the}) we have the exact sequence
\begin{equation}\label{gthe}0\ra H^0(\z(n))\ra H^0(\z^2(n))\xrightarrow{\varpi} H^0(D_{\z},\z^2(n)|_{D_{\z}})\ra 0.
\end{equation}
\begin{Lemma}\label{gent}The  multiplication map 
\[m_2:~H^0(D_{\z},\z^2(2)|_{D_{\z}})\otimes H^0(M^X_H(c^*),\pi^{*}(\oo_{\ls}(n-2)))\ra H^0(D_{\z},\z^2(n)|_{D_{\z}})\]
is surjective.
\end{Lemma}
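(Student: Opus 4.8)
The plan is to deduce the surjectivity of $m_2$ from the surjectivity of a multiplication map on all of $M:=M^X_H(c^*)$, by comparing the restriction sequence (\ref{the}) for the twist $2$ with its analogue for the twist $n$. Set $V:=H^0(M,\pi^*\oo_{\ls}(n-2))$. By the projection formula and $\pi_*\oo_M\cong\oo_{\ls}$ (Lemma 4.3.3 in \cite{Yuan}) we have $V\cong H^0(\ls,\oo_{\ls}(n-2))$, and multiplication by sections of $\pi^*\oo_{\ls}(n-2)$ carries $\z^2(2)$ into $\z^2(n)$ and $\z^2(2)|_{D_\z}$ into $\z^2(n)|_{D_\z}$, compatibly with the restriction maps $\varpi$ of (\ref{gthe}). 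First I would record the resulting commutative square: the middle vertical multiplication map
\[\nu:~H^0(M,\z^2(2))\otimes V\ra H^0(M,\z^2(n))\]
and the right vertical map $m_2$ satisfy $\varpi\circ\nu=m_2\circ(\varpi\otimes\id)$. Since $\varpi:H^0(M,\z^2(n))\ra H^0(D_\z,\z^2(n)|_{D_\z})$ is surjective by (\ref{gthe}), an immediate diagram chase shows that $m_2$ is surjective as soon as $\nu$ is: lift a class $\eta$ along $\varpi$, write the lift as $\nu(w)$, and then $\eta=\varpi(\nu(w))=m_2\big((\varpi\otimes\id)(w)\big)$ lies in the image of $m_2$.

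This reduces the whole statement to the surjectivity of $\nu$, which I would treat by the projection formula once more. With $\mathcal{W}:=\pi_*\z^2$, a coherent sheaf on $\ls\cong\P^{d_L}$, one has $H^0(M,\z^2(k))\cong H^0(\ls,\mathcal{W}(k))$ for all $k$, so $\nu$ becomes the multiplication map
\[H^0(\ls,\mathcal{W}(2))\otimes H^0(\ls,\oo_{\ls}(n-2))\ra H^0(\ls,\mathcal{W}(n)).\]
By Castelnuovo--Mumford theory this is surjective for every $n\ge 2$ once $\mathcal{W}$ is $2$-regular on $\P^{d_L}$, i.e. once $H^i(\ls,\mathcal{W}(2-i))=0$ for all $i\ge 1$; indeed $2$-regularity makes $\mathcal{W}(2)$ globally generated and each step $H^0(\mathcal{W}(k))\otimes H^0(\oo_{\ls}(1))\ra H^0(\mathcal{W}(k+1))$ surjective for $k\ge 2$, which composes to the map above. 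So the task becomes a cohomology-vanishing statement for $\z^2\otimes\pi^*\oo_{\ls}(k)$ on the moduli space $M$.

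The main obstacle is precisely this vanishing. Via the Leray spectral sequence for $\pi$ it amounts to controlling $H^i(M,\z^2(2-i))$, and I would establish it along the lines of \propref{vanihh} and \cite{Yuan}: $M$ is normal with only rational singularities because $-K_X$ is ample, so Serre duality and a Kawamata--Viehweg--type vanishing are available on $M$, while $\z$ and $\pi^*\oo_{\ls}(1)$ are sufficiently positive determinant line bundles, so that $\z^2\otimes\pi^*\oo_{\ls}(2-i)$ has vanishing intermediate cohomology; the precise positivity can be read off from the description of $\Pic(M)$ and of the cohomology of the $\z^a(b)$ in \cite{Yuan} (Theorems 4.4.1 and 4.5.2). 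Alternatively one can bypass $\mathcal{W}$ and prove directly that $\mathcal{A}:=(\pi|_{D_\z})_*\big(\z^2(2)|_{D_\z}\big)$ is $0$-regular, which is the same kind of input restricted to the theta-divisor locus $D_\z$. Either way, producing this positivity/vanishing on the singular fibred moduli space carries the real content, and the rest of the argument is formal.
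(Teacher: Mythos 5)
Your formal skeleton is fine: the square $\varpi\circ\nu=m_2\circ(\varpi\otimes\id)$ does commute, the surjectivity of $\varpi$ from (\ref{gthe}) is available, and the diagram chase correctly reduces the lemma to surjectivity of $\nu$ (likewise, Mumford's regularity criterion would give surjectivity of the multiplication map on $\ls$ once $\pi_*\z^2$ is $2$-regular). But this reduction has not made the problem easier: given the other facts quoted in the paper (surjectivity of $m_1$ in \lemref{geno}, vanishing of higher cohomology of $\z(n)$, exactness of (\ref{gthe})), surjectivity of $\nu$ is essentially \emph{equivalent} to the lemma itself, and your proposal leaves precisely that point unproven — as you yourself concede, the "real content" is the regularity/vanishing statement, which is only gestured at. The paper's own proof consists of citing \cite{Yuan}, Lemmas 4.4.4 and 4.5.4, which establish exactly the surjectivity of the pushed-forward map $\widetilde m_2$ on $\ls$; in case (1) this rests on the explicit identifications $D_\z\cong\ls$ and $\pi_*(\z^2(2)|_{D_\z})\cong\oo_{\ls}$. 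The references you invoke instead (\cite{Yuan} Theorems 4.4.1 and 4.5.2, and \propref{vanihh}) concern $\z(n)$ on $M^X_H(c^*)$ and rank-two moduli spaces respectively; neither says anything about $\z^2$ restricted to $D_\z$, so they cannot supply the missing input.

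There is also a technical flaw in the route you propose for the missing vanishing. You claim that, via Leray, the $2$-regularity of $\mathcal W=\pi_*\z^2$ on $\ls\cong\P^{d_L}$ "amounts to controlling $H^i(M,\z^2(2-i))$". This is backwards: the Leray spectral sequence $H^p(\ls,R^q\pi_*(\z^2)(k))\Rightarrow H^{p+q}(M,\z^2(k))$ lets one deduce vanishing \emph{on $M$} from vanishing of the terms on $\ls$, but vanishing of $H^i(M,\z^2(2-i))$ does not force $H^i(\ls,\mathcal W(2-i))=0$, because classes in $E_2^{i,0}$ can be killed by differentials coming from $E_r^{i-r,r-1}$, i.e. from the higher direct images $R^q\pi_*\z^2$. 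So even granting a Kawamata--Viehweg-type vanishing on the (singular) moduli space $M$, you would still need to prove $R^q\pi_*\z^2=0$ (or otherwise control these sheaves) before concluding anything about $\mathcal W$ — and that is again a statement about the fibrewise behaviour of $\z^2$ of the same order of difficulty as the lemma. Your alternative suggestion, proving $0$-regularity of $(\pi|_{D_\z})_*\bigl(\z^2(2)|_{D_\z}\bigr)$, is the right shape of statement, but it is exactly the content of \cite{Yuan} Lemmas 4.4.4 and 4.5.4, not something your argument derives.
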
 
\begin{proof}By Lemma 4.4.4 and Lemma 4.5.4 in \cite{Yuan}, the multiplication map
\[\widetilde{m}_2:~H^0(\ls,\pi_{*}(\z^2(2)|_{D_{\z}}))\otimes H^0(\ls,\pi_{*}\pi^{*}(\oo_{\ls}(n-2)))\ra H^0(\ls,\pi_{*}(\z^2(n)|_{D_{\z}}))\]
is surjective.  Hence so is $m_2$ since $\pi_{*}(\pi^*\oo_{\ls}(n-2))\cong\oo_{\ls}(n-2)$.
\end{proof}

\begin{Proposition}\label{etwo}We can find a set of finitely many slope-stable vector bundles $\{\cal E^i_2\}$ of class $(2,0,c_2=2)$ such that the images of $s_{\cal E_2^i}$ in $H^0(D_{\z},\z^2(2)|_{D_{\z}})$ via map $\varpi$ in (\ref{gthe}) span $H^0(D_{\z},\z^2(2)|_{D_{\z}})$. 
\end{Proposition}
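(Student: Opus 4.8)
The plan is to prove \propref{etwo} by restricting the sections $s_{\cal E_2}$ to the fibres of the support map $\pi\colon M^X_H(c^*)\to\ls$ and reducing to a classical spanning statement for theta divisors of rank–two bundles on the curves of $\ls$. First I would record the geometric meaning of $s_{\cal E_2}$ on $D_\z$. Over a smooth curve $C\in\ls$ the fibre $\pi^{-1}(C)$ is $\Pic^{g-1}(C)$ with its principal polarisation $\Theta_C$; the divisor $D_\z$ meets it in $\Theta_C$ and $\z|_{\pi^{-1}(C)}=\oo(\Theta_C)$, while $\pi^{*}\oo_\ls(2)$ is trivial on the fibre. For a slope–stable bundle $\cal E_2$ of class $(2,0,c_2=2)$ the restriction $\cal E_2|_C$ is a rank–two, degree–zero bundle with trivial determinant, and $D_{\cal E_2}$ cuts out on $\Pic^{g-1}(C)$ the generalised theta divisor $\Theta_{\cal E_2|_C}\in|2\Theta_C|$. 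Thus $s_{\cal E_2}|_{\pi^{-1}(C)}$ is the rank–two theta section $\theta_{\cal E_2|_C}\in H^0(\Pic^{g-1}(C),\oo(2\Theta_C))$, and $\varpi(s_{\cal E_2})|_{\Theta_C}$ is its image in $H^0(\Theta_C,\oo(2\Theta_C)|_{\Theta_C})\cong H^0(\oo(2\Theta_C))/\langle\theta_C^2\rangle$, where $\langle\theta_C^2\rangle$ is the fibrewise restriction of the kernel $H^0(\z(2))$ of $\varpi$ in \eqref{gthe}.

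The fibrewise heart is then to show that, for general $C\in\ls$, the sections $\theta_{\cal E_2|_C}$ span $H^0(\Pic^{g-1}(C),\oo(2\Theta_C))$. For the line bundles in question, $L=-K_X$ or $L=-K_X+F$, the curves of $\ls$ have arithmetic genus $g\le 2$, so this is classical: the theta map from the moduli space $\SU_C(2)$ of semistable rank–two bundles with trivial determinant to $|2\Theta_C|$ is an isomorphism (Narasimhan--Ramanan for $g=2$, and elementary for $g=1$), so the theta sections of rank–two bundles span $H^0(\oo(2\Theta_C))$. Dividing by the one–dimensional subspace $\langle\theta_C^2\rangle$, which by \lemref{fog} is exactly the fibrewise image of $H^0(\z(2))$, the classes $\varpi(s_{\cal E_2})|_{\Theta_C}$ span $H^0(\Theta_C,\oo(2\Theta_C)|_{\Theta_C})$.

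Next I would realise enough curve bundles as restrictions of globally slope–stable bundles and then pass from the fibres to $\ls$. Using the Serre construction I would produce slope–stable bundles $\cal E_2$ of class $(2,0,2)$ as non-split extensions $0\to\oo_X(-A)\to\cal E_2\to\I_Z(A)\to 0$, with $A$ and the length–$(2+A^2)$ subscheme $Z$ chosen (Cayley--Bacharach, general position) so that $c_1=0$, $c_2=2$, the extension is locally free, and no sub-line bundle of nonnegative slope survives. Restricting to a general $C$ disjoint from $Z$ gives an extension of $\oo_C(A|_C)$ by $\oo_C(-A|_C)$, and varying $A$ and the extension class lets $\cal E_2|_C$ range over a finite general subset of $\SU_C(2)$, spanning as in the previous paragraph. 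Finally I would descend fibrewise spanning to global spanning: since $\pi_*\oo=\oo_\ls$ and, by the results of \cite{Yuan} underlying \lemref{gent}, the push-forward $\pi_*\big(\z^2(2)|_{D_\z}\big)$ is a sheaf on $\ls$ whose sections are explicitly controlled and which is globally generated, the subspace $\varpi(\operatorname{span}\{s_{\cal E_2}\})$ agrees with $H^0(D_\z,\z^2(2)|_{D_\z})=H^0\big(\ls,\pi_*(\z^2(2)|_{D_\z})\big)$ on a general fibre and hence coincides with it. Together with \lemref{fog} this shows $\{s_{\cal E_2}\}\cup\{s_{\F_{R,T}}\}$ spans $H^0(M^X_H(c^*),\D_{c^*,c})$ for $c_2=2$, which is the base case feeding the induction through \lemref{gent} and \propref{ned}.

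I expect the main obstacle to be precisely this fibre–to–global passage: controlling $\pi_*(\z^2(2)|_{D_\z})$ over the \emph{singular} curves of $\ls$ and over the strictly semistable locus of $M^X_H(c^*)$, and verifying that the restriction $M^X_H(2,0,2)\dashrightarrow\SU_C(2)$ is dominant (so that globally slope–stable bundles genuinely realise a spanning family of curve bundles) uniformly in $C$. The fibrewise spanning and the Serre construction are routine in the low genus at hand; the delicate point is guaranteeing that generic–fibre spanning forces global spanning, for which I would lean on the explicit structure of $\pi_*\z$ and $\pi_*(\z^2(2)|_{D_\z})$ established in \cite{Yuan}.
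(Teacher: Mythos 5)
Your fibrewise strategy is appealing and genuinely different from the paper's argument, but as written it has two gaps, both sitting exactly where you yourself place ``the main obstacle'', and neither is closed by what you invoke. First, the fibre-to-global passage does not follow from global generation of $\pi_*\bigl(\z^2(2)|_{D_{\z}}\bigr)$: a globally generated sheaf can easily have a proper subspace of $H^0$ that spans one general fibre (take $\oo_{\P^1}\oplus\oo_{\P^1}(1)$ and the span of the constant section together with one section of $\oo_{\P^1}(1)$ not vanishing at the chosen point). What actually closes this step is a dimension count: one needs $h^0\bigl(D_{\z},\z^2(2)|_{D_{\z}}\bigr)\le h^0\bigl(\Theta_C,\oo(2\Theta_C)|_{\Theta_C}\bigr)$, which equals $1$ for $L=-K_X$ and $3$ for $L=-K_X+F$; this is precisely the content of Lemmas 4.4.3, 4.4.4 and 4.5.4 of \cite{Yuan}. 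With that inequality, a subspace whose restriction surjects onto one fibre automatically equals $H^0$, so this gap is repairable, but only by importing the explicit computation rather than by appealing to global generation.

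Second, and more seriously, in the genus-two case ($L=-K_X+F$) your spanning input is Narasimhan--Ramanan applied to all of $\SU_C(2)$, whereas you need spanning by restrictions of \emph{globally} slope-stable bundles of class $(2,0,2)$; that is, the image of the rational restriction map $M^X_H(2,0,2)\dashrightarrow \SU_C(2)\cong|2\Theta_C|$ must not lie in a hyperplane. You flag this dominance but give no argument; proving it requires controlling obstruction spaces or the surjectivity of $\Ext^1_X(\I_Z(A),\oo_X(-A))\to\Ext^1_C$ on extension classes, and it is not routine. It is worth noting that the paper's proof is engineered to avoid this point entirely: in case (1) a single bundle suffices because $h^0=1$, and non-vanishing of $s_{\cal E_2}$ at \emph{every} point of $D_{\z}$ (every curve $C\in\ls$, smooth or not) follows directly from stability, $\chi(\cal E_2)=0$ and $\cal E_2^{\vee}\cong\cal E_2$; in case (2) the three explicit extensions $0\to\oo_X(-F)\to\cal E^i_2\to\I_j(F)\to0$ are evaluated not at smooth genus-two curves at all, but at the sheaves $\G^j=\oo_C\oplus\oo_{f_j}(-1)$ supported on reducible curves, where $H^0(\G^j\otimes\cal E^i_2)$ is read off from the splitting type of $\cal E^i_2$ on the fibres $f_j$, producing a diagonal, hence invertible, evaluation matrix. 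Until you supply the dominance statement (for at least one smooth $C$), your proposal is incomplete at its central step; a smaller issue in the same direction is that the interpretation of $D_{\cal E_2}\cap\pi^{-1}(C)$ as a generalized theta divisor also requires $\cal E_2|_C$ to be semistable, which must be checked for the bundles your Serre construction produces.
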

\begin{proof} For case (1), we have $D_{\z}\cong \ls$ and $\z^2(2)\cong\oo_{\ls}$ by Lemma 4.4.3 and Lemma 4.4.4 in \cite{Yuan}.  It is enough to show that there exists a slope-stable vector bundle $\cal E_2$ of class $(2,0,c_2=2)$, such that  for all $\G\in D_{\z}$, $H^0(\G\otimes \cal E_2)=0$.  If we are on $\P^2$, then any semistable bundle of class $(2,0,c_2=2)$ is slope-stable.  If we are on the  two Hirzebruch surfaces, then by Lemma \ref{slop} and Remark \ref{apple} below there are slope-stable bundles of class $(2,0,c_2=2)$ for any polarization.   

Notice that in this case $\G\in D_{\z}\Leftrightarrow \G\cong \oo_{C_{\G}}$ with $C_{\G}$ the support of $\G$.  Let $\cal E_2$ be any slope-stable vector bundle of class $(2,0,c_2=2)$.  We want to show that $H^0(\oo_C\otimes \cal E_2)=0$ for all curves $C\in\ls.$  We have the following exact sequence
\[0\ra K_X\ra \oo_X\ra\oo_C\ra0.\]
Tensoring it by $\cal E_2$ and taking global sections,  we get
\[0\ra H^0(K_X\otimes \cal E_2)\ra H^0(\cal E_2)\ra H^0(\cal E_2\otimes\oo_C)\ra H^1(K_X\otimes\cal E_2).\]
By stability we have $H^0(\cal E_2)=H^2(\cal E_2)=0$ and moreover since $\chi(\cal E_2)=0$, we have $H^1(\cal E_2)=H^1(K_X\otimes \cal E_2^{\vee})^{\vee}=0$.  Because $\cal E_2$ is a rank 2 bundle with trivial determinant, we have $\cal E_2^{\vee}\cong \cal E_2$.  Hence $H^1(\cal E_2)=H^1(K_X\otimes \cal E_2)=0$, hence $H^0(\cal E_2\otimes\oo_C)=0$ for all $C\in\ls.$  This finishes the proof for case (1).

Now we deal with Case (2).  In this case $X=\P^1\times\P^1$ or $\widehat \P^2$.  We can write $X=\P(\oo_{\P^1}\oplus\oo_{\P^1}(e))$ with $e=0$ for $\P^1\times\P^1$ and $e=1$ for $\widehat \P^2$.  

Define the section class $\Xi:=G-eF/2$.  Then $\Xi^2=-e$ and $a\Xi+bF$ is ample if and only if $a,b>0$ and $b>ae$.  Choose three distinct points $x_1,x_2,x_3$ on $X$ such that no two of them lie on a divisor of class $F$ or $\Xi$.  Denote by $\I_{j}$ the ideal sheaf of $\{x_i,i\neq j\}$, hence we have $H^0(\I_j(F))=H^0(\I_j(\Xi))=0$ for all $1\leq j\leq 3$.  It is easy to compute that $\text{Ext}^1(\I_j(F),\oo_X(-F))\neq 0$.  Let $\cal E^i_2$ be a vector bundle lying in the following exact sequence.
\begin{equation}\label{de}0\ra\oo_X(-F)\ra \cal E^i_2\ra \I_j(F)\ra 0.\end{equation} 
There exists such vector bundle is because the Cayley Bacharach condition is fulfilled  by $H^0(K_X(2F))=0$.  
Lemma \ref{slop}, Remark \ref{apple} and Lemma \ref{three} below imply the statement for Case (2).  
This proves the proposition. 
\end{proof}

To deal with Case (2), we have the following three lemmas.
\begin{Lemma}\label{slop}If $e=1$, $\cal E^i_2$ is slope-stable for any polarization.  If $e=0$, $\cal E^i_2$ is slope-stable for the polarization $G+aF$ for $a\geq 1$.  
\end{Lemma}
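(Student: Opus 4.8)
The plan is to verify slope-stability directly from the defining extension \eqref{de}. Since $c_1(\cal E^i_2)=0$, the slope of $\cal E^i_2$ with respect to any polarization $P$ is $0$, so $\cal E^i_2$ is slope-stable for $P$ exactly when every saturated rank-one subsheaf $\oo_X(D)\hookrightarrow \cal E^i_2$ satisfies $\<D,P\><0$ (any rank-one subsheaf is dominated in slope by its saturation, which on a smooth surface is a line bundle). So I would fix such a $D$ and compose the inclusion with the surjection $\cal E^i_2\twoheadrightarrow \I_j(F)$ of \eqref{de}, whose kernel is $\oo_X(-F)$; this splits the analysis according to whether the composite $\oo_X(D)\to \I_j(F)$ vanishes.

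If that composite is zero, then $\oo_X(D)\to \cal E^i_2$ factors through $\oo_X(-F)$, producing a nonzero map $\oo_X(D)\to \oo_X(-F)$; hence $-F-D$ is effective and $\<D,P\>\le -\<F,P\><0$, because $F$ is effective and $P$ is ample. This settles the first case for every polarization.

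The substantive case is when the composite $\oo_X(D)\to \I_j(F)$ is nonzero. Setting $B:=F-D$, this yields a nonzero section of $\I_j(B)$, i.e. an effective curve of class $B$ through the two points cut out by $\I_j$. Writing $B=p\Xi+qF$ and using that the effective cone of $X$ is generated by $\Xi$ and $F$, effectivity forces $p,q\ge 0$. I would then read off $\<D,P\>=\<F,P\>-\<B,P\>$ from the intersection numbers $\<F,\Xi\>=1$, $\Xi^2=-e$, $F^2=0$, reducing $\<D,P\><0$ to a positivity statement in $(p,q)$. The genericity imposed on $x_1,x_2,x_3$ (no two on a curve of class $F$ or of class $\Xi$) rules out precisely $B=F$ and $B=\Xi$, which are the two borderline classes where $\<D,P\>$ could fail to be negative.

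The hardest point, and the source of the dichotomy in the statement, is controlling the classes $B=p\Xi$ with $p\ge 2$. When $e=1$ we have $\Xi=E$ with $\Xi^2=-1$, so $|p\Xi|$ is the single rigid divisor $pE$; a curve of class $p\Xi$ through two points would force both points onto $E$, contradicting genericity, so no such $B$ occurs and $\<D,P\><0$ holds for every ample $P$. When $e=0$ the pencil $|p\Xi|=|pG|$ moves and does admit divisors through two points once $p\ge 2$; for these I would compute $\<D,G+aF\>=1-pa$, which is negative exactly because $a\ge 1$ gives $pa\ge 2>1$, and this is where the restriction to polarizations $G+aF$ with $a\ge 1$ is used. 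Combining the two cases and the genericity exclusions yields $\<D,P\><0$ for every saturated sub-line bundle, establishing slope-stability in both regimes.
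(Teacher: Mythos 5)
Your proposal is correct and takes essentially the same approach as the paper's proof: both reduce slope-stability to the vanishing of maps from line bundles of non-negative $P$-degree into the two pieces $\oo_X(-F)$ and $\I_j(F)$ of the defining extension \eqref{de}, then dispose of the possible destabilizing classes using the genericity of the points (killing $F$ and $\Xi$), the rigidity of $|p\Xi|$ when $e=1$, and the numerical constraint $a\geq 1$ when $e=0$. The only difference is presentational: you argue contrapositively via saturated sub-line bundles $\oo_X(D)$ and effective classes $B=F-D$, whereas the paper fixes $S$ with $\<S,P\>\geq 0$ and shows $\Hom(\oo_X(S),\cal E^i_2)=0$, which amounts to the same case analysis.
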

\begin{proof}Choose any polarization $P=\Xi+\nu F$ with $\nu\in\mathbb{Q},\nu>e$.  Since $\cal E^i_2$ is locally free, it is enough to show that for any divisor $ S=a\Xi+b F$ with $a,b\in\mathbb{Z}$ such that $\<S,P\>\geq 0$, we have $\Hom(\oo_X(S),\cal E^i_2)=0$.  By (\ref{de}) it is enough to show $\Hom(\oo_X(S),\oo_{X}(-F))=0=H^0(\oo_X(S),\I_j(F))$.  

We have $\Hom(\oo_X(S),\oo_X(-F))=H^0(\oo_X(-a\Xi-(b+1)F))=0$ because $\<P,(-a\Xi-(b+1)F\><0$.  
If $\Hom(\oo_X(S),\I_j(F))=H^0(\I_j(-a\Xi-(b-1)F))\neq 0$, then $a\leq 0$, $b\leq 1$ and $-a\Xi-(b-1)F\ne 0$.  But $\<(a\Xi+bF),P\>=(\nu-e)a+b\geq 0$, hence we have $0>a\geq -1/(\nu-e)$ and $b=1$ or $a=b=0$.  

If $e=1$, then $H^0(\oo_X(k\Xi))=H^0(\oo_X(\Xi))\cong\C$ for all $k\geq 1$.   Hence $H^0(\I_j(k\Xi))=H^0(\I_j(F))=0$ for any $k$ because $\I_j$ is an ideal sheaf of two distinct points not lying on a divisor of class $F$ or $\Xi$.   Hence $\Hom(\oo_X(S),\I_j(F))=0$ and hence $\cal E^i_2$ is slope-stable for any polarization.

If $e=0$, then we ask $\nu\geq1$ and hence $a=-1$, $b=1$ or $a=b=0$.  But $H^0(\I_j(\Xi))=H^0(\I_j(F))=0$ and hence $\cal E^i_2$ is slope-stable. 
\end{proof}
\begin{Remark}\label{apple}In Lemma \ref{slop}, if $X=\P^1\times\P^1$ then $G=\Xi$ and $F$ are symmetric, hence we can always write a polarization as $G+aF$ with $a\geq 1$.  Hence for any polarization, we can construct slope-stable vector bundles $\cal E_2^i$. 
\end{Remark}
\begin{Lemma}\label{three}The restrictions of $s_{\cal E^i_2},1\leq i\leq3$ to $D_{\z}$ span $H^0(D_{\z},\z^2(2)|_{D_{\z}})$. 
\end{Lemma}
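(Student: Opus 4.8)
The plan is to reduce \lemref{three} to a linear-independence statement and then to separate the three sections by evaluating them on suitably chosen one-dimensional sheaves. First I would pin down the target space $H^0(D_{\z},\z^2(2)|_{D_{\z}})$. The exact sequence \eqref{gthe} with $n=2$ gives
\[
\dim H^0(D_{\z},\z^2(2)|_{D_{\z}})=\dim H^0(M^X_H(c^*),\z^2(2))-\dim H^0(M^X_H(c^*),\z(2)),
\]
and both terms on the right are accessible: $\z^2(2)=\D_{c^*,c}$ for $c=(2,0,2)$, whose space of sections is controlled by \propref{nucon} together with \thmref{p11t}, \thmref{P22} and \propref{vanihh}, while $\z(2)$ is a determinant bundle attached to a rank-one class and is computed on the Hilbert scheme of two points by the rank-one strange duality of \cite{Yuan}. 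A direct count using these inputs is expected to give the value $3$, matching the number of bundles produced in \propref{etwo}; granting this, it suffices to prove that $\varpi(s_{\cal E^1_2}),\varpi(s_{\cal E^2_2}),\varpi(s_{\cal E^3_2})$ are linearly independent.

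Next I would make the sections explicit. The defining sequence \eqref{de} shows $[\cal E^i_2]=[\oo_X(-F)]+[\I_i(F)]$ in $K(X)$, so $s_{\cal E^i_2}$ lies in the same space $H^0(\z^2(2))$ as the split section $s_{\oo_X(-F)}\cdot s_{\I_i(F)}$, the two differing precisely because the extension is non-split. For a sheaf $\G\in M^X_H(c^*)$ whose support avoids $x_1,x_2,x_3$, the sequence \eqref{de} remains exact after tensoring with $\G$, so whether $H^0(\cal E^i_2\otimes\G)\ne0$ is governed by $H^0(\oo_X(-F)\otimes\G)$ together with the connecting map attached to the extension class; this locates $D_{\cal E^i_2}\cap D_{\z}$ in terms of the two points cut out by $\I_i$. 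Here the general-position hypothesis on $x_1,x_2,x_3$ (no two on a divisor of class $F$ or $\Xi$, as already used in \lemref{slop}) ensures that the three configurations $\{x_k:k\ne i\}$ are genuinely distinct and that the relevant cohomology jumps exactly where expected.

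Finally I would establish linear independence by separation. For each $i\in\{1,2,3\}$ the aim is to exhibit $\G_i\in D_{\z}$ with $H^0(\cal E^i_2\otimes\G_i)=0$ but $H^0(\cal E^k_2\otimes\G_i)\ne0$ for $k\ne i$; evaluating $\sum_k\lambda_k\,\varpi(s_{\cal E^k_2})$ at $\G_i$ then forces $\lambda_i=0$. Such $\G_i$ will be supported on a curve $C_i\in|L|$ through a prescribed subset of $\{x_1,x_2,x_3\}$, arranged so that the extra sections appearing when $C_i$ meets the two points defining $\I_k$ occur for $k\ne i$ but not for $k=i$; the freedom to choose $C_i$ is exactly what the general position of the points affords. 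I expect the main obstacle to be the second step: computing $H^0(\cal E^i_2\otimes\G)$ through the connecting homomorphism of \eqref{de}, that is, tracking the contribution of the non-split extension class, and matching this with the description of $D_{\z}$ and of the pushforward $\pi_*\big(\z^2(2)|_{D_{\z}}\big)$ from \cite{Yuan} that underlies \lemref{gent}. Once the sections are expressed in the coordinates coming from the projection $\pi$, both the dimension count and the separation reduce to explicit finite computations.
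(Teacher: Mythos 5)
You have the right skeleton, and it is the same one the paper uses implicitly: since $\dim H^0(D_{\z},\z^2(2)|_{D_{\z}})=3$ (your proposed count $h^0(\z^2(2))-h^0(\z(2))$ does give $3$, namely $81-78$, though for $c_2=2$, i.e.\ $d=8$, one cannot invoke \propref{vanihh}, which requires $d>8$ when $c_1=0$; the number should instead be taken from the computations of \cite{Yuan} that underlie \lemref{gent}), spanning reduces to exhibiting three sheaves $\G^1,\G^2,\G^3\in D_{\z}$ whose evaluation matrix against $s_{{\cal E}^1_2},s_{{\cal E}^2_2},s_{{\cal E}^3_2}$ is diagonal with nonzero diagonal entries. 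The genuine gap is that you never produce such sheaves: your separation step is a plan, not an argument. With your proposed test objects --- sheaves $\G_i$ supported on irreducible curves $C_i\in\ls$ through prescribed subsets of $\{x_1,x_2,x_3\}$ --- tensoring \eqref{de} with $\G_i$ gives $0\to\G_i(-F)\to{\cal E}^k_2\otimes\G_i\to\I_k(F)\otimes\G_i\to0$, and since $\chi(\I_k(F)\otimes\G_i)>0$ the quotient always has sections; whether $H^0({\cal E}^k_2\otimes\G_i)=0$ then amounts to injectivity of the connecting map $H^0(\I_k(F)\otimes\G_i)\to H^1(\G_i(-F))$, a map between spaces of equal dimension whose behaviour depends on the extension class and on the pair $(C_i,\G_i)$, and not in any evident way on which of the points $x_j$ lie on $C_i$. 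You flag exactly this as ``the main obstacle'' but offer nothing to overcome it, so the proof does not close.

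The idea you are missing is the paper's choice of test sheaves, which makes the connecting homomorphism disappear altogether. Because $L=-K_X+F$, one can take the polystable points $\G^j=\oo_C\oplus\oo_{f_j}(-1)\in D_{\z}$, where $C\in|-K_X|$ is a smooth curve and $f_j$ is the fiber of the ruling through $x_j$; these have class $c^*$ and lie in $D_{\z}$ since $H^0(\G^j)\supseteq H^0(\oo_C)\neq0$. Then
$H^0({\cal E}^i_2\otimes\G^j)=H^0({\cal E}^i_2\otimes\oo_C)\oplus H^0({\cal E}^i_2\otimes\oo_{f_j}(-1))$.
The first summand vanishes for every $i$ by the slope-stability argument already given in the proof of \propref{etwo}, and the second is read off from the splitting type on fibers: by \eqref{de} and the general position of $x_1,x_2,x_3$ one has ${\cal E}^i_2|_{f_i}\cong\oo_{f_i}^{\oplus2}$, while ${\cal E}^i_2|_{f_j}\cong\oo_{f_j}(1)\oplus\oo_{f_j}(-1)$ for $j\neq i$, because the fibers through the two points cut out by $\I_i$ are jumping fibers of ${\cal E}^i_2$. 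Hence $H^0({\cal E}^i_2\otimes\G^j)=0$ exactly when $i=j$, which is the diagonal evaluation you wanted; no analysis of extension classes restricted to curves in $\ls$ is ever needed.
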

\begin{proof}It is enough to choose three distinct points $\G^1,\G^2,\G^3\in D_{\z}$ such that $s_{\cal E^i_2}(\G^j)\neq0$ if and only if $i=j$.  In other words, $H^0(\G^j\otimes \cal E^i_2)=0$ if and only if $i=j$.  

Recall that we have chosen three distinct points $x_1,x_2,x_3$.  Denote by $f_i$ the fiber passing through $x_i$, then $f_i\cong\P^1$ and $f_i\cap f_j=\emptyset $ for $i\neq j$.   Choose a smooth curve $C\in |-K_X|.$  Then we define $\G^i$ to be ($S$-equivalent to) $\oo_C\oplus\oo_{f_i}(-1)$, where $\oo_{f_i}(-1)\cong \oo_{\P^1}(-1)$.

By Lemma \ref{etwo} we see that $H^0(\cal E_2^i\otimes\oo_C)=0$ for $1\leq i\leq 3$.  Therefore $H^0(\cal E^i_2\otimes \G^j)= 0\Leftrightarrow H^0(\cal E_2^i\otimes\oo_{f_j}(-1))=0$.  
By (\ref{de}) we see that $\cal E_2^i|_{f_i}\cong \oo_{f_i}^{\oplus 2}$, while for $i\neq j$, $\cal E_2^i|_{f_j}\cong \oo_{f_i}(-1)\oplus \oo_{f_i}(1)$.  Hence $H^0(\G^j\otimes \cal E^i_2)=0$ if and only if $i=j$.  This proves the lemma.
\end{proof}

Let $W$ be a subset of $\big\{x_j^k\big\}_{1\leq j\leq d_L+1}^{1\leq k\leq n-2}$ such that $\#W=n-2$.  Let $\oo_W\cong \bigoplus_{x\in W}\oo_x$ be the structure sheaf of the subscheme $W$.
Choose a surjective map $h^i:\cal E_2^i\twoheadrightarrow\oo_W$ (one may choose $h^i$ to factor through $\cal E_2^i\twoheadrightarrow \cal E_2^i\otimes\oo_W\cong \oo_W^{\oplus 2}$).  
Let $\F_W^i$ be the kernel of $h^i$. Then we have the following exact sequence
\begin{equation}\label{grow}0\ra \F_W^i\ra \cal E_2^i\xrightarrow{h^i} \oo_W\ra 0.\end{equation}
It is easy to see that $\F_W^i$ is slope-stable and $s_{\F^i_W}=s_{\cal E^i_2}\times \pi^{*}t_{W}$.  Recall that $t_W$ is the section of $H^0(\ls,\oo_{\ls}(n-2))$ vanishing at curves passing through any point in $W$, and moreover $\{t_W\}_W$ spans $H^0(\ls,\oo_{\ls}(n-2))$.  By Lemma \ref{gent} and Proposition \ref{etwo} we have the following lemma.
\begin{Lemma}\label{song}The restriction of $\{s_{\F^i_{W}}\}^i_{W}$ to $D_{\z}$ spans $H^0(D_{\z},\z^2(n)|_{D_{\z}})$ for $n\geq 2$.
\end{Lemma}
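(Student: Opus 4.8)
The plan is to read off the statement from the surjectivity of the multiplication map $m_2$ of \lemref{gent}, feeding into it the two spanning inputs already produced, namely \propref{etwo} on the sheaves $\cal E^i_2$ and the construction of the sections $t_W$. The starting point is the factorization $s_{\F^i_W}=s_{\cal E^i_2}\cdot\pi^{*}t_W$ recorded just above: the sequence \eqref{grow} realizes $\cal E^i_2$ as an extension of $\oo_W=\bigoplus_{x\in W}\oo_x$ by $\F^i_W$, so $[\F^i_W]=[\cal E^i_2]-[\oo_W]$ has class $(2,0,c_2=n)$ and $s_{\F^i_W}$ is a section of $\D_{c^*,c}=\z^2(n)$; moreover, by additivity of $\lambda$ and \lemref{point} (which identifies $\lambda_x\cong\pi^{*}\oo_{\ls}(-1)$, hence $s_{\oo_x}$ with $\pi^{*}t_x$), the canonical section $s_{\F^i_W}$ splits off as the product of $s_{\cal E^i_2}$ and $\pi^{*}t_W=\prod_{x\in W}\pi^{*}t_x$.

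Next I would restrict to $D_{\z}$ through the surjection $\varpi$ of \eqref{gthe}. As $\varpi$ is restriction of sections it is multiplicative, and since $\pi^{*}t_W$ is pulled back from $\ls$ this yields
\[
\varpi(s_{\F^i_W})=\varpi(s_{\cal E^i_2})\cdot\big(\pi^{*}t_W\big)\big|_{D_{\z}}=m_2\big(\varpi(s_{\cal E^i_2})\otimes\pi^{*}t_W\big),
\]
where I use the identification $\z^2(2)\otimes\pi^{*}\oo_{\ls}(n-2)\cong\z^2(n)$. Thus each restricted section $\varpi(s_{\F^i_W})$ is exactly the $m_2$-image of the pure tensor built from the restricted section of $\cal E^i_2$ and the pullback $\pi^{*}t_W$.

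Finally I would assemble the spanning. By \propref{etwo} the classes $\varpi(s_{\cal E^i_2})$ for $1\le i\le 3$ span $H^0(D_{\z},\z^2(2)|_{D_{\z}})$, and by construction the $t_W$ span $H^0(\ls,\oo_{\ls}(n-2))$; since $\pi_{*}\oo_{M^X_H(c^*)}\cong\oo_{\ls}$ the pullbacks $\pi^{*}t_W$ span $H^0(M^X_H(c^*),\pi^{*}\oo_{\ls}(n-2))$. Hence the pure tensors $\varpi(s_{\cal E^i_2})\otimes\pi^{*}t_W$ span the entire source of $m_2$, so applying the surjection $m_2$ of \lemref{gent} shows that the family $\{\varpi(s_{\F^i_W})\}_{i,W}$ spans $H^0(D_{\z},\z^2(n)|_{D_{\z}})$, which is the assertion. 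I do not expect a genuine obstacle: all the hard content (the surjectivity of $m_2$ and the exactness behind \eqref{gthe}) has been quarantined in the preceding lemmas, so the only points demanding care are verifying that restriction to $D_{\z}$ intertwines multiplication of sections with $m_2$ and checking the bookkeeping that places $s_{\F^i_W}$ in $H^0(\z^2(n))$.
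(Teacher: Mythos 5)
Your proposal is correct and follows essentially the same route as the paper: the paper deduces the lemma exactly from the factorization $s_{\F^i_W}=s_{\cal E^i_2}\cdot\pi^{*}t_W$, the spanning of $H^0(\ls,\oo_{\ls}(n-2))$ by the $t_W$, Proposition~\ref{etwo}, and the surjectivity of $m_2$ in Lemma~\ref{gent}. Your write-up merely makes explicit the intertwining of restriction with multiplication and the pure-tensor spanning argument that the paper leaves implicit.
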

By Proposition \ref{ned}, Lemma \ref{fog} and Lemma \ref{song}, we have the following proposition.  
\begin{Proposition}\label{case12}Let $X=\P^2$, $\P^1\times\P^1$ or $\widehat \P^2$.  Let $c^*=(0,L,\chi=0)$ with $L=-K_X$ or $L=-K_X+F$ with $F$ the fiber class for $X$ a Hirzebruch surface, and $c=(2,0,c_2)$.  Then for any polarization $H$ on $X$, the strange duality map
\[SD^g_{c,c^*}:H^0(M^X_H(c)^g,\D^g_{c,c^*})\ra H^0(M^X_H(c^*),\D_{c^*,c})\]
is surjective for all $c_2\geq 2$.
\end{Proposition}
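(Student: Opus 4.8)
The plan is to invoke the surjectivity criterion of Proposition \ref{ned}. Since Proposition \ref{nucon} already shows that $H^0(M^X_H(c)^g,\D^g_{c,c^*})$ and $H^0(M^X_H(c^*),\D_{c^*,c})$ have equal finite dimension, it suffices to exhibit finitely many semistable sheaves $\F_i$ of class $c=(2,0,c_2)$, all lying in $M^X_H(c)^g$, whose associated sections $s_{\F_i}$ span the target $H^0(M^X_H(c^*),\D_{c^*,c})$. By Lemma \ref{point} this target is $H^0(M^X_H(c^*),\z^2(c_2))$.

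First I would split the target using the exact sequence \eqref{gthe}
\[0\ra H^0(\z(c_2))\ra H^0(\z^2(c_2))\xrightarrow{\varpi} H^0(D_{\z},\z^2(c_2)|_{D_{\z}})\ra 0,\]
which is exact because $\z(n)$ has no higher cohomology for $n\ge 0$ by the computations in \cite{Yuan}. It is then enough to produce sheaves spanning the sub and the quotient separately, since every $s_{\F}$ depends only on the $S$-equivalence class of $\F$.

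For the subspace $H^0(\z(c_2))$, I would use the torsion-free sheaves $\F_{R,T}$ built as extensions \eqref{first} of ideal sheaves of the chosen points: Lemma \ref{fog} shows the $s_{\F_{R,T}}$ span the image of $H^0(\z(c_2))$ inside $H^0(\z^2(c_2))$, and by Remark \ref{fin} each $[\F_{R,T}]$ lies in $M^X_H(c)^g$ for any polarization. For the quotient $H^0(D_{\z},\z^2(c_2)|_{D_{\z}})$, I would use the slope-stable bundles $\F^i_W$ obtained from the bundles $\cal E_2^i$ of class $(2,0,2)$ by the elementary modifications \eqref{grow}: Lemma \ref{song} shows that their sections $s_{\F^i_W}$ span this quotient after restriction, and being slope-stable these sheaves automatically lie in $M^X_H(c)^s\subset M^X_H(c)^g$. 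For $c_2=2$ the modification is trivial, $\F^i_W=\cal E_2^i$, and Proposition \ref{etwo} applies directly.

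Combining the two families produces finitely many sections $s_{\F_i}$ spanning all of $H^0(M^X_H(c^*),\z^2(c_2))$, with every $\F_i\in M^X_H(c)^g$; Proposition \ref{ned} then yields surjectivity of $SD^g_{c,c^*}$, hence an isomorphism, for all $c_2\ge 2$. The substance of the argument is not in this final assembly but in the two spanning statements: the genuinely geometric input is Proposition \ref{etwo} together with Lemma \ref{song}, where one must construct slope-stable bundles in class $(2,0,2)$ for every polarization (via Lemma \ref{slop} and Remark \ref{apple}) and verify the vanishing pattern $H^0(\G^j\otimes\cal E^i_2)=0\Leftrightarrow i=j$ that forces the $s_{\cal E^i_2}$ to be linearly independent modulo the subspace $H^0(\z(c_2))$. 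That case analysis, distinguishing $\P^2$, $\P^1\times\P^1$ and $\widehat\P^2$, is where I expect the main difficulty to lie.
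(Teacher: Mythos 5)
Your proposal is correct and follows essentially the same route as the paper: the paper's proof of this proposition is precisely the combination of Proposition \ref{ned}, Lemma \ref{fog} and Lemma \ref{song}, assembled through the exact sequence \eqref{gthe} exactly as you describe, with the $\F_{R,T}$ handling the subspace $H^0(\z(c_2))$ and the slope-stable modifications $\F^i_W$ handling the quotient over $D_{\z}$. The only cosmetic difference is that Proposition \ref{nucon} is not needed for surjectivity itself (it enters only when upgrading to the isomorphism in Theorem \ref{vsdmain}), but this does not affect your argument.
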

\begin{proof}[Proof of Theorem \ref{vsdmain} for cases (1) and (2)]Combine Proposition \ref{nucon} and Proposition \ref{case12}.\end{proof}
\begin{Remark}\label{esd}In cases (1) and (2), there are still some conditions on the polarization $H$, which are required by Theorem \ref{p11t}.  However, by Theorem \ref{vanwall} we know that the generating function $\sum_{d>0}\chi(M^X_H(c),\D_{c,c^*})$ is essentially independent of $H$.  Hence we know that the strange duality map $SD_{c,c^*}^g$ is an isomorphism for any polarization for $c_2\gg0$.
\end{Remark}

\begin{flushleft}{\textbf{$\dagger$ Case (3).}}\end{flushleft}
In this case, we have $X=\P^2$ with $H$ the hyperplane class, $c^*=(0,2H,\chi=-1)$ and $c=(2,H,c_2\geq1)$.  As we have shown in the proof of Proposition \ref{nucon}, the map $\pi:M^X_H(c^*)\ra |2H|$ is an isomorphism, $\D_{c^*,c}\cong\oo_{|2H|}(c_2-1)$, $\cal T_{\P^2}(-1)$ is slope-stable of class $(2,H,1)$ and $s_{\cal T_{\P^2}(-1)}$ is nowhere vanishing.

Let $W$ be a subset of $\big\{x_j^k\big\}_{1\leq j\leq 6}^{1\leq k\leq c_2-1}$ such that $\#W=c_2-1$.  Let $\oo_W\cong \bigoplus_{x\in W}\oo_x$ be the structure sheaf of the subscheme $W$.
We then choose a surjective map $h:\cal T_{\P^2}(-1)\twoheadrightarrow\oo_W$ and construct a slope-stable sheaf $\F^H_W$ associated to $W$ as the kernel of $h$.  We have  
\begin{equation}\label{bow}0\ra \F^H_W\ra \cal T_{\P^2}(-1)\xrightarrow{h} \oo_W\ra 0.\end{equation}
It is easy to see that $s_{\F^H_W}=s_{\cal T_{\P^2}(-1)}\times t_{W}=t_W$ and hence $\{s_{\F^H_W}\}_W$ spans $H^0(M(c^*),\D_{c^*.c})$.  Hence by Proposition \ref{ned} we have
\begin{Lemma}\label{case3}The map $SD_{c,c^*}$ is surjective for case (3).\end{Lemma}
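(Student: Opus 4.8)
The plan is to deduce the surjectivity directly from the criterion of \propref{ned}: it suffices to exhibit finitely many semistable sheaves of class $c=(2,H,c_2)$ whose associated sections $s_{\F_i}$ span $H^0(M^X_H(c^*),\D_{c^*,c})$. The natural candidates are the sheaves $\F^H_W$ defined by the exact sequence \eqref{bow}, indexed by the $(c_2-1)$-element subsets $W$ of the chosen finite set of points $\{x_j^k\}$; there are only finitely many such $W$, so the collection is finite.

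First I would check that each $\F^H_W$ really defines a point of $M^X_H(c)$. From \eqref{bow}, since $\cal T_{\P^2}(-1)$ has rank $2$, $c_1=H$ and $c_2=1$, and since $\#W=c_2-1$, the kernel $\F^H_W$ has rank $2$, first Chern class $H$ and second Chern class $c_2$, i.e.\ class $c$. Its slope-stability was arranged in the construction: $\F^H_W$ is a full-rank subsheaf of the slope-stable bundle $\cal T_{\P^2}(-1)$ with the same first Chern class, so any destabilizing subsheaf would already destabilize $\cal T_{\P^2}(-1)$. Since $c_1=H$ is not divisible by $2$, every semistable sheaf of class $c$ is stable; hence $M^X_H(c)=M^X_H(c)^s=M^X_H(c)^g$ and $SD_{c,c^*}=SD^g_{c,c^*}$, so there is no distinction to track.

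Next I would record the two facts established just above the statement: under the identification $\pi\colon M^X_H(c^*)\xrightarrow{\sim}|2H|$ with $\D_{c^*,c}\cong\oo_{|2H|}(c_2-1)$, and using that $s_{\cal T_{\P^2}(-1)}$ is nowhere vanishing (because $\cal T_{\P^2}(-1)$ is stable of class $(2,H,1)$, for which $\D_{c^*,(2,H,1)}\cong\oo_{|2H|}$ is trivial), the factorization of the canonical sections over \eqref{bow} gives $s_{\F^H_W}=t_W$ up to a nonzero scalar, where $t_W=\prod_{x\in W}t_x$. By the choice of the points $\{x_j^k\}$ the sections $\{t_W\}_W$ span $H^0(|2H|,\oo_{|2H|}(c_2-1))=H^0(M^X_H(c^*),\D_{c^*,c})$. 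Feeding the finite family $\{\F^H_W\}_W$ into \propref{ned} then yields the surjectivity of $SD_{c,c^*}$; combined with the dimension equality of \propref{nucon} (both dimensions being computed via \thmref{P22}, \propref{vanihh} and \lemref{point}) this upgrades to the desired isomorphism in \thmref{vsdmain}.

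The step I expect to be the real content is the identity $s_{\F^H_W}=t_W$. The Jordan--H\"older factorization lemma proved earlier applies only to filtrations by subquotients of the same reduced Hilbert polynomial, whereas in \eqref{bow} the quotient $\oo_W$ is a torsion sheaf whose class is not proportional to $c$. I would therefore establish the multiplicativity of the canonical section over \eqref{bow} directly from the additivity of the determinant-line-bundle construction in the $K$-theory class $[\cal T_{\P^2}(-1)]=[\F^H_W]+[\oo_W]$, together with the explicit description of the vanishing divisor $D_{\oo_x}$ as the hyperplane $\{C\in|2H|\mid x\in C\}$ cut out by $t_x$. Once this is in place, the remaining assertions are the finiteness of the family and the spanning of $\{t_W\}_W$, both of which are immediate from the combinatorial choice of points.
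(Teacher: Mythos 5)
Your proposal is correct and follows essentially the same route as the paper: both apply the criterion of \propref{ned} to the finite family of kernels $\F^H_W$ from \eqref{bow}, using that $s_{\cal T_{\P^2}(-1)}$ is nowhere vanishing, so that $s_{\F^H_W}=t_W$ up to scalar and the sections $\{t_W\}_W$ span $H^0(M^X_H(c^*),\D_{c^*,c})\cong H^0(|2H|,\oo_{|2H|}(c_2-1))$. The only difference is one of detail: the paper treats the factorization $s_{\F^H_W}=s_{\cal T_{\P^2}(-1)}\times t_W$ as immediate, whereas you justify it via additivity of the determinant line bundle along \eqref{bow} and the identification of $D_{\oo_x}$ with the hyperplane $\{t_x=0\}$, which is a correct way to make that step explicit (and you rightly note the Jordan--H\"older lemma does not cover it, since $\oo_W$ is torsion).
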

\begin{proof}[Proof of Theorem \ref{vsdmain} for case (3)]Combine Proposition \ref{nucon} and Lemma \ref{case3}.
\end{proof}

\end{document}